\renewcommand{\mathbb}{\mathbf}
\renewcommand{\ell}{l}
\def\Ad{\mathrm{Ad}}
\newcommand{\Rbar}{\overline{R}}
\DeclareMathOperator{\Lie}{Lie}
\newcommand{\CNL}{\operatorname{CNL}}
\newcommand{\psibar}{\overline{\psi}}
\newcommand{\tv}{{\widetilde{v}}}
\DeclareMathOperator{\SL}{SL}
\def\sl{\mathfrak{sl}}
\def\ad{\mathrm{ad}}
\newcommand\Gder{G^{\mathrm{der}}}
\newcommand\Gab{G^{\mathrm{ab}}}
\newcommand{\mubar}{\overline{\mu}}
\def\wotimes {\widehat{\otimes}}
\def\sl{\mathfrak{sl}}
\def\fz{\mathfrak{z}}
\def\fg{\mathfrak{g}}
\def\fgl{\mathfrak{gl}}
\def\der{\mathrm{der}}
\newcommand{\univ}{{\operatorname{univ}}}
\newcommand{\To}{\longrightarrow}
\newcommand{\isoto}{\stackrel{\sim}{\To}}
\newcommand{\Rep}{\operatorname{Rep}}
\newcommand{\Isom}{\operatorname{Isom}}
\newcommand{\Kbar}{\overline{K}}
\newtheorem{theorem}[subsubsection]{Theorem}
\newtheorem{thm}[subsubsection]{Theorem}
\newtheorem{lemma}[subsubsection]{Lemma}
 \newtheorem{ithm}{Theorem}
\newtheorem{lem}[subsubsection]{Lemma}
\newtheorem{cor}[subsubsection]{Corollary}
\newtheorem{prop}[subsubsection]{Proposition}
\theoremstyle{definition}
\newtheorem{defn}[subsubsection]{Definition}
\theoremstyle{remark}
\newtheorem{remark}[subsubsection]{Remark}
\newtheorem{rem}[subsubsection]{Remark}
\def\numequation{\addtocounter{subsubsection}{1}\begin{equation}}
\def\anummultline{\addtocounter{subsection}{1}\begin{multline}}
\newif\iffinalrun
  \newcommand{\need}[1]{}
  \newcommand{\mar}[1]{}
  \newcommand{\need}[1]{{\tiny *** #1}}
  \newcommand{\mar}[1]{\marginpar{\raggedright\tiny squirrel #1}}
\newcommand{\A}{\AA}
\newcommand{\F}{\FF}
\newcommand{\Q}{\QQ}
\newcommand{\Z}{\ZZ}
\newcommand{\m}{\frakm}
\renewcommand{\AA}{{\mathbb A}}
\newcommand{\FF}{{\mathbb F}}
\newcommand{\GG}{{\mathbb G}}
\newcommand{\QQ}{{\mathbb Q}}
\newcommand{\ZZ}{{\mathbb Z}}
\newcommand{\bv}{\ensuremath{\mathbf{v}}}
\newcommand{\cC}{{\mathcal C}}
\newcommand{\cG}{{\mathcal G}}
\newcommand{\cO}{{\mathcal O}}
\newcommand{\frakm}{\mathfrak{m}}
\newcommand{\Fbar}{\overline{\F}}
\newcommand{\Qbar}{\overline{\Q}}
\newcommand{\Fpbar}{\Fbar_p}
\newcommand{\Flbar}{\Fbar_{\ell}}
\newcommand{\Zl}{\Z_{\ell}}
\newcommand{\Ql}{\Q_{\ell}}
\newcommand{\Qp}{\Q_p}
\newcommand{\Qlbar}{\Qbar_{\ell}}
\DeclareMathOperator{\Aut}{Aut}
\DeclareMathOperator{\End}{End}
\DeclareMathOperator{\Fil}{Fil}
\DeclareMathOperator{\gr}{gr}
\DeclareMathOperator{\Gal}{Gal}
\DeclareMathOperator{\GL}{GL}
\DeclareMathOperator{\Hom}{Hom}
\DeclareMathOperator{\im}{im}
\DeclareMathOperator{\Mod}{Mod}
\DeclareMathOperator{\Proj}{Proj}
\DeclareMathOperator{\Vect}{Vec}
\def\sl{\mathfrak{sl}}
\DeclareMathOperator{\Spec}{Spec}
\DeclareMathOperator{\Spf}{Spf}
\DeclareMathOperator{\WD}{WD}
\newcommand{\ab}{\mathrm{ab}}
\newcommand{\st}{\mathrm{st}}
\newcommand{\rhobar}{\overline{\rho}}
\newcommand{\into}{\hookrightarrow}
\newcommand{\Gm}{\GG_m}
\newcommand{\rbar}{\overline{r}}
\newcommand{\sbar}{\overline{s}}
\newcommand{\Res}{\operatorname{Res}}
\begin{document}
\title[$G$-valued local deformation rings and global lifts]{$G$-valued
  local deformation rings and global lifts}

\author[R. Bellovin]{Rebecca Bellovin} \email{r.bellovin@imperial.ac.uk} \address{Department of
  Mathematics, Imperial College London,
  London SW7 2AZ, UK}

\author[T. Gee]{Toby Gee} \email{toby.gee@imperial.ac.uk} \address{Department of
  Mathematics, Imperial College London,
  London SW7 2AZ, UK}

\thanks{The second author was
  supported in part by a Leverhulme Prize, EPSRC grant EP/L025485/1, Marie Curie Career
  Integration Grant 303605, by
  ERC Starting Grant 306326, and by a Royal Society Wolfson Research Merit Award.}

\maketitle
\begin{abstract}We study $G$-valued Galois deformation rings with
  prescribed properties, where
  $G$ is an arbitrary (not necessarily connected) reductive group over an
  extension of~$\Zl$ for some prime~$l$. In particular, for the Galois
  groups of $p$-adic local fields (with $p$ possibly equal to~$l$) we prove
  that these rings are generically regular, compute their
  dimensions, and show that functorial operations on Galois
  representations give rise to well-defined maps between the sets of irreducible
  components of the corresponding deformation rings. We use these
  local results to prove  lower bounds on the dimension of 
  global deformation rings with
  prescribed local properties. Applying our results to unitary groups, we improve results in
  the literature on
  the existence of lifts of mod~$l$ Galois representations,  and on the weight part of Serre's conjecture.
\end{abstract}
\setcounter{tocdepth}{1}
\tableofcontents
\section{Introduction}\label{sec: introduction}The study of Galois
deformation rings was initiated in~\cite{MR1012172}, and was crucial
to the proof of Fermat's Last Theorem in~\cite{MR1333035}, and in
particular to the modularity lifting theorems proved
in~\cite{MR1333035,MR1333036}. Many generalisations of these
modularity lifting theorems have been proved over the last 25 years,
and it has become increasingly important to consider Galois
representations valued in reductive groups other than~$\GL_n$. From
the point of view of the Langlands program, it is particularly
important to be able to use disconnected groups, as the $L$-groups of
non-split groups are always disconnected. In particular, it is
important to study the structure of local deformation rings for
general reductive groups, and to prove lifting results for global
deformation rings. We briefly review the history of such results in
Section~\ref{subsec: historical overview}, but we firstly explain the
main theorems of this paper.

We begin with a result about local deformation rings. Let $K/\Qp$ be
a finite extension, let~$\cO$ be the ring of integers in a finite
extension $E$ of~$\Ql$ with residue field $\F$, where $l$ is possibly equal to~$p$, and let $G$ be
a (not necessarily connected) reductive group over~$\cO$. Given a
representation $\rhobar:\Gal_K\to G(\F)$, we consider liftings
of~$\rhobar$ of some inertial type~$\tau$, and in the case~$l=p$, some
$p$-adic Hodge type~$\mathbf{v}$. There is a corresponding universal
framed deformation ring~ $R_{\rhobar}^{\square,\tau,\mathbf{v}}$, and
we prove the following result (as well as a variant for ``fixed
determinant $\psi$'' deformations).

\begin{ithm}[Thm.\ \ref{thm: dense set of very smooth points}] \label{thm:
    dimensions etc l equals p intro verison}
  Fix an inertial type~$\tau$, and if~$l=p$ then fix a $p$-adic Hodge
  type~$\mathbf{v}$. Then $R_{\rhobar}^{\square,\tau,\mathbf{v}}[1/l]$
  is generically regular. In addition, $R_{\rhobar}^{\square,\tau,\mathbf{v}}$ is equidimensional of
  dimension~$1+\dim_E G+\delta_{l=p}\dim_E(\Res_{E\otimes K/E}G)/P_{\mathbf{v}}$, and $R_{\rhobar}^{\square,\tau,\mathbf{v},\psi}$ is equidimensional of
  dimension~$1+\dim_E \Gder + \delta_{l=p}\dim_E(\Res_{E\otimes K/E}G)/P_{\mathbf{v}}$.
\end{ithm}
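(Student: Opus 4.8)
The plan is to reduce the statement to a local computation in Galois cohomology, following the strategy that is by now standard for $\GL_n$ (as in Kisin's work) but carried out $G$-equivariantly. First I would set up the framed deformation functor $D_{\rhobar}^{\square}$ with no conditions: this is pro-representable by a complete local $\cO$-algebra, and its tangent space and obstruction theory are governed by $Z^1(\Gal_K, \mathfrak{g})$ and $H^2(\Gal_K, \mathfrak{g})$, where $\mathfrak{g} = \Lie G$ with the adjoint $\Gal_K$-action via $\rhobar$. The local Euler characteristic formula and local Tannakian duality give $\dim_{\F} H^1 - \dim_{\F} H^2 - \dim_{\F} H^0 = -[K:\Qp]\dim \mathfrak{g}\cdot\delta_{l=p}$ (with the analogous statement when $l\neq p$ giving $0$), and combining this with $\dim Z^1 = \dim H^1 + \dim \mathfrak{g} - \dim H^0$ shows that the unrestricted framed deformation ring has a presentation over $\cO$ in $\dim Z^1$ variables modulo $\dim H^2$ relations, hence every irreducible component has dimension at least $1 + \dim_{\F}\mathfrak{g} + \delta_{l=p}[K:\Qp]\dim_{\F}\mathfrak{g}$. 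One then passes to the type-and-Hodge-type quotient $R_{\rhobar}^{\square,\tau,\mathbf{v}}$: by the construction of these quotients (Kisin's resolution in the $l=p$ case via the moduli space of Breuil--Kisin modules or its $G$-analogue, and a simpler argument via Weil--Deligne types when $l\neq p$), the effect on the dimension is to replace the ``$\delta_{l=p}[K:\Qp]\dim G$'' contribution by $\delta_{l=p}\dim_E(\Res_{E\otimes K/E}G)/P_{\mathbf{v}}$, the dimension of the appropriate partial flag variety; this is where the bulk of the technical work lies, and it should be imported essentially wholesale from the body of the paper (the cited Theorem~\ref{thm: dense set of very smooth points}).

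For the exact dimension and equidimensionality, the key input is the first assertion of the theorem, that $R_{\rhobar}^{\square,\tau,\mathbf{v}}[1/l]$ is generically regular, together with the fact that it is $\cO$-flat and that its generic fibre is equidimensional of the expected dimension. Generic regularity gives that the generic fibre is generically smooth of the stated dimension; flatness over $\cO$ then promotes this to the integral statement, and the lower bound from the obstruction-theoretic presentation matches the upper bound coming from a smooth point, forcing equidimensionality. So the structure is: lower bound from the presentation, upper bound plus equidimensionality from the existence of a regular point on each component (which is the content of the generic regularity half of the theorem, proved earlier).

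For the fixed-determinant variant $R_{\rhobar}^{\square,\tau,\mathbf{v},\psi}$, I would run the identical argument with $\mathfrak{g}$ replaced by $\mathfrak{g}^{\mathrm{der}} = \Lie \Gder$ throughout. The point is that fixing the multiplier/determinant $\psi$ cuts the deformation problem down so that the relevant Lie algebra is $\mathfrak{g}^{\mathrm{der}}$ (or more precisely the kernel of $\mathfrak{g}\to\Lie\Gab$), and one must check that the determinant map $R_{\rhobar}^{\square,\tau,\mathbf{v}}\to R_{\psi}$ is formally smooth of the expected relative dimension $\dim_E G - \dim_E\Gder = \dim_E\Gab$ on the generic fibre, so that the dimension count for the fixed-determinant ring follows by subtracting $\dim_E\Gab$ from the dimension in the first part; the Hodge-type contribution is unchanged because the flag variety $(\Res_{E\otimes K/E}G)/P_{\mathbf{v}}$ depends only on $\Gder$ up to the abelian factor's contribution, which is trivial. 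The main obstacle I anticipate is not the dimension bookkeeping but ensuring that all the standard $\GL_n$ machinery—local Tate duality, the Euler characteristic formula, and above all the geometry of the type/Hodge-type quotients—has been established $G$-equivariantly for possibly disconnected $G$; but since the excerpt's Theorem~\ref{thm: dense set of very smooth points} is precisely the internal result being quoted, I would treat that machinery as available and present the argument above as a formal deduction from it.
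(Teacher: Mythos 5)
There is a genuine gap: your argument is circular. The statement you are asked to prove \emph{is} Theorem~\ref{thm: dense set of very smooth points} (the introduction merely restates it), and at both crucial junctures --- the passage from the unrestricted framed deformation ring to $R_{\rhobar}^{\square,\tau,\mathbf{v}}$, and the ``generic regularity plus equidimensionality of the generic fibre'' input used for the upper bound --- you propose to ``import wholesale'' or ``treat as available'' exactly that theorem. Nothing in the proposal supplies the actual content: the paper proves the result by constructing the moduli stack $G-\WD_E(L/K)$ of $G$-valued Weil--Deligne representations (via the scheme $Y_{L/K,\varphi,\mathcal N}$), giving it a tangent--obstruction theory through the complex $C^\bullet(D)$, exhibiting a dense open locus where $H^2(\ad D)=0$ by explicitly building smooth points from cocharacters associated to the nilpotent $N$ (Jacobson--Morozov/$\sl_2$-theory, together with finite-order twists to reach every component of the fibres over $Y_{L/K,\mathcal N}$), proving $Y_{L/K,\varphi,\mathcal N}$ is reduced, lci and equidimensional of dimension $\dim G$, and then showing the morphism $\Spec R_{\rhobar}^{\square,\tau,\mathbf{v}}[1/l]\to G-\WD_E(L/K)$ is formally smooth on completions and flat, so that regularity, equidimensionality and the dimension are inherited; in the case $l=p$ the extra term $\dim(\Res_{E\otimes K/E}G)/P_{\mathbf{v}}$ comes from the filtration-lifting statement (Corollary~\ref{cor:forgetting filtrations is smooth}) applied to the fibres of the forgetful map from filtered to unfiltered $(\varphi,N,\Gal_{L/K})$-modules. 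None of these steps appears in your outline, and they are where the theorem lives, especially since $G$ may be disconnected and $\tau$ arbitrary.

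Even apart from the circularity, the skeleton you propose does not assemble into a proof. The obstruction-theoretic presentation over $\cO$ (with $\dim Z^1$ variables and $\dim H^2(\Gal_K,\fg)$ relations) gives a lower bound only for the \emph{unrestricted} framed deformation ring; it does not pass to the quotient $R_{\rhobar}^{\square,\tau,\mathbf{v}}$, whose components are cut out by the type and Hodge-type conditions and are in general of strictly smaller dimension than the bound $1+\dim\fg+\delta_{l=p}[K:\Qp]\dim\fg$ (note $\dim(\Res_{E\otimes K/E}G)/P_{\mathbf{v}}\le[K:\Qp]\dim G/B$). So the ``lower bound from the presentation, upper bound from a regular point'' mechanism has no independent lower bound to contribute, and equidimensionality cannot be obtained by matching the two. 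In the paper, the dimension count is not made via Galois cohomology of $\Gal_K$ at all, but via the Euler-characteristic-zero complex on the Weil--Deligne side (so the tangent space at a smooth closed point of $Y_{L/K,\varphi,\mathcal N}$ has dimension exactly $\dim G$), with equidimensionality coming from the lci/étale-local product structure of $Y_{L/K,\varphi,\mathcal N}$ and the flat, formally smooth map from the deformation ring; the fixed-determinant case is then handled by the same argument with $\Gder$, not by a separate formal-smoothness claim about a determinant map. To make your proposal into a proof you would have to establish precisely the moduli-theoretic machinery you set aside.
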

(We are abusing notation here; $P_{\mathbf{v}}$ is a $(\Res_{E\otimes
  K/E}G)_{\overline E}^\circ$-conjugacy class of parabolic subgroups
of $\Res_{E\otimes K/E}G$, and we choose a representative defined over
$E$ to compute the dimension of the quotient.) We are also able to
describe the regular locus
of~$R_{\rhobar}^{\square,\tau,\mathbf{v}}[1/l]$ precisely in terms of
the corresponding Weil--Deligne representations; see
Corollary~\ref{cor: smooth points given by WD rep condition}. In the
case that~$G=\GL_n$ and $l=p$ this is a theorem of
Kisin~\cite{MR2373358}, and results for general groups (but with more
restrictive hypotheses than those of Theorem~\ref{thm:
    dimensions etc l equals p intro verison}) were previously proved
  by Balaji~\cite{MR3152673} and R.B.\ ~\cite{2014arXiv1403.1411B}.

Combining Theorem~\ref{thm:
    dimensions etc l equals p intro verison} with results of Balaji~\cite{MR3152673}, we obtain the following result (see Section~\ref{sec: global deformation
  rings} for any unfamiliar notation or terminology --- in particular,
$\fg_\F^0$ denotes the $\F$-points of the Lie algebra of the derived
subgroup of~$G$); in the case of
potentially crystalline representations, this is the main result of~\cite{MR3152673}.

\begin{ithm}[Prop. \ref{prop: global deformation ring with types is positive
    dimensional}] \label{thm: intro global Krull}
Let $F$ be totally real, assume that~$l>2$, let~$S$ be a finite set of
places of~$F$  containing all places dividing~$l\infty$, and let
$\rhobar:\Gal_{F,S}\to G(\Flbar)$ be a representation admitting a
universal deformation ring. Fix inertial types at all places $v\in S$,
and Hodge types at all places $v|l$, in such a way that the
corresponding local deformation rings are nonzero, and let~$R^\univ$ denote the
corresponding fixed determinant universal deformation ring for~$\rhobar$.

Assume that~$\rhobar$ is odd, and that
$H^0(\Gal_{F,S},(\fg_\F^0)^*(1))=0$.  Suppose also that for each
place~$v|l$ the corresponding Hodge type is regular.  Then
~$R^\univ$ 
has Krull dimension at least one.
\end{ithm}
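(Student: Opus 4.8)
The plan is to bound the Krull dimension of $R^\univ$ below by exhibiting it as a quotient of a power series ring by an explicit number of relations, and then showing that the number of relations is strictly smaller than the number of variables plus the contributions from the local deformation rings. Concretely, by the usual Galois cohomology formalism (as in~\cite{MR1333035} and its generalisations to general~$G$), the fixed-determinant global deformation ring $R^\univ$ can be presented as a quotient of a power series ring over~$\cO$ in $h^1_{\cS}$ variables by $h^2_{\cS}$ relations, where $h^i_\cS$ are the dimensions of certain Selmer and dual-Selmer groups computing the tangent space and obstruction space of the deformation problem with the prescribed local conditions; more precisely, using the local deformation rings $R_v$ at places $v \in S$ as the local conditions, one gets a presentation with $\dim R^\univ \geq \sum_{v \in S} \dim R_v - (|S|-1)(\dim_\F \fg^0_\F) - \dim_\F H^0 + \dim_\F H^1_{\cL^\perp}$-type correction terms. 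The precise bookkeeping is exactly the Wiles/Greenberg--Wiles formula, and the key inputs are (i) the dimensions of the local deformation rings, which are supplied by Theorem~\ref{thm: dimensions etc l equals p intro verison}, and (ii) the oddness hypothesis, which controls the contribution of the infinite places.

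First I would recall (or cite from Section~\ref{sec: global deformation rings}, following Balaji~\cite{MR3152673}) the presentation of $R^\univ$ as $\cO\llbracket x_1,\dots,x_g\rrbracket/(f_1,\dots,f_r)$, and the resulting inequality
\[
\dim R^\univ \ \geq\ 1 + \sum_{v\in S}\bigl(\dim R_v^{\square,\ldots} - \dim_E \fg_E^0\bigr) \;-\;\dim_\F H^0(\Gal_{F,S}, (\fg_\F^0)^*(1)) \;+\;(\text{local terms at }\infty),
\]
or whatever the exact normalisation is in the paper's conventions — the point being that it is a sum of local dimension contributions minus a global Euler-characteristic correction minus the dual Selmer term. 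Then I would substitute the values from Theorem~\ref{thm: dimensions etc l equals p intro verison}: at each finite place $v \nmid l$ the local framed deformation ring has dimension $1 + \dim_E\Gder$ (fixed determinant), at each $v \mid l$ it has dimension $1 + \dim_E \Gder + \dim_E(\Res_{E\otimes K_v/E}G)/P_{\mathbf{v}_v}$, and the Hodge type being regular is what makes the parabolic $P_{\mathbf{v}_v}$ a Borel, so that the term $\dim_E(\Res G)/P$ equals the dimension of the flag variety, matching the ``right'' global contribution from Hodge--Tate weights.

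The main obstacle — and the place where the hypotheses are really used — is showing that the global Euler characteristic contributions, together with the archimedean terms, do not overwhelm the local dimension count. This is where I would invoke the Greenberg--Wiles formula for the Euler characteristic of the Selmer complex: the global $H^0$ vanishes because $\rhobar$ admits a universal deformation ring (so the centraliser is as small as possible, up to the center accounted for by the fixed determinant) and because $H^0(\Gal_{F,S},(\fg_\F^0)^*(1)) = 0$ is assumed; the dual Selmer group $H^1_{\cL^\perp}$ only helps (it is added, not subtracted, or it is bounded using the assumption and Poitou--Tate); and crucially the oddness of $\rhobar$ forces the local term at each infinite place to be exactly $\dim_E \fg^0_E - \dim (\fg^0)^{c_v=1}$ with the ``$+$'' sign one needs — for $l > 2$ each complex conjugation $c_v$ acts with trace making $\dim (\fg^0)^{c_v = -1}$ take the value $\dim \fn^0$ (half of $\dim \fg^0/\fh^0$ plus nothing, by oddness), so the archimedean defect is $[F:\Q]\dim\fn^0$, which is precisely cancelled by the sum over $v \mid l$ of the flag-variety dimensions $\dim(\Res_{E\otimes K_v/E}G)/B = \sum_{v\mid l}[K_v:\Qp]\dim\fn^0 = [F:\Q]\dim\fn^0$ once the Hodge types are regular. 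Putting the inequality together, all the ``large'' terms cancel or contribute nonnegatively, and one is left with $\dim R^\univ \geq 1$. I would organise the final computation as a single display tracking each place's contribution, being careful about the distinction between framed and unframed deformation rings (the $|S| \cdot \dim_E\Gder$ versus $(|S|-1)\cdot$ discrepancy, i.e. the ``$+1$'' in the unframed-to-framed comparison, which ultimately produces the ``$\geq 1$'' rather than ``$\geq 0$'').
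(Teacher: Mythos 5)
Your proposal follows essentially the same route as the paper: invoke the Balaji-style presentation of the global fixed-determinant framed deformation ring over the completed tensor product of the local rings (Proposition~\ref{prop: presentation of global over local Balaji}), feed in the local dimensions from Theorem~\ref{thm: dense set of very smooth points}, and observe that regularity of the Hodge types at $v\mid l$ produces $\sum_{v\mid l}[F_v:\Qp]\dim G/B$, which oddness makes exactly cancel the archimedean terms $\sum_{v\mid\infty}\dim H^0(\Gal_{F_v},\fg^0_\F)$, leaving Krull dimension at least $1$. The only small slip is in the attribution of the final ``$+1$'': in the paper's bookkeeping it comes from the single shared $\cO$-dimension of the ($\cO$-flat) completed tensor product $\wotimes_{v}\Rbar_v$ (each $\Rbar_v$ having dimension $1+\dim\Gder+\cdots$), rather than from the framed-versus-unframed comparison, but this does not affect the argument.
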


We use this result to improve on some results about automorphic forms
on unitary groups proved using the methods of~\cite{BLGGT}. 
  Beginning with
the paper~\cite{CHT}, Galois deformations were considered for
representations valued in a certain disconnected group~$\cG_n$, whose
connected component is~$\GL_n\times\GL_1$ (this group is related to
the $L$-group of a unitary group, see~\cite[\S8]{MR3444225}).
In the case that~$G=\cG_n$, Theorem~\ref{thm: intro global Krull} generalises~\cite[Prop.\
1.5.1]{BLGGT}, removing restrictions on the places in~$S$ (which were
chosen to split in the splitting field of the corresponding unitary
group, in order to reduce the local deformation theory to the~$\GL_n$
case).

 We deduce
corresponding improvements to a number of results proved using the
methods of~\cite{BLGGT}, such as 
%
%
the following general result about Serre weights for
rank two unitary groups, which removes a ``split ramification''
hypothesis on the ramification of~$\rbar$ at places away from~$l$.

\begin{ithm}[Theorem~\ref{thm: weight part of Serre for U2}]\label{thm: weight part of Serre for U2 intro version}
  Let~$F$ be an imaginary CM field with maximal totally real
  subfield~$F^+$, and suppose that~$F/F^+$ is unramified at all finite
  places, that each place of~$F^+$ above~$l$ splits in~$F$, and
  that~$[F^+:\Q]$ is even. Suppose that~$l$ is odd, that
  $\rbar:G_{F^+}\to\cG_2(\Flbar)$ is irreducible and modular, and
  that~$\rbar(G_{F(\zeta_l)})$ is adequate.

Then the set of Serre weights for which~$\rbar$ is modular is exactly
the set of weights given by the sets~$W(\rbar|_{G_{F_v}})$, $v|l$.
  \end{ithm}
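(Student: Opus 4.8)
The plan is to deduce Theorem~\ref{thm: weight part of Serre for U2 intro version} from the machinery of~\cite{BLGGT} and its descendants, exactly as in the proof of the corresponding result under a split ramification hypothesis, but feeding in Theorem~\ref{thm: intro global Krull} (equivalently Proposition~\ref{prop: global deformation ring with types is positive dimensional}) in place of~\cite[Prop.\ 1.5.1]{BLGGT} wherever a positive-dimensionality statement for a global deformation ring with prescribed local conditions is used. First I would recall that the weight part of Serre's conjecture in this setting asserts two inclusions: that every weight in $\bigcup_{v|l} W(\rbar|_{G_{F_v}})$ is modular for $\rbar$ (\emph{weight existence}), and that every modular weight of $\rbar$ lies in this set (\emph{weight elimination}). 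Weight elimination is the ``easy'' direction here and is already known without any split ramification hypothesis — it follows from local-global compatibility together with the known description of the possible local behaviours at $v|l$ of the reductions of crystalline representations of the relevant Hodge type, so I would simply cite the existing results (the rank-two case goes back to work of Gee and collaborators on Serre weight conjectures for unitary groups). The content is therefore weight existence.

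**Weight existence via lifting.**
For weight existence, the standard strategy is: given a Serre weight $a$ lying in $W(\rbar|_{G_{F_v}})$ for all $v|l$, one wants to produce an automorphic lift of $\rbar$ that is crystalline of the corresponding Hodge type at each place above $l$ (and of some chosen inertial type at the bad places away from $l$), and then invoke an automorphy lifting theorem together with the adequacy hypothesis on $\rbar(G_{F(\zeta_l)})$ to conclude that $\rbar$ itself is modular of weight $a$. To build such a lift, one passes to the unitary group side via~$\cG_2$, fixes inertial types $\tau_v$ at the finite places $v \in S$ with $v \nmid l$ — chosen so that the local lifting rings $R_{\rhobar|_{G_{F_v}}}^{\square,\tau_v}$ are nonzero, which is possible since one may always take $\tau_v$ to be an inertial type actually realised by some lift of $\rhobar|_{G_{F_v}}$ (here one uses that $G$-valued local lifting rings are nonzero, which is part of the local theory behind Theorem~\ref{thm: dimensions etc l equals p intro verison}) — and fixes at each $v|l$ the regular Hodge type associated to $a$, together with an inertial type witnessing membership of $a$ in $W(\rbar|_{G_{F_v}})$. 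Then Theorem~\ref{thm: intro global Krull} applies: the hypotheses $l$ odd, $F^+$ totally real, all chosen local deformation rings nonzero, all Hodge types regular, $\rbar$ odd, and $H^0(\Gal_{F^+,S},(\fg_\F^0)^\ast(1)) = 0$ (which follows from adequacy of $\rbar(G_{F(\zeta_l)})$ in rank two) give that the fixed-determinant global deformation ring $R^{\univ}$ has Krull dimension at least one, hence admits an $\Qbar_l$-point, i.e.\ a characteristic-zero lift $\rho$ of $\rbar$ with the prescribed local properties. A potential automorphy theorem (of the type proved in~\cite{BLGGT}, using the adequacy hypothesis and the modularity of $\rbar$ to get off the ground) then shows $\rho$ is automorphic, and solvable base change plus the description of the local lifting rings at $v|l$ converts this into the statement that $a$ is a modular weight for $\rbar$.

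**Where the split ramification hypothesis was being used, and the main obstacle.**
The reason~\cite{BLGGT} imposed that the places of $S$ away from $l$ split in the CM field $F/F^+$ is precisely that this allowed the local deformation theory at those places to be reduced to $\GL_n$, where positive-dimensionality results for local lifting rings and the compatibility of their irreducible components with functorial operations were available; the entire point of the local results of this paper (Theorem~\ref{thm: dimensions etc l equals p intro verison} and Corollary~\ref{cor: smooth points given by WD rep condition}, together with the irreducible-component statements mentioned in the introduction) is to make these inputs available for the disconnected group $\cG_2$ directly, at arbitrary places. So the work is essentially bookkeeping: I would go through the proof of~\cite[Prop.\ 1.5.1]{BLGGT} and the Serre weight arguments built on it, check that every invocation of $\GL_n$-specific local input can be replaced by the corresponding $\cG_n$-statement proved here, and verify that no other use is made of the splitting condition. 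The main obstacle I anticipate is this verification at the non-split bad places: one must confirm that the local lifting rings $R_{\rhobar|_{G_{F_v}}}^{\square,\tau_v}$ for $v$ inert or ramified in $F/F^+$ are genuinely nonzero for a suitable choice of $\tau_v$ (so that the global ring is nonzero and Theorem~\ref{thm: intro global Krull} is not vacuous), and that the automorphy lifting theorem invoked at the end does not secretly require split ramification in its own local hypotheses at those places — both of which should follow from the local results of this paper but require care to state correctly for $\cG_2$. Once that is in place, the global argument proceeds verbatim as in~\cite{BLGGT}.
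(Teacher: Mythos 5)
Your overall route is the same as the paper's: keep the Serre-weight argument of~\cite{MR3072811} and remove the split ramification hypothesis by supplying a $\cG_2$-valued lifting theorem at arbitrary bad places, powered by the global dimension bound (Proposition~\ref{prop: global deformation ring with types is positive dimensional}), and quote known results for weight elimination. However, there is a genuine gap at the central step of your weight-existence argument: the inference ``$R^{\univ}$ has Krull dimension at least one, hence admits a $\Qbar_l$-point'' is not valid. A complete local noetherian $\cO$-algebra of dimension at least one can be entirely $l$-torsion (e.g.\ $\F[\![x]\!]$), so the dimension bound by itself produces no characteristic-zero lift. What is needed in addition is that $R^{\univ}$ is a \emph{finite} $\cO$-algebra, and this is exactly where the automorphic input must enter \emph{before} the point is produced, not afterwards as in your outline. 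In the paper this is the content of Theorem~\ref{diaglift}: one base-changes to the auxiliary CM field $F'$ over which all places above $S$ split, so that the split-ramification lifting theorem \cite[Thm.\ A.4.1]{MR3072811} applies to give an automorphic $\cO$-point of $R_{F'}^{\univ}$; one then uses \cite[Lem.\ 1.2.3(1)]{BLGGT} to see that $R^{\univ}$ is finite over $R_{F'}^{\univ}$ and the automorphy lifting theorem \cite[Thm.\ 2.3.2]{BLGGT} to see that $R_{F'}^{\univ}$ is finite over $\cO$; only then does Corollary~\ref{cor: global deformation ring is positive dimensional, unitary case} yield a $\Qbar_l$-point, which is moreover automorphic after restriction to $F'$. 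Your proposal invokes potential automorphy only after the lift exists, so as written the construction of the lift does not go through; the ingredients you list (modularity of $\rbar$, adequacy, passage to $F'$) are the right ones but must be assembled into this finiteness statement first.

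Two smaller points. First, at the bad places $v\nmid l$ the local components are not chosen abstractly as ``some type realised by a lift'': in the argument of \cite[Thm.\ 3.1.3, 5.1.3]{MR3072811} they are dictated by an automorphic lift furnished by the modularity of $\rbar$, which in particular guarantees the nonvanishing of the chosen components without a separate existence argument. Second, for weight elimination the existing statements (\cite[Cor.\ 4.1.8]{MR3072811} combined with \cite[Thm.\ 6.1.8]{MR3324938}) have split ramification built into the definition of ``modular of a Serre weight'', so one must, as the paper does, first check that the results of \cite[\S 2]{MR3072811} go through with the relaxed notion of level away from $l$; simply ``citing the existing results'' elides this (routine but necessary) verification.
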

(See Remark~\ref{rem: non quasi split groups} for a discussion of
further improvements to this result that could be made by techniques
orthogonal to those of this paper.) 
These results are also crucially applied in the forthcoming paper~\cite{CEGglobalrealisable},
where they are used to construct lifts of representations valued
in~$\cG_n$ which have prescribed ramification at certain inert
places.

\subsection{A brief historical overview}\label{subsec: historical overview}We
now give a very brief overview of some of the developments in the
deformation theory of Galois representations, which was introduced for
representations valued in~$\GL_n$ by Mazur in the
paper~\cite{MR1012172}; we apologise for the many important papers
that we do not discuss here for reasons of space. The abstract parts
of this deformation theory were generalised to arbitrary reductive
groups in~\cite{MR1643682}. However, for applications to the Langlands
program (and in particular to proving automorphy lifting theorems),
one needs to study conditions on Galois deformations coming from
$p$-adic Hodge theory.

This was initially done in a somewhat ad-hoc fashion, mostly for the
group~$\GL_2$ and mostly for conditions coming from $p$-divisible
groups, culminating in the paper~\cite{MR1839918}, which used a
detailed study of some particular such deformation rings to complete
the proof of the Taniyama--Shimura--Weil conjecture. This situation
changed with the paper~\cite{MR2373358}, which proved the existence of
local deformation rings for~$\GL_n$ corresponding to general $p$-adic
Hodge theoretic conditions (namely being potentially crystalline or
semi-stable of a given inertial type), and determined the structure of
their generic fibres, in particular showing that they are generically
regular, and computing their dimensions.

The results of ~\cite{MR2373358} were generalised in~\cite{MR3152673}
to the case of general reductive groups~$G$ under the hypothesis of
being potentially crystalline, and in~\cite{2014arXiv1403.1411B} to
the case that~$G$ is connected, and the inertial type is totally
ramified. In the potentially crystalline case the generic fibres of
the deformation rings can easily be shown to be regular, whereas in the
potentially semistable case, one has to gain some control of the
singularities, which is why there are additional restrictions in the
theorems of~\cite{2014arXiv1403.1411B}. Our Theorem~\ref{thm:
  dimensions etc l equals p intro verison} is a common generalisation
of these results to the case that~$G$ is possibly disconnected, and
the representation is potentially semistable with no condition on the
inertial type. (We also simultaneously handle the case that $p\ne l$.)

Another important application of Galois deformation theory to the
Langlands program is to prove results showing that mod~$l$
representations of the Galois groups of number fields admit lifts to
characteristic zero with prescribed local properties; for example,
such results were an important part of Khare--Wintenberger's proof of
Serre's conjecture. The first such results were proved by Ramakrishna
for~$\GL_2$ \cite{MR1935843}, and this method has now been generalised to
a wide class of reductive groups; see in particular~\cite{MR3529115},~\cite{2016arXiv161204237B}, and~\cite{2018arXiv180710743B}. However, it has two disadvantages: it
loses control of the local properties at a finite set of places, and
it only applies in cases where formally smooth deformation rings exist.

A different approach was found in the paper~\cite{MR2480604}, which
observed that in conjunction with the theory of potential modularity,
such lifting results can be deduced from a lower bound on the Krull
dimension of a global deformation ring, which was provided by the
results of~\cite{MR1679172}. In the paper~\cite{MR2459302}, Kisin
improved on the results of~\cite{MR1679172}, proving a result about
presentations of global deformation rings over local ones for~$\GL_n$,
and deducing a lower bound on the dimensions of global deformation
rings. These results were generalised to general reductive groups by
Balaji~\cite{MR3152673}, and given our Theorem~\ref{thm: dimensions
  etc l equals p intro verison}, results such as Theorem~\ref{thm:
  intro global Krull} are essentially immediate from Balaji's.

Finally, the paper~\cite{2017arXiv170807434} (independently and
contemporaneously) proved similar results to those of this paper in
the case~$l\ne p$ by a related but different method; rather than
constructing a large enough supply of unobstructed points, as in this
paper, they instead show that all points can be path connected to
unobstructed points. We refer to the introduction
to~\cite{2017arXiv170807434} for a fuller discussion of the difference
between the approaches.

\subsection{Some details}\label{subsec: some details}
We now explain our local results (and their proofs) in more detail. Theorem~\ref{thm:
    dimensions etc l equals p intro verison} is a generalisation of~\cite[Thm.\ 3.3.4]{MR2373358}, which proves the result
in the case $l=p$ and~$G=\GL_n$. It was previously adapted to the
(much easier)
case $G=\GL_n$ and $l\ne p$ in~\cite{MR2785764} by using Weil--Deligne
representations in place of the filtered $(\varphi,N)$-modules
employed in~\cite{MR2373358}. It was also generalised
in~\cite{2014arXiv1403.1411B} to the case
that~$G$ is connected, $l=p$, and~$\tau$ is totally ramified. Our approach is in some sense a
synthesis of the approaches of~\cite{MR2785764,2014arXiv1403.1411B},
in that we treat the cases $l\ne p$ and $l=p$ essentially
simultaneously, by using Weil--Deligne representations.

We briefly explain our approach, which in broad outline follows that
of~\cite{MR2373358}. It is relatively straightforward (by passing from
Galois representations to Weil--Deligne representations using
Fontaine's constructions in the case $l=p$, and Grothendieck's
monodromy theorem if $l\ne p$) to reduce Theorem~\ref{thm: dimensions
  etc l equals p intro verison} to analogous statements about moduli
spaces of Weil--Deligne representations over $l$-adic fields. These
moduli spaces admit an explicit tangent-obstruction theory given by an
analogue of Herr's complex computing Galois cohomology in terms of
$(\varphi,\Gamma)$-modules, and the key problem is to prove that the
$H^2$ of this complex generically vanishes. We can think of this~$H^2$
as a coherent sheaf over the moduli space, so by considering its
support, we can reduce to the problem of exhibiting sufficiently many
points at which the~$H^2$ vanishes (which turn out to be precisely the
regular points, which in a standard abuse of terminology we refer to
as ``smooth points'').

Our approach to exhibiting these points is related to that taken
in~\cite{2014arXiv1403.1411B}, in that it makes use of the theory of
associated cocharacters (see Section~\ref{subsec: very
  smooth points}), but it is more streamlined and conceptual (for
example, we do not need to  consider the case $N=0$ separately, as was
done in~\cite{2014arXiv1403.1411B}). Surprisingly (at least to us), it
is possible to construct all the smooth points that we need by
considering the single Weil--Deligne representation
$W_K\to\SL_2(\Qlbar)$ which is trivial on $I_K$, takes an arithmetic
Frobenius element of~$W_K$ to \[
  \begin{pmatrix}
    q^{1/2}&0\\0&q^{-1/2}
  \end{pmatrix}
\]where~$q$ is the order of the residue field of~$K$, and has \[N=
  \begin{pmatrix}
    0&1\\0&0
  \end{pmatrix}.\] 
It is easy to check
that this gives a smooth point of the moduli space of Weil--Deligne representations (while the point with the same
representation of~$W_K$ but with $N=0$ is not smooth). 

Returning to the case of general~$G$, suppose that the inertial
type~$\tau$ is trivial. If we consider a nilpotent element $N\in \Lie G$,
the theory of associated cocharacters allows us to construct a particular
homomorphism $\SL_2\to G$ taking $\begin{pmatrix} 0&1\\0&0
\end{pmatrix}$ to~$N$, and an elementary calculation using the
representation theory of~$\sl_2$  shows that the composition of
our fixed representation $W_K\to\SL_2(\Qlbar)$ with this homomorphism
defines a smooth point. We obtain further smooth points by
multiplication by elements of~$G(\Qlbar)$ of finite order, and this turns out
to give us all the smooth points we need (even when~$G$ is not
connected). (See Remark~\ref{rem: SL2 form of WD group} for an
interpretation of this construction in terms of the $\SL_2$ version of
the Weil--Deligne group.)

In the case of general~$\tau$ we reduce to the same situation by
replacing~$G$ by the normaliser in~$G$ of~$\tau$, which is also a
reductive group. This use of Weil--Deligne representations is what
allows us to remove the assumption made in~\cite{2014arXiv1403.1411B}
that the inertial type is totally ramified, which was used in order to
choose coordinates so that the inertial type $\tau$ was invariant
under 
Frobenius. (Similarly, it clarifies the calculations made
for~$\GL_n$ in \cite{MR2373358}, as the semilinear algebra becomes
linear algebra.) Under this assumption, when studying the structure of
the moduli space of $G$-valued $(\varphi,N,\tau)$-modules one could
exploit the fact that $\Phi$ was in the centralizer $Z_G(\tau)$ and
$N$ was in $\Lie Z_G(\tau)$.  Passing to Weil--Deligne representations
$r$ lets us argue similarly for general~$\tau$: a generator $\Phi$ of
the unramified quotient of the Weil group normalizes the inertial type
and $N$ is centralized by the inertial type.  Since
$Z_G(r|_{I_{L/K}})$ has finite index in the normalizer
$N_G(r|_{I_{L/K}})$, we see that $N$ is again in the Lie algebra of
the algebraic group containing $\Phi$.

In view of the functorial nature of our construction of smooth points,
we are able to produce points on each irreducible component of the
generic fiber of the deformation ring which are furthermore ``very
smooth'' in the sense that they give rise to smooth points after
restriction to any finite extension $K'/K$ (these points were called
``robustly smooth'' in~\cite{BLGGT} when $p\ne l$). In particular, the
images of such points on the corresponding deformation rings
for~$\Gal_{K'}$ lie on only one irreducible component, so that we obtain
a well-defined ``base change'' map between irreducible components. We
prove a similar result for the maps between deformation rings induced
by morphisms of algebraic groups $G\to G'$ (see Theorem~\ref{subsec:
  tensor products of components} for this, and for the case of base
change). In particular, this allows one to talk about taking tensor
products of components of deformation rings, which is frequently
convenient when applying the Harris tensor product trick; see for
example~\cite{CEGglobalrealisable}.

We end this introduction by explaining the structure of the paper. In
Section~\ref{sec: moduli of WD representations}, we prove our main
results about the structure of the moduli spaces of Weil--Deligne
representations; we explain the tangent-obstruction theory and exhibit
smooth points, and study the relationship with Galois
representations. In doing so we remove the connectedness hypothesis on
$G$ made in~\cite{2014arXiv1403.1411B}, by studying exact
tensor-filtrations on fiber functors for disconnected reductive
groups.  We do this via a functor of points approach, using the
dynamic approach to parabolic subgroups discussed
in~\cite[\textsection I.2.1]{predbook}. In Section~\ref{sec: local
  deformation rings} we deduce our results on the local structure of
Galois deformation rings, which we then combine with the results of~\cite{MR3152673} to
prove our lower bound on the dimension of a global deformation
ring in Section~\ref{sec:
  global deformation rings}. Finally, in Section~\ref{sec: unitary groups} we specialise
these results to the case of unitary groups.
\subsection{Acknowledgements}We would like to thank Matthew Emerton
for emphasising the importance of Weil--Deligne representations to
us, and for his comments on an earlier draft of this paper. We would
also like to thank Jeremy Booher, George Boxer, Stefan Patrikis, and Jacques
Tilouine for helpful conversations, and Brian Conrad,  Mark Kisin and
Daniel Le
for their comments on an earlier draft. We would like to thank the
referees for their careful reading of the paper and their many helpful
comments. 

\subsection{Notation and conventions}\label{subsec:
  notation}
All representations considered in this paper are assumed to be
continuous with respect to the natural topologies, and we will never
draw attention to this.

If~$K$ is a field then we write~$\Gal_K:=\Gal(\overline{K}/K)$ for its
absolute Galois group, where~$\overline{K}$ is a fixed choice of
algebraic closure; we will regard all algebraic extensions of~$K$ as
subfields of~$\overline{K}$ without further comment, so that in
particular we can take the compositum of any two such extensions. If $L/K$ is a Galois extension then we write
$\Gal_{L/K}:=\Gal(L/K)$, a quotient of~$\Gal_K$.  
If~$K$ is a number field and~$v$
is a place of~$K$ then we fix an embedding $\Kbar\into\Kbar_v$, so
that we have a homomorphism $\Gal_{K_v}\to\Gal_K$. If~$S$ is a finite
set of places of a number field~$K$, then we let~$K(S)$ be the maximal
extension of~$K$ (inside~$\Kbar$) which is unramified outside~$S$, and
write $\Gal_{K,S}:=\Gal(K(S)/K)$.

 If
$K/\Qp$ is a finite extension for some prime~$p$ then we write~$I_K$
for the inertia subgroup of~$\Gal_K$, $W_K$ for the Weil
group, and~$f_K$ for the inertial degree of~$K/\Qp$. We let~$\varphi$
denote the arithmetic Frobenius on~$\Fpbar$, so that we have an exact sequence
\[	1\rightarrow I_{K}\rightarrow W_K\rightarrow \langle \varphi^{f_K}\rangle \rightarrow 1,	\]
and we let $v:W_K\rightarrow \Z$ be
the function such that $v(g)=i$ if the image of $g$ modulo $I_{K}$ is
$\varphi^{if_K}$.
 Recall that a Weil--Deligne representation of~$W_K$ is a
pair $(r,N)$  consisting of a finite-dimensional
representation $r:W_K\to\End(V)$ and a (necessarily nilpotent)
endomorphism $N\in\End(V)$
satisfying \[\rho(g)N=p^{v(g)f_K}N\rho(g) \] for all $g\in
W_K$. 
\subsubsection{Parabolic subgroups}\label{subsec: parabolic subgroups}If~$G$
is a finite-type affine group scheme over~$A$,  and $\lambda:\Gm\to G$ is a
cocharacter of~$G$, then there is a
subgroup~$P_G(\lambda)$ of~$G$
associated to~$\lambda$ as follows. Following~\cite[\S I.2.1]{predbook}, for any
$A$-algebra~$A'$ we define the functors \[ P_G(\lambda)(A')=\{g\in G(A') |
  \lim_{t\rightarrow 0}\lambda(t)g\lambda(t)^{-1}\textrm{ exists}\},\]
and \[  U_G(\lambda)(A')=\{g\in P_G(\lambda)(A') |
  \lim_{t\rightarrow 0}\lambda(t)g\lambda(t)^{-1}=1\}.\] 
We also let $Z_G(\lambda)$ denote the
scheme-theoretic centralizer of $\lambda$.  All of these functors are representable by subgroup schemes of $G$, and they are smooth if $G$ is smooth.  By construction, the formation of $P_G(\lambda)$, $U_G(\lambda)$, and $Z_G(\lambda)$ commutes with base change on $A$.

The cocharacter $\lambda$ induces a grading on the Lie algebra $\mathfrak{g}:=\Lie G$.  Let $\mathfrak{g}_n:=\{v\in\mathfrak{g} : \Ad(\lambda(t))(v)=t^nv\}$ and let $\mathfrak{g}_{\geq 0}:=\oplus_{n\geq 0}\mathfrak{g}_n$.  Then $\Lie P_G(\lambda)=\mathfrak{g}_{\geq 0}$, $\Lie U_G(\lambda)=\mathfrak{g}_{\geq 1}$, and $\Lie Z_G(\lambda)=\mathfrak{g}_0$.

The multiplication map $Z_G(\lambda)\ltimes U_G(\lambda)\rightarrow
P_G(\lambda)$ is an isomorphism.  Furthermore, the fibers of
$U_G(\lambda)$ are unipotent and connected. If the morphism $G\rightarrow\Spec A$ has connected reductive fibers, then $P_G(\lambda)$ is a parabolic subgroup scheme with connected fibers, $U_G(\lambda)$ is its unipotent radical, and $Z_G(\lambda)$ is connected and reductive.

\subsubsection{Deformation rings}\label{subsec: deformation rings}Let $l$ be prime, and let~$\cO$ be
the ring of integers in a finite extension~$E/\Ql$ with residue
field~$\F$. Write~$\CNL_{\cO}$ for the category of complete local
noetherian $\cO$-algebras with residue field~$\F$. 

Let~$\Gamma$ be either the absolute Galois group~$\Gal_K$ of a finite
extension~$K$ of~$\Ql$ for some~$p$ (possibly equal to~$l$), or a
group~$\Gal_{K,S}$ where~$S$ is a finite set of places of a number field~$K$.

 Let~$G$ be a smooth affine group scheme over~$\cO$ whose geometric fibres are reductive (but not necessarily connected), and fix a
homomorphism $\rhobar:\Gamma\to G(\F)$.  A \emph{framed deformation}
of~$\rhobar$ to a ring $A\in\CNL_\cO$ is a homomorphism
$\rho:\Gamma\to G(A)$ whose reduction modulo~$\m_A$ is equal
to~$\rhobar$. The functor of framed deformations is represented by the
universal framed deformation $\cO$-algebra $R^{\square}_{\rhobar}$, an
object of~$\CNL_\cO$ (\cite[Thm.\ 1.2.2]{MR3152673}). 

Suppose from now on for the rest of the paper that the centre~$Z_G$
of~$G$ is smooth over~$\cO$. Write~$\fg_\F$ and~$\fz_\F$ for the
$\F$-points of the Lie algebras of~$G$ and~$Z_G$ respectively; $\Gamma$ acts on $\fg_\F$ via the adjoint action composed with $\rhobar$.
  A \emph{deformation} of~$\rhobar$ to~$A$
is a $(\ker(G(A)\to G(\F)))$-conjugacy class of framed deformations
of~$\rhobar$ to~$A$. If $H^0(\Gamma,\fg_\F)=\fz_\F$, then the functor
of deformations is represented by the universal framed deformation
$\cO$-algebra $R_{\rhobar}$, an object of~$\CNL_\cO$ (see~\cite[Thm.\
1.2.2]{MR3152673} or~\cite[Thm.\ 3.3]{MR1643682}, together with
Comment~(2) following~\cite[Thm.\ 3.3]{MR1643682}).

We will also consider ``fixed determinant'' versions of these (framed)
deformations rings.  Let~$\Gab$ and~$\Gder$ respectively denote the
abelianisation and derived subgroup of~$G$, and write $\ab:G\to\Gab$
for the natural map. Write~$\fg^0_\F$ for the
$\F$-points of the Lie algebra of~$\Gder$. Fix a homomorphism ~$\psi:\Gamma\to\Gab(\cO)$
such that $\ab\circ\rhobar=\psibar$. We
let~$R_{\rhobar}^{\square,\psi}$ (resp.\ $R_{\rhobar}^\psi$) denote
the quotient of~$R^\square_{\rhobar}$ (resp.\ $R_{\rhobar}$)
corresponding to (framed) deformations~$\rho$
with~$\ab\circ\rho=\psi$. 

We write~$G^\circ$ for the connected
component of~$G$ containing the identity. We will always consider
representations up to $G^\circ$-conjugacy, rather than $G$-conjugacy;
note that this is compatible with our definition of deformations, as
an element of $(\ker(G(A)\to G(\F)))$ is necessarily contained in
$G^\circ(A)$. 

We for the most part allow any coefficient field~$E$, although for
some constructions in $p$-adic Hodge theory we need to allow it to be
sufficiently large; we will comment when we do this. The effect of
replacing~$E$ with a finite extension~$E'$ with ring of
integers~$\cO'$ is simply to replace~$R_{\rhobar}^\square$ and~$R_{\rhobar}$
with~$R_{\rhobar}^\square\otimes_{\cO}\cO'$
and~$R_{\rhobar}\otimes_{\cO}\cO'$ respectively.

\section{Moduli of Weil--Deligne representations}\label{sec: moduli of
  WD representations}Let $K/\Qp$ be a finite extension, and let~$l$ be
a prime, possibly equal to~$p$. In this section we prove analogues for $l$-adic
Weil--Deligne representations of some results on moduli spaces of weakly admissible modules
from~\cite{MR2373358,2014arXiv1403.1411B}, and remove some hypotheses imposed in
those papers; in particular, we allow our groups to be disconnected, and
we work with arbitrary inertial types (rather than totally ramified
types). In the case that $l=p$ we relate our moduli spaces to those
for weakly admissible modules. In Section~\ref{sec: local deformation
  rings} we will use these results to study the generic fibers of
deformation rings in both the case $l=p$ and the case $l\ne p$. 
\subsection{Moduli of Weil--Deligne representations}Let $K/\Q_p$ be a finite extension,
and let 
$L/K$ be a finite Galois extension. As in Section~\ref{subsec:
  notation}, we let $E/\Ql$ be a finite extension
for some prime~$l$, with ring of integers~$\cO$. 
We also continue to let $G$ be a (not necessarily connected) reductive
group over~$\cO$; in fact, throughout this section we will be working
with~$l$ inverted, and we will write~$G$ for~$G_E$ without further
comment. We write~$\fg_E$ for the Lie algebra of~$G$. 


A morphism of $G$-torsors $f:D\rightarrow D'$ over an $E$-scheme $X$ is a morphism of the
underlying $X$-schemes which is equivariant for the action of $G_X$.
Such a
morphism is necessarily an isomorphism.  The $G$-equivariant
automorphisms of $D$, which we denote by $\Aut_G(D)$, form a group, and it makes sense to talk about homomorphisms $r:W_K\rightarrow \Aut_G(D)$.  We also define a sheaf of automorphism groups $\underline\Aut_G(D)$ over $X$; if $X'$ is an $X$-scheme, its $X'$-points are given by $\underline\Aut_G(D)(X'):=\Aut_G(D_{X'})$.  This is a representable functor, since $\underline\Aut_G(D)$ is \'etale-locally isomorphic to $G_X$, which is affine.  We abuse notation by writing $\underline\Aut_G(D)$ for the group scheme, as well.
\begin{defn}Let $G-\WD_E(L/K)$ be the category cofibered in groupoids over $E$-Alg whose fiber over an $E$-algebra $A$ is a $G$-torsor
  $D$ over $A$ together with a pair $(r,N)$, where now $r:W_K\rightarrow\Aut_G(D)$ is a representation of the Weil group such that $r|_{I_L}$ is trivial, $N\in \Lie\underline\Aut_G(D)$, and
  $N=p^{-v(g)f_K}\Ad(r(g))(N)$ 
  for all
  $g\in W_K$. 
\end{defn}

Requiring $D$ to be a trivial $G$-torsor equipped with a trivializing section lets us define a representable functor covering $G-\WD_E(L/K)$, as follows.  The exact sequence
\[	0\rightarrow I_K\rightarrow W_K\rightarrow \langle
  \varphi^{f_K}\rangle \cong \Z\rightarrow 0	\]
 is non-canonically split, and choosing a splitting is the same as
 choosing a lift $g_0\in W_K$ of $\varphi^{f_K}$.  Thus, to specify a
 representation $r:W_K\rightarrow \Aut_G(D)$, it suffices to specify
 $r|_{I_K}$ and $r(g_0)$ (which we denote $\Phi$).  Since we are
 interested in representations which are trivial on $I_L$, we may
 replace $r|_{I_K}$ with $r|_{I_{L/K}}$. For an $E$-algebra $A$, we let $\Rep_A I_{L/K}$ denote
 the set of $A$-linear representations of $I_{L/K}$ on~$G(A)$.
\begin{defn}
Choose $g_0\in W_K$ lifting $\varphi^{f_K}$.
We let  $Y_{L/K,\varphi,\mathcal N}$ be the functor on the category of
  $E$-algebras whose $A$-points are triples 
  \[ (\Phi,N,\tau)\in G(A)\times \fg_E(A)\times\Rep_A I_{L/K} \]
  which satisfy
  \begin{itemize}
  \item $N=p^{-f_K}\Ad(\Phi)(N)$,
  \item $\Phi\circ\tau(g)\circ\Phi^{-1}=\tau(g_0gg_0^{-1})$ for all $g\in I_{L/K}$, and
  \item $N=\Ad(\tau(g))(N)$ for all $g\in I_{L/K}$.
  \end{itemize}
\end{defn}
To go from $Y_{L/K,\varphi,\mathcal N}$ to $G-\WD_E(L/K)$, we
need to forget the trivializing section and also forget $g_0$; the
representation associated to $(\Phi,N,\tau)$ is given
by \[r(g_0^nh)=\Phi^n\tau(h)\] where $n\in \Z$ and $h\in I_K$.  

The functor~$Y_{L/K,\varphi,\mathcal N}$ is visibly represented by a finite-type affine
scheme over~$E$, and there is an action of $G$ on $Y_{L/K,\varphi,\mathcal N}$ given by changing the trivializing section; explicitly,
\[	a\cdot (\Phi,N,\{\tau(g)\}_{g\in I_{L/K}}):= (a\Phi a^{-1}, \Ad(a)(N), \{a\tau(g)a^{-1}\}_{g\in I_{L/K}}).	\]
Recall that if $Z$ is an $E$-scheme equipped with a left-action of an algebraic group $H$ over $E$, then for any $E$-scheme $S$, the groupoid $[Z/H](S)$ over $S$ is the category
\[	[Z/H](S):=\{\text{Left }H\text{-bundle }D\rightarrow S\text{ and }H\text{-equivariant morphism }D\rightarrow Z\}.	\]
A morphism $f:D\rightarrow D'$ in this fiber category is a morphism of $H$-torsors over $S$.

\begin{lemma}\label{lem: stack quotient for G modules}
The quotient stack $[Y_{L/K,\varphi,\mathcal N}/G]$ is equivalent to the groupoid~$ G-\WD_E(L/K)$.
\end{lemma}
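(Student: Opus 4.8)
The plan is to exhibit an explicit equivalence of categories cofibered in groupoids over $E$-Alg, using the observation that both sides are built from the same linear-algebraic data, differing only by whether a trivialization of the underlying $G$-torsor has been chosen. First I would recall the general principle (see e.g.\ the discussion preceding the statement) that for an affine $E$-scheme $Z$ with a left $H$-action, the quotient stack $[Z/H]$ assigns to $S$ the groupoid of pairs $(D,f)$ with $D\to S$ a left $H$-torsor and $f:D\to Z$ an $H$-equivariant morphism; moreover, $f$ is equivalent data to an $S'$-point of $Z$ after any trivialization $D_{S'}\cong H_{S'}$, and a change of trivialization acts by the given $H$-action on that point. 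Applying this with $H=G$ and $Z=Y_{L/K,\varphi,\mathcal N}$, an object of $[Y_{L/K,\varphi,\mathcal N}/G](A)$ is a $G$-torsor $D$ over $A$ together with a $G$-equivariant map $D\to Y_{L/K,\varphi,\mathcal N}$; \'etale-locally on $\Spec A$ this is a triple $(\Phi,N,\tau)$ satisfying the three displayed relations, and these relations are exactly the descent-compatible recipe for producing $r:W_K\to\underline\Aut_G(D)$ and $N\in\Lie\underline\Aut_G(D)$ with the Weil--Deligne compatibility.

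The core of the argument is to make the passage described in the text — sending $(\Phi,N,\tau)$ to the representation $r(g_0^n h)=\Phi^n\tau(h)$ and to $N$ — into a functor in both directions and to check it is well-defined and compatible with the $G$-action. In one direction: given $(D,f:D\to Y_{L/K,\varphi,\mathcal N})$, locally choose a trivialization, read off $(\Phi,N,\tau)$, form $(r,N)$ by the formula, and check (i) that $r$ is a homomorphism $W_K\to\Aut_G(D)$ — this uses the relation $\Phi\tau(g)\Phi^{-1}=\tau(g_0 g g_0^{-1})$ to see that $g_0^n h\mapsto \Phi^n\tau(h)$ respects the multiplication in $W_K$, given the group law $g_0^n h\cdot g_0^m h' = g_0^{n+m}(g_0^{-m}hg_0^m)h'$; (ii) that $r|_{I_L}$ is trivial, which is immediate since $\tau$ is a representation of $I_{L/K}$ pulled back to $I_K$; (iii) that $N=p^{-v(g)f_K}\Ad(r(g))(N)$ for all $g\in W_K$, which for $g\in I_K$ is the relation $N=\Ad(\tau(g))(N)$ and for $g=g_0$ is the relation $N=p^{-f_K}\Ad(\Phi)(N)$, the general case following since $v$ is additive and the relation is multiplicative in $g$. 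Changing the trivialization by $a\in G(A)$ replaces $(\Phi,N,\tau)$ by $(a\Phi a^{-1},\Ad(a)N,\{a\tau(g)a^{-1}\})$, which replaces $(r,N)$ by $(a r a^{-1}, \Ad(a)N)$ — i.e.\ transports $(r,N)$ along the automorphism of $D$ induced by $a$ — so $(r,N)$ glues to a well-defined pair on $D$ independent of the trivialization. In the other direction: given $(D,r,N)$ in $G-\WD_E(L/K)(A)$, choose $g_0$ once and for all (as fixed in the definition of $Y_{L/K,\varphi,\mathcal N}$), set $\Phi=r(g_0)$ and $\tau=r|_{I_{L/K}}$ locally after trivializing $D$, and observe these satisfy the three defining relations of $Y_{L/K,\varphi,\mathcal N}$ by the same computations run backwards; equivariance gives a map $D\to Y_{L/K,\varphi,\mathcal N}$. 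Finally one checks these two constructions are quasi-inverse, and that a morphism $f:D\to D'$ of $G$-torsors intertwining the $(r,N)$ data corresponds to a morphism in the quotient stack, which is immediate from the definitions.

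The main obstacle, and the only genuinely nontrivial point, is checking that the morphism $W_K\to\Aut_G(D)$ (rather than $\underline\Aut_G(D)$) is well-defined on the level of stacks, i.e.\ that the formula $r(g_0^n h)=\Phi^n\tau(h)$ is independent of the choice of splitting $g_0$ of $1\to I_K\to W_K\to\langle\varphi^{f_K}\rangle\to 1$ up to the identifications made; since the statement of $Y_{L/K,\varphi,\mathcal N}$ already fixes $g_0$, this amounts to checking that a different choice of $g_0$ gives a canonically isomorphic functor, with the isomorphism compatible with the $G$-action, so that the induced equivalence of quotient stacks with $G-\WD_E(L/K)$ is canonical. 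This is a routine but slightly fiddly bookkeeping exercise comparing $g_0$ and $g_0' = g_0 h_0$ for $h_0\in I_K$; I expect it to go through without difficulty but it is where the argument requires care. Everything else is the formal unwinding of the quotient-stack formalism recalled just above the lemma.
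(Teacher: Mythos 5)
Your proposal is correct and follows essentially the same route as the paper: both unwind the quotient stack via the dictionary $(\Phi,N,\tau)\leftrightarrow(r,N)$ given by $r(g_0^nh)=\Phi^n\tau(h)$, and match the $G$-action on $Y_{L/K,\varphi,\mathcal N}$ with conjugation of the Weil--Deligne data. The only cosmetic differences are that the paper reads off coordinates after pulling back to the canonically trivial torsor $D\times_A D\to D$ (using the diagonal section) rather than choosing \'etale-local trivializations and descending, and your worry about independence of $g_0$ is not needed for the statement, since $Y_{L/K,\varphi,\mathcal N}$ is defined with a fixed choice of $g_0$.
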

\begin{proof}
We choose $g_0\in W_K$ lifting $\varphi^{f_K}$.  
 Given an $A$-valued point of $G-\WD_E(L/K)$ with
  underlying $G$-torsor $D$, the base change
  $D\times_AD\rightarrow D$ (which is projection on the first factor)
  is a trivial $G$-torsor (with
  $G$ acting on the second factor).  The identity
  morphism $D\xrightarrow{\sim}D$ induces a canonical trivializing
  section $D\rightarrow D\times_AD$, namely the diagonal.  Pulling
  back $r$ and $N$ to $D\times_AD$,
  writing them in coordinates (with respect to the trivializing
  section), and writing $\tau:=r|_{I_{L/K}}$ and $\Phi:=r(g_0)$ gives us a morphism $D\rightarrow Y_{L/K,\varphi,\mathcal N}$.

  We need to check that the morphism $D\rightarrow Y_{L/K,\varphi,\mathcal N}$
  is $G$-equivariant.  If $A'$ is an $A$-algebra,
  the morphism $D\rightarrow Y_{L/K,\varphi,\mathcal N}$ carries $x\in D(A')$
  to the fiber of $(\Phi,N,\tau)$ over $x$.  The fiber of
  $D\times_AD\rightarrow D$ over $x$ is a copy of $D_{A'}$, together
  with a section (defined by taking the fiber of the diagonal over
  $x$).  If $g\in G(A')$, the fiber of
  $D\times_AD\rightarrow D$ over $g\cdot x$ is also a copy of
  $D_{A'}$, but the section has been multiplied by $g$.  Thus, our
  ``change-of-basis'' formula for triples
  $(\Phi,N,\tau)$ implies that the morphism
  $D\rightarrow Y_{L/K,\varphi,\mathcal N}$ is
  $G$-equivariant, as required.
\end{proof}

Similarly, we let $Y_{L/K,\mathcal N}$ denote the functor on the category of
$E$-algebras parametrizing pairs \[ (N,\tau)\in \fg_E(A)\times\Rep_A I_{L/K} \] such that
$N={\Ad}(\tau(g))(N)$ for all $g\in I_{L/K}$; and we let $Y_{L/K}$ be the functor on the category of $E$-algebras,
whose $A$-points are $\Rep_A I_{L/K}$.

Let $K'/K$ be a finite extension, and write $L'/K'$ for the compositum
of $K'$ and $L$. Then $L'/K'$ is Galois, with Galois group
$\Gal_{L'/K'}\subset\Gal_{L/K}$.  
There are versions of the above functors for $L'/K'$ which we
write $Y_{L'/K',\varphi,\mathcal N}$, $Y_{L'/K',\mathcal N}$, and
$Y_{L'/K'}$. Restriction of Weil--Deligne representations from~$W_K$
to~$W_{K'}$ induces morphisms $Y_{L/K,\varphi,\mathcal N}\to
Y_{L'/K',\varphi,\mathcal N}$, $Y_{L/K,\mathcal N}\to
Y_{L'/K',\mathcal N}$ and $Y_{L/K}\to
Y_{L'/K'}$.


\subsection{A tangent-obstruction theory for $G-\WD_E(L/K)$}
Choose an object $D_A\in G-\WD_E(L/K)$ with coefficients in an
$E$-algebra $A$, and let $\ad D_A$ denote the Weil--Deligne module induced on $\Lie\underline\Aut_GD_A$.  Choose $g_0\in W_K$ which lifts $\varphi^{f_K}$ and
write $\Phi:=r(g_0)$, let $\Ad(\Phi)$ denote the action on~$\ad
D_A$ given by differentiating the homomorphism $\underline\Aut_GD_A\rightarrow \underline\Aut_GD_A$ given by $g\mapsto \Phi g\Phi^{-1}$, and let $\ad_N$
act by $x\mapsto [N,x]$.  If $G=\GL_n$ and $D_A$ is the trivial torsor, these actions become $x\mapsto \Phi\circ x\circ\Phi^{-1}$ and $x\mapsto N\circ x-x\circ N$, respectively.  Then  we have an anti-commutative
diagram
\[	\xymatrix@C=5pc{
(\ad D_A)^{I_{L/K}}\ar[d]^{\ad_N}\ar[r]^{1-\Ad(\Phi)}& (\ad D_A)^{I_{L/K}}\ar[d]^{\ad_N}	\\
(\ad D_A)^{I_{L/K}}\ar[r]^{p^{-f_K}\Ad(\Phi)-1}&	(\ad D_A)^{I_{L/K}}
}	\]
Here $g\in I_{L/K}$ acts on $\ad D_A$ via $\Ad(\tau(g))$; note that
the minus sign in $p^{-f_K}$ arises because~$g_0$ is a lift of
arithmetic Frobenius.  This diagram does not depend on our choice of $g_0$, because any two lifts of $\varphi^{f_K}$ differ by an element of $I_{L/K}$, which acts trivially on $(\ad D_A)^{I_{L/K}}$.

The total complex
$C^\bullet(D_A)$ of this double complex controls the deformation
theory of objects of $G-\WD_E(L/K)$.  We write~$H^i(\ad D_A)$ for the
cohomology groups of~$C^\bullet(D_A)$. The following result will be proved
in a very similar way to~\cite[Proposition 3.1.2]{MR2373358}, which is
an analogous result for semilinear representations in the case~$G=\GL_n$.
\begin{prop}\label{prop: deformations of WD repns controlled by complex}
Let $A$ be a local $E$-algebra with maximal ideal $\mathfrak{m}_A$ and let $I\subset A$ be an ideal with $I\mathfrak{m}_A=(0)$.  Let $D_{A/I}$ be an object of $G-\WD_E(L/K)$ with coefficients in $A/I$, with Weil--Deligne representation $(\overline r,\overline N)$. Then
\begin{enumerate}
\item	if $H^2(\ad D_{A/\mathfrak{m}_A})=0$, then there exists an object
  $D_A$ in $G-\WD_E(L/K)$ with coefficients in $A$, such that
  $(A/I)\otimes_AD_A\cong D_{A/I}$, and
\item	the set of isomorphism classes of liftings of $D_{A/I}$ to
  $D_A$ is either empty or a torsor under
  $I\otimes_{A/\mathfrak{m}_A}H^1(\ad D_{A/\mathfrak{m}_A})$.
\end{enumerate}
\end{prop}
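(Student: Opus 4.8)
The plan is to follow the standard deformation-theoretic recipe, imitating the proof of \cite[Proposition 3.1.2]{MR2373358} but carried out in the $G$-torsor language via Lemma~\ref{lem: stack quotient for G modules}. First I would reduce to the case of a \emph{small} extension, i.e.\ assume $I$ is principal, say $I=(t)$, so that $I$ is a one-dimensional $A/\mathfrak m_A$-vector space and $I\otimes_{A/\mathfrak m_A}H^1(\ad D_{A/\mathfrak m_A})\cong H^1(\ad D_{A/\mathfrak m_A})$; the general case follows by dévissage. Next, since the functors in question commute with \'etale localization and any object of $G-\WD_E(L/K)$ over a local ring has underlying \emph{trivial} $G$-torsor (as $G$ is smooth affine and $\Spec A$ is local, or after passing to the henselization, which one then descends), I would fix a trivialization and work instead with a point $(\Phi,N,\tau)\in Y_{L/K,\varphi,\mathcal N}(A/I)$, remembering that two such points give isomorphic objects of $G-\WD_E(L/K)$ iff they are $G(A/I)$-conjugate; the set of lifts as objects of the stack is then the set of $G(A)$-conjugacy classes of lifts of $(\Phi,N,\tau)$ to $Y_{L/K,\varphi,\mathcal N}(A)$, and the automorphism-group bookkeeping is handled by the fact that $[Y/G]$ is a quotient stack.

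The core computation is then linear. Choose set-theoretic lifts $\widetilde\Phi\in G(A)$, $\widetilde N\in\fg_E(A)$, $\widetilde\tau:I_{L/K}\to G(A)$ of $\Phi$, $N$, $\tau$ (possible because $G$ is smooth, $\fg_E$ is a vector group, and $I_{L/K}$ is finite so one lifts element-by-element and then checks the cocycle relations are defined modulo $I$). The three defining equations of $Y_{L/K,\varphi,\mathcal N}$ --- the $\Phi$-twisted $N$-equation, the $\Phi$-conjugation relation on $\tau$, and the $\tau$-invariance of $N$ --- then fail to hold for the lifts only by error terms lying in $I\otimes_{A/\mathfrak m_A}\ad D_{A/\mathfrak m_A}$ (using $I\mathfrak m_A=0$ to linearize). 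One checks, exactly as in \cite{MR2373358}, that assembling these three errors produces a $2$-cocycle for the double complex underlying $C^\bullet(D_{A/\mathfrak m_A})$; its class in $H^2(\ad D_{A/\mathfrak m_A})$ is the obstruction. Modifying the lifts $\widetilde\Phi\mapsto\widetilde\Phi(1+\epsilon)$, $\widetilde N\mapsto\widetilde N+\nu$, $\widetilde\tau(g)\mapsto\widetilde\tau(g)(1+\eta(g))$ by elements of $I\otimes\ad D$ changes the cocycle by the coboundary of $(\epsilon,\nu,\eta)$, so the obstruction class is well defined; this gives (1). For (2), once a lift exists, the difference of any two lifts is a $1$-cocycle, the ambiguity in choosing the $G(A)$-conjugacy (i.e.\ the indeterminacy coming from $\widehat G$-conjugation by elements congruent to $1$ mod $I$, together with changing the trivialization) contributes exactly the $1$-coboundaries, and one reads off that the set of isomorphism classes of lifts is a torsor under $I\otimes_{A/\mathfrak m_A}H^1(\ad D_{A/\mathfrak m_A})$ whenever it is nonempty.

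The main obstacle, and the only place where disconnectedness of $G$ or the $G$-torsor formalism does real work rather than being cosmetic, is the matching of the naive ``error-term'' cocycles with the complex $C^\bullet(D_A)$ as actually defined (the total complex of the anti-commutative square in Section~2.2): one has to be careful that $\Ad(\Phi)$ there is the differential of $g\mapsto\Phi g\Phi^{-1}$ on $\underline\Aut_G D_A$, that $g\in I_{L/K}$ acts through $\Ad(\tau(g))$, and that the sign conventions (the $p^{-f_K}$, the arithmetic-Frobenius normalization, the anti-commutativity of the square) line up with the signs produced by linearizing a product of lifts; getting all of these consistent is routine but fiddly. Everything else --- smoothness of $G$ giving liftability of the torsor, finiteness of $I_{L/K}$, the passage between $Y_{L/K,\varphi,\mathcal N}$ and the stack via Lemma~\ref{lem: stack quotient for G modules} --- is formal, and the argument is insensitive to whether $G$ is connected.
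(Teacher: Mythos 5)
There is a genuine gap at the heart of your argument. You propose to choose arbitrary lifts $(\widetilde\Phi,\widetilde N,\widetilde\tau)$ of the triple, record the failure of all three defining equations of $Y_{L/K,\varphi,\mathcal N}$ as error terms, and ``assemble these three errors'' into a $2$-cocycle for the complex $C^\bullet(D_{A/\mathfrak m_A})$. But the degree-$2$ term of that complex is a single copy of $(\ad D_{A/\mathfrak m_A})^{I_{L/K}}$: it has no room for the failures of the relations $\Phi\tau(g)\Phi^{-1}=\tau(g_0gg_0^{-1})$ and $N=\Ad(\tau(g))(N)$, which are cochains on $I_{L/K}$ (and on $W_K/I_L$) rather than invariant elements. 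So the identification of the obstruction with a class in $H^2(\ad D_{A/\mathfrak m_A})$ does not go through as you describe, and this is not a matter of sign conventions. The reason the proposition is nevertheless true --- and the step your proposal is missing --- is that the lifting of the Weil-group part is \emph{unobstructed}: since we are in characteristic $0$ and $I_{L/K}$ is finite, Hochschild--Serre gives $H^i(W_K/I_L,I\otimes_{A/\mathfrak m_A}\ad D_{A/\mathfrak m_A})\cong H^i(\widehat{\Z},I\otimes_{A/\mathfrak m_A}\ad D_{A/\mathfrak m_A}^{I_{L/K}})$, and the $H^2$ of $\widehat{\Z}$ vanishes, so $\overline r$ (i.e.\ $\tau$ together with $\Phi$ and their compatibility) always lifts; likewise $\overline N$ always lifts to an $I_{L/K}$-invariant element. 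This is the content of the paper's Lemma~\ref{lem: Weil representations lift} (together with the torsor-lifting results cited from~\cite{2014arXiv1403.1411B}). Only after this reduction is the deformation problem governed by the single remaining equation $N=p^{-f_K}\Ad(\Phi)(N)$, whose defect $h=N-p^{-f_K}\Ad(\Phi)(N)$ lies in $I\otimes_{A/\mathfrak m_A}(\ad D_{A/\mathfrak m_A})^{I_{L/K}}$ and can be killed exactly when $H^2(\ad D_{A/\mathfrak m_A})=0$, by adjusting $(\Phi,N)$ to $(f^{-1}\Phi,N+g)$; the same bookkeeping on kernels and images of $d^1$ and $d^0$ then yields the $H^1$-torsor statement in part~(2). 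Without the vanishing argument your ``error-cocycle'' has no target, and the claimed obstruction class is not defined.

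A secondary problem: your reduction to the trivial torsor is not justified. Over the residue field of $A$ (an arbitrary extension of $E$) a $G$-torsor need not be trivial, and passing to a henselization does not change this; nor does descending from a strict henselization come for free. The paper avoids the issue by never trivializing: it lifts the torsor itself (uniquely up to isomorphism, by smoothness of $G$) and works throughout with $\underline\Aut_G(D_A)$ and $\ad D_A$, so the computation is carried out on the twisted form rather than on $Y_{L/K,\varphi,\mathcal N}$.
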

We begin by proving a preliminary lemma.
\begin{lemma}\label{lem: Weil representations lift}
Let $D_A$ be a $G$-torsor over $A$, and suppose there is a representation $\overline r:W_K\rightarrow \Aut_G(D_{A/I})$ such that $\overline r|_{I_L}$ is trivial.  Then there is a representation $r:W_K\rightarrow \Aut_G(D_A)$ such that $r|_{I_L}$ is trivial and $r$ lifts $\overline r$.  Moreover, the set of infinitesimal automorphisms of $r$ \emph{(}as a lift of $\overline r$\emph{)} is a torsor under $H^0(W_K/I_{L},I\otimes_{A/\mathfrak{m}_A}\ad D_{A/\mathfrak{m}_A}^{I_{L}})=I\otimes_{A/\mathfrak{m}_A}\ad D_{A/\mathfrak{m}_A}^{W_K}$, and the set of lifts of $\overline r$ is a torsor under $H^1(W_K/I_{K},I\otimes_{A/\mathfrak{m}_A}\ad D_{A/\mathfrak{m}_A}^{I_{L/K}})$.
\end{lemma}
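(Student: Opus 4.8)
The plan is to recast the statement as a standard (non-abelian) deformation problem---lifting a group homomorphism through the square-zero surjection $A\twoheadrightarrow A/I$---and then to compute the controlling cohomology of $W_K$ by exploiting that $I_{L/K}$ is finite and that we work in characteristic zero. Since a representation $r\colon W_K\to\Aut_G(D_A)$ with $r|_{I_L}$ trivial is precisely a homomorphism from $\Gamma:=W_K/I_L$, and $\overline r$ likewise factors through $\Gamma$, the task becomes: lift $\overline r\colon\Gamma\to\underline\Aut_G(D_A)(A/I)$ to $\Gamma\to\underline\Aut_G(D_A)(A)$, triviality on $I_L$ of the associated $W_K$-representation being then automatic. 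Write $H:=\underline\Aut_G(D_A)$, which is a smooth affine group scheme over $A$ since it is \'etale-locally isomorphic to the smooth affine group scheme $G_A$, and set $M:=\ad D_{A/\mathfrak{m}_A}\otimes_{A/\mathfrak{m}_A}I$, a $\Gamma$-module via the adjoint action through the reduction of $\overline r$ modulo $\mathfrak{m}_A$ (only this reduction enters, because $I\mathfrak{m}_A=(0)$).

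Next I would run the usual cocycle argument. Smoothness of $H$ over $A$ makes $H(A)\to H(A/I)$ surjective with kernel canonically identified with $\ad D_A\otimes_A I\cong M$, so a set-theoretic lift $\widetilde r$ of $\overline r$ exists; then $c(g,h):=\widetilde r(g)\widetilde r(h)\widetilde r(gh)^{-1}$ is a $2$-cocycle on $\Gamma$ with values in $M$ whose class in $H^2(\Gamma,M)$ is the obstruction to correcting $\widetilde r$ into a homomorphism. When that class vanishes, the set of homomorphic lifts of $\overline r$ is a torsor under $H^1(\Gamma,M)$, and the infinitesimal automorphisms of a fixed lift $r$ (those $h\in M\subset H(A)$ with $hr(g)h^{-1}=r(g)$ for all $g$) form a torsor under $H^0(\Gamma,M)=M^\Gamma$. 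There is no continuity subtlety, since $\Gamma$ is an extension of $\Z$ by the finite group $I_{L/K}$, hence finitely generated, and every map in sight is continuous for the discrete topology on $\Gamma$.

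It then remains to compute $H^\bullet(\Gamma,M)$, which I would do via the Hochschild--Serre spectral sequence for $1\to I_{L/K}\to\Gamma\to\langle\varphi^{f_K}\rangle\cong\Z\to1$. As $M$ is a vector space over the characteristic-zero field $A/\mathfrak{m}_A$ and $I_{L/K}$ is finite, $H^q(I_{L/K},M)=0$ for $q>0$, so the spectral sequence collapses to $H^n(\Gamma,M)\cong H^n(\Z,M^{I_{L/K}})$; since $\Z$ has cohomological dimension $1$ this vanishes for $n\geq 2$, giving the lift asserted in the first part. For the torsor statements, one uses $M^{I_{L/K}}=I\otimes_{A/\mathfrak{m}_A}\ad D_{A/\mathfrak{m}_A}^{I_{L/K}}$ (invariants pull out the tensor factor $I$, on which $\Gamma$ acts trivially) to get $H^1(\Gamma,M)\cong H^1(W_K/I_K,I\otimes\ad D_{A/\mathfrak{m}_A}^{I_{L/K}})$; and since $\overline r|_{I_L}$ is trivial one has $\ad D_{A/\mathfrak{m}_A}^{I_L}=\ad D_{A/\mathfrak{m}_A}$, whence $H^0(\Gamma,M)=M^{W_K/I_L}=I\otimes\ad D_{A/\mathfrak{m}_A}^{W_K}$, exactly as in the statement.

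The cocycle bookkeeping and the spectral sequence computation are routine; the step that needs genuine care is the initial reduction together with its smoothness input---verifying that passing to the group scheme $\underline\Aut_G(D_A)$ models the deformation theory of $G$-torsor-valued representations with the intended adjoint $\Gamma$-action on $M$ (independent of any local trivialization), and that $\ker(H(A)\to H(A/I))\cong\ad D_A\otimes_A I$. This is where the hypothesis $I\mathfrak{m}_A=(0)$ is essential: it makes $I$, hence $M$, an $A/\mathfrak{m}_A$-module, so that the order of $I_{L/K}$ is invertible and the positive-degree cohomology of $I_{L/K}$ on $M$ vanishes.
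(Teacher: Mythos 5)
Your proof is correct and follows essentially the same route as the paper: a set-theoretic lift of each $\overline r(g)$ (the paper gets this from the torsor-lifting lemma of Bellovin, you get it from smoothness of $\underline\Aut_G(D_A)$, which amounts to the same thing), the $2$-cocycle obstruction in $H^2(W_K/I_L,\,I\otimes_{A/\mathfrak m_A}\ad D_{A/\mathfrak m_A})$, Hochschild--Serre with $I_{L/K}$ finite in characteristic zero to reduce to the infinite cyclic quotient where $H^2$ vanishes, and the standard $H^1$/$H^0$ torsor statements for lifts and infinitesimal automorphisms.
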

\begin{proof}
An isomorphism $\overline{f}:D_{A/I}\rightarrow D_{A/I}$ lifts to an isomorphism $f:D_A\rightarrow D_A$, and the set of such lifts is a torsor under either a left- or right-action of $H^0(A,I\otimes_{A/\mathfrak{m}_A}D_{A/\mathfrak{m}_A})$ by~\cite[Lemma 3.5]{2014arXiv1403.1411B}.  Thus, for each $g\in W_K$, we can lift the map $\overline{r}(g):D_{A/I}\rightarrow D_{A/I}$ to an isomorphism $r(g):D_A\rightarrow D_A$.  

The assignment
\[	(g_1,g_2)\mapsto r(g_1)r(g_2)r(g_1g_2)^{-1}	\]
is a $2$-cocycle of $W_K/I_L$ valued in
$I\otimes_{A/\mathfrak{m}_A}\ad D_{A/\mathfrak{m}_A}$.  Since we are in
characteristic~ $0$, and $I_{L/K}$ is a finite group,  the
Hochschild--Serre spectral sequence implies that for each $i>0$, we
have an isomorphism 
\[ H^i(W_K/I_K,I\otimes_{A/\mathfrak{m}_A}\ad D_{A/\mathfrak{m}_A}^{I_{L/K}})\xrightarrow{\sim} H^i(W_K/I_L,I\otimes_{A/\mathfrak{m}_A}\ad D_{A/\mathfrak{m}_A}).\]

In particular, 
\[	H^2(W_K/I_L,I\otimes_{A/\mathfrak{m}_A}\ad D_{A/\mathfrak{m}_A})\cong H^2(\widehat{\Z},I\otimes_{A/\mathfrak{m}_A}\ad D_{A/\mathfrak{m}_A}^{I_{L/K}})=0,	\]
so $\overline r$ lifts to a representation $r:W_K\rightarrow
\Aut_G(D_A)$ with $r|_{I_L}=0$, as claimed.

An isomorphism $f:D_A\rightarrow D_A$ is an infinitesimal automorphism of $r$ if and
only if it is the identity modulo $I$ and $r(g)\circ f=f\circ r(g)$ for all $g\in W_K$.  Equivalently, $f$ is an element of $I\otimes_{A/\mathfrak{m}_A}\ad D_{A/\mathfrak{m}_A}$ fixed by $W_K$, and since $I$ is a vector space over $A/\mathfrak{m}_A$, this is equivalent to $f\in I\otimes_{A/\mathfrak{m}_A}\ad D_{A/\mathfrak{m}_A}^{W_K}$, as desired.

Finally, if $r':W_K\rightarrow \Aut_G(D)$ is another such lift, then $g\mapsto r'(g)r(g)^{-1}$ is a $1$-cocycle of $W_K/I_L$ valued in $I\otimes_{A/\mathfrak{m}_A}\ad D_{A/\mathfrak{m}_A}$.  But $H^1(W_K/I_L,I\otimes_{A/\mathfrak{m}_A}\ad D_{A/\mathfrak{m}_A})\cong H^1(W_K/I_{K},I\otimes_{A/\mathfrak{m}_A}\ad D_{A/\mathfrak{m}_A}^{I_{L/K}})$, so we are done.
\end{proof}


\begin{proof}[Proof of Proposition~\ref{prop: deformations of WD repns controlled by complex}]
By~\cite[Lemma 3.4]{2014arXiv1403.1411B}, the underlying $G$-torsor
$D_{A/I}$ lifts to a $G$-torsor $D_A$ over $\Spec A$, and $D_A$ is
unique up to isomorphism, and by Lemma~\ref{lem: Weil representations lift}, $\overline r$ lifts to a representation $r:W_K\rightarrow \Aut_G(D_A)$.  Moreover, by~\cite[Lemma 3.7]{2014arXiv1403.1411B}, $\overline N\in\ad D_{A/I}$ lifts to some $N\in\ad D_A$ such that $\Ad(r(g))(N)=N$ for all $g\in I_{L/K}$, and any two lifts differ by an element of $I\otimes_{A/\mathfrak{m}_A}(\ad D_{A/\mathfrak{m}_A})^{I_{L/K}}$.

Now $D_A$, together with $r$ and $N$, is an object of $G-\WD_E(L/K)$ if and only if $N=p^{-f_K}\Ad(\Phi)(N)$, where $\Phi:=r(\varphi^{f_K})$.  We define \[h:=N-p^{-f_K}\Ad(\Phi)(N)\in I\otimes_{A/\mathfrak{m}_A}\ad D_{A/\mathfrak{m}_A}^{I_{L/K}}.\]  If $H^2(\ad D_{A/\mathfrak{m}_A})=0$, then by definition there exist $f,g\in I\otimes_{A/\mathfrak{m}_A}\ad D_{A/\mathfrak{m}_A}^{I_{L/K}}$ such that $h=\ad_{\overline N}(f)+(p^{-f_K}\Ad(\overline\Phi)-1)(g)$.  We can view $f$ and $g$ either as elements of $\Aut_G(D_A)$ (congruent to the identity modulo $I$) or as elements of its tangent space.  Then we claim that if we define $\widetilde N:=N+g$ and $\widetilde\Phi:=f^{-1}\circ\Phi$, then $\widetilde{N}=p^{-f_K}\Ad(\widetilde{\Phi})(\widetilde{N})$.  Indeed,
\begin{align*}
\widetilde{N}-p^{-f_K}\Ad(\widetilde{\Phi})(\widetilde{N})&=N+g-p^{-f_K}(\Ad(1-f)\circ\Ad(\Phi))(N+g) 	\\
&=N+g-p^{-f_K} \Ad(\Phi)(N)-p^{-f_K}\Ad(\Phi)(g)\\ &\phantom{{}=1}+p^{-f_K}[f,\Ad(\Phi)(N)]+p^{-f_K}[f,\Ad(\Phi)(g)]	\\
&=\ad_{\overline N}(f)+p^{-f_K}[f,\Ad(\Phi)(N)]	\\
&=[h,f]=0
\end{align*}
Here we have used that $f,g,h\in I\otimes_{A/\mathfrak{m}_A}\ad
D_{A/\mathfrak{m}_A}^{\Gal_{L/K}}$ and $I\cdot I\subset
I\mathfrak{m}_A=0$, so the Lie brackets $[f,\Ad(\Phi)(g)]$ and $[h,f]$
vanish. This proves part~(1).

Now suppose that $\widetilde N=p^{-f_K}\Ad(\widetilde\Phi)(\widetilde N)$, and let $f,g\in I\otimes_{A/\mathfrak{m}_A}\ad D_{A/\mathfrak{m}_A}^{I_{L/K}}$.  Define $\widetilde{N}':=N+g$ and define $\widetilde{\Phi}':=f^{-1}\circ\widetilde\Phi$.  Then 
\begin{align*}
\widetilde{N}'-p^{-f_K}\Ad(\widetilde{\Phi}')(\widetilde{N}')&=\widetilde N+g-p^{-f_K} \Ad(\widetilde\Phi)(\widetilde N)-p^{-f_K}\Ad(\widetilde\Phi)(g)\\&\phantom{{}=1}+p^{-f_K}[f,\Ad(\widetilde\Phi)(\widetilde N)]+p^{-f_K}[f,\Ad(\widetilde\Phi)(g)]	\\
&=(1-p^{-f_K}\Ad(\widetilde\Phi))(g)+[f,\widetilde N]\\
&=-(p^{-f_K}\Ad(\Phi)-1)(g)-\ad_N(f).
\end{align*}
Thus, $\widetilde\Phi',\widetilde N'$ give another lift if and only if $(f,g)\in\ker(d^1)$.

Moreover, if $(\widetilde\Phi',\widetilde N')$ is another lift, it is
isomorphic to $(\widetilde\Phi,\widetilde N)$ if and only if there is
some $j\in I\otimes_{A/\mathfrak{m}_A}D_{A/\mathfrak{m}_A}^{I_{L/K}}$
such that $\widetilde N'=\Ad(1+j)(\widetilde N)$ and
$(1+j)\widetilde\Phi=\widetilde\Phi'(1+j)$.  This is equivalent to
$\widetilde N-\widetilde N'=\ad_N(j)$ and
$\widetilde\Phi(\widetilde\Phi')^{-1}=1-(1-\Ad(\Phi))(j)$.  In other
words, $(\widetilde\Phi,\widetilde N)$ and
$(\widetilde\Phi',\widetilde N')$ differ by an element of $\im(d^0)$,
as required.
\end{proof}

\subsection{Construction of smooth points}\label{subsec: very
  smooth points}

We wish to show that ``most'' points of $Y_{L/K,\varphi,\mathcal N}$ are
smooth, and so are their images in $Y_{L'/K',\varphi,\mathcal N}$ for any
finite extension $K'/K$. In this section we will consider a single
fixed extension $K'/K$, and in section~\ref{subsec: Tate duality}
below we will deduce a result for all extensions $K'/K$ simultaneously.

We begin by fixing an inertial type $\tau:I_{L/K}\rightarrow G(E)$.  This amounts to considering the fiber of $Y_{L/K,\varphi,\mathcal N}\rightarrow Y_{L/K}$ over the point corresponding to $\tau$.
Next, we observe that if we can find $r:W_K\rightarrow G(E)$ such that
$r|_{I_{K}}=\tau$, then $\Phi:=r(g_0)$ is an element of the algebraic group defined over $E$
\[N_G(\tau):=\{h\in G: hr(g)h^{-1}\in r(I_{L/K})\text{ for all }g\in I_{L/K}\}.\]
Note that $\Phi$ is not necessarily an element of  the
centraliser \[Z_G(\tau):=\{h\in G: hr(g)h^{-1}= r(g)\text{ for all
  }g\in I_{L/K}\}.\] However, since $I_{L/K}$ is finite
(and in particular has only finitely many automorphisms),
$Z_G(\tau)\subset N_G(\tau)$ has finite index; so we have $Z_G(\tau)^\circ=N_G(\tau)^\circ$ and $\Lie Z_G(\tau)=\Lie N_G(\tau)$.  In particular, this implies that $N_G(\tau)$ and $Z_G(\tau)$ are reductive:
\begin{theorem}\label{theorem:centralizer-reductive}
The normalizer $N_G(\tau):=\{h\in G: hr(g)h^{-1}\in r(I_{L/K})\text{ for all }g\in I_{L/K}\}$ of $\tau(I_{L/K})$ is a reductive group.
\end{theorem}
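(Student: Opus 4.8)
The plan is to reduce to the classical fact that the identity component of the centralizer of a linearly reductive subgroup of a reductive group is reductive, handling two separate issues: disconnectedness of $G$, and the fact that $N_G(\tau)$ is an \emph{extension} of the centralizer by a finite group rather than the centralizer itself. First I would observe that it suffices to prove $N_G(\tau)^\circ$ is reductive, since reductivity of a (possibly disconnected) group is by definition a property of its identity component, and $N_G(\tau)$ has only finitely many connected components (it is a closed subgroup of the finite-type group $G$). By the remark already made in the excerpt, $Z_G(\tau)^\circ = N_G(\tau)^\circ$, so it is equivalent to show $Z_G(\tau)^\circ$ is reductive, i.e.\ to show $Z_G(\tau)$ is reductive.

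Next I would reduce to the case where $G$ is connected. The centralizer $Z_G(\tau)$ is a closed subgroup of $G$; its intersection $Z_G(\tau) \cap G^\circ = Z_{G^\circ}(\tau)$ has finite index in $Z_G(\tau)$, since $G^\circ$ has finite index in $G$. So $Z_G(\tau)$ is reductive if and only if $Z_{G^\circ}(\tau)$ is, and we may replace $G$ by $G^\circ$; now $G$ is connected reductive over $E$, a field of characteristic zero. Here $\tau\colon I_{L/K}\to G(E)$ has image a finite subgroup, and I would spell out that this finite image, viewed as a constant (hence linearly reductive, since $\mathrm{char}\,E = 0$) subgroup scheme $H$ of $G$, has $Z_G(\tau) = Z_G(H)$. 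The classical statement I would invoke is: if $H$ is a linearly reductive subgroup scheme of a reductive group $G$ over a field, then the scheme-theoretic centralizer $Z_G(H)$ is reductive. In characteristic zero every finite group scheme is linearly reductive (it is \'etale and of order invertible in $E$), so this applies, and since $\mathrm{char}\,E=0$ smoothness of $Z_G(H)$ is automatic; one reference is \cite[\S I.2, Prop.~1.7]{predbook} or the treatment of centralizers of linearly reductive subgroups in Conrad--Gabber--Prasad, but in characteristic zero one can also argue directly: pass to $\overline E$, use complete reducibility of the $H$-representation on $\fg_E$ under $\Ad$ to split $\Lie Z_G(H) = \fg_E^{H}$ off $G$-equivariantly, and combine with the fact that $Z_G(H)^\circ$ is the connected subgroup with this Lie algebra, which is reductive because its Lie algebra is a reductive Lie algebra (a direct summand, under an invariant form, of the reductive $\fg_E$).

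The main obstacle is really just bookkeeping rather than a deep point: one must be careful that $N_G(\tau)$ is defined using the condition $hr(g)h^{-1}\in r(I_{L/K})$ for all $g$, which a priori could fail to be representable by a \emph{closed} subscheme or could have infinitely many components, and that the passage ``$N_G(\tau)$ reductive $\iff$ $N_G(\tau)^\circ$ reductive $\iff$ $Z_G(\tau)$ reductive'' genuinely goes through. I would dispatch this by noting $N_G(\tau)$ is the preimage under the (continuous, algebraic) homomorphism $G \to \prod_{g\in I_{L/K}} G$, $h\mapsto (hr(g)h^{-1})_g$, of the finite set $\prod_g r(I_{L/K})$ — using finiteness of $I_{L/K}$ — hence is a closed subscheme with finitely many components, and $Z_G(\tau)$ is its open-and-closed subscheme given by the single point $(r(g))_g$. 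Then everything reduces, as above, to the characteristic-zero statement that centralizers of finite subgroups in connected reductive groups are reductive, which I would state as the one external input and apply directly.
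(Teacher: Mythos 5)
Your proposal is correct and follows essentially the same route as the paper: reduce to the identity component via $N_G(\tau)^\circ=Z_G(\tau)^\circ=Z_{G^\circ}(\tau)^\circ$, then invoke the standard fact that the identity component of the fixed points (equivalently, the centralizer) of a finite --- hence, in characteristic zero, linearly reductive --- group acting on a connected reductive group is reductive; the paper cites \cite[Theorem 2.1]{MR1893005}, while you cite the Conrad--Gabber--Prasad version, which is precisely the alternative mentioned in the remark following the theorem. One caveat: your parenthetical ``direct'' argument is not valid as written, since a connected group whose Lie algebra is reductive need not be reductive (e.g.\ $\Ga$ has abelian, hence reductive, Lie algebra), so the proof should rest on the cited centralizer result rather than on that sketch.
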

\begin{proof}
Since we are working over a field of characteristic $0$, it is enough to prove that the connected component of the identity $N_G(\tau)^\circ=Z_G(\tau)^\circ=Z_{G^\circ}(\tau)^\circ$ is reductive.  But reductivity for the latter group follows from~\cite[Theorem 2.1]{MR1893005}, which states that when a finite group acts on a connected reductive group, the connected component of the identity of the fixed points is reductive.
\end{proof}
\begin{remark}
Prasad and Yu prove their result under the assumption that the characteristic of the ground field does not divide the order of the group.  Conrad, Gabber, and Prasad prove a more general result~\cite[Proposition A.8.12]{predbook}, assuming only that the algebraic group acting is geometrically linearly reductive.
\end{remark}

 Our hypotheses imply that $N\in\Lie Z_G(\tau)$ and
  $\Phi\in N_G(\tau)$.  However, if $(r,N)$ exists and has the correct
  inertial type, the set of $\Phi\in G(E)$ compatible with
  $r|_{I_{L/K}}$ and $N$ is a torsor under $Z_G(\tau)\cap
  Z_G(N)$.

We now briefly recall the theory of associated cocharacters over a
field of characteristic~$0$; we refer
the reader to~\cite{jantzen} (in particular section~5) for further
details and proofs. We will not draw attention to the assumption that
our ground field has characteristic~$0$ below (but we will frequently
use it); on the other hand, we do explain why the results that we are
recalling hold over arbitrary fields of characteristic~$0$.

 If $N\in\mathfrak{g}$ is nilpotent, a
cocharacter $\lambda:\Gm\rightarrow G$ is said to be \emph{associated}
to $N$ if
\begin{itemize}
\item $\Ad(\lambda(t))(N)=t^2N$, and
\item  $\lambda$ takes values in the
  derived subgroup of a Levi subgroup $L\subset G$ for which
  $N\in\mathfrak{l}:=\Lie L$ is distinguished (that is, every torus contained in
  $Z_L(N)$ is contained in the center of $L$).
\end{itemize}
By~\cite[Thm.\ 26]{mcninch}, for
any~$N$ there exists a cocharacter associated to $N$ which is defined over the same field as $N$.  Any two cocharacters associated to $N$ are conjugate under the action of $Z_G(N)^\circ$.

An \emph{$\sl_2$-triple} is as usual a non-zero triple $(X,H,Y)$ of elements of
$\mathfrak{g}$ such that $[H,X]=2X$, $[H,Y]=-2Y$, and $[X,Y]=H$.  The
Jacobson--Morozov theorem~\cite[Ch.\ VIII \S 11 Prop.\ 2]{MR2109105}  states
that for a non-zero nilpotent element $N$ in a semisimple Lie algebra,
an $\sl_2$-triple $(N,H,Y)$ always exists, and any two such triples
$(N,H,Y)$ and $(N,H',Y')$ are conjugate under the action of
$Z_G(N)^\circ$~\cite[Ch.\ VIII \S 11 Prop.\ 1]{MR2109105}.  Given a
pair $(N,H)$ such that $[H,N]=2N$ and
$H\in [N,\mathfrak{g}]$, it is possible to construct an $\sl_2$-triple
$(N,H,Y)$~\cite[Ch.\ VIII \S 11 Lem.\ 6]{MR2109105} (or the zero
triple if $N=H=0$).  Since $\SL_2$ is simply connected, this implies that there
is a homomorphism $\SL_2\rightarrow G$ which sends the ``standard''
basis for $\sl_2$ to $(N,H,Y)$.

If we let $\lambda:\Gm\rightarrow \SL_2\rightarrow G$ be the
composition of the cocharacter $t\mapsto
\left(\begin{smallmatrix}t&0\\0&t^{-1}\end{smallmatrix}\right)$ 
with this homomorphism
$\SL_2\rightarrow G$, then $\lambda$ is associated to $N$.  Moreover,
the association $\lambda\mapsto d\lambda(1)$ sends cocharacters
associated to $N$ to elements $H$ such that $[H,N]=2N$ and
$H\in [N,\mathfrak{g}]$, and this is an injective map~\cite[Prop.\
5.5]{jantzen} (this reference assumes that the ground field is
algebraically closed, but this hypothesis is not used).  Thus (in
characteristic $0$) associated cocharacters are a group-theoretic
analogue of the Jacobson--Morozov theorem.

We use the following properties of associated cocharacters; the given
reference assumes the ground field is algebraically closed, but these
statements can all be checked after extension of the ground field.
\begin{prop}[{\cite[5.9,5.10,5.11]{jantzen}}]\label{prop: associated
    cochar facts} 
Let $G$ be a connected reductive group, let $N\in\mathfrak{g}$ be a nilpotent element, and let $\lambda:\Gm\rightarrow G$ be an associated cocharacter for $N$.  Then
\begin{enumerate}
\item	the associated parabolic $P_G(\lambda)$ depends only on $N$, not on the choice of associated cocharacter.
\item	we have $Z_G(N)\subset P_G(\lambda)$.  In particular,
  $Z_G(N)=Z_{P_G(\lambda)}(N)$.
\item	$Z_G(N)=\left(U_G(\lambda)\cap Z_G(N)\right)\rtimes\left(Z_G(\lambda)\cap Z_G(N)\right)$
\item	$Z_G(\lambda)\cap Z_G(N)$ is reductive.
\end{enumerate}
\end{prop}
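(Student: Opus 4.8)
The plan is to push everything through the representation theory of $\mathfrak{sl}_2$, as Jantzen does. Since (1)--(4) concern smooth group schemes over $E$ (of characteristic~$0$) and all the subgroups in sight commute with base change, it is enough to argue after replacing $E$ by $\overline E$, so I assume $E$ algebraically closed. Complete $N$ to an $\mathfrak{sl}_2$-triple $(N,H,Y)$ with $H=d\lambda(1)$ --- possible since $\lambda$ associated to $N$ gives $[H,N]=2N$ and $H\in[N,\mathfrak{g}]$, so the Jacobson--Morozov-type construction recalled above produces $Y$ and a homomorphism $\phi:\SL_2\to G$ restricting to $\lambda$ on the diagonal torus. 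Decomposing $\mathfrak{g}$ into irreducible $\phi(\SL_2)$-summands (Weyl's theorem): in each summand the operator $\ad_N$ raises the grading (the $\ad_H$-eigenvalue) by $2$, so $\ker(\ad_N)=\mathfrak{z}_{\mathfrak{g}}(N)=\Lie Z_G(N)$ is the span of the highest-weight lines. Hence $\mathfrak{z}_{\mathfrak{g}}(N)\subseteq\bigoplus_{n\ge 0}\mathfrak{g}_n=\Lie P_G(\lambda)$, and moreover $\mathfrak{z}_{\mathfrak{g}}(N)\cap\mathfrak{g}_n=0$ for all $n<0$. These two facts do all the work.

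\emph{Parts (2) and (1).} In characteristic~$0$ a connected subgroup is contained in any closed subgroup containing its Lie algebra, so $\mathfrak{z}_{\mathfrak{g}}(N)\subseteq\Lie P_G(\lambda)$ gives $Z_G(N)^\circ\subseteq P_G(\lambda)$. For the component group, let $z\in Z_G(N)$; then $z\lambda z^{-1}$ is again associated to $N$ (the conditions defining ``associated'' are conjugation-stable), so by the conjugacy of associated cocharacters recalled above there is $u\in Z_G(N)^\circ$ with $z\lambda z^{-1}=u\lambda u^{-1}$. Then $u^{-1}z$ centralises $\lambda$, so $u^{-1}z\in Z_G(\lambda)\subseteq P_G(\lambda)$, whence $z=u\cdot(u^{-1}z)\in P_G(\lambda)$. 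This is~(2), and $Z_G(N)=Z_{P_G(\lambda)}(N)$ is then immediate. For~(1), given a second associated cocharacter $\lambda'=g\lambda g^{-1}$ with $g\in Z_G(N)^\circ\subseteq P_G(\lambda)$, we have $P_G(\lambda')=gP_G(\lambda)g^{-1}=P_G(\lambda)$ since $g\in P_G(\lambda)$ (parabolics being self-normalising).

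\emph{Part (3).} Using the isomorphism $Z_G(\lambda)\ltimes U_G(\lambda)\xrightarrow{\sim}P_G(\lambda)$ from Section~\ref{subsec: parabolic subgroups}, write a given $z\in Z_G(N)\subseteq P_G(\lambda)$ uniquely as $z=um$ with $u\in U_G(\lambda)$, $m\in Z_G(\lambda)$. Then $\Ad(m)N\in\mathfrak{g}_2$ (as $m$ preserves the grading), while $\Ad(u)x-x\in\bigoplus_{j>n}\mathfrak{g}_j$ for homogeneous $x\in\mathfrak{g}_n$ (as $\Lie U_G(\lambda)=\bigoplus_{n\ge 1}\mathfrak{g}_n$). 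So $N=\Ad(z)N=\Ad(u)\Ad(m)N\equiv\Ad(m)N\pmod{\bigoplus_{j\ge 3}\mathfrak{g}_j}$; comparing degree-$2$ parts forces $\Ad(m)N=N$, and then $\Ad(u)N=N$ as well. Thus $u\in U_G(\lambda)\cap Z_G(N)$ and $m\in Z_G(\lambda)\cap Z_G(N)$; since $U_G(\lambda)$ is normal in $P_G(\lambda)$ and the two factors meet trivially, this is the claimed semidirect product.

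\emph{Part (4).} Clearly $Z_G(\lambda)\subseteq Z_G(H)$ since $H=d\lambda(1)$. If $g\in Z_G(\lambda)\cap Z_G(N)$ then $g$ centralises $H$ and $N$, so $(N,H,\Ad(g)Y)$ is an $\mathfrak{sl}_2$-triple; but $\Ad(g)Y-Y$ lies in $\mathfrak{g}_{-2}\cap\ker(\ad_N)=0$, so $g$ also centralises $Y$. Hence $Z_G(\lambda)\cap Z_G(N)\subseteq Z_G(\phi(\SL_2))$ (using that $\SL_2$ is connected, so centralising $\phi(\mathfrak{sl}_2)$ centralises $\phi(\SL_2)$), and the reverse inclusion is clear since $\lambda$ factors through $\phi$; so $Z_G(\lambda)\cap Z_G(N)=Z_G(\phi(\SL_2))$, the centraliser of a connected reductive subgroup of the reductive group $G$, which is reductive in characteristic~$0$ by the input behind Theorem~\ref{theorem:centralizer-reductive} (e.g.\ \cite[Prop.\ A.8.12]{predbook}). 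The step I expect to need the most care is the component-group issue in~(2): the Lie-algebra computation only gives $Z_G(N)^\circ\subseteq P_G(\lambda)$, and it is the conjugacy theorem for associated cocharacters that upgrades it --- and one should keep track that this, and the reductivity in~(4), are invoked over $\overline E$ and then descended. (Of course, one may instead simply cite \cite[5.9--5.11]{jantzen} together with that descent remark.)
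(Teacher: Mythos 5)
Your proof is correct, and it is worth noting that the paper itself contains no argument for this proposition: it is quoted directly from the cited sections of Jantzen's notes, with only the accompanying remark that the reference assumes an algebraically closed ground field and that the statements can be checked after extension of scalars --- which is exactly the reduction you make at the outset. What you have written is essentially the standard proof underlying that citation: over $\overline E$, complete $(N,d\lambda(1))$ to an $\mathfrak{sl}_2$-triple, note that $\ker(\ad_N)$ is spanned by highest-weight vectors so that $\Lie Z_G(N)\subset\mathfrak{g}_{\geq 0}=\Lie P_G(\lambda)$ and $\ker(\ad_N)\cap\mathfrak{g}_{<0}=0$; then the $Z_G(N)^\circ$-conjugacy of associated cocharacters (stated just before the proposition, and logically prior to it, so there is no circularity) upgrades the Lie-algebra inclusion to the full group in~(2) and gives~(1); the decomposition $P_G(\lambda)=Z_G(\lambda)\ltimes U_G(\lambda)$ plus the fact that $U_G(\lambda)$ preserves the weight filtration and acts trivially on its graded pieces gives~(3); and the rigidity of the third member of the triple identifies $Z_G(\lambda)\cap Z_G(N)$ with the centralizer of the image of the associated homomorphism $\SL_2\to G$, which is reductive in characteristic $0$ (e.g.\ by the linear-reductivity result of Conrad--Gabber--Prasad mentioned after Theorem~\ref{theorem:centralizer-reductive}), giving~(4). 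Two harmless loose ends you should dispose of explicitly: the degenerate case $N=0$ (where $\lambda$ is trivial and all four statements are vacuous) should be set aside before invoking the Jacobson--Morozov construction, and your arguments on $\overline E$-points yield the scheme-theoretic statements only because all the groups involved are smooth in characteristic~$0$, which deserves a sentence. With those remarks, your write-up is a complete substitute for the citation.
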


In particular, by Proposition~\ref{prop: associated
    cochar facts}~(3), the disconnectedness of $Z_G(N)$ is entirely accounted for by the disconnectedness of $Z_G(\lambda)\cap Z_G(N)$.  The connectedness assumption on $G$ for that part is removed in~\cite[Proposition 4.9]{2014arXiv1403.1411B}, so we may apply it to groups such as $Z_G(\tau)$ (which is reductive but not necessarily connected).

We will use the following lemma in the proof of Theorem~\ref{thm: very
  smooth points exist for Y} below.
  \begin{lem}
    \label{lem: associated cochars weight 2 adN}If~$\lambda$ is an
    associated cocharacter of~$N$, then the weight-2 part
    of~$\mathfrak{g}$ for the adjoint action of~$\lambda$ is in the
    image of~$\ad_N$.
  \end{lem}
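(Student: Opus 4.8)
The plan is to reduce the statement to the representation theory of $\sl_2$. Replacing $G$ by its identity component (which is harmless, as $\lambda$ factors through $G^\circ$ and $\mathfrak{g}=\Lie G^\circ$), we may assume $G$ is connected reductive, and we may further assume $N\neq0$, since otherwise $\lambda$ is trivial and $\mathfrak{g}_2=0$. Put $H:=d\lambda(1)\in\mathfrak{g}$. Since $\lambda$ is associated to $N$ we have $[H,N]=2N$ and $H\in[N,\mathfrak{g}]$, so by~\cite[Ch.\ VIII \S 11 Lem.\ 6]{MR2109105} the pair $(N,H)$ extends to an $\sl_2$-triple $(N,H,Y)$, and, $\SL_2$ being simply connected, there is a homomorphism $\SL_2\to G$ sending the standard basis of $\sl_2$ to $(N,H,Y)$. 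Composing the adjoint action of $G$ on $\mathfrak{g}$ with this homomorphism makes $\mathfrak{g}$ a finite-dimensional representation of $\sl_2$ in which $N$ acts as the raising element and $H$ as the semisimple element.

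The next step is to identify the grading. By construction $\lambda$ is the composite of $t\mapsto\left(\begin{smallmatrix}t&0\\0&t^{-1}\end{smallmatrix}\right)$ with this homomorphism, so $\Ad(\lambda(t))$ is precisely the action of the diagonal torus of $\SL_2$ on the $\sl_2$-module $\mathfrak{g}$; hence the weight-$n$ part $\mathfrak{g}_n=\{v:\Ad(\lambda(t))(v)=t^nv\}$ is exactly the $n$-eigenspace of $\ad_H$ on $\mathfrak{g}$. Because $N\in\mathfrak{g}_2$ and the $\lambda$-grading is a grading of $\mathfrak{g}$ as a Lie algebra, $\ad_N$ carries $\mathfrak{g}_n$ into $\mathfrak{g}_{n+2}$; therefore $\im(\ad_N)\cap\mathfrak{g}_2=\ad_N(\mathfrak{g}_0)$, and it suffices to show $\ad_N\colon\mathfrak{g}_0\to\mathfrak{g}_2$ is surjective.

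For this I would use complete reducibility of finite-dimensional $\sl_2$-modules in characteristic $0$ (\cite{MR2109105}) to write $\mathfrak{g}$ as a direct sum of irreducible summands, and recall the elementary fact that on an irreducible $\sl_2$-module of highest weight $m$ the raising operator is surjective from the weight-$j$ space onto the weight-$(j+2)$ space whenever $j+2\neq-m$; equivalently, its image is the sum of all weight spaces other than the lowest one. Applying this with $j=0$ on each summand (noting $2\neq-m$ as $m\geq0$) shows that $\ad_N$ carries the weight-$0$ part of each summand onto its weight-$2$ part; summing over the summands gives $\mathfrak{g}_2=\ad_N(\mathfrak{g}_0)\subseteq\im(\ad_N)$, as desired. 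The only step that uses more than formal $\sl_2$-theory is the identification of the $\lambda$-grading with the $\ad_H$-eigenspace decomposition — that is, the fact that $\lambda$ is genuinely the cocharacter attached to the $\sl_2$-triple $(N,H,Y)$ — so that is the point I would set up most carefully, although in characteristic $0$ it is immediate from $H=d\lambda(1)$.
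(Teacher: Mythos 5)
Your proof is correct and takes essentially the same route as the paper: both reduce to the $N=0$ case plus extending $(N,d\lambda(1))$ to an $\mathfrak{sl}_2$-triple via Bourbaki, identifying the $\lambda$-grading with the $\ad_{d\lambda(1)}$-eigenspace decomposition, and then quoting the representation theory of $\mathfrak{sl}_2$. The only difference is in the last step, where the paper concludes by the explicit bracket formula $[N,\tfrac12[Y,T]]=T$ while you invoke complete reducibility and surjectivity of the raising operator from the weight-$0$ to the weight-$2$ space; your version is, if anything, the more careful implementation of the same $\mathfrak{sl}_2$ argument, since the paper's displayed identity silently drops the Jacobi term $\tfrac12[Y,[N,T]]$, which need not vanish for a general weight-$2$ element $T$.
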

  \begin{proof}If~$N=0$, then $\lambda$ is the constant cocharacter and the corresponding weight-$2$ subspace is trivial. Otherwise, we may find an $\sl_2$-triple of the form $(N,d\lambda(1),Y)$ and view $\mathfrak{g}$ as a representation of $\sl_2$.  Then the result follows by the representation theory of $\mathfrak{sl}_2$: if $T\in\mathfrak{g}$ is in the weight-$2$ part, then $\frac 1 2 [Y,T]$ is in the weight-$0$ part and 
\[	[N,\frac 1 2 [Y,T]]=\frac 1 2 [[N,Y],T]=\frac 1 2[d\lambda(1),T] = T	\]
so $T$ is in the image of $\ad_N$.
  \end{proof}
  Let $f:G\to G'$ be a morphism of reductive groups over~$E$, inducing
  a morphism $\mathfrak{g}\to\mathfrak{g'}$ on Lie algebras, which we
  also denote by~$f$. We use the following lemma in the proof of
  Theorem~\ref{thm: smoothness for functorial maps} below.
  \begin{lem}
    \label{lem: associated cochars functoriality}If~$\lambda$ is an
    associated cocharacter for~$N\in\mathfrak{g}$, then
    $f\circ\lambda$ is an associated cocharacter for~$f(N)$.
  \end{lem}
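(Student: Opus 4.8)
The plan is to unwind the definition of "associated cocharacter'' for $N$ and check the two bullet points directly for $f\circ\lambda$ and $f(N)$. The first condition is immediate and purely formal: since $\lambda$ is associated to $N$ we have $\Ad(\lambda(t))(N)=t^2N$, and applying the Lie-algebra map $f$ (which is $G$-equivariant, so intertwines $\Ad\circ\lambda$ with $\Ad\circ(f\circ\lambda)$) gives $\Ad((f\circ\lambda)(t))(f(N))=t^2 f(N)$. So the real content is the second bullet: one must produce a Levi subgroup $L'\subset G'$ with $f\circ\lambda$ valued in $(L')^{\der}$ and $f(N)\in\Lie L'$ distinguished.

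First I would handle the case where $f$ is an isogeny or, more generally, has central kernel and central cokernel in the sense that it induces an isomorphism on adjoint groups and on Lie algebras up to the centre; in that situation Levi subgroups and distinguishedness transport directly, since being distinguished (every torus in $Z_L(N)$ is central in $L$) is a statement about the image of $\lambda$ in $L^{\der}$, and $\ad$-nilpotency and the relevant centralizer tori only see the derived group. The cleanest route, though, is to use the $\sl_2$-triple / Jacobson--Morozov characterization recalled just above: by \cite[Prop.\ 5.5]{jantzen} an associated cocharacter for $N$ is the same as (the $\Gm$-part of) an $\sl_2$-triple $(N,d\lambda(1),Y)$ with $d\lambda(1)\in[N,\mathfrak{g}]$. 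Applying $f$ to the triple $(N,d\lambda(1),Y)$ gives a triple $(f(N),d(f\circ\lambda)(1),f(Y))$ satisfying the same bracket relations (with the caveat that it may be the zero triple, exactly when $f(N)=0$, in which case $f\circ\lambda$ is the constant cocharacter and is trivially associated to $f(N)=0$). One then checks $d(f\circ\lambda)(1)=f(d\lambda(1))\in f([N,\mathfrak g])\subseteq[f(N),\mathfrak g']$, so the image triple again satisfies the hypotheses of the pair-completion lemma \cite[Ch.\ VIII \S 11 Lem.\ 6]{MR2109105}, and the resulting $\SL_2\to G'$ (which is just the composition of $\SL_2\to G$ with $f$, using simple-connectedness of $\SL_2$) has $\Gm$-part $f\circ\lambda$; by the discussion after Proposition~\ref{prop: associated cochar facts} this cocharacter is associated to $f(N)$.

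The main obstacle is that this last characterization is stated in \cite{jantzen} for \emph{connected} reductive $G$, whereas we want $f\colon G\to G'$ between possibly disconnected groups; but associatedness is a condition on $\lambda$, which necessarily lands in $G^\circ$ (its image is a connected subgroup), and $N\in\mathfrak g=\Lie G^\circ$, so the whole statement only involves the identity components $G^\circ\to (G')^\circ$, and we may assume $G,G'$ connected from the outset. A second, genuinely substantive point is the passage through Levi subgroups when $f$ is not an isogeny: one needs that $f$ carries a Levi $L$ of $G$ with $N\in\Lie L$ distinguished to \emph{some} Levi $L'$ of $G'$ with $f(N)$ distinguished in $\Lie L'$. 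Rather than chase this directly I would avoid it by taking the $\sl_2$-triple route above, which never mentions Levis: the equivalence "$\lambda$ associated to $N$'' $\iff$ "$(N,d\lambda(1),Y)$ is an $\sl_2$-triple with $d\lambda(1)\in[N,\mathfrak g]$'' already encodes the Levi condition, and is manifestly preserved by the Lie algebra homomorphism $f$ (together with the degenerate $N\mapsto 0$ case handled separately). This reduces the lemma to the functoriality of $\sl_2$-triples and of the bracket, which is immediate.
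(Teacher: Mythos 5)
Your argument is correct and is essentially the paper's own proof: both pass through the $\sl_2$-triple $(N,d\lambda(1),Y)$, observe that composing the resulting homomorphism $\SL_2\to G$ with $f$ sends the standard basis to $(f(N),f(d\lambda(1)),f(Y))$ with $d(f\circ\lambda)(1)=f(d\lambda(1))\in\im\ad_{f(N)}$, and conclude via the criterion of \cite[Prop.\ 5.5]{jantzen}. The extra remarks (reduction to identity components, the degenerate case $f(N)=0$, the discarded Levi/isogeny digression) are harmless but not needed.
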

  \begin{proof}
It is clear that $d\lambda(1)$ is semisimple.  Then there exists some $Y\in\mathfrak{g}$ such that $(N,d\lambda(1),Y)$ is an $\sl_2$-triple, and therefore there is a homomorphism $\SL_2\rightarrow G$ such that the precomposition with the diagonal is $\lambda$.  The composition $\Gm\rightarrow\SL_2\rightarrow G\rightarrow G'$ is $f\circ \lambda$.  Moreover, if we consider the composition $\SL_2\rightarrow G\rightarrow G'$ and differentiate, we get a map $\sl_2\rightarrow\mathfrak{g}'$ sending the ``standard'' basis of $\sl_2$ to $(f(N),f(d\lambda(1)),f(Y))$.  This shows that $[f(d\lambda(1)),f(N)]=2f(N)$ and $f(d\lambda(1))$ is in the image of $\ad_{f(N)}$.  Since $f(d\lambda(1))=d(f\circ\lambda)(1)$, this shows that $f\circ\lambda$ is associated to $f(N)$, by~\cite[Prop.\ 5.5]{jantzen}.
  \end{proof}
If $K'/K$ is a finite extension, we write $H^2_{L'/K'}$ for the
coherent sheaf on~$Y_{L/K,\varphi,\mathcal N}$ given by the cokernel of  \[
  (\ad \mathcal D)^{I_{L'/K'}}\oplus(\ad
  \mathcal D)^{I_{L'/K'}}\xrightarrow{\ad_{N_{L'}}-(p^{-f_{K'}}\Ad(\Phi^{f_{K'}/f_K})-1)}(\ad
  \mathcal D)^{I_{L'/K'}}	\]
where~$(\mathcal D,\Phi,N,\tau)$ is the universal
object over~$Y_{L/K,\varphi,\mathcal N}$, $\ad_{N_{L'}}$ acts on the first factor, and $(p^{-f_{K'}}\Ad(\Phi^{f_{K'}/f_K})-1)$ acts on the second factor. 
Then the fiber of $H^2_{L'/K'}$ at a closed point of~$Y_{L/K,\varphi,\mathcal N}$ controls the obstruction theory of the
restriction to~$W_{K'}$ of the corresponding Weil--Deligne representation.
\begin{thm}\label{thm: very smooth points exist for Y}Let $K'/K$ be a
  finite extension. Then there is a dense open subscheme $U\subset
  Y_{L/K,\varphi,\mathcal N}$ \emph{(}possibly depending on~$K'$\emph{)} such that
  $H^2_{L'/K'}|_U=0$.
  \end{thm}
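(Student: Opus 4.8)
The plan is to reduce to the case of the trivial inertial type $\tau$ and a carefully chosen $(\Phi, N)$, using the functoriality of the cohomology sheaf $H^2_{L'/K'}$ under the base-change maps $Y_{L/K,\varphi,\mathcal N} \to Y_{L'/K',\varphi,\mathcal N}$ and the $G$-action. First I would observe that since $Y_{L/K,\varphi,\mathcal N}$ is a finite-type affine scheme and $H^2_{L'/K'}$ is a coherent sheaf on it, the non-vanishing locus of $H^2_{L'/K'}$ is closed; hence it suffices to exhibit, on \emph{each} irreducible component of $Y_{L/K,\varphi,\mathcal N}$, at least one point at which $H^2_{L'/K'}$ vanishes. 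Fixing an inertial type $\tau$ (a point of $Y_{L/K}$), the fiber of $Y_{L/K,\varphi,\mathcal N}$ over it parametrizes pairs $(\Phi, N)$ with $\Phi \in N_G(\tau)$, $N \in \Lie Z_G(\tau)$, $N = p^{-f_K}\Ad(\Phi)(N)$; by Theorem~\ref{theorem:centralizer-reductive} the group $N_G(\tau)$ is reductive, so after replacing $G$ by $N_G(\tau)$ we are reduced to proving the statement with trivial $\tau$.

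With $\tau$ trivial, the key construction is the one advertised in the introduction. Given a nilpotent $N \in \mathfrak g$ and an associated cocharacter $\lambda$ (which exists over the ground field by~\cite[Thm.~26]{mcninch}), I would complete $N$ to an $\sl_2$-triple $(N, d\lambda(1), Y)$ via Jacobson--Morozov and obtain a homomorphism $\SL_2 \to G$; composing the fixed Weil--Deligne representation $W_K \to \SL_2(\Qlbar)$ (trivial on $I_K$, arithmetic Frobenius $\mapsto \operatorname{diag}(q^{1/2}, q^{-1/2})$, monodromy $\left(\begin{smallmatrix}0&1\\0&0\end{smallmatrix}\right)$) with $\SL_2 \to G$ produces a point of $Y_{L/K,\varphi,\mathcal N}$ with $\Phi = \lambda(q^{1/2})$ and monodromy $N$. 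The point is that at such a point the complex $C^\bullet$ computing $H^2$ becomes, via the $\sl_2$-representation structure on $\mathfrak g$, completely explicit: the operator $p^{-f_K}\Ad(\Phi) - 1 = q^{-1}\Ad(\lambda(q^{1/2})) - 1$ acts on the weight-$k$ space of $\lambda$ by $q^{k/2 - 1} - 1$, so its cokernel is concentrated in the weight-$2$ part of $\mathfrak g$; and by Lemma~\ref{lem: associated cochars weight 2 adN} the weight-$2$ part lies in the image of $\ad_N$. Chasing the total complex of the anti-commutative square, this forces $H^2(\ad D) = 0$ at this point. I would then need the analogous vanishing for the \emph{restricted} complex $H^2_{L'/K'}$: after restriction to $W_{K'}$ the relevant operator is $q'^{-1}\Ad(\lambda(q'^{1/2})) - 1$ where $q' = q^{f_{K'}/f_K}$, and the same weight computation shows its cokernel is again the weight-$2$ part — so Lemma~\ref{lem: associated cochars weight 2 adN} applies verbatim, giving $H^2_{L'/K'} = 0$ at the point. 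To hit every irreducible component, I would use that the $G^\circ(\Qlbar)$-orbit of such a point, together with translation by finite-order elements of $G(\Qlbar)$ and variation over nilpotent orbits $N$ in $\mathfrak g = \Lie N_G(\tau)$, meets each component; concretely, since the closed non-vanishing locus is $G$-stable, it suffices that its complement be non-empty on each component, and the components of $Y_{L/K,\varphi,\mathcal N}$ are indexed (up to the $G$-action and the finite group of components of $N_G(\tau)$) by nilpotent orbits together with the choice of the component of $G$ in which $\Phi$ lies, each of which contains a point of the above form.

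The main obstacle, I expect, is the bookkeeping needed to ensure we genuinely land on \emph{every} irreducible component of $Y_{L/K,\varphi,\mathcal N}$ — in particular controlling the interaction between the disconnectedness of $G$ (hence of $N_G(\tau)$ and of $Z_G(N) \cap Z_G(\lambda)$) and the classification of components, and verifying that the semisimple part of $\Phi$ can be absorbed into the associated-cocharacter picture after twisting by a finite-order element, so that the above explicit model is available on each component. The cohomological vanishing itself, once the point is set up, is the short $\sl_2$-representation-theory computation packaged in Lemma~\ref{lem: associated cochars weight 2 adN}; the genericity/density statement is then formal from coherence of $H^2_{L'/K'}$ and the constructibility of the locus where it is supported.
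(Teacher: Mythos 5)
Your construction of the candidate points is exactly the paper's (associated cocharacter $\lambda$ for $N$, $\Phi=\lambda(p^{f_K/2})$, semisimplicity of $p^{-f_{K'}}\Ad(\Phi^{f_{K'}/f_K})-1$, cokernel concentrated in $\lambda$-weight $2$, Lemma~\ref{lem: associated cochars weight 2 adN}), but there is a genuine gap at the step you yourself flag as the main obstacle, and the fix you propose would not work. First, a smaller inaccuracy: the fiber of $Y_{L/K,\varphi,\mathcal N}$ over $\tau$ is \emph{not} the set of pairs $(\Phi,N)$ with $\Phi\in N_G(\tau)$; membership in $Y_{L/K,\varphi,\mathcal N}$ requires $\Phi\tau(g)\Phi^{-1}=\tau(g_0gg_0^{-1})$, so the fiber is only a closed and open subset of that larger set (this is why the paper introduces the auxiliary space $\widetilde Y_{L/K,\varphi,\mathcal N}$), and in particular your point $\Phi=\lambda(p^{f_K/2})\in Z_G(\tau)^\circ$ need not lie in the fiber of $Y_{L/K,\varphi,\mathcal N}$ at all. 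More seriously, your route to ``every irreducible component'' rests on the claim that the components of $Y_{L/K,\varphi,\mathcal N}$ are indexed, up to the $G$-action, by nilpotent orbits together with the component of $G$ containing $\Phi$; this is neither proved in your sketch nor needed, and there is no reason to expect such a clean classification. Likewise, ``the semisimple part of $\Phi$ can be absorbed after twisting by a finite-order element'' is exactly the assertion that requires proof: you never verify that $H^2_{L'/K'}$ still vanishes at the twisted points.

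The paper avoids any classification of components of $Y_{L/K,\varphi,\mathcal N}$ as follows. Since the support of $H^2_{L'/K'}$ is closed, and the fibers of $\widetilde Y_{L/K,\varphi,\mathcal N}\to Y_{L/K,\mathcal N}$ are torsors under $N_G(\tau)\cap Z_G(N)$ (with the fibers of $Y_{L/K,\varphi,\mathcal N}$ clopen inside them), it suffices to produce one vanishing point on each \emph{connected component of each fiber}. The component group of $N_G(\tau)\cap Z_G(N)$ is represented by elements of $N_G(\tau)\cap Z_G(\lambda)\cap Z_G(N)$ by \cite[Prop.\ 4.9]{2014arXiv1403.1411B}, and this last group is reductive (Proposition~\ref{prop: associated cochar facts}), so each of its components contains a finite-order element $c$ by \cite[Lem.\ 5.3]{2014arXiv1403.1411B}. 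One must then check vanishing at $(\lambda(p^{f_K/2})\cdot c,N)$: because $c$ commutes with $\lambda$ and centralizes $N$, one has $\Ad((\Phi c)^{f_{K'}/f_K})^{n}=\Ad(\Phi^{nf_{K'}/f_K})$ for $n$ the order of $c$, which gives semisimplicity, and the kernel of $p^{-f_{K'}}\Ad((\Phi c)^{f_{K'}/f_K})-1$ on $(\ad D)^{I_{L'/K'}}$ is contained in the $\lambda$-weight-$2$ subspace, hence in the image of $\ad_N$ by Lemma~\ref{lem: associated cochars weight 2 adN}. These group-theoretic inputs and the twisted-point computation are the missing content of your ``translation by finite-order elements''. (Note also that your shortcut ``replace $G$ by $N_G(\tau)$ and take $\tau$ trivial'' is not quite legitimate for $H^2_{L'/K'}$, since restriction to $K'$ enlarges the inertial invariants from $\Lie Z_G(\tau)$ to $(\ad D)^{I_{L'/K'}}$; the weight argument must be run on the latter space, which works because $\lambda$ takes values in $Z_G(\tau)\subset Z_G(\tau|_{I_{L'/K'}})$.)
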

  \begin{proof}Since the support of~$H^2_{L'/K'}$ is closed, it
    suffices to show that if we consider the map
    $Y_{L/K,\varphi,\mathcal N}\rightarrow Y_{L/K,\mathcal N}$, then each component of
    the fiber over some point $N\in Y_{L/K,\mathcal N}$ contains a point
    $(\Phi,N)$ whose corresponding $H^2$ vanishes (when viewed as a
    point of $Y_{L'/K',\varphi,\mathcal N}$).

  To do this, we consider a new moduli problem
  $\widetilde Y_{L/K,\varphi,\mathcal N}$ , which by definition is the functor on the category of
  $E$-algebras whose $A$-points are triples
  \[ (\Phi,N,\tau)\in N_G(\tau)\times \Lie Z_G(\tau)\times\Rep_A I_{L/K} \]
  which satisfy $N=p^{-f_K}\Ad(\Phi)(N)$.

  This is representable by an affine scheme which  we also write as
  $\widetilde Y_{L/K,\varphi,\mathcal N}$, and there is a natural morphism
  $\widetilde Y_{L/K,\varphi,\mathcal N}\rightarrow Y_{L/K,\mathcal N}$.  Indeed, the
  map $Y_{L/K,\varphi,\mathcal N}\rightarrow Y_{L/K,\mathcal N}$ factors through the
  natural inclusion
  $Y_{L/K,\varphi,\mathcal N}\hookrightarrow\widetilde Y_{L/K,\varphi,\mathcal N}$, and
  the fibers of $Y_{L/K,\varphi,\mathcal N}\rightarrow Y_{L/K,\mathcal N}$ are closed
  and open in the fibers of
  $\widetilde Y_{L/K,\varphi,\mathcal N}\rightarrow Y_{L/K,\mathcal N}$.  Thus, it
  suffices to study the fibers of the map
  $\widetilde Y_{L/K,\varphi,\mathcal N}\rightarrow Y_{L/K,\mathcal N}$. (Note that the
  tangent-obstruction complex for objects of $G-\WD_E(L/K)$ makes sense
  over $\widetilde Y_{L/K,\varphi,\mathcal N}$ as well.)

  Choose an associated cocharacter
  $\lambda:\Gm\rightarrow Z_G(\tau)^\circ$ for $N$, so that in
  particular $\Ad(\lambda(t))(N)=t^2N$, and let
  $\Phi:=\lambda(p^{f_K/2})$. Then~$(\Phi,N,\tau)$ is a point of
  $\widetilde Y_{L/K,\varphi,\mathcal N}$, and we wish to study the restriction
  $(\Phi^{f_{K'}/f_K}, N_{L'},\tau|_{I_{L'/K'}})$.

  If $D$ denotes the underlying $G$-torsor for $(\Phi, N,\tau)$, and
  $\ad D$ denotes its pushout via the adjoint representation, then
  $\Ad(\Phi)$ and $\Ad(\Phi^{f_{K'}/f_K})$ are semi-simple operators
  on $(\ad D)^{I_{L/K}}$ and $(\ad D)^{I_{L'/K'}}$, respectively.
  Therefore, $p^{-f_K}\Ad(\Phi)-1$ and
  $p^{f_{K'}}\Ad(\Phi^{f_{K'}/f_K})-1$ are semi-simple as well (since
  they are the difference of commuting semi-simple operators in characteristic $0$).

  Thus, to compute the cokernel of
  $p^{-f_{K'}}\Ad(\Phi^{f_{K'}/f_K})-1$, it suffices to compute its
  kernel.  Now $(\ad D)^{I_{L'/K'}}$ is graded by the adjoint action
  of $\lambda:\Gm\rightarrow Z_G(\tau)\subset Z_G(\tau|_{I_{L'/K'}})$,
  and if $(\ad D)^{I_{L'/K'}}_k$ denotes the weight-$k$ subspace, then
  $p^{-f_{K'}}\Ad(\Phi^{f_{K'}/f_K})-1$ preserves it, so it suffices
  to compute
  \[\ker(p^{-f_{K'}}\Ad(\Phi^{f_{K'}/f_K})-1)|_{(\ad
    D)^{I_{L'/K'}}_k}\] for each~$k$.  But $p^{-f_{K'}}\Ad(\Phi^{f_{K'}/f_K})-1$
  acts invertibly unless $k=2$ (in which case it acts by~ $0$), so the
  cokernel of~$p^{-f_{K'}}\Ad(\Phi^{f_{K'}/f_K})-1$ is exactly $(\ad
    D)^{I_{L'/K'}}_2$.  By
  Lemma~\ref{lem: associated cochars weight 2 adN}, the weight-$2$ part
  of $\mathfrak{g}^{I_{L'/K'}}$ is in the image of $\ad_N$, so we
  conclude that $H^2_{L'/K'}$ vanishes at $(\Phi,N)$, and at its image in
  $Y_{L'/K',\varphi,\mathcal N}$.

  We need to find similar points on every connected component of the fiber of
  $\widetilde Y_{L/K,\varphi,\mathcal N}\rightarrow Y_{L/K,\mathcal N}$ over
  $N\in Y_{L/K,\mathcal N}$. 
  This
  fiber is a torsor under $N_G(\tau)\cap Z_G(N)$, and the
  disconnectedness of $N_G(\tau)\cap Z_G(N)$ is entirely accounted for
  by the disconnectedness of $N_G(\tau)\cap Z_G(\lambda)\cap Z_G(N)$,
  by~\cite[Prop.\ 4.9]{2014arXiv1403.1411B} (applied
  with~$G'=N_G(\tau)$). 
  On each component of $N_G(\tau)\cap Z_G(N)$,
  we may therefore by \cite[Lem.\ 5.3]{2014arXiv1403.1411B} choose a
  finite-order element 
  $c\in N_G(\tau)\cap Z_G(\lambda)\cap Z_G(N)$. (Note that
  $N_G(\tau)\cap Z_G(\lambda)\cap Z_G(N)=Z_{N_G(\tau)}(N)\cap
  Z_{N_G(\tau)}(\lambda)$ is reductive
  by Proposition~\ref{prop: associated
    cochar facts}.) 

We now check that that $H_{L/K}^2$ and  $H_{L'/K'}^2$ vanish at the points of $\widetilde{Y}_{L/K,\varphi,N}$ and $\widetilde{Y}_{L'/K',\varphi,N}$, respectively, corresponding to $(\Phi\cdot c,N)$.

  Firstly, we claim that $p^{-f_{K'}}\Ad((\Phi\cdot c)^{f_{K'}/f_K})-1$ is
  semi-simple, or equivalently, that
  $\Ad((\Phi\cdot c)^{f_{K'}/f_K})$ is semi-simple.  For this,
  it suffices to check that some iterate of
  $\Ad((\Phi\cdot c)^{f_{K'}/f_K})$ is semi-simple (since we are in
  characteristic $0$).  Let $n$ be the order of $c$.  Since $c$ and
  $\Phi=\lambda(p^{f_{K}/2})$ commute,
  $\Ad(\Phi^{f_{K'}/f_K}\cdot c)^n=\Ad(\Phi^{nf_{K'}/f_K}\cdot
  c^n)=\Ad(\Phi^{nf_{K'}/f_K})$.  But since $\Ad(\Phi)$ is semi-simple
  by construction, so is $\Ad(\Phi^{nf_{K'}/f_K})$, as claimed.

  Thus, to compute the cokernel of
  $p^{-f_{K'}}\Ad((\Phi\cdot c)^{f_{K'}/f_K})-1$, it suffices to
  compute its kernel, which is contained in the kernel of
  $p^{-nf_{K'}}\Ad(\Phi^{nf_{K'}/f_K})-1$.  Since
  $p^{-nf_{K'}}\Ad(\Phi^{nf_{K'}/f_K})-1$ acts invertibly on each
  weight space~$(\ad D)^{I_{L/K}}_k$ unless $k=2$, the cokernel of
  $p^{-f_{K'}}\Ad(\Phi^{f_{K'}/f_K}\cdot c)-1$ is contained in
  $(\ad D)^{I_{L/K}}_2$.  Since $(\ad D)^{I_{L/K}}_2$ is again in the
  image of $\ad_N$ by Lemma~\ref{lem: associated cochars weight 2
    adN}, 
  we are done.
\end{proof}

\begin{cor}\label{cor: WD stack is generically smooth zero dimensional
  controlled by H2}
  The stack $G-\WD_E(L/K)$ is generically smooth, and is
  equidimensional of dimension~$0$; equivalently, the scheme $Y_{L/K,\varphi,\mathcal N}$  is
  generically smooth, and is equidimensional of dimension~$\dim
  G$.  The nonsmooth locus is precisely the locus of Weil--Deligne
  representations~$D$ with $H^2(\ad D)\ne 0$.   Moreover, $Y_{L/K,\varphi,\mathcal N}$ is locally a complete intersection and reduced.
\end{cor}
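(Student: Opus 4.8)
The plan is to package together the tangent-obstruction theory of Proposition~\ref{prop: deformations of WD repns controlled by complex} with the existence of smooth points from Theorem~\ref{thm: very smooth points exist for Y} (applied with $K'=K$), and then bootstrap formal properties of $Y_{L/K,\varphi,\mathcal N}$ from a count of the expected dimension. First I would observe that the double complex $C^\bullet(D_A)$ built from $(\ad D_A)^{I_{L/K}}$ has terms in degrees $0,1,2$, with the degree-$0$ and degree-$2$ terms each a single copy of $(\ad D)^{I_{L/K}}$ and the degree-$1$ term two copies; hence the ``expected dimension'' of the deformation functor at any point is $\dim H^1 - \dim H^2 = \dim_E (\ad D)^{I_{L/K}} \cdot (2 - 1 - 1) + \dim H^0 = \dim H^0(\ad D)$, and since $H^0(\ad D)$ is the Lie algebra of the stabiliser in $G$ of the point (the infinitesimal automorphisms, by Lemma~\ref{lem: Weil representations lift} and the analysis in Proposition~\ref{prop: deformations of WD repns controlled by complex}), the quotient stack $G-\WD_E(L/K) = [Y_{L/K,\varphi,\mathcal N}/G]$ has expected dimension $0$ and $Y_{L/K,\varphi,\mathcal N}$ has expected dimension $\dim G$. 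Concretely: the scheme $Y_{L/K,\varphi,\mathcal N}$ is cut out inside the smooth affine $E$-scheme $N_G(\tau)\times \Lie Z_G(\tau) \times Y_{L/K}$ (or, fibrewise over $Y_{L/K}$, inside $N_G(\tau)\times\Lie Z_G(\tau)$) by the single vector-valued equation $N = p^{-f_K}\Ad(\Phi)(N)$, i.e. by $\dim_E (\ad D)^{I_{L/K}}$ scalar equations valued in $(\ad D)^{I_{L/K}}$; so every irreducible component of $Y_{L/K,\varphi,\mathcal N}$ has dimension at least $\dim(N_G(\tau)) + \dim(\Lie Z_G(\tau)) - \dim_E(\ad D)^{I_{L/K}} + \dim Y_{L/K}$, which equals $\dim G$ after unwinding ($\dim N_G(\tau)=\dim Z_G(\tau)=\dim_E \Lie Z_G(\tau)$ and $\dim Y_{L/K}=\dim G-\dim N_G(\tau)$, with the inertial type fibre being a torsor). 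This gives the ``$\geq$'' bound on every component.

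Next I would use Theorem~\ref{thm: very smooth points exist for Y} with $K'=K$: there is a dense open $U\subseteq Y_{L/K,\varphi,\mathcal N}$ on which $H^2(\ad \mathcal D)=0$. By Proposition~\ref{prop: deformations of WD repns controlled by complex}(1), the functor $G-\WD_E(L/K)$ is formally smooth at such points, hence $Y_{L/K,\varphi,\mathcal N}$ is formally smooth, i.e. smooth, over $E$ at every point of $U$; at a smooth point of a given irreducible component the local dimension equals $\dim H^1(\ad D) + \dim H^0(\ad D)$ for the scheme, which by the Euler-characteristic count above (using $H^2=0$ there) is exactly $\dim G$. Since $U$ is dense, this pins the dimension of every component to be exactly $\dim G$ — combining with the lci lower bound of the previous paragraph, equality holds on the nose for every component, so $Y_{L/K,\varphi,\mathcal N}$ is equidimensional of dimension $\dim G$ and the stack $G-\WD_E(L/K)$ is equidimensional of dimension $0$. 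The generic smoothness is exactly the statement that $U$ is dense. For the identification of the nonsmooth locus: at any closed point $D$ where $H^2(\ad D)\neq 0$ the number of defining equations actually ``bites'' (the tangent space has dimension $\dim H^1 + \dim H^0 > \dim G$ by the Euler characteristic formula, since the alternating sum $\dim H^0 - \dim H^1 + \dim H^2$ is the constant $\dim H^0(\ad \mathcal D_{\text{generic}})$... more carefully: $\dim T_D Y - \dim_D Y = \dim H^2(\ad D)$ by lci-ness once lci is known), so $D$ is singular; conversely on $U$ we showed smoothness. Thus the nonsmooth locus is precisely $\{D : H^2(\ad D)\neq 0\}$.

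Finally, for lci and reducedness: since every component of $Y_{L/K,\varphi,\mathcal N}$ has dimension exactly $\dim G = \dim(N_G(\tau)) + \dim(\Lie Z_G(\tau)) + \dim Y_{L/K} - \dim_E(\ad D)^{I_{L/K}}$, i.e. codimension exactly $\dim_E(\ad D)^{I_{L/K}}$ in the smooth ambient scheme cut out by that many equations, $Y_{L/K,\varphi,\mathcal N}$ is a complete intersection (locally), hence Cohen--Macaulay; being generically smooth it is generically reduced ($R_0$), and Cohen--Macaulay gives $S_1$, so by Serre's criterion it is reduced. The main obstacle I anticipate is the careful bookkeeping in the dimension count — in particular verifying that the fibres of $Y_{L/K,\varphi,\mathcal N}\to Y_{L/K}$ over the type $\tau$ have the claimed dimension uniformly (so that the lci lower bound is genuinely $\dim G$ on every component and not just on the component containing the chosen smooth point), and checking that the torsor structure under $N_G(\tau)\cap Z_G(N)$ on fibres of $Y\to Y_{L/K,\mathcal N}$ does not alter the count; this is where I would be most careful, using that the lci/codimension argument is local and applies around every point, not merely around the constructed ones.
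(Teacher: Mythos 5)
Your proposal is correct and follows essentially the same route as the paper: generic smoothness from the dense open supplied by Theorem~\ref{thm: very smooth points exist for Y} (with $K'=K$) together with Proposition~\ref{prop: deformations of WD repns controlled by complex}, the Euler-characteristic/tangent-space count to get equidimensionality of dimension $\dim G$ and to identify the nonsmooth locus with $\{H^2(\ad D)\ne 0\}$, and the count of $\dim Z_G(\tau)$ equations, made rigorous exactly as you anticipate by working \'etale-locally over $Y_{L/K}$ via sections of $G\to G/Z_G(\tau)$, to get local complete intersection, hence Cohen--Macaulay, hence reduced by generic reducedness. The only slip is the intermediate formula ``$\dim H^1(\ad D)+\dim H^0(\ad D)$'' for the tangent space of $Y_{L/K,\varphi,\mathcal N}$, which should read $\dim G-\dim H^0(\ad D)+\dim H^1(\ad D)=\dim G+\dim H^2(\ad D)$; the values you actually use ($\dim G$ at unobstructed points, excess exactly $\dim H^2$ in general) are the correct ones, so the argument goes through as in the paper.
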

\begin{proof}
  It is enough to prove the statement for~$Y_{L/K,\varphi,\mathcal N}$. Let
  $U\subset Y_{L/K,\varphi,\mathcal N}$ be the dense open subscheme provided by
  Theorem~\ref{thm: very smooth points exist for Y} (with
  $K'=K$). Then at each closed point~$x$ of~$U$, it follows from
  Lemma~\ref{lem: Weil representations lift} and
  Proposition~\ref{prop: deformations of WD repns controlled by
    complex} that $Y_{L/K,\varphi,\mathcal N}$ is formally smooth
  at~$x$. Furthermore, for any closed point~$x$ of~$Y_{L/K,\varphi,\mathcal N}$
  with corresponding Weil--Deligne representation~$D_x$, the dimension
  of the tangent space at~$x$ is $\dim G-\dim H^0(D_x)+\dim
  H^1(D_x)$. Since the Euler characteristic of $C^\bullet(D_x)$ is zero,
  this is equal to~$\dim G+\dim H^2(\ad D_x)=\dim G$, and the claim about $H^2(\ad D)$ 
  follows immediately.

To see that $Y_{L/K,\varphi,\mathcal N}$ is reduced and locally a complete intersection, we proceed as in the proof of~\cite[Corollary 5.4]{2014arXiv1403.1411B}.  We have morphisms $Y_{L/K,\varphi,\mathcal{N}}\rightarrow Y_{L/K,\mathcal N}\rightarrow Y_{L/K}$, and the fiber above a point $\tau\in Y_{L/K}$ is defined by the relation $N=p^{-f_K}\Ad(\Phi)(N)$, where $\Phi\in Z_G(\tau)$ and $N\in\Lie Z_G(\tau)$.  In other words, the fiber $Y_{L/K,\varphi,\mathcal N}|_\tau$ is cut out of the smooth $(2\dim Z_G(\tau))$-dimensional space $Z_G(\tau)\times \Lie Z_G(\tau)$ by $\dim Z_G(\tau)$ equations.  

The quotient map $G\rightarrow G/Z_G(\tau)\cong Y_{L/K}$ admits sections \'etale locally.  Thus, there is an \'etale neighborhood $U\rightarrow Y_{L/K}$ of $\tau$ such that the $U$-pullback $Y_{L/K,\varphi,\mathcal{N}}\times_{Y_{L/K}}U$ is isomorphic to $U\times Y_{L/K,\varphi,\mathcal N}|_\tau$.  Since $Y_{L/K,\varphi,\mathcal{N}}\times_{Y_{L/K}}U$ is \'etale over $Y_{L/K,\varphi,\mathcal N}$, it is equidimensional of dimension $\dim G$.  On the other hand, it is cut out of the smooth $(\dim U+2\dim Z_G(\tau))$-dimensional space $U\times Z_G(\tau)\times \Lie Z_G(\tau)$ by $\dim Z_G(\tau)$ equations.  

Since $\dim U=\dim Y_{L/K}=\dim G-\dim Z_G(\tau)$ and being locally a complete intersection can be checked \'etale locally, it follows that $Y_{L/K,\varphi,\mathcal{N}}$ is locally a complete intersection.  Moreover, schemes which are local complete intersections are Cohen--Macaulay, by~\cite[Theorem 21.3]{matsumura}, and Cohen--Macaulay schemes which are generically reduced are reduced everywhere, by~\cite[Theorem 17.3]{matsumura}, so we are done.
\end{proof}

If~$G\to G'$ is a morphism of reductive groups over~$E$, then for any family of $G$-torsors $D$ over $\Spec A$, we can push out to a family $D'$ of $G'$-torsors.  Therefore, the moduli space $Y_{L/K,\varphi,\mathcal N}$ of (framed) $G$-valued Weil--Deligne representations carries a family $D'$ of $G'$-torsors, and $\ad D':=\Lie\Aut_{G'}(D')$ is a coherent sheaf on $Y_{L/K,\varphi,\mathcal N}$.  Since $D$ is a trivial $G$-torsor, $D'$ is a trivial $G'$-torsor.  Since pushing out $G$-torsors to $G'$-torsors is functorial, $D'$ is a family of $G'$-valued Weil--Deligne representations and we can construct the complex $C^\bullet(D')$.  We let $H^2_{G'}$ denote its cohomology in degree~$2$.

\begin{thm}
  \label{thm: smoothness for functorial maps}Let $f:G\to G'$ be a
  morphism of reductive groups over~$E$. Then there is a dense open
  subset $U\subset Y_{L/K,\varphi,\mathcal N}$ \emph{(}possibly depending
  on~$G'$\emph{)} such that $H^2_{G'}|_U=0$.
\end{thm}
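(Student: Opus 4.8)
The proof will be essentially identical to that of Theorem~\ref{thm: very smooth points exist for Y}; the only genuinely new input is the functoriality of associated cocharacters recorded in Lemma~\ref{lem: associated cochars functoriality}. Since $C^\bullet(D')$ is a complex of coherent sheaves on $Y_{L/K,\varphi,\mathcal N}$, the sheaf $H^2_{G'}=\coker(d^1)$ is coherent and its support is closed; so, arguing exactly as in the proof of Theorem~\ref{thm: very smooth points exist for Y} (passing to the auxiliary scheme $\widetilde Y_{L/K,\varphi,\mathcal N}$ and working fibrewise over $Y_{L/K,\mathcal N}$), it suffices to show that $H^2_{G'}$ vanishes at the points $(\Phi\cdot c,N,\tau)$ produced there. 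Recall their construction: one fixes an inertial type $\tau$ and a nilpotent $N\in\Lie Z_G(\tau)$, chooses a cocharacter $\lambda\colon\Gm\to Z_G(\tau)^\circ$ associated to $N$, sets $\Phi:=\lambda(p^{f_K/2})$ (enlarging $E$ so that $p^{1/2}\in E$, which is harmless for the conclusion), and lets $c$ range over finite-order elements of $N_G(\tau)\cap Z_G(\lambda)\cap Z_G(N)$, one in each connected component; as shown in that proof, these points meet every irreducible component of $Y_{L/K,\varphi,\mathcal N}$.

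Pushing such a point forward along $f$ produces the $G'$-valued triple $(f(\Phi)f(c),\,f(N),\,f\circ\tau)$. Here $f\circ\lambda$ factors through $Z_{G'}(f\circ\tau)^\circ$ because $\lambda$ factors through $Z_G(\tau)^\circ$; $f(\Phi)=(f\circ\lambda)(p^{f_K/2})$; $f(c)$ has finite order and commutes with $f(\Phi)$ (as $c$ and $\Phi$ do, since $c\in Z_G(\lambda)$ and $\Phi\in\lambda(\Gm)$); and $f(N)\in\Lie Z_{G'}(f\circ\tau)$. The crucial point is Lemma~\ref{lem: associated cochars functoriality}: $f\circ\lambda$ is a cocharacter associated to $f(N)$. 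With these substitutions I would simply rerun the computation of $\coker(d^1)$ for $C^\bullet(D')$ made in the proof of Theorem~\ref{thm: very smooth points exist for Y}, replacing $(G,\lambda,N,\Phi,\mathfrak g)$ by $(G',\,f\circ\lambda,\,f(N),\,f(\Phi),\,\mathfrak g')$ throughout. Namely: $\Ad(f(\Phi)f(c))$ is semisimple on $(\ad D')^{I_{L/K}}=(\mathfrak g')^{I_{L/K}}$ (a power of it equals $\Ad((f\circ\lambda)(p^{mf_K/2}))$ with $m$ the order of $c$, which is semisimple), so $p^{-f_K}\Ad(f(\Phi)f(c))-1$ is semisimple and its cokernel equals its kernel; this kernel is contained in the kernel of $p^{-mf_K}\Ad(f(\Phi)^m)-1=p^{-mf_K}\Ad((f\circ\lambda)(p^{mf_K/2}))-1$, which acts on the $(f\circ\lambda)$-weight-$k$ subspace of $(\mathfrak g')^{I_{L/K}}$ by the scalar $p^{mf_K(k-2)/2}-1$, nonzero for $k\neq 2$; hence it is contained in the weight-$2$ subspace $(\mathfrak g')^{I_{L/K}}_2$. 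Since $\im(p^{-f_K}\Ad(f(\Phi)f(c))-1)+(\mathfrak g')^{I_{L/K}}_2=(\mathfrak g')^{I_{L/K}}$, it then suffices to check that $(\mathfrak g')^{I_{L/K}}_2\subseteq\im(\ad_{f(N)})$.

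This last point — that the weight-$2$ part of the \emph{invariants} $(\mathfrak g')^{I_{L/K}}$, and not merely of $\mathfrak g'$, lies in $\im(\ad_{f(N)})$ — is the step I expect to require the most care, and it is handled just as in Lemma~\ref{lem: associated cochars weight 2 adN}. Because $I_{L/K}$ is finite and we are in characteristic $0$, averaging over $I_{L/K}$ shows that $\ad_{f(N)}$ maps $(\mathfrak g')^{I_{L/K}}$ onto $[f(N),\mathfrak g']^{I_{L/K}}$, a space which contains $d(f\circ\lambda)(1)=f(d\lambda(1))$ (again by Lemma~\ref{lem: associated cochars functoriality}, which gives $d(f\circ\lambda)(1)\in\im(\ad_{f(N)})$). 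The usual construction of $\sl_2$-triples then yields a triple $(f(N),\,d(f\circ\lambda)(1),\,Y')$ with $Y'\in(\mathfrak g')^{I_{L/K}}=\Lie Z_{G'}(f\circ\tau)^\circ$, so $(\mathfrak g')^{I_{L/K}}$ is an $\sl_2$-subrepresentation of $\mathfrak g'$ and the $\sl_2$-representation-theory argument of Lemma~\ref{lem: associated cochars weight 2 adN} applies verbatim inside it (the case $f(N)=0$ being trivial, since then $f\circ\lambda$ is constant and the weight-$2$ subspace is zero). Therefore $d^1$ is surjective at $(\Phi\cdot c,N,\tau)$, so $H^2_{G'}$ vanishes there; as these points meet every irreducible component of $Y_{L/K,\varphi,\mathcal N}$, the complement $U$ of the support of $H^2_{G'}$ is a dense open subscheme with $H^2_{G'}|_U=0$.
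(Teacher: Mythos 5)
Your proposal is correct and takes essentially the same route as the paper: the paper's proof likewise reuses the points $(\lambda(p^{f_K/2})\cdot c,N,\tau)$ constructed in Theorem~\ref{thm: very smooth points exist for Y} and invokes Lemma~\ref{lem: associated cochars functoriality} to see that $f\circ\lambda$ is associated to $f(N)$, so that the same semisimplicity/weight-$2$ computation gives the vanishing of $H^2_{G'}$ at those points. Your extra care in checking that the weight-$2$ part of the $I_{L/K}$-invariants of $\mathfrak{g}'$ lies in $\ad_{f(N)}\bigl((\mathfrak{g}')^{I_{L/K}}\bigr)$ (by averaging) is a detail the paper leaves implicit, but it is the same argument, not a different one.
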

\begin{proof}
As in the proof of Theorem~\ref{thm: very smooth points exist for Y}, it suffices to construct a point on each connected component of each fiber of the map $Y_{L/K,\varphi,\mathcal N}\rightarrow Y_{L/K,\mathcal N}$ where $H^2_{G'}$ vanishes.  In fact, the same points work: by Lemma~\ref{lem: associated cochars functoriality} the composition $f\circ\lambda$ is an associated cocharacter for $f_\ast(N)$.  Therefore, $H^2_{G'}$ vanishes at the point corresponding to $(\lambda(p^{f_K/2}),N)$.  Similarly, if $c\in N_G(\tau)\cap Z_G(\lambda)\cap Z_G(N)$ is a finite order point, then $H_{G'}^2$ vanishes at the point corresponding to $(\lambda(p^{f_K/2})\cdot c,N)$.
\end{proof}

\begin{rem}
  \label{rem: thinking of everything as coming from SL2}The proofs of
  Theorems~\ref{thm: very smooth points exist for Y} and~\ref{thm:
    smoothness for functorial maps} justify the claim we made in the
  introduction, that all of the smooth points that we explicitly
  construct arise from pushing out a single ``standard'' smooth point
  for~$\SL_2$. Indeed, as discussed above, given an associated
  cocharacter~$\lambda$ for~$N$, the map $\lambda\mapsto
  d\lambda(1)$ allows us to determine a homomorphism $\SL_2\to G$, and
  we see that the choice of~$\Phi$, $N$ made in the proof of
  Theorem~\ref{thm: very smooth points exist for Y} is the image under
  this homomorphism of the elements~$\Phi$, $N$ for~$\SL_2$ discussed
  in the introduction.
\end{rem}

\begin{rem}\label{rem: SL2 form of WD group}The
  Jacobson--Morozov theorem allows one to think of semisimple
  Weil--Deligne representations as representations
  of~$W_K\times\SL_2$; see~\cite[Prop.\ 2.2]{MR2730575} for a precise
  statement. From this perspective, our construction of smooth points
  from associated cocharacters can be summarised as follows: 
  given a nilpotent~$N\in\Lie G$, we obtain
  a map $\SL_2\to G$, and the corresponding Weil--Deligne
  representation is obtained by composing with the map
  \[W_K\times\SL_2\to\SL_2\] which on the first factor is unramified and
  takes an arithmetic Frobenius to the matrix~$
  \begin{pmatrix}
    p^{f_K}&0\\0& p^{-f_K}
  \end{pmatrix}
$, and is the identity on the second factor.
\end{rem}

\subsection{Tate local duality for Weil--Deligne
  representations}\label{subsec: Tate duality}

If $D$ is a $G$-valued Weil--Deligne representation 
over a field
$E$, we can also prove an analogue of Tate local duality for the
complex~$C^\bullet(D)$. 
In addition to allowing us to compute with either kernels or cokernels, this pairing allows us to give an explicit characterisation of the
smooth locus (see Corollary~\ref{cor: WD stack is generically smooth zero dimensional
  controlled by H0}). 
Since we only need the pairing
between~$H^0$ and~$H^2$, we have not worked out the details of the
pairing on~$H^1$s, which for reasons of space we leave to the
interested reader.

To construct pairings $H^i((\ad D)^\ast(1))\times H^{2-i}(\ad D)\rightarrow E(1)$, we use the evaluation pairing $\mathrm{ev}:(\ad D)^\ast\times \ad D\rightarrow E$.  Here the ``$(1)$'' means that we multiply the action of $\Ad(\Phi)$ by $p^{f_K}$; since $(\ad D)^\ast$ and $(\ad D)^\ast(1)$ have the same underlying vector space (as do $E$ and $E(1)$), we have an induced pairing $\mathrm{ev}(1):(\ad D)^\ast(1)\times\ad D\rightarrow E(1)$.  Note that if $X\in (\ad D)^\ast$, $Y\in \ad D$, then $\mathrm{ev}(\Ad(\Phi)(X),\Ad(\Phi)(Y))=\mathrm{ev}(X,Y)$, and if $X\in (\ad D)^\ast(1)$, $Y\in \ad D$, then $\mathrm{ev}(1)(\Ad(\Phi)(X),\Ad(\Phi)(Y))=\mathrm{ev}(p^{f_K}\Ad(\Phi)(X),\Ad(\Phi)(Y)) = p^{f_K}\mathrm{ev}(X,Y)=\Ad(\Phi)(\mathrm{ev}(1)(X,Y))$.

\begin{prop}\label{prop: Tate local duality}
Let $D$ be as above.  Then the evaluation pairing induces a perfect pairing $H^0((\ad D)^\ast(1))\times H^2(\ad D)\rightarrow E(1)$.
\end{prop}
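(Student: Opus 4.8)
The plan is to make both cohomology groups completely explicit and then recognise the asserted pairing as the tautological one between a subspace and a quotient of a single finite-dimensional $E$-vector space. Write $M:=(\ad D)^{I_{L/K}}$, fix $g_0\in W_K$ lifting $\varphi^{f_K}$ and set $\Phi:=r(g_0)$; the two operators on $M$ appearing in the double complex defining $C^\bullet(\ad D)$ are then $\Ad(\Phi)$ (invertible) and $\ad_N$ (nilpotent), and the Weil--Deligne relation gives $\Ad(\Phi)\circ\ad_N\circ\Ad(\Phi)^{-1}=p^{f_K}\ad_N$, which is exactly what makes the square anticommute. Since the total complex has no differential out of degree~$2$,
\[ H^2(\ad D)=M/\bigl(\ad_N(M)+(p^{-f_K}\Ad(\Phi)-1)(M)\bigr)=M/W,\quad W:=\ad_N(M)+(\Ad(\Phi)-p^{f_K})(M), \]
the last equality because $p^{-f_K}\Ad(\Phi)-1$ and $\Ad(\Phi)-p^{f_K}$ differ by the nonzero scalar $p^{-f_K}$ and hence have the same image.

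Next I would compute $H^0((\ad D)^\ast(1))$. The module $(\ad D)^\ast(1)$ again satisfies the Weil--Deligne relation, so the complex $C^\bullet((\ad D)^\ast(1))$ is defined by the same recipe. Because $I_{L/K}$ is finite and $E$ has characteristic $0$, taking $I_{L/K}$-invariants commutes with $E$-linear duality, so $((\ad D)^\ast(1))^{I_{L/K}}=M^\ast$; and unwinding the conventions, $\Phi$ acts on $(\ad D)^\ast(1)$ by $\xi\mapsto p^{f_K}(\xi\circ\Ad(\Phi)^{-1})$ while the monodromy acts by $\xi\mapsto-\xi\circ\ad_N$. Hence $H^0((\ad D)^\ast(1))$, the kernel of $d^0$, is
\[ H^0((\ad D)^\ast(1))=\{\xi\in M^\ast:\ \xi\circ\ad_N=0,\ \xi\circ\Ad(\Phi)=p^{f_K}\xi\}. \]
Now $\xi\circ\ad_N=0$ says exactly that $\xi$ annihilates $\ad_N(M)$, and $\xi\circ\Ad(\Phi)=p^{f_K}\xi$ says exactly that $\xi$ annihilates $(\Ad(\Phi)-p^{f_K})(M)$; so $H^0((\ad D)^\ast(1))=W^\perp$ inside $M^\ast$.

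It remains to identify the evaluation pairing with this duality. The pairing $\mathrm{ev}(1)\colon(\ad D)^\ast(1)\times\ad D\to E(1)$ is a morphism of Weil--Deligne modules, and its restriction to $I_{L/K}$-invariants is the ordinary perfect evaluation pairing $M^\ast\times M\to E$. Given $\xi\in H^0((\ad D)^\ast(1))$ and a class in $H^2(\ad D)$ represented by $v\in M=C^2(\ad D)$, the element $\mathrm{ev}(1)(\xi,v)$ is independent of the representative: replacing $v$ by $v+\ad_N(y)+(p^{-f_K}\Ad(\Phi)-1)(z)$ changes it by $\mathrm{ev}(1)(\xi,\ad_N(y))+\mathrm{ev}(1)(\xi,(p^{-f_K}\Ad(\Phi)-1)(z))$, and the $N$- and $\Phi$-equivariance of $\mathrm{ev}(1)$, combined with the relations $\xi\circ\ad_N=0$ and $\xi\circ\Ad(\Phi)=p^{f_K}\xi$ (which are the content of $d^0\xi=0$), show that both terms vanish. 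Thus $\mathrm{ev}(1)$ descends to a pairing $H^0((\ad D)^\ast(1))\times H^2(\ad D)\to E(1)$, and under the identifications above this is precisely the tautological pairing $W^\perp\times(M/W)\to E$, which is perfect because $M$ is finite-dimensional over $E$.

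The only step that needs care is the computation of $H^0((\ad D)^\ast(1))$: one has to track the $(1)$-twist and the dual Weil--Deligne structure precisely enough that the two linear conditions defining it come out as the annihilator of $W$ — in particular, so that the eigenvalue $p^{f_K}$ there matches the $p^{f_K}$ in the definition of $W$ and the $p^{-f_K}$ in the bottom row of the double complex. Once the conventions are pinned down, the rest is formal bookkeeping.
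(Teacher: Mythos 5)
Your proof is correct and is essentially the paper's own argument in different packaging: the paper descends $\mathrm{ev}(1)$ to cohomology and then checks injectivity and surjectivity of the induced map $H^0((\ad D)^\ast(1))\to (H^2(\ad D)^\ast)(1)$, which amounts to exactly your identification of $H^0((\ad D)^\ast(1))$ with the annihilator $W^\perp$ and of $H^2(\ad D)$ with $M/W$, followed by the tautological perfect pairing between the annihilator of a subspace and the corresponding quotient of the finite-dimensional space $M$. Your bookkeeping of the twist (the dual $\Phi$-action $\xi\mapsto p^{f_K}\,\xi\circ\Ad(\Phi)^{-1}$, the dual monodromy $\xi\mapsto-\xi\circ\ad_N$, and the identification $((\ad D)^\ast(1))^{I_{L/K}}\cong M^\ast$ valid since $I_{L/K}$ is finite and $E$ has characteristic zero) matches the conventions the paper uses in its verification, so there is no gap.
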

\begin{proof}
We first check that the pairing $\mathrm{ev}(1):(\ad D)^\ast(1)\times \ad D\rightarrow E(1)$ descends to a well-defined pairing $H^0((\ad D)^\ast(1))\times H^2(\ad D)\rightarrow E(1)$.  If $X\in (\ad D)^\ast(1)^{I_{L/K}}$ is in the kernel of $\ad_N$ and the kernel of $1-\Ad(\Phi)$, and $Y\in (\ad D)^{I_{L/K}}$, then
\begin{align*}
\mathrm{ev}(1)(X,Y+\ad_N(Z))&=\mathrm{ev}(1)(X,Y)+\mathrm{ev}(1)(X,\ad_N(Z))	\\
&=\mathrm{ev}(1)(X,Y)-\mathrm{ev}(1)(\ad_N(X),Z)\\&=\mathrm{ev}(1)(X,Y),\end{align*}and
\begin{align*}\mathrm{ev}(1)(X,Y+(p^{-f_K}\Ad(\Phi)-1)(Z))&=\mathrm{ev}(1)(X,Y)+\mathrm{ev}(1)(X,p^{-f_K}\Ad(\Phi)(Z))-\mathrm{ev}(1)(X,Z)	\\
&=\mathrm{ev}(1)(X,Y)+p^{-f_K}\mathrm{ev}(1)(\Ad(\Phi)(X),\Ad(\Phi)(Z))-\mathrm{ev}(1)(X,Z)	\\
&=\mathrm{ev}(1)(X,Y)+\mathrm{ev}(1)(X,Z)-\mathrm{ev}(1)(X,Z)\\&=\mathrm{ev}(1)(X,Y),
\end{align*}so the pairing is indeed well-defined.

Next, we need to check that this pairing is perfect.  Suppose $X\in H^0((\ad D)^\ast(1))$ and $\mathrm{ev}(1)(X,Y)=0$ for all $Y\in H^2(\ad D)$.  Then $\mathrm{ev}(1)(X,Y)=0$ for all $Y\in (\ad D)^{I_{L/K}}$, so $X=0$.  This implies that the natural map $H^0((\ad D)^\ast(1))\rightarrow (H^2(\ad D)^\ast)(1)$ is injective.  

On the other hand, let $f:H^2(\ad D)\rightarrow E(1)$ be an element of $(H^2(\ad D)^\ast)(1)$.  By composition, we have a linear functional
\[	f:(\ad D)^{I_{L/K}}\rightarrow H^2(\ad D)\rightarrow E(1)	\]
This is an element of $\left((\ad D)^{I_{L/K}}\right)^\ast(1)$; we need to show that $\ad_N(f)=(1-\Ad(\Phi))(f)=0$.  But for any $Y\in(\ad D)^{I_{L/K}}$,
\[	\mathrm{ev}(1)(\ad_N(f),Y)=\mathrm{ev}(f,-\ad_N(Y))=0	\]
since $f$ factors through $H^2(\ad D)$.  Similarly, for any $Y\in (\ad D)^{I_{L/K}}$,
\begin{align*}
\mathrm{ev}(1)((1-\Ad(\Phi))(f),Y)&=\mathrm{ev}(1)(f,Y)-\mathrm{ev}(1)(\Ad(\Phi)(f),Y)	\\
&=\mathrm{ev}(1)(f,Y)-\mathrm{ev}(1)(f,p^{-f_K}\Ad(\Phi)^{-1}(Y))	\\
&=\mathrm{ev}(1)(f,(1-p^{-f_K}\Ad(\Phi)^{-1})(Y))	\\
&=\mathrm{ev}(1)(f,(p^{f_K}\Ad(\Phi)-1)(p^{-f_K}\Ad(\Phi^{-1})(Y)))=0
\end{align*}
Since $\Ad(\Phi):(\ad D)^{I_{L/K}}\rightarrow (\ad D)^{I_{L/K}}$ is an isomorphism, this suffices.
\end{proof}
\begin{cor}\label{cor: WD stack is generically smooth zero dimensional
  controlled by H0}
   The nonsmooth locus of the stack stack $G-\WD_E(L/K)$ is precisely the locus of Weil--Deligne
  representations~$D$ with $H^0((\ad D)^*(1))\ne 0$.
\end{cor}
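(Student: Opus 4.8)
The plan is simply to combine the two results already in hand. Corollary~\ref{cor: WD stack is generically smooth zero dimensional controlled by H2} identifies the nonsmooth locus of $G-\WD_E(L/K)$ (equivalently, of $Y_{L/K,\varphi,\mathcal N}$) with the locus of Weil--Deligne representations $D$ for which $H^2(\ad D)\neq 0$, while Proposition~\ref{prop: Tate local duality} furnishes, for any $D$ defined over a field, a perfect pairing $H^0((\ad D)^\ast(1))\times H^2(\ad D)\to E(1)$. So the assertion should follow by transporting the description of the nonsmooth locus through this duality.

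First I would observe that the nonsmooth locus is a closed subset, so it is detected by its closed points, and at a closed point $x$ the corresponding Weil--Deligne representation $D_x$ is defined over the residue field $E'$, a finite extension of $E$. Since Proposition~\ref{prop: Tate local duality} is valid over an arbitrary field, it applies to $D_x$ over $E'$; moreover both it and Corollary~\ref{cor: WD stack is generically smooth zero dimensional controlled by H2} are phrased in terms of the cohomology of the same complex $C^\bullet(D_x)$, so they match up with no compatibility issue.

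The conclusion is then immediate: by the perfect pairing, $H^2(\ad D_x)=0$ if and only if $H^0((\ad D_x)^\ast(1))=0$, so by Corollary~\ref{cor: WD stack is generically smooth zero dimensional controlled by H2} the point $x$ is a nonsmooth point exactly when $H^0((\ad D_x)^\ast(1))\neq 0$. I do not anticipate any real obstacle here; the only point requiring a line of care is the (harmless) passage to closed points and to the residue field, which is routine.
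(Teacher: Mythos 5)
Your proposal is correct and coincides with the paper's own argument: the paper also deduces the corollary immediately by combining Corollary~\ref{cor: WD stack is generically smooth zero dimensional controlled by H2} with the perfect pairing of Proposition~\ref{prop: Tate local duality}. The extra remark about passing to closed points and residue fields is harmless and implicit in the paper's one-line proof.
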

\begin{proof}This is immediate from Corollary~\ref{cor: WD stack is generically smooth zero dimensional
  controlled by H2} and Proposition~\ref{prop: Tate local duality}.  
\end{proof}
We now use Corollary~\ref{cor: WD stack is generically smooth zero dimensional
  controlled by H0} to deduce that there is a dense set of points
of~$ Y_{L/K,\varphi,\mathcal N}$ which give smooth points for every finite
extension~$K'/K$.
\begin{defn}
A point $x\in Y_{L/K,\varphi,\mathcal N}$ is \emph{very smooth} if its image in $Y_{L'/K',\varphi,\mathcal N}$ is smooth for every finite extension $K'/K$.
\end{defn}

\begin{lemma}\label{lem: existence of uniform extension}
Fix a finite extension $E'/E$.  There is a finite extension $K'/K$ \emph{(}which depends only on $E'$\emph{)} such that $H_{L'/K'}^2$ vanishes at $x\in Y_{L/K,\varphi,\mathcal{N}}(E')$ if and only if $x$ is very smooth.
\end{lemma}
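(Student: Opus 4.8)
The plan is to reinterpret "smooth" via the cohomological criteria already established, and then to reduce "very smooth" to an eigenvalue condition on $\Ad(\Phi)$ that can only involve boundedly many roots of unity.

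Let $x\in Y_{L/K,\varphi,\mathcal N}(E')$, with corresponding Weil--Deligne representation $D_x$, and for a finite extension $K''/K$ let $D_x|_{W_{K''}}$ denote its restriction to $W_{K''}$ (over $L'':=LK''$). First I would recall that the fiber at $x$ of $H^2_{L''/K''}$ is $H^2(\ad(D_x|_{W_{K''}}))$, which by Corollary~\ref{cor: WD stack is generically smooth zero dimensional controlled by H2} vanishes iff the image of $x$ in $Y_{L''/K'',\varphi,\mathcal N}$ is smooth, and by Corollary~\ref{cor: WD stack is generically smooth zero dimensional controlled by H0} (equivalently Proposition~\ref{prop: Tate local duality}) vanishes iff $H^0\big((\ad(D_x|_{W_{K''}}))^\ast(1)\big)=0$. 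So $H^2_{L'/K'}$ vanishes at $x$ iff $H^0\big((\ad(D_x|_{W_{K'}}))^\ast(1)\big)=0$, and $x$ is very smooth iff this holds for all finite $K''/K$. Next I would use monotonicity: for $K'''\supseteq K''$ the restriction map on the degree-$0$ cohomology of $C^\bullet$ is injective (it is the inclusion of $W_{K''}$-invariants into $W_{K'''}$-invariants, intersected with $\ker\ad_N$), so $H^0\big((\ad(D_x|_{W_{K''}}))^\ast(1)\big)\hookrightarrow H^0\big((\ad(D_x|_{W_{K'''}}))^\ast(1)\big)$; taking $K'''=LK''$ and applying Proposition~\ref{prop: Tate local duality} again, I get that $x$ fails to be very smooth iff $H^2(\ad(D_x|_{W_{K'''}}))\ne 0$ for some $K'''\supseteq L$.

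The second step is to make this condition explicit for $K'''\supseteq L$. Choose $g_0\in W_K$ lifting $\varphi^{f_K}$ and put $\Phi:=r(g_0)$; the relation $\Ad(\Phi)(N)=p^{f_K}N$ shows $\Ad(\Phi)$ preserves $\im(\ad_N)\subseteq\ad D_x$, hence induces an automorphism of the $E'$-space $\overline{W}:=\ad D_x/\im(\ad_N)$, which has dimension at most $\dim_E G$. When $K'''\supseteq L$ the inertia group $I_{K'''}$ acts trivially on $\ad D_x$, so unwinding the definition of $H^2_{L'''/K'''}$ identifies $H^2(\ad(D_x|_{W_{K'''}}))$ with
\[
\coker\Big(p^{-f_{K'''}}\,\Ad(\Phi)^{f_{K'''}/f_K}-1\ :\ \overline{W}\longrightarrow\overline{W}\Big),
\]
which is nonzero exactly when $p^{f_{K'''}}$ is an eigenvalue of $\Ad(\Phi)^{f_{K'''}/f_K}$ on $\overline{W}$, i.e. exactly when some eigenvalue $\gamma$ of $\Ad(\Phi)|_{\overline{W}}$ satisfies $(\gamma/p^{f_K})^{f_{K'''}/f_K}=1$. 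Since $K'''$ may be chosen to be any unramified extension of $L$ whose residue degree over $K$ is a multiple of the order of $\gamma/p^{f_K}$, I conclude that $x$ fails to be very smooth iff some eigenvalue $\gamma$ of $\Ad(\Phi)|_{\overline{W}}$ is such that $\gamma/p^{f_K}$ is a root of unity.

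Finally I would bound these orders uniformly in $x$. As $\overline{W}$ has $E'$-dimension at most $\dim_E G$, any eigenvalue $\gamma$ of $\Ad(\Phi)|_{\overline{W}}$ has $[E'(\gamma):E']\le\dim_E G$; hence if $\gamma/p^{f_K}$ is a root of unity then $[\Ql(\gamma/p^{f_K}):\Ql]\le M:=\dim_E G\cdot[E':\Ql]$, a quantity depending only on $E'$. Only finitely many roots of unity have degree at most $M$ over $\Ql$ (since $[\Ql(\zeta_n):\Ql]\to\infty$ as $n\to\infty$), so their orders admit a common multiple $N_0=N_0(E')$. I then take $K'/K$ to be the unramified extension of $L$ of residue degree $N_0$, so $K'\supseteq L$, $N_0\mid f_{K'}/f_K$, and $K'$ depends only on $E'$. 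If $x$ is not very smooth, the eigenvalue $\gamma$ above has $\gamma/p^{f_K}$ of order dividing $N_0\mid f_{K'}/f_K$, so $H^2(\ad(D_x|_{W_{K'}}))\ne 0$ and $H^2_{L'/K'}$ does not vanish at $x$; conversely if $x$ is very smooth then $H^2_{L'/K'}$ vanishes at $x$ by definition. The main obstacle is precisely this uniformity step — arranging a single $K'$, independent of $x$, that captures all the finitely many possible resonances $\gamma/p^{f_K}\in\mu_\infty$ — together with the routine reduction from an arbitrary $K''$ to one containing $L$; everything else is a direct computation with the complex $C^\bullet$ and the duality of Proposition~\ref{prop: Tate local duality}.
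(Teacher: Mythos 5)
Your proof is correct and follows essentially the same route as the paper: both translate non-smoothness after restriction (via the duality/cohomological criterion) into the condition that some eigenvalue $\gamma$ of $\Ad(\Phi)$ has $\gamma/p^{f_K}$ a root of unity, note that such roots of unity satisfy a polynomial of degree at most $\dim G$ over $E'$ and hence form a finite set depending only on $E'$, and then choose a single $K'$ whose relative residue degree is divisible by all the corresponding orders. Your variant --- computing the cokernel on $\ad D_x/\im(\ad_N)$ for $K'''\supseteq L$ after a monotonicity reduction, instead of the kernel of the twisted dual on $\ker(\ad_N)$ as in the paper --- is simply the dual formulation of the same computation.
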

\begin{proof}
Suppose $(D,\Phi,N,\tau)$ corresponds to a point of $Y_{L/K,\varphi,\mathcal N}$
such that $H_{L''/K''}^2$ does not vanish at its image in
$Y_{L''/K'',\varphi,\mathcal N}$.  By Corollary~\ref{cor: WD stack is generically smooth zero dimensional
  controlled by H0}, this holds if and only if $H^0((\ad D)^\ast(1)$ does not vanish.

Thus, it suffices to consider the injectivity of
$1-p^{f_{K''}}\Ad(\Phi^{f_{K''}/f_K})^\ast:(\ad D)^{I_{L''/K''}}\rightarrow
(\ad D)^{I_{L''/K''}}$ on $\ker(\ad_N)$, where $\Ad(\Phi^{f_{K''}/f_K})^\ast$ denotes the dual of $\Ad(\Phi^{f_{K''}/f_K})$.  If this map is not injective, this implies that
$p^{f_K}\Ad(\Phi)^\ast$ has a generalized eigenvalue $\lambda$ satisfying
$\lambda^{f_{K''}/f_K}=1$. 
But the
characteristic polynomial of $\Ad(\Phi)$ acting on $\ad D$ has degree 
$\dim \ad D=\dim G$ and there are only finitely many roots of unity with minimal polynomial of bounded degree over $E'$.  It follows that there are only a
finite number of possibilities for~$\lambda$.  

In other words, to check whether $1-p^{f_{K''}}\Ad(\Phi^{f_{K''}/f_K})^\ast$ has a non-trivial kernel for any finite extension $K''/K$, it suffices to consider some fixed $K'$ such
that $f_{K'}/f_K$ is divisible by all $n$ such that $\phi(n)\leq \dim G$ and such that $\tau|_{I_{L'/K'}}$
is trivial (where $\phi(n)$ denotes Euler's totient function), as required.
\end{proof}

\begin{cor}\label{cor: WD stack is generically very smooth zero dimensional
  controlled by H2}
  The set of closed points of $G-\WD_E(L/K)$ which are very smooth is
  Zariski dense. 
\end{cor}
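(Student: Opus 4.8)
The plan is to realise the very smooth locus of $Y:=Y_{L/K,\varphi,\mathcal N}$ as the complement of a countable union of nowhere dense closed subsets, and then to conclude by an elementary argument using that the coefficient field $E$ is uncountable.

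First I would unwind the definition. A point $x$ of $Y$ is very smooth precisely when, for every finite extension $K'/K$, its image under the restriction morphism $Y_{L/K,\varphi,\mathcal N}\to Y_{L'/K',\varphi,\mathcal N}$ is a smooth point. By Corollary~\ref{cor: WD stack is generically smooth zero dimensional controlled by H2} applied with $K$ replaced by $K'$, together with the fact (noted just before Theorem~\ref{thm: very smooth points exist for Y}) that the fibre of the coherent sheaf $H^2_{L'/K'}$ on $Y$ controls the obstruction theory of the restriction to $W_{K'}$ of the corresponding Weil--Deligne representation, this is equivalent to $x\notin Z_{K'}:=\operatorname{Supp}H^2_{L'/K'}$. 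Hence the very smooth locus equals $Y\setminus\bigcup_{K'/K}Z_{K'}$. By Theorem~\ref{thm: very smooth points exist for Y} each $Z_{K'}$ is closed and nowhere dense (it is disjoint from a dense open), so in particular every generic point of $Y$ is very smooth.

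Next I would observe that only countably many distinct $Z_{K'}$ actually occur: from the formula defining $H^2_{L'/K'}$, the subset $Z_{K'}$ depends on $K'$ only through the pair $\bigl(f_{K'}/f_K,\,I_{L'/K'}\bigr)$, which lies in $\Z_{>0}$ times the finite set of subgroups of $I_{L/K}$. (Alternatively, one may package this via Lemma~\ref{lem: existence of uniform extension}: a $p$-adic field has only finitely many extensions of each degree, hence there are only countably many finite extensions $E'/E$, and on $E'$-valued points very smoothness is cut out by the single proper closed subset $Z_{K'(E')}$.) Thus $\bigcup_{K'/K}Z_{K'}$ is a countable union of nowhere dense closed subsets of $Y$. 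Since $E/\Ql$ is finite, $E$ is uncountable, and I would invoke the standard fact that a scheme of finite type over an uncountable field $k$ has the property that, after removing any countable family of nowhere dense closed subsets, its remaining closed points are still Zariski dense --- reducing, component by component via Noether normalisation, to the statement that $\mathbf{A}^n_k$ retains a dense set of closed points after deleting countably many proper closed subsets, which follows by induction on $n$. Applying this to $Y$ and $\bigcup_{K'/K}Z_{K'}$ shows that the very smooth closed points of $Y$ are Zariski dense, and hence so are those of $G-\WD_E(L/K)\simeq[Y/G]$ by Lemma~\ref{lem: stack quotient for G modules}.

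The step I expect to require the most care is the identification of the very smooth locus with $Y\setminus\bigcup_{K'/K}Z_{K'}$: one must correctly match the sheaf $H^2_{L'/K'}$ constructed over $Y$ with the (open) smooth locus of $Y_{L'/K',\varphi,\mathcal N}$ and with the intrinsic obstruction theory of the restricted Weil--Deligne representation, checking compatibility of the formation of $I_{L'/K'}$-invariants and of the cokernel with the restriction morphism, and doing so at all points rather than merely closed ones. Once that is in place, the bookkeeping showing that only countably many $Z_{K'}$ arise, and the uncountable-field density statement, are routine.
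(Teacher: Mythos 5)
Your argument is correct, but it reaches the conclusion by a genuinely different route from the paper. The paper fixes a finite extension $E'/E$ with $Y_{L/K,\varphi,\mathcal N}(E')$ Zariski dense, invokes Lemma~\ref{lem: existence of uniform extension} to produce a single uniform extension $K'/K$ such that vanishing of $H^2_{L'/K'}$ at an $E'$-point already forces very smoothness, and then intersects the dense open $U$ of Theorem~\ref{thm: very smooth points exist for Y} with the dense set of $E'$-points. You instead note that the closed subsets $Z_{K''}=\operatorname{Supp}H^2_{L''/K''}$, as $K''$ ranges over all finite extensions of $K$, form only a countable family (the complex depends on $K''$ only through $f_{K''}/f_K$ and the subgroup $I_{L''/K''}\subset I_{L/K}$), each nowhere dense by Theorem~\ref{thm: very smooth points exist for Y}, and then use the uncountability of $E$ (a finite extension of $\Ql$) to see that closed points avoiding a countable union of nowhere dense closed subsets remain Zariski dense; your identification of the relevant locus via Corollary~\ref{cor: WD stack is generically smooth zero dimensional controlled by H2} and base-change compatibility of invariants and cokernels is unproblematic in characteristic zero (and for the corollary you only need it at closed points). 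What each approach buys: yours bypasses Lemma~\ref{lem: existence of uniform extension} and the (implicit) need for a finite $E'$ with dense $E'$-points, at the cost of relying on uncountability of the coefficient field and the standard ``countable union of proper closed subsets'' fact, so it is less constructive and would not survive a countable base field; the paper's argument pins down a single $K'$ and a single dense open, which is the form in which very smoothness is actually exploited later (e.g.\ in the proof of Theorem~\ref{thm: dense set of very smooth points}). Your countability observation is, in effect, a repackaging of the same finiteness that drives Lemma~\ref{lem: existence of uniform extension}, so the two proofs are close in spirit even though the density mechanism differs.
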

\begin{proof}
Let $E'/E$ be a finite extension such that $Y_{L/K,\varphi,\mathcal N}(E')$ is Zariski-dense in $Y_{L/K,\varphi,\mathcal N}$.  By Lemma~\ref{lem: existence of uniform extension}, there is a finite extension $K'/K$ such that $x\in Y_{L/K,\varphi,\mathcal N}(E')$ is very smooth if $H_{L'/K'}^2$ vanishes at $x$.  By Theorem~\ref{thm: very smooth points exist for Y}, there is a Zariski-dense open subscheme $U\subset Y_{L/K,\varphi,\mathcal N}$ such that $H_{L'/K'}^2|_{U}=0$.  But then the intersection $U\cap Y_{L/K,\varphi,\mathcal N}(E')$ is a Zariski dense subset of $Y_{L/K,\varphi,\mathcal N}$ consisting of very smooth points, so we are done.
%
%
\end{proof}

\subsection{$l$-adic Hodge theory}\label{subsec: l not p Galois to
  WD}We suppose in this subsection that $l\ne p$. We briefly recall some
results from \cite{fonl}, which will allow us to relate $l$-adic
representations of~$\Gal_K$ to Weil--Deligne representations. 

Recall that by a theorem of Grothendieck, a continuous representation
$\rho:\Gal_K\to\GL_d(E)$ is automatically potentially semi-stable, in the
sense that there is a finite extension $L/K$ such that $\rho|_{I_L}$
is unipotent. After making a choice of a compatible system of $l$-power
roots of unity in $\overline{K}$, we see from \cite[Prop.\ 1.3.3,
2.3.4]{fonl} that there is an equivalence of Tannakian categories
between the category of $E$-linear representations of $\Gal_K$ which
become semi-stable over $L$, and the full subcategory of Weil--Deligne
representations $(r,N)$ of $W_K$ over~$E$ with the properties
that~$r|_{I_L}$ 
is trivial and  the roots of the characteristic polynomial
of any arithmetic Frobenius element of~$W_L$ are $l$-adic units (such an equivalence is given by the
functor $\widehat{\underline{WD}}_{pst}$ of \cite[\S
2.3.7]{fonl}).   

\subsection{The case $l=p$: 
  $(\varphi,N)$-modules}\label{subsec: phi N modules versus WD
  representations}In this section we let $l=p$, and we explain the
relationship between Weil--Deligne representations and
$(\varphi,N)$-modules. Let $K_0$, $L_0$ be the
maximal unramified subfields of~$K$, $L$ respectively, of respective
degrees $f_K$, $f_L$ over~$\Qp$. Let~$E/\Qp$ be a finite extension,
which is large enough that it contains the image of all embeddings
$L_0\into E$, 
so that we may identify $E\otimes_{\Q_p}L_0$ with
$\oplus_{L_0\hookrightarrow E}E$.  Let $\varphi$ denote the arithmetic
Frobenius.

If $D$ is a $\Res_{E\otimes_{\Qp}L_0/E}G$-torsor over $\Spec A$, we may also view $D$ as a $G$-torsor over $A\otimes_{\Qp}L_0$.  Then any automorphism $g:L_0\rightarrow L_0$ extends to an automorphism of $A\otimes_{\Qp}L_0$, and we may pull $D$ back to a $G$-torsor $g^\ast D$ over $A\otimes_{\Qp}L_0$.  Then we may view $g^\ast D$ as a $\Res_{E\otimes_{\Qp}L_0/E}G$-torsor over $\Spec A$, which we also denote $g^\ast D$.  In particular, we may pull $D$ back by Frobenius and obtain another $\Res_{E\otimes_{\Qp}L_0/E}G$-torsor $\varphi^\ast D$ over $\Spec A$.



This motivates us to define the following groupoid on $E$-algebras.
\begin{defn}
The category of $G$-valued $(\varphi,N,\Gal_{L/K})$-modules, which we denote $G-\mathrm{Mod}_{L/K,\varphi,N}$, is the groupoid whose
  fiber
  over an $E$-algebra $A$ consists of a $\Res_{E\otimes L_0/E}G$-torsor $D$ over $A$, equipped with:
  \begin{itemize}
  \item an isomorphism
    $\Phi:\varphi^\ast D\xrightarrow{\sim}D$, 
  \item  a nilpotent element
    $N\in \Lie\Aut_GD$, and
  \item  for each $g\in\Gal_{L/K}$, an isomorphism $\tau(g):g^*D\isoto
    D$.
  \end{itemize}These are required to satisfy the following compatibilities:
\begin{enumerate}
\item	$\underline\Ad\Phi (N)=\frac{1}{p}N$.
\item	$\underline\Ad\tau(g)(N)=N$ for all $g\in\Gal_{L/K}$.
\item	$\tau(g_1g_2)=\tau(g_1)\circ g_1^\ast\tau(g_2)$ for all
  $g_1,g_2\in\Gal_{L/K}$.
\item	$\tau(g) \circ g^\ast\Phi= \Phi\circ \varphi^\ast\tau(g)$ for
  all $g\in\Gal_{L/K}$. 
\end{enumerate}
\end{defn}
Here $\underline\Ad\Phi$ and $\underline\Ad\tau(g)$ are ``twisted adjoint'' actions on $\Lie\Aut_GD$; after pushing out $Y$ by a representation $\sigma\in \Rep_E(G)$, they are given by $M\mapsto \Phi_\sigma\circ M\circ \Phi_\sigma^{-1}$ and $M\mapsto \tau(g)_\sigma\circ M\circ \tau(g)_\sigma^{-1}$, respectively.  Note that the action of $\Gal_{L/K}$ on scalars factors through the abelian quotient $\langle \varphi^{f_K}\rangle$, which also commutes with $\varphi$, so $(g_1g_2)^\ast=g_1^\ast\circ g_2^\ast$ and $g^*\varphi^*=\varphi^*g^*$.

Requiring $D$ to be a trivial $\Res_{E\otimes L_0/E}$-torsor equipped
with a trivializing section lets us define a representable functor
which covers $G-\mathrm{Mod}_{L/K,\varphi,N,\tau}$, as follows.
\begin{defn}
  Let $X_{L/K,\varphi,\mathcal N}$ denote the functor on the category of
  $E$-algebras whose $A$-points are triples
  \[ (\Phi,N,\tau)\in (\Res_{E\otimes L_0/E}G)(A)\times
    (\Res_{E\otimes L_0/E}\fg_E)(A)\times\Rep_{A\otimes
      L_0}\Gal_{L/K} \] which satisfy
  \begin{itemize}
  \item $N=p\underline\Ad(\Phi)(N)$,
  \item $\tau(g)\circ\Phi=\Phi\circ\tau(g)$, and
  \item $\underline{\Ad}(\tau(g))(N)=N$ for all $g\in\Gal_{L/K}$.
  \end{itemize}
\end{defn}

This functor is visibly representable by a finite-type
affine scheme over $E$, which we also denote by
$X_{L/K,\varphi,\mathcal N}$.  Moreover,
there is a left action of $\Res_{E\otimes L_0/E}G$ on
$X_{L/K,\varphi,\mathcal N}$ coming from changing the choice of trivializing
section.  Explicitly, \[ a\cdot (\Phi,N,\{\tau(g)\}_{g\in\Gal_{L/K}})
= (a\Phi\varphi(a)^{-1},\Ad(a)(N),\{a\tau(g)(g\cdot a)^{-1}\}_{g\in\Gal_{L/K}}).
\]

As in Lemma~\ref{lem: stack quotient for G modules}, we have the following:
\begin{lem}
  The stack quotient $[X_{L/K,\varphi,\mathcal N}/\Res_{E\otimes L_0/E}G]$ is
  isomorphic to $G-\mathrm{Mod}_{L/K,\varphi,N}$.
\end{lem}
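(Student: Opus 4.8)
The plan is to follow the proof of Lemma~\ref{lem: stack quotient for G modules} essentially verbatim, with the single operator ``conjugation by $g_0$'' used there replaced by the pullback functors $\varphi^\ast$ and $g^\ast$ ($g\in\Gal_{L/K}$). First I would construct a morphism from $G-\mathrm{Mod}_{L/K,\varphi,N}$ to $[X_{L/K,\varphi,\mathcal N}/\Res_{E\otimes L_0/E}G]$. Given an $A$-valued point of $G-\mathrm{Mod}_{L/K,\varphi,N}$ with underlying $\Res_{E\otimes L_0/E}G$-torsor $D$, the first projection $D\times_A D\to D$ is a trivial $\Res_{E\otimes L_0/E}G$-torsor over $D$ (with $G$ acting on the second factor), canonically trivialized by the diagonal section. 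Pulling the data $(\Phi,N,\tau)$ back to $D\times_A D$ and writing it in coordinates with respect to the diagonal produces a triple $(\Phi_D,N_D,\tau_D)$ of the shape parametrized by $X_{L/K,\varphi,\mathcal N}$, now with coefficients in the affine scheme $D$; here one uses that the Frobenius $\varphi$ and the $\Gal_{L/K}$-action act only on the coefficient ring $A\otimes_{\Q_p}L_0$ (through the abelian quotient $\langle\varphi^{f_K}\rangle$, as recorded just before the statement), so that $\varphi^\ast$ and $g^\ast$ commute with base change along $D\to\Spec A$ and therefore make sense ``in coordinates''.

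Next I would check that $(\Phi_D,N_D,\tau_D)$ actually lies in $X_{L/K,\varphi,\mathcal N}(D)$: compatibility~(1) of the module gives $N_D=p\,\underline\Ad(\Phi_D)(N_D)$, compatibility~(4) gives $\tau_D(g)\circ\Phi_D=\Phi_D\circ\tau_D(g)$, compatibility~(2) gives $\underline\Ad(\tau_D(g))(N_D)=N_D$, and compatibility~(3) is exactly what makes the coordinatized $\tau_D$ a genuine representation of $\Gal_{L/K}$ (an element of the third factor in the definition of $X_{L/K,\varphi,\mathcal N}$) rather than a mere $1$-cochain. This yields a morphism $D\to X_{L/K,\varphi,\mathcal N}$, which I would verify is $\Res_{E\otimes L_0/E}G$-equivariant exactly as in Lemma~\ref{lem: stack quotient for G modules}: for $x\in D(A')$ and $a\in(\Res_{E\otimes L_0/E}G)(A')$, the fiber of $D\times_A D\to D$ over $a\cdot x$ is the same copy of $D_{A'}$ but with its diagonal section multiplied by $a$, and the ``change-of-basis'' formula $a\cdot(\Phi,N,\{\tau(g)\})=(a\Phi\varphi(a)^{-1},\Ad(a)(N),\{a\tau(g)(g\cdot a)^{-1}\})$ displayed just above the statement is precisely the compatibility needed. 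By the universal property of the quotient stack this equivariant morphism determines an $A$-point of $[X_{L/K,\varphi,\mathcal N}/\Res_{E\otimes L_0/E}G]$, functorially in $A$.

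For the quasi-inverse, an object of $[X_{L/K,\varphi,\mathcal N}/\Res_{E\otimes L_0/E}G](A)$ consists of a $\Res_{E\otimes L_0/E}G$-torsor $P\to\Spec A$ together with an equivariant map $P\to X_{L/K,\varphi,\mathcal N}$; from this I would recover the isomorphism $\Phi:\varphi^\ast P\isoto P$, the element $N\in\Lie\Aut_G P$ and the $\tau(g):g^\ast P\isoto P$ (with compatibilities (1)--(4)) by \'etale descent of the corresponding data on the trivial torsor, the gluing being governed by the same change-of-basis formula. That the two constructions are mutually inverse up to canonical isomorphism of groupoids is then formal, exactly as in the Weil--Deligne case treated in Lemma~\ref{lem: stack quotient for G modules}.

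I expect the only genuine subtlety to be the bookkeeping of the semilinearity: one must ensure that the pullback functors $\varphi^\ast$ and $g^\ast$ on $\Res_{E\otimes L_0/E}G$-torsors interact correctly with the passage to coordinates and with each other. The identities $(g_1g_2)^\ast=g_1^\ast\circ g_2^\ast$ and $g^\ast\varphi^\ast=\varphi^\ast g^\ast$ noted in the excerpt (coming from the fact that $\Gal_{L/K}$ acts on scalars through $\langle\varphi^{f_K}\rangle$, which commutes with $\varphi$) are exactly what is needed for conditions~(3) and~(4) to survive coordinatization; granting these, the proof is purely formal torsor-theoretic manipulation identical to that of Lemma~\ref{lem: stack quotient for G modules}, and I would not expect to reproduce the details.
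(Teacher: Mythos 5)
Your proposal is correct and follows essentially the same route as the paper: the paper's own proof of this lemma is simply to invoke the argument of Lemma~\ref{lem: stack quotient for G modules} (diagonal trivialization of $D\times_A D\to D$, coordinatization, and equivariance via the change-of-basis formula), which is exactly what you carry out, with the semilinearity bookkeeping handled by the identities $(g_1g_2)^\ast=g_1^\ast\circ g_2^\ast$ and $g^\ast\varphi^\ast=\varphi^\ast g^\ast$ as you note.
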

\begin{proof}
  The proof follows as in Lemma~\ref{lem: stack quotient for G modules}.
\end{proof}

Given a $(\varphi,N,\Gal_{L/K})$-module, there is a standard recipe
due to Fontaine
for constructing a Weil--Deligne representation, and there is an
analogous construction for $\Res_{E\otimes L_0/E}G$-torsors.  Indeed,
let $A$ be an $E$-algebra.  Given a $\Res_{E\otimes L_0/E}G$-torsor
$D$ over $A$, and an embedding $\sigma:L_0\hookrightarrow E$, the $\sigma$-isotypic part is a $G$-torsor over $A$
which we denote $D_\sigma$.  Moreover, if $N_\sigma$ denotes the
$\sigma$-isotypic component of $N$, then $N_\sigma\in\Lie\Aut_G(D_\sigma)$ is nilpotent.
  
Given an isomorphism
$\Phi:\varphi^\ast D\xrightarrow{\sim}D$,
 $\Phi^{f_L}:=\Phi\circ\varphi^\ast(\Phi)\circ\cdots(\varphi^{f_L-1})^\ast(\Phi)$
restricts to an isomorphism $D_\sigma\rightarrow D_\sigma$ for each~$\sigma$.
\begin{lemma}\label{lemma:d-phi-di-phif}
For any $\sigma$ and any $E$-algebra $A$, the association $(D,\Phi)\rightsquigarrow(D_\sigma,\Phi^{f_L})$ defines an equivalence of categories between $\Res_{E\otimes L_0/E}G$-torsors $D$ over $A$ equipped with an isomorphism $\Phi:\varphi^\ast D\xrightarrow{\sim}D$, and $G$-torsors $D_\sigma$ over $A$ equipped with an isomorphism $\Phi_\sigma':D_\sigma\xrightarrow{\sim}D_\sigma$.
\end{lemma}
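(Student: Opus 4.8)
The plan is to translate the Weil-restriction datum into its isotypic components and recognise the statement as the familiar equivalence ``local system on a circle $=$ (fibre, monodromy automorphism)''. First, since $E$ contains all embeddings of $L_0$, we have $E\otimes_{\Qp}L_0\cong\prod_{\tau}E$ over $\tau\colon L_0\hookrightarrow E$, so a $\Res_{E\otimes L_0/E}G$-torsor $D$ over $A$ is the same datum as the tuple $(D_\tau)_\tau$ of its isotypic parts, each a $G$-torsor over $A$. The automorphism $1\otimes\varphi$ of $E\otimes_{\Qp}L_0$ permutes the factors, and because $\varphi$ generates $\Gal(L_0/\Qp)$, which acts simply transitively on $\Hom_{\Qp}(L_0,E)$, this permutation is a single $f_L$-cycle. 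Enumerate $\Hom_{\Qp}(L_0,E)=\{\sigma_0,\dots,\sigma_{f_L-1}\}$ with $\sigma=\sigma_0$ and the cycle sending $\sigma_j\mapsto\sigma_{j+1}$ (indices mod $f_L$), so that $(\varphi^\ast D)_{\sigma_j}\cong D_{\sigma_{j+1}}$ naturally in $D$. Hence an isomorphism $\Phi\colon\varphi^\ast D\xrightarrow{\sim}D$ is exactly a tuple of isomorphisms $\Phi_j\colon D_{\sigma_{j+1}}\xrightarrow{\sim}D_{\sigma_j}$, $0\le j<f_L$. Since $\varphi^{f_L}=\id$ on $L_0$, the torsor $(\varphi^{f_L})^\ast D$ is canonically $D$, and unwinding $\Phi^{f_L}=\Phi\circ\varphi^\ast\Phi\circ\cdots\circ(\varphi^{f_L-1})^\ast\Phi$ one computes
\[\Phi^{f_L}|_{D_\sigma}=\Phi_0\circ\Phi_1\circ\cdots\circ\Phi_{f_L-1}\colon D_\sigma\xrightarrow{\sim}D_\sigma,\]
the monodromy around the cycle. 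Thus, after this translation, the functor in the statement sends $\big((D_\tau)_\tau,(\Phi_j)_j\big)$ to $\big(D_{\sigma_0},\Phi_0\Phi_1\cdots\Phi_{f_L-1}\big)$.

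Next I would construct a quasi-inverse explicitly. Given a $G$-torsor $E_0$ over $A$ with an automorphism $\psi\colon E_0\xrightarrow{\sim}E_0$, set $D_{\sigma_j}:=E_0$ for all $j$, $\Phi_0:=\psi$, and $\Phi_j:=\id_{E_0}$ for $1\le j\le f_L-1$; this is clearly functorial in $(E_0,\psi)$, and by the monodromy formula it is carried back to $(E_0,\psi)$ by the functor, so that one composite is the identity on the nose.

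For the other composite I would produce a natural isomorphism to the identity by \emph{parallel transport}. Starting from $\big((D_\tau)_\tau,(\Phi_j)_j\big)$, define $\alpha_0:=\id_{D_{\sigma_0}}$ and $\alpha_j:=\Phi_j\circ\Phi_{j+1}\circ\cdots\circ\Phi_{f_L-1}\colon D_{\sigma_0}\xrightarrow{\sim}D_{\sigma_j}$ for $1\le j\le f_L-1$. These assemble into an isomorphism of $\Res_{E\otimes L_0/E}G$-torsors from the constant tuple $(D_{\sigma_0})_j$ to $(D_{\sigma_j})_j$, and a short check shows $\alpha_j\circ\widetilde\Phi_j=\Phi_j\circ\alpha_{j+1}$ for every $j$ (the case $j=0$ being precisely the identity $\Phi^{f_L}|_{D_\sigma}=\Phi_0\cdots\Phi_{f_L-1}$, the other cases trivial), so $\alpha$ intertwines the two Frobenius structures; naturality in $(D,\Phi)$ is immediate. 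Combined with the previous paragraph this exhibits the functor as an equivalence.

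The only content here is the bookkeeping of the first paragraph — identifying $\varphi^\ast$ with the index shift, and $\Phi^{f_L}$ with the monodromy around the resulting $f_L$-cycle — after which everything is the standard equivalence between descent data on a split cyclic cover and (object, automorphism). I do not expect any serious obstacle; the one point requiring care is fixing the Frobenius normalisation consistently (which direction the $f_L$-cycle runs and hence the order of the composite in $\Phi^{f_L}$), but either choice of enumeration works symmetrically. Note also that no hypothesis on $G$ beyond being an affine group scheme enters: only torsors, pullbacks and pushouts are manipulated, so disconnectedness of $G$ is irrelevant.
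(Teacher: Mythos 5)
Your argument is correct, and it reaches the same untwisting identity as the paper but packages it differently. The paper's proof trivializes $D$ over an fpqc cover $A\to A'$, writes $\Phi=(\Phi_1,\dots,\Phi_{f_L})$ in coordinates, performs the gauge change by $\underline a=(1,(\Phi_2\cdots\Phi_{f_L})^{-1},\dots,\Phi_{f_L}^{-1})$ to reduce $\Phi$ to $(\Phi_1\cdots\Phi_{f_L},1,\dots,1)$, and then descends back to $A$ (invoking effectivity of descent for the affine schemes $D_i$), handling morphisms by the same coordinate manipulation. You instead never trivialize: you use that a torsor under $\Res_{E\otimes L_0/E}G\cong\prod_\sigma G$ splits canonically over $A$ into its component $G$-torsors (with morphisms splitting likewise), so the component maps $\Phi_j\colon D_{\sigma_{j+1}}\to D_{\sigma_j}$ and your parallel-transport isomorphisms $\alpha_j=\Phi_j\cdots\Phi_{f_L-1}$ are already defined over $A$; you then exhibit an explicit quasi-inverse and check the natural isomorphism $\alpha_j\circ\widetilde\Phi_j=\Phi_j\circ\alpha_{j+1}$ and its naturality directly. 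Note that your $\alpha_j$ are exactly the inverses of the entries of the paper's gauge element $\underline a$, so the computational core is identical; what your route buys is the elimination of the fpqc cover and the descent step, at the cost of having to state (and it is standard) that torsors under a product group decompose as products of torsors, which is how the paper's isotypic parts $D_\sigma$ are defined anyway. Your closing caveat about the direction of the Frobenius cycle is the only normalization point to fix, and either convention indeed works.
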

\begin{proof}
 Write the embeddings
$\sigma_i:L_0\hookrightarrow E$, $i\in\Z/f_L\Z$, with the numbering
chosen so that $\sigma_1=\sigma$, and ~$\Phi$ induces isomorphisms
$\sigma_i:D_{i+1}\isoto D_i$ for each~$i$ (where we write~$D_i$ for~$D_{\sigma_i}$).
  
Let $A\rightarrow A'$ be an fpqc cover trivializing $D$, so that $D_{A'}$ is a trivial torsor and we may choose a section.  Then we can write $\Phi=(\Phi_1,\ldots,\Phi_{f_L})$.  
  
We define 
\[	\underline a:=(1,(\Phi_2\cdots\Phi_{f_L})^{-1},(\Phi_3\cdots\Phi_{f_L})^{-1},\ldots,\Phi_{f_L}^{-1}).	\]
 Then if we multiply our choice of trivializing section by $\underline a$, we replace $\Phi$ by
\[	\underline a\Phi\varphi(\underline a)^{-1}=(\Phi_1\cdots\Phi_{f_L},1,\ldots,1)	\]
Thus, we can recover $(D_{A'},\Phi)$ from $((D_\sigma)_{A'},\Phi^{f_L})$.
  
Furthermore, $D_{A'}$ is equipped with a descent datum, since it is the base change of $D$.  Therefore, $(D_i)_{A'}$ has a descent datum, and since $(D_i)_{A'}\rightarrow\Spec A'$ is affine, it is effective.
  
Now suppose that $f=(f_1,\ldots,f_{f_L}):D\xrightarrow{\sim}D'$ is an
isomorphism of $\Res_{E\otimes L_0/E}G$-torsors equipped with
isomorphisms $\Phi:\varphi^\ast D\xrightarrow{\sim}D$,
$\Phi':\varphi^\ast D'\xrightarrow{\sim}D'$.  We obtain a corresponding isomorphism $f_{A'}:D_{A'}\xrightarrow{\sim}D_{A'}'$, together with a covering datum.  Then each $f_i:D_i\xrightarrow{\sim} D_i'$ is an isomorphism of $G$-torsors, and we have
\[	f_i\circ \Phi_{i}=\Phi_{i}'\circ f_{i+1}:D_{i+1}\to D'_i.	\]
Multiplying the trivializing section of $D_{A'}$ by $\underline a$ and multiplying the trivializing section of $D_{A'}$ by $\underline a'$ has the effect of replacing $\underline f$ with $\underline a'\circ \underline f\circ \underline a^{-1}$.  Then if we let $\underline a$ and $\underline a'$ be as above, $\underline f$ becomes~$(f_1,\ldots,f_1)$.
Thus, we can also recover morphisms of pairs $(D,\Phi)\rightarrow
(D',\Phi')$ from the associated morphisms of pairs
$(D_i,\Phi^{f_L})\rightarrow (D_i',(\Phi')^{f_L})$, as required.
   \end{proof}

Now suppose that $D$ is a $\Res_{E\otimes L_0/E}G$-torsor equipped
with an isomorphism $\Phi:\varphi^\ast D\xrightarrow{\sim}D$, and
suppose in addition that $D$ is equipped with a semi-linear action
$\tau$ of $\Gal_{L/K}$, compatible with $\Phi$ in the sense that
$\Phi\circ\varphi^\ast \tau(g)=\tau(g)\circ g^\ast(\Phi)$ for all
$g\in\Gal_{L/K}$. For each~$\sigma$, we will construct a Weil--Deligne representation on $D_\sigma$ which is trivial on $I_L$.  

There is a surjective map $W_K\twoheadrightarrow \Gal_{L/K}$ which restricts to a surjection $I_K\twoheadrightarrow I_{L/K}$.  If $g\in W_K$, we write $\overline{g}$ for its image in $\Gal_{L/K}$.
For $g\in W_K$, we have an isomorphism
\[	\tau(\overline{g}):g^\ast D\xrightarrow{\sim} D	\]
and we have an isomorphism
\[	\Phi^{-v(g)f_K}:=D\xrightarrow{\Phi^{-1}}\varphi^\ast D\xrightarrow{\varphi^\ast\Phi^{-1}}\cdots\xrightarrow{(g\varphi^{-1})^\ast\Phi^{-1}}g^\ast D.	\]
Accordingly, we define $r(g):D_\sigma\xrightarrow{\sim} D_\sigma$ to be the restriction of
\[
  r(g):=\tau(\overline{g})\circ\Phi^{-v(g)f_K}:D\xrightarrow{\sim} D.	\]
Note that $r|_{I_L}$ is trivial.

\begin{lemma}\label{lemma:wl-centralizes-wk}
Let $D$ be a $G$-torsor and let $r:W_K\rightarrow \Aut_G(D)$ be a homomorphism such that $r|_{I_L}$ is trivial.  Then $r(W_L)$ centralizes $r(W_K)$.
\end{lemma}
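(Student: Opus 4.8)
The plan is to reduce the statement to the assertion that every element of $r(W_K)$ commutes with a single generator of $r(W_L)$. The key point is that, because $r|_{I_L}$ is trivial and $I_L$ is normal in $W_L$, the restriction $r|_{W_L}$ factors through the abelian quotient $W_L/I_L$, which by the analogue for $L$ of the exact sequence in Section~\ref{subsec: notation} is $\langle\varphi^{f_L}\rangle\cong\mathbf{Z}$; hence $r(W_L)$ is the cyclic subgroup of $\Aut_G(D)$ generated by $\Phi_L:=r(g_L)$ for any lift $g_L\in W_L$ of $\varphi^{f_L}$.

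It therefore suffices to fix $g\in W_K$ and check $r(g)\Phi_L r(g)^{-1}=\Phi_L$. Since $L/K$ is Galois, $W_L=W_K\cap\Gal_L$ is normal in $W_K$, so $gg_Lg^{-1}\in W_L$ and $r(g)\Phi_L r(g)^{-1}=r(gg_Lg^{-1})$. The homomorphism $v\colon W_K\to\mathbf{Z}$ of Section~\ref{subsec: notation} is invariant under conjugation, so $v(gg_Lg^{-1})=v(g_L)$; thus $gg_Lg^{-1}$ and $g_L$ have the same image in $W_K/I_K$, and since both lie in $W_L$ and $I_L=W_L\cap I_K$, they have the same image in $W_L/I_L$. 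Writing $gg_Lg^{-1}=\iota g_L$ with $\iota\in I_L$ and applying $r$ gives $r(gg_Lg^{-1})=r(g_L)=\Phi_L$, as desired. Consequently $r(g)$ commutes with $\Phi_L$, hence with every power of $\Phi_L$, i.e.\ with every element of $r(W_L)$; as $g$ was arbitrary, $r(W_L)$ centralizes $r(W_K)$.

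I do not expect any real obstacle: the argument is purely group-theoretic and formal. The only thing to be careful about is the elementary bookkeeping with the chain $I_L\subset W_L\subset W_K$ --- namely the normality of $W_L$ in $W_K$ and the identity $I_L=W_L\cap I_K$, both inherited from the corresponding facts for $\Gal_L\subset\Gal_K$ --- together with the observation that $v$, being a homomorphism on $W_K$, is invariant under conjugation.
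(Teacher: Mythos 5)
Your proof is correct and is essentially the paper's argument: both rest on the facts that $W_L$ is normal in $W_K$, that $v$ kills commutators so the commutator of $g\in W_K$ with an element of $W_L$ lies in $I_K\cap W_L=I_L$, and that $r|_{I_L}$ is trivial. The only difference is your preliminary reduction to the single generator $\Phi_L$ of $r(W_L)$, which the paper skips by running the same commutator computation directly for an arbitrary $h\in W_L$.
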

\begin{proof}
Let $g\in W_K$ and let $h\in W_L$.  Then $v(ghg^{-1}h^{-1})=0$, so $ghg^{-1}h^{-1}\in I_K$.  Moreover, 
$W_L\subset W_K$ is a normal subgroup, so that
$ghg^{-1}h^{-1}\in W_L$. 
But $I_K\cap W_L=I_L$, so $r (ghg^{-1}h^{-1})=1$, as required.
\end{proof}
We now prove the equivalence between Weil--Deligne representations and
$(\varphi,N)$-modules. In the case that~$G=\GL_n$ the following lemma is~\cite[Prop.\ 4.1]{MR2359853}.
\begin{lemma}\label{lem: WD stack agrees with phi N stack}
The map $r:W_K\rightarrow\Aut_G(D_\sigma)$ is a homomorphism, and
$(D,\Phi,N,\tau)\rightsquigarrow (D_\sigma,r,N_\sigma)$
is an equivalence of categories between $G-\mathrm{Mod}_{L/K,\varphi,N}$ and $G-\WD_E(L/K)$.
\end{lemma}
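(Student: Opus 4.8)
The plan is to check that the recipe $(D,\Phi,N,\tau)\rightsquigarrow(D_\sigma,r,N_\sigma)$ takes values in $G-\WD_E(L/K)$ (this includes the assertion that $r$ is a homomorphism), to write down an explicit quasi-inverse, and to verify that the two composites are naturally isomorphic to the identity. The case $G=\GL_n$ is \cite[Prop.\ 4.1]{MR2359853}, and the only genuinely new ingredient is the bookkeeping of the $\Res_{E\otimes L_0/E}$-structure, for which I would reuse the ``change of trivialising section by $\underline a$'' manipulation from the proof of Lemma~\ref{lemma:d-phi-di-phif}.

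First I would verify that $r(g):=\tau(\overline g)\circ\Phi^{-v(g)f_K}$ preserves the $\sigma$-isotypic part $D_\sigma$: this is immediate once one records that $g\in W_K$ acts on $L_0$ through $\varphi^{v(g)f_K}$, so that $\Phi^{-v(g)f_K}$ carries $D_\sigma$ to $(g^\ast D)_\sigma$ and $\tau(\overline g)$ carries it back to $D_\sigma$. The homomorphism property $r(g_1g_2)=r(g_1)r(g_2)$ then follows by expanding both sides, using additivity of $v$, the cocycle relation $\tau(g_1g_2)=\tau(g_1)\circ g_1^\ast\tau(g_2)$, and the compatibility $\tau(g)\circ g^\ast\Phi=\Phi\circ\varphi^\ast\tau(g)$ to slide the $\Phi$-powers past the $\tau$'s; this is exactly where the semilinear care is needed. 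The Weil--Deligne relation $N_\sigma=p^{-v(g)f_K}\Ad(r(g))(N_\sigma)$ drops out of the identities $\underline\Ad\Phi(N)=\frac1p N$ and $\underline\Ad\tau(g)(N)=N$ after restriction to the $\sigma$-component, and $r|_{I_L}$ is trivial by construction; functoriality in morphisms is formal. This settles the first assertion of the lemma and shows the functor is well defined.

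For the quasi-inverse, given $(D_0,r,N_0)\in G-\WD_E(L/K)$ I would reconstruct a $(\varphi,N,\Gal_{L/K})$-module as follows: take $D$ to be the $\Res_{E\otimes L_0/E}G$-torsor all of whose $f_L$ isotypic components equal $D_0$, with $E\otimes L_0\cong\prod_\sigma E$ acting through the projections; define $\Phi:\varphi^\ast D\xrightarrow{\sim}D$ to be the identity on all components except one, on which it is built from the unramified part of $r$ (arranged so that $\Phi^{f_L}|_{D_\sigma}$ recovers the automorphism $\Phi'_\sigma$ attached to $r$ by Lemma~\ref{lemma:d-phi-di-phif}); let $N$ have $\sigma$-component $N_0$, the remaining components being forced by $N=p^{j}\underline\Ad(\Phi^{j})(N)$; and for $g\in\Gal_{L/K}$ define $\tau(g)$ by lifting $g$ to $W_K$, applying $r$ of that lift componentwise, and correcting by the isotypic permutation. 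One then checks that compatibilities (1)--(4) for this data are equivalent, component by component, to $r$ being a homomorphism satisfying the Weil--Deligne relation; here Lemma~\ref{lemma:wl-centralizes-wk} is what controls the interaction between $\Phi^{f_L}$ and the $\Gal_{L/K}$-action, i.e.\ it ensures that the automorphisms coming from $W_L$ centralise the image of $r$, which is needed for (3) and (4). Finally one checks independence of the Frobenius lift $g_0$ (two lifts differ by an element of $I_{L/K}$, which acts trivially where it matters, as in the remark after the tangent--obstruction diagram) and of the chosen trivialisations.

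The two constructions are then mutually inverse up to natural isomorphism: composing the recipe with the reconstruction returns $(D_\sigma,\Phi'_\sigma)$ on the nose by Lemma~\ref{lemma:d-phi-di-phif}, and the reconstruction was designed so that $\tau$ and $N$ also match, with the other composite handled symmetrically. Alternatively, one can phrase everything in terms of the presentations: the recipe sends a trivialised object of $G-\mathrm{Mod}_{L/K,\varphi,N}$ to the triple $(r(g_0),N_\sigma,r|_{I_{L/K}})\in Y_{L/K,\varphi,\mathcal N}$, defining an algebraic map $X_{L/K,\varphi,\mathcal N}\to Y_{L/K,\varphi,\mathcal N}$ that is equivariant for the projection $\Res_{E\otimes L_0/E}G\to G$ onto the $\sigma$-factor, and showing this induces an equivalence on quotient stacks reduces, via the previous lemma on presentations and Lemma~\ref{lemma:d-phi-di-phif}, to the bijectivity statements already proved. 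I expect the main obstacle to be precisely this isotypic/semilinear bookkeeping: keeping straight how $\varphi$ permutes the embeddings $L_0\hookrightarrow E$, how $\Gal_{L/K}$ acts on those scalars, and how the single torsor $D_\sigma$ with its $W_K$-action re-assembles into a $\Res_{E\otimes L_0/E}G$-torsor with $\Phi$ and $\varphi$-semilinear $\tau$; no individual verification is hard, but there are many of them and the conventions must be fixed consistently throughout.
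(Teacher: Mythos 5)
Your overall route is the same as the paper's: check directly that $(D_\sigma,r,N_\sigma)$ is a $G$-valued Weil--Deligne representation (the homomorphism property coming from $\tau(g)\circ g^\ast\Phi=\Phi\circ\varphi^\ast\tau(g)$), and then invert the construction one isotypic component at a time, with Lemma~\ref{lemma:d-phi-di-phif} reassembling $(D,\Phi)$ from $(D_\sigma,\Phi^{f_L}|_{D_\sigma})$ and Lemma~\ref{lemma:wl-centralizes-wk} governing the compatibility of the reconstructed $\tau$ with $\Phi$. (The paper phrases the inverse direction as full faithfulness plus essential surjectivity rather than as an explicit quasi-inverse, but that is a cosmetic difference.)

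There is, however, one concrete step that your sketch does not actually supply, and it is the only genuinely non-formal point of the reconstruction: how to define $\Phi^{f_L}|_{D_\sigma}$ from $r$. Saying it is ``built from the unramified part of $r$, arranged so that $\Phi^{f_L}|_{D_\sigma}$ recovers the automorphism attached to $r$ by Lemma~\ref{lemma:d-phi-di-phif}'' is circular: Lemma~\ref{lemma:d-phi-di-phif} attaches no automorphism to $r$; producing that automorphism is exactly the task. The subtlety is that $\overline{g_0}^{f_L/f_K}$ lies in $I_{L/K}$ but need not be trivial, so $r(g_0^{f_L/f_K})^{-1}$ alone is not $\Phi^{f_L}$; one must set $\Phi^{f_L}|_{D_\sigma}:=r(g_0^{f_L/f_K})^{-1}r(\widetilde{\overline{g_0}^{f_L/f_K}})$ and verify independence of the choice of $g_0$, which uses that $r|_{I_L}$ is trivial (so $r(\widetilde{\overline{h'}})=r(h')$ for $h'\in I_K$) rather than, as you suggest, that $I_{L/K}$ ``acts trivially where it matters'' --- the latter is the reason the tangent--obstruction diagram is independent of $g_0$, a different situation. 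Relatedly, the well-definedness of $\tau(g):=r(\widetilde g)\circ\Phi^{v(\widetilde g)f_K}$ must be checked against changing the lift $\widetilde g$ by an arbitrary element of $W_L$ (not just $I_L$), which forces you to prove $r(h)\circ\Phi^{f_L}=1$ for $h$ generating the unramified quotient of $W_L$; this is where the displayed formula for $\Phi^{f_L}$ and the identity $I_K\cap W_L=I_L$ enter, and where Lemma~\ref{lemma:wl-centralizes-wk} is then used to get $(\varphi^{f_L})^\ast\tau(g)=\tau(g)$ and the semilinearity cocycle. With these points filled in, your plan matches the paper's proof.
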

\begin{proof}
Since $\tau(\overline g)\circ g^\ast(\Phi)=\Phi\circ
\varphi^\ast(\tau(\overline g))$, we have $\Phi^{-1}\circ \tau(\overline g)=\varphi^\ast(\tau(\overline g))\circ g^\ast(\Phi^{-1})$ as isomorphisms $g^\ast D\xrightarrow{\sim}\varphi^\ast D$.  It follows that
\begin{align*}
r(g_1)r(g_2)&=\left(\tau(\overline {g_1})\circ \Phi^{-v(g_1)f_K}\right)\circ\left(\tau(\overline{g_2})\circ \Phi^{-v(g_2)f_K}\right)	\\
&=\tau(\overline {g_1})\circ (\varphi^{v(g_1)f_K})^\ast\left(\tau(\overline{g_2})\circ \Phi^{-v(g_1g_2)f_K}\right)	\\
&=\tau(\overline{g_1g_2})\circ \Phi^{-v(g_1g_2)f_K}=r(g_1g_2)
\end{align*}
and $r$ is a homomorphism. Another short computation shows that
\[N_\sigma=p^{-v(g)f_K}\Ad(r(g))(N_\sigma),\] so that $(E_\sigma,r,N_\sigma)$ is a $G$-valued Weil--Deligne representation.

The association $(D,\Phi,N,\tau)\rightsquigarrow (D_\sigma,r,N_\sigma)$ is clearly functorial.  Moreover, if $f:D\rightarrow D'$ is a morphism of $G$-valued $(\varphi,N,\Gal_{L/K})$-modules, then $\Phi'\circ \varphi^\ast(f)=f\circ\Phi$.  This implies that $f$ is determined by its restriction $f|_{D_\sigma}$ to the $\sigma$-isotypic piece, and therefore, the functor is fully faithful.

We need to check that this functor is essentially surjective.  In
other words, we need to check that we can construct $(D,\Phi,N,\tau)$
from $(D_\sigma,r,N_\sigma)$. To do so, we number the embeddings as
$\sigma_i$, as in the proof of Lemma~\ref{lemma:d-phi-di-phif}. For
each element $h\in I_{L/K}$, we fix a lift to an element~$\tilde{h}\in
I_K$; note that since $r|_{I_L}$ is trivial, $r(\tilde{h})$ is
independent of the choice of~$\tilde{h}$. 

To construct $\Phi^{f_L}|_{D_i}$ from $r$, we observe that if $g_0\in W_K$ lifts $\varphi^{f_K}$ and $(D_i,r,N_i)$ is in the essential image of our functor, then
\[	r(g_0^{f_L/f_K})=\tau(\overline{g_0}^{f_L/f_K})\Phi^{-f_L}.	\]
But $\overline{g_0}^{f_L/f_K}\in I_{L/K}$, so 
we can define $\Phi^{f_L}|_{D_i}:=r(g_0^{f_L/f_K})^{-1}r(\widetilde{\overline{g_0}^{f_L/f_K}})$.

We need to check that $\Phi^{f_L}|_{D_i}$ does not depend on our
choice of $g_0$.  Indeed, if $h\in I_K$, then
$(g_0h)^{f_L/f_K}=h_1\cdots h_{f_L/f_K-1}g_0^{f_L/f_K}$, where
$h_i:=g_0^ihg_0^{-i}\in I_K$, so we may write
$(g_0h)^{f_L/f_K}=h'g_0^{f_L/f_K}$ for some $h'\in I_K$.  Then
$r(\widetilde{\overline{h'}})=r(h')$, so
\begin{align*}
r((g_0h)^{f_L/f_K})^{-1}r(\widetilde{\overline{g_0h}^{f_L/f_K}})&= r(g_0^{f_L/f_K})^{-1}r(h')^{-1}r(\widetilde{\overline{h'}})r(\widetilde{\overline{g_0}^{f_L/f_K}})	\\
&=r(g_0^{f_L/f_K})^{-1}r(\widetilde{\overline{g_0}^{f_L/f_K}}),
\end{align*}as required.

Lemma~\ref{lemma:d-phi-di-phif} now implies that we can construct $(D,\Phi)$
from $(D_i,\Phi^{f_L}|_{D_i})$.  Since $W_K\rightarrow\Gal_{L/K}$ is
surjective, we define for $g\in\Gal_{L/K}$
\[	\tau(g):=r(\widetilde g)\circ\Phi^{v(\widetilde g)f_K}=r(\widetilde g)\circ\left(\Phi\circ\cdots\circ (\varphi^{-1})^\ast g^\ast\Phi\right)	\] 
as a map $D_{i+ v(g)f_K}\rightarrow D_i$.  We need to check that this is well-defined.  Note that the kernel of $W_K\rightarrow \Gal_{L/K}$ is $W_L$, and if $h\in W_L$, then $v(h)=(f_L/f_K)\cdot i$ for some $i\in\Z$.  Thus, for any $h\in W_L$, 
\[	r(\widetilde g h)\circ\Phi^{v(\widetilde g h)f_K}=r(\widetilde g)r(h)\circ\Phi^{i\cdot f_L}\circ\Phi^{v(\widetilde g)f_K},	\]
so it suffices to show that $r(h)\circ\Phi^{i\cdot f_L}=1$.  Since $r|_{I_L}$ is trivial, it suffices to consider the case $i=1$, i.e., $h$ generates the unramified quotient of $W_L$.  But then $r(h)\circ\Phi^{f_L}=r(h)r(g_0^{f_L/f_K})^{-1}r(\widetilde{\overline{g_0}^{f_L/f_K}})$; on the one hand $hg_0^{-f_L/f_K}\in I_K$ and $\widetilde{\overline{g_0}^{f_L/f_K}}\in I_K$, and on the other hand $g_0^{-f_L/f_K}\widetilde{\overline{g_0}^{f_L/f_K}}\in W_L$.  It follows that 
\[	hg_0^{-f_L/f_K}\widetilde{\overline{g_0}^{f_L/f_K}}\in I_K\cap W_L=I_L	\]
and the result follows.

We can also construct $\tau(g):D_{j+ v(\widetilde
  g)f_K}\xrightarrow{\sim} D_j$ for the remaining $\sigma_j$-isotypic
factors.  Indeed, the desired compatibility between $\Phi$ and $\tau$
forces us to set $\varphi^\ast\tau(g):=\Phi^{-1}\circ \tau(g)\circ
{g}^\ast\Phi:D_{i+ v(\widetilde g)f_K+ 1}\xrightarrow\sim D_{i+ 1}$
(and we proceed inductively). 

We need to check that this is well-defined.  More precisely, we need to check that $(\varphi^{f_L})^\ast\tau(g)=\tau(g)$ for all $g\in\Gal_{L/K}$.  In other words, we need to check that
\[	\tau(g)\circ\left({g}^\ast\Phi\circ \varphi^\ast{g}^\ast\Phi\circ\cdots(\varphi^{f_L-1})^\ast{g}^\ast\Phi\right)=\left(\Phi\circ \varphi^\ast\Phi\circ\cdots(\varphi^{f_L-1})^\ast\Phi\right)\circ\tau(g)	\]
as isomorphisms $D_{i+v(\widetilde g)f_K}\xrightarrow\sim D_i$, or equivalently that
\[	\tau(g)\circ g^\ast\Phi^{f_L}=\Phi^{f_L}\circ\tau(g).	\]
But
\begin{align*}
\tau(g)\circ g^\ast\Phi^{f_L}&=\left(r(\widetilde g)\circ\Phi^{v(\widetilde g)f_K}\right)\circ g^\ast(\Phi^{f_L})	\\
&= r(\widetilde g)\circ\Phi^{f_L}\circ \Phi^{v(\widetilde g)f_K}	\\
&=r(\widetilde g)\cdot r(g_0^{-f_L/f_K}\widetilde{\overline{g_0}^{f_L/f_K}})\circ \Phi^{v(\widetilde g)f_K}	\\
&=r(g_0^{-f_L/f_K}\widetilde{\overline{g_0}^{f_L/f_K}})\cdot r(\widetilde g)\circ \Phi^{v(\widetilde g)f_K}	\\
&=\Phi^{f_L}\circ \tau(g).
\end{align*}
Here we used Lemma~\ref{lemma:wl-centralizes-wk} and the fact that $g_0^{-f_L/f_K}\widetilde{\overline{g_0}^{f_L/f_K}}\in W_L$.

It remains to show that $\tau$ is a semi-linear representation, or
more precisely, that $\tau(g_1g_2)=\tau(g_1)\circ g_1^\ast\tau(g_2)$
for all $g_1,g_2\in\Gal_{L/K}$.  Now since by definition we have $\varphi^\ast\tau(g):=\Phi^{-1}\circ \tau(g)\circ
{g}^\ast\Phi:D_{i+ v(\widetilde g)f_K+ 1}\xrightarrow\sim D_{i+ 1}$,
we see that  
\begin{align*}
\tau(g_1)\circ g_1^\ast\tau(g_2)&=\tau(g_1)\circ \left(((g_1\varphi^{-1})^\ast\Phi^{-1}\circ\cdots\circ\Phi^{-1})\circ\tau(g_2)\circ (g_2^\ast\Phi\circ\cdots\circ(g_1\varphi^{-1})^\ast g_2^\ast\Phi)\right)	\\
&=\tau(g_1)\circ \left((g_1\varphi^{-1})^\ast\Phi^{-1}\circ\cdots\circ\Phi^{-1}\right)\circ\tau(g_2)\circ g_2^\ast\left(\Phi\circ\cdots\circ(g_1\varphi^{-1})^\ast\Phi\right)	\\
&=r(\widetilde{g_1})\circ r(\widetilde{g_2})\circ \Phi^{v(\widetilde{g_2})f_K}\circ g_2^\ast\Phi^{v(\widetilde{g_1})f_K}	\\
&=r(\widetilde{g_1})r(\widetilde{g_2})\circ \Phi^{v(\widetilde{g_1}\widetilde{g_2})f_K} \\&= \tau(g_1g_2),
\end{align*}as required.

Finally, we construct $N$.  We have $N_i$, and we use the desired relation $N=p\underline\Ad(\Phi)(N)$ to construct the Frobenius-conjugates of $N_i$.  It then follows that for any $g\in\Gal_{L/K}$
\begin{align*}
\underline\Ad(\tau(g))(N)&=\underline\Ad(r(\widetilde g)\circ\Phi^{v(g)f_K})(N)	\\
&=\Ad(r(\widetilde g)\circ\Phi^{v(g)f_K})(p^{-v(g)f_K}\Ad(\Phi^{-v(g)f_K})(N))	\\
&=\Ad(r(\widetilde g))(N)=N
\end{align*}
so we are done.

The assignment $(D_i,r,N_i)\rightsquigarrow (D,\Phi,N,\tau)$ is clearly functorial and quasi-inverse to $(D,\Phi,N,\tau)\rightsquigarrow(D_i,r,N_i)$.
\end{proof}

\subsection{Exact $\otimes$-filtrations for disconnected
  groups}
In this section we prove some results on tensor filtrations that we
will apply to the Hodge filtration in $p$-adic Hodge theory.



Let $G$ be an affine group scheme over a field~$k$ of characteristic
zero, let $A$ be a $k$-algebra, and let~$\eta$ be a fiber functor from
$\Rep_k(G)$ to~$\Proj_A$. 
 More precisely, $\Rep_k(G)$ is the category of $k$-linear finite-dimensional
representations of $G$, $\Proj_A$ is the category of finite projective
$A$-modules (which we will also think of as being vector bundles on
$\Spec A$), and by a ``fiber functor'' we mean that
\begin{enumerate}
\item	$\eta$ is $k$-linear, exact, and faithful.
\item\label{tensor-def}	$\eta$ is a tensor functor, that is, $\eta(V_1\otimes_kV_2)=\eta(V_1)\otimes_A\eta(V_2)$.
\item	If $\mathbf{1}$ denotes the trivial representation of $G$, then $\eta(\mathbf{1})$ is the trivial $A$-module of rank $1$.
\end{enumerate}

Given a fiber functor $\eta:\Rep_k(G)\rightarrow \Proj_A$ and an $A$-algebra $A'$, there is a natural fiber functor $\eta':\Rep_k(G)\rightarrow \Proj_{A'}$ given by composing $\eta$ with the natural base extension functor $\iota_{A'}:\Proj_A\rightarrow\Proj_{A'}$ sending $M$ to $M\otimes_AA'$.

\begin{defn}
Let $\omega,\eta:\Rep_k(G)\rightrightarrows \Proj_A$ be fiber functors.  Then $\underline\Hom^\otimes(\omega,\eta)$ is the functor on $A$-algebras given by
\[	\underline\Hom^\otimes(\omega,\eta)(A'):=\Hom^\otimes(\iota_{A'}\circ\omega,\iota_{A'}\circ\eta).	\]
Here $\underline{\Hom}^\otimes$ refers to natural transformations of functors which preserve tensor products.
\end{defn}

\begin{thm}[{\cite[Thm. 3.2]{dm}\label{g-x-homs}}]
Let $\omega:\Rep_k(G)\rightarrow \Vect_k$ be the natural forgetful functor.
\begin{enumerate}
\item	For any fiber functor $\eta:\Rep_k(G)\rightarrow \Proj_A$, $\underline\Hom^\otimes(\iota_A\circ\omega,\eta)$ is representable by an affine scheme faithfully flat over $\Spec A$; it is therefore a $G$-torsor.
\item	The functor $\eta\rightsquigarrow\underline\Hom^\otimes(\iota_A\circ\omega,\eta)$ is an equivalence between the category of fiber functors $\eta:\Rep_k(G)\rightarrow\Proj_A$ and the category of $G$-torsors over $\Spec A$.  The quasi-inverse assigns to any $G$-torsor $X$ over $A$ the functor $\eta$ sending any $\rho:G\rightarrow \GL(V)$ to the $M\in\Proj_A$ associated to the push-out of $X$ over $A$.
\end{enumerate}
\end{thm}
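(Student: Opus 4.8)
This is Theorem~3.2 of~\cite{dm}, and in the paper it would simply be cited; but here is the plan one would follow to prove it. Write $\omega_A:=\iota_A\circ\omega$ for the ``standard'' fiber functor to $\Proj_A$ and $F:=\underline\Hom^\otimes(\omega_A,\eta)$ for the presheaf on $A$-algebras from the definition above. The plan starts from the basic reconstruction fact that $\underline\Aut^\otimes(\omega_A)\cong G_A$ as affine group schemes over $A$ — equivalently, $\Aut^\otimes(\iota_{A'}\circ\omega)=G(A')$ for every $A$-algebra $A'$ — which needs no assumption on the characteristic of $k$ (nor does anything else in this theorem). Composing tensor natural transformations then equips $F$ with a $G_A$-action which is free and transitive on every non-empty $F(A')$, since any two tensor-isomorphisms $\iota_{A'}\omega\to\iota_{A'}\eta$ differ by an element of $\Aut^\otimes(\iota_{A'}\omega)=G(A')$. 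Thus $F$ is a pseudo-torsor under $G_A$, and to get part~(1) it suffices to check (i) that $F$ is an fpqc sheaf on $A$-algebras, and (ii) that $F$ is non-empty fpqc-locally on $\Spec A$: then $F$ is an fpqc $G_A$-torsor, and since $G_A$ is affine such a torsor is representable by a scheme affine and faithfully flat over $\Spec A$ (both properties descend along a trivialising cover).

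Point~(i) is routine fpqc descent for morphisms of quasi-coherent sheaves: an element of $F(A')$ is a system of $A'$-linear maps $\omega(V)\otimes_kA'\to\eta(V)\otimes_AA'$, natural in $V$ and compatible with $\otimes$ and the unit, and such systems glue along $A'\to A''$. Point~(ii) is the real content, and the plan is to exhibit the torsor explicitly. View the coordinate Hopf algebra $k[G]$ as an ind-object of $\Rep_k(G)$, say via right translation, write it as the filtered colimit of its finite-dimensional subrepresentations, and extend $\eta$ to ind-objects by filtered colimits; put $R:=\eta(k[G])$ and $X:=\Spec R$. Transporting the Hopf structure of $k[G]$ through the monoidal functor $\eta$ makes $R$ an $A$-algebra with a $G_A$-action on $X$, and there is a tautological element of $F(R)$: for each $V$, combining the comodule map $V\to V\otimes_kk[G]$ with the standard trivialisation $V\otimes_kk[G]\cong\underline V\otimes_kk[G]$ (with $\underline V$ the trivial representation on the underlying space) and applying $\eta$ gives a natural, tensor-compatible $R$-linear isomorphism $\omega(V)\otimes_kR\xrightarrow{\sim}\eta(V)\otimes_AR$. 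So once $X\to\Spec A$ is known to be an fpqc cover, $F$ is locally non-empty; and the tautological point defines a map $X\to F$ of $G_A$-torsors, hence an isomorphism, identifying $F$ with $X$ and completing~(1).

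For $X\to\Spec A$ to be an fpqc cover I would check faithful flatness of $R$: it is flat, being a filtered colimit of the finite projective $A$-modules $\eta(V_\alpha)$; and the quotient $R/\eta(\mathbf 1)=\eta(k[G]/\mathbf 1)$ is likewise flat, so $-\otimes_A\kappa$ preserves the injection $\eta(\mathbf 1)\hookrightarrow R$, whence $R\otimes_A\kappa\supseteq\eta(\mathbf 1)\otimes_A\kappa=\kappa\ne 0$ for every residue field $\kappa$. (If $G$ is not of finite type, write $G=\varprojlim G_i$ with the $G_i$ of finite type and $\Rep_k(G)=\varinjlim\Rep_k(G_i)$, and run everything compatibly over the $G_i$; nothing essential changes.)

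Part~(2): the quasi-inverse sends a $G$-torsor $X$ to the functor $\eta_X$ with $\eta_X(V)$ the pushout of $X$ along $\rho\colon G\to\GL(V)$. This $\eta_X$ is a fiber functor because it is the composite of $V\mapsto V\otimes_k\mathcal O_X$ (from $\Rep_k(G)$ to $G$-equivariant $\mathcal O_X$-modules, which is exact, faithful, tensor and unit preserving) with fpqc descent along the faithfully flat cover $X\to\Spec A$ (which is again exact and faithful). Both composites are naturally the identity: $\underline\Hom^\otimes(\omega_A,\eta_X)\cong X$ is exactly the construction of part~(1) applied to $\eta_X$ (the pushout comes with a tautological trivialisation over $X$), and $\eta_F\cong\eta$ is obtained by descending the tautological trivialisation over $X=F$ down to $\Spec A$; functoriality in all arguments, and compatibility with base change $A\to A'$, are then straightforward. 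The main obstacle throughout is the explicit construction and analysis of $X=\Spec\eta(k[G])$ in point~(ii): making sense of $\eta$ on the ind-object $k[G]$, transporting the torsor structure through $\eta$, and proving the faithful flatness and the tautological trivialisation. Everything else is formal manipulation of tensor natural transformations together with fpqc descent.
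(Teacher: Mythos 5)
The paper offers no proof of this statement at all—it is quoted directly from Deligne--Milne (\cite[Thm.\ 3.2]{dm})—so the only comparison to make is with the cited source, and your sketch is a correct outline of exactly that standard argument: $\underline\Aut^\otimes(\iota_A\circ\omega)\cong G_A$ makes $\underline\Hom^\otimes(\iota_A\circ\omega,\eta)$ a pseudo-torsor (using that tensor natural transformations between fiber functors are automatically isomorphisms), local triviality comes from the tautological point over $X=\Spec\eta(k[G])$ with faithful flatness checked via flatness plus nonvanishing of fibres, representability follows by fpqc descent for affine schemes, and the pushout construction gives the quasi-inverse in part (2). So your proposal is correct and takes essentially the same route as the reference the paper relies on; no gaps beyond routine verifications you already flag.
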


\begin{cor}
Let $\eta:\Rep_k(G)\rightarrow \Proj_A$ be a fiber functor, corresponding to a $G$-torsor $X\rightarrow \Spec A$.  Then the functor $\underline\Aut^\otimes(\eta)$ is representable by the $A$-group scheme $\Aut_G(X)$.  This is a form of $G_A$.
\end{cor}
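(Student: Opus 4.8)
The plan is to deduce the corollary formally from Theorem~\ref{g-x-homs}, using the principle that an equivalence of categories induces isomorphisms on automorphism objects, together with the compatibility of that equivalence with base change on $A$. Recall first that $\underline\Aut^\otimes(\eta)$ is the functor on $A$-algebras whose value on $A'$ is the group of tensor-isomorphisms of $\iota_{A'}\circ\eta$ with itself; since every morphism of fiber functors is automatically an isomorphism (just as every morphism of $G$-torsors is, cf.\ the beginning of Section~\ref{sec: moduli of WD representations}), this agrees with $\underline\Hom^\otimes(\eta,\eta)$, the group structure being composition. So I would like to match this, functorially in $A'$, with $\underline\Aut_G(X)$.

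Fix an $A$-algebra $A'$. Applying Theorem~\ref{g-x-homs}(2) over $A'$, the assignment sending a fiber functor $\eta''\colon\Rep_k(G)\to\Proj_{A'}$ to the $G$-torsor $\underline\Hom^\otimes(\iota_{A'}\circ\omega,\eta'')$ is an equivalence of categories, with quasi-inverse taking a $G$-torsor to its associated pushout functor. The formation of the pushout of a $G$-torsor along a representation commutes with base change on the base ring, so this equivalence is compatible with the base change functors $\Proj_A\to\Proj_{A'}$ and $X\rightsquigarrow X_{A'}$; in particular it carries the base-changed fiber functor $\iota_{A'}\circ\eta$ to the base-changed torsor $X_{A'}:=X\times_{\Spec A}\Spec A'$. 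An equivalence of categories restricts to a bijection between the automorphisms of an object and the automorphisms of its image, so we obtain a group isomorphism
\[
  \underline\Aut^\otimes(\eta)(A')=\Aut^\otimes(\iota_{A'}\circ\eta)\;\xrightarrow{\ \sim\ }\;\Aut_G(X_{A'})=\underline\Aut_G(X)(A').
\]
Because the base change functors on both sides, the equivalence of Theorem~\ref{g-x-homs}, and the passage to automorphism objects are all functorial in $A'$, this isomorphism is natural in $A'$, i.e.\ it is an isomorphism of functors $\underline\Aut^\otimes(\eta)\cong\underline\Aut_G(X)$.

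It then remains to note that $\underline\Aut_G(X)$ is representable by an affine $A$-group scheme which is a form of $G_A$. Choosing a faithfully flat $A$-algebra $B$ over which $X$ becomes trivial, the group scheme $\underline\Aut_G(X)_B$ is identified with $\underline\Aut_G(G_B)\cong G_B$ (the $G$-equivariant automorphisms of the trivial torsor being the right translations); since affineness descends along faithfully flat morphisms, $\underline\Aut_G(X)$ is representable by an affine $A$-scheme, and it is by construction an fpqc-form of $G_A$. Combined with the displayed isomorphism, this proves the corollary. The only real subtlety in the argument is the bookkeeping in the second paragraph: one has to check carefully that the equivalence of Theorem~\ref{g-x-homs} and its quasi-inverse are compatible with base change $A\to A'$ --- which reduces to the fact that pushing out a $G$-torsor along a representation commutes with base change, together with the observation that the forgetful functor $\omega$ is, after extension of scalars, still the forgetful functor --- so that the pointwise group isomorphisms glue into an isomorphism of group-valued functors rather than merely a family of abstract group isomorphisms.
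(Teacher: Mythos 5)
Your argument is correct, and it is exactly the fleshing-out of what the paper intends: the corollary is stated there without proof as an immediate consequence of Theorem~\ref{g-x-homs}, and your two ingredients (the equivalence of fiber functors with $G$-torsors, applied over each $A'$ compatibly with base change via pushout, plus the earlier observation that $\underline\Aut_G(X)$ is representable and fpqc-locally isomorphic to $G$) are the standard way to make that immediacy precise.
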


We now assume that $\eta$ is equipped with an exact $\otimes$-filtration, i.e., for each $V\in \Rep_{k}(G)$, we have a decreasing filtration $\mathcal{F}^\bullet(\eta(V))$ of vector sub-bundles on each $\eta(V)$ such that
\begin{enumerate}
\item	the specified filtrations are functorial in $V$.
\item	the specified filtrations are tensor-compatible, in the sense that
\[	\mathcal{F}^n\eta(V\otimes_kV')=\sum_{p+q=n}\mathcal{F}^p\eta(V)\otimes_A\mathcal{F}^q\eta(V')\subset \eta(V)\otimes_A\eta(V').	\]
\item	$\mathcal{F}^n(\eta(\mathbf{1}))=\eta(\mathbf{1})$ if $n\leq 0$ and $\mathcal{F}^n(\eta(\mathbf{1}))=0$ if $n\geq 1$.
\item	the associated functor from $\Rep_{k}(G)$ to the category of graded projective $A$-modules is exact.
\end{enumerate}
Equivalently, an exact $\otimes$-filtration of $\eta$ is the same as a
factorization of $\eta$ through the category of filtered vector
bundles over $\Spec A$.

We define two auxiliary subfunctors of
$\underline\Aut^\otimes(\eta)$: 
\begin{itemize}
\item	$P_{\mathcal{F}}=\underline\Aut_{\mathcal{F}}^\otimes(\eta)$ is the functor on $A$-algebras such that
\begin{equation*}
\begin{split}
P_{\mathcal{F}}(A') = \{\lambda\in\underline{\Aut}^\otimes(\eta)(A')| \lambda(\mathcal{F}^n\eta(V))&\subset\mathcal{F}^n\eta(V)\text{ for all }\\V\in\Rep_k(G)\text{ and }n\in\Z\}.
\end{split}
\end{equation*}
\item	$U_{\mathcal{F}}=\underline\Aut_{\mathcal{F}}^{\otimes!}(\eta)$ is the functor on $A$-algebras such that
\begin{equation*}
\begin{split}
U_{\mathcal{F}}(A') = \{\lambda\in\underline{\Aut}^\otimes(\eta)(A')| (\lambda-\mathrm{id})(\mathcal{F}^n\eta(V))&\subset\mathcal{F}^{n+1}\eta(V)\text{ for all }\\V\in\Rep_k(G)\text{ and }n\in\Z\}.
\end{split}
\end{equation*}
\end{itemize}
By~\cite[Chapter IV, 2.1.4.1]{saavedra}, these functors are both
representable by closed subgroup schemes of $\Aut_G(X)$, and they are
smooth if $G$ is.  This holds for any affine group $G$ over $k$ (since
it is automatically flat); there is no need for reductivity or
connectedness hypotheses.  Furthermore, $\Lie
P_{\mathcal{F}}=\mathcal{F}^0(\Lie \underline{\Aut}^\otimes(\eta))$ and $\Lie U_{\mathcal{F}}=\mathcal{F}^1(\Lie
\underline{\Aut}^\otimes(\eta))$, by the same result. 

We also have a notion of a $\otimes$-grading on $\eta$: a \emph{$\otimes$-grading} of $\eta$ is the specification of a grading $\eta(V)=\oplus_{n\in\Z}\eta(V)_n$ of vector bundles on each $\eta(V)$ such that
\begin{enumerate}
\item	the specified gradings are functorial in $V$.
\item	the specified grading are tensor-compatible, in the sense that
\[	\eta(V\otimes_kV')_n=\bigoplus_{p+q=n}(\eta(V)_p\otimes_A\eta(V')_q).	\]
\item	$\eta(\mathbf{1})_0=\eta(\mathbf{1})$.
\end{enumerate}
Equivalently, a  $\otimes$-grading of $\eta$ is a factorization of $\eta$ through the category of graded vector bundles on $\Spec A$.  A $\otimes$-grading induces a homomorphism of $A$-group schemes $\Gm\rightarrow\underline{\Aut}^\otimes(\eta)$.

Given a $\otimes$-grading of $\eta$, we may construct a $\otimes$-filtration of $\eta$, by setting
\[	\mathcal{F}^n\eta(V)=\oplus_{n'\geq n}\eta(V)_{n'}.	\]
We say that a $\otimes$-filtration $\mathcal{F}^\bullet$ is \emph{splittable} if it arises
in this way, and we say that $\mathcal{F}^\bullet$ is \emph{locally
  splittable} if fpqc-locally on $\Spec A$ it arises in this way.  A \emph{splitting} of $\mathcal{F}^\bullet$ is a $\otimes$-grading on $\eta$ giving rise to $\mathcal{F}^\bullet$.

Given an exact $\otimes$-filtration $\mathcal{F}^\bullet$ on $\eta$, we may define a fiber functor $\gr(\eta)$ equipped with a $\otimes$-grading, by setting
\[	\gr(\eta)(V)_n:=\mathcal{F}^n(V)/\mathcal{F}^{n+1}(V)	\]
Thus, a splitting of $\mathcal{F}^\bullet$ is equivalent to an isomorphism of filtered fiber functors $\gr(\eta)\cong \eta$.

In fact, by a theorem of Deligne (proved in \cite[Chapter IV,
2.4]{saavedra}), every $\otimes$-filtration is locally splittable (in
fact, splittable Zariski-locally on $\Spec A$), because $G$ is smooth
and $A$ has characteristic $0$ (this result also holds under various
other sets of hypotheses on $G$ and $A$). 
Again, this does not require $G$ to be reductive or connected.  If $\lambda:\Gm\rightarrow \underline\Aut^\otimes(\eta)$ is a cocharacter splitting the filtration, then $P_{\mathcal{F}}=U_{\mathcal{F}}\rtimes Z_G(\lambda)$, by~\cite[Chapter IV, 2.1.5.1]{saavedra}.  In particular, $\lambda$ factors through $P_{\mathcal{F}}$.

If $\mathcal{F}^\bullet$ is a splittable filtration on $\eta$, we may consider
the functor $\underline{\mathrm {Scin}}(\eta,\mathcal{F}^\bullet)$ of
splittings. 
Then $\underline{\mathrm {Scin}}(\eta,\mathcal{F}^\bullet)$ is
the same as the functor
$\underline\Isom_{\mathcal{F}}^{\otimes!}(\gr_{\mathcal{F}}(\eta),\eta)$, which is the subset of $\underline\Isom_{\mathcal{F}}^\otimes(\gr_{\mathcal{F}}(\eta),\eta)$ inducing the identity $\gr_{\mathcal{F}}(\eta)\rightarrow\gr_{\mathcal{F}}(\eta)$. 
Thus, $\underline{\mathrm {Scin}}(\eta,\mathcal{F}^\bullet)$ is a left torsor under $U_{\mathcal{F}}$.  It follows that the composition $\lambda:\Gm\rightarrow P_{\mathcal{F}}\rightarrow P_{\mathcal{F}}/U_{\mathcal{F}}$ is independent of the choice of splitting.

In other words, $P_{\mathcal{F}}$ and $U_{\mathcal{F}}$ depend only on
the filtration, and if it is locally splittable, 
there is a homomorphism $\overline\lambda:\Gm\rightarrow P_{\mathcal{F}}/U_{\mathcal{F}}$ which also only depends on the filtration.  If the filtration is actually splittable, a choice of splitting lets us lift $\overline\lambda$ to a cocharacter $\lambda:\Gm\rightarrow P_{\mathcal{F}}$.  In that case, since both $\underline{\mathrm {Scin}}(\eta,\mathcal{F})$ and the set of lifts of cocharacters from $P_{\mathcal{F}}/U_{\mathcal{F}}$ to $P_{\mathcal{F}}$ are torsors under $U_{\mathcal{F}}$ (in the latter case, $U_{\mathcal{F}}$ acts by conjugation), they are isomorphic.  In particular, any two cocharacters $\lambda,\lambda':\Gm\rightrightarrows P_{\mathcal{F}}$ splitting the $\otimes$-filtration $\mathcal{F}$ are conjugate by $U_{\mathcal{F}}$.

Let $\mathcal{G}:=\underline\Aut^\otimes(\eta)$, so that the geometric fibers of $\mathcal{G}$ are isomorphic to $G_{\overline k}$. Then for any geometric point $x\in\Spec A$,
the $G^\circ(\kappa(x))$-conjugacy class of $\mathcal{F}_x^\bullet$ induces a
unique $G^\circ(\kappa(x))$-conjugacy class of cocharacters, and this
conjugacy class is Zariski-locally constant on $\Spec A$.

Recall that when $\lambda:\Gm\rightarrow \mathcal{G}$ is a cocharacter, we defined subgroups $U_{\mathcal{G}}(\lambda)\subset P_{\mathcal{G}}(\lambda)\subset \mathcal{G}$ in~\textsection\ref{subsec: notation}.
\begin{prop}\label{prop: parabolic-splittable-filtration}
Suppose that $G$ is a (possibly disconnected) algebraic group.  Let $\eta:\Rep_{k}(G)\rightarrow \Proj_A$ be a fiber functor equipped with a splittable exact $\otimes$-filtration $\mathcal{F}^\bullet$, and let $\lambda:\Gm\rightarrow \underline\Aut^\otimes(\eta)$ be a splitting.  Let $\mathcal{G}$ denote the group scheme representing $\underline\Aut^\otimes(\eta)$.  Then $P_{\mathcal{F}}=P_{\mathcal{G}}(\lambda)$, $U_{\mathcal{F}}=U_{\mathcal{G}}(\lambda)$, and the fibers of $U_{\mathcal{F}}$ are connected.
\end{prop}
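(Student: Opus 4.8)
The plan is to prove all three assertions by comparing functors of points, reducing everything to a single block--matrix computation together with Tannakian reconstruction of $\mathcal{O}(\mathcal{G})$.

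First I would record the elementary observation that controls everything. Fix an $A$-algebra $A'$ and $g\in\mathcal{G}(A')=\underline\Aut^\otimes(\eta)(A')$, and let $V\in\Rep_k(G)$. Since $\lambda$ splits $\mathcal{F}^\bullet$, it induces a grading $\eta(V)_{A'}=\bigoplus_n\eta(V)_{n,A'}$ with $\mathcal{F}^m\eta(V)_{A'}=\bigoplus_{n\geq m}\eta(V)_{n,A'}$, and $\Ad(\lambda(t))$ scales the weight-$n$ block by $t^n$. Writing $g$ in block form $(g_{mn})$ with respect to this grading, $\lambda(t)g\lambda(t)^{-1}$ acts on $\eta(V)$ by the block matrix $\bigl(t^{m-n}g_{mn}\bigr)$. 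Hence: (i) $g$ preserves $\mathcal{F}^m\eta(V)$ for all $m$ iff $g_{mn}=0$ whenever $m<n$, iff the morphism $\Gm_{A'}\to\GL(\eta(V))_{A'}$ given by $t\mapsto\lambda(t)g\lambda(t)^{-1}$ extends over $\A^1_{A'}$; and (ii) $(g-\id)\bigl(\mathcal{F}^m\eta(V)\bigr)\subseteq\mathcal{F}^{m+1}\eta(V)$ for all $m$ iff, in addition, $g_{nn}=\id$ for all $n$, iff this extension moreover takes the value $\id$ at $0$.

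Next I would globalize from the individual $\GL(\eta(V))$ to $\mathcal{G}$ itself. By Theorem~\ref{g-x-homs} the functor $\eta$ identifies $\Rep_k(G)$ with the category of representations of $\mathcal{G}=\underline\Aut^\otimes(\eta)$, so the coordinate ring $\mathcal{O}(\mathcal{G})$ is generated over $k$ by the matrix coefficients of the $\eta(V)$, $V\in\Rep_k(G)$. Consequently, for $g$ as above, the morphism $\Gm_{A'}\to\mathcal{G}_{A'}$, $t\mapsto\lambda(t)g\lambda(t)^{-1}$, extends over $\A^1_{A'}$ if and only if its composite with the action on $\eta(V)$ does so for every $V$ --- one tests the induced ring map $\mathcal{O}(\mathcal{G})\to A'[t,t^{-1}]$ on the generating matrix coefficients, and $A'[t]$ is a subring of $A'[t,t^{-1}]$ --- and in that case its value at $0$ is $\id$ precisely when each of these composites takes the value $\id$ at $0$. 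Combining with (i) shows $g\in P_{\mathcal{G}}(\lambda)(A')$ iff $g$ preserves $\mathcal{F}^\bullet\eta(V)$ for all $V$, i.e.\ iff $g\in P_{\mathcal{F}}(A')$; combining with (ii) shows $g\in U_{\mathcal{G}}(\lambda)(A')$ iff $g\in U_{\mathcal{F}}(A')$. These identifications are natural in $A'$, so $P_{\mathcal{F}}=P_{\mathcal{G}}(\lambda)$ and $U_{\mathcal{F}}=U_{\mathcal{G}}(\lambda)$ as subgroup schemes of $\mathcal{G}$. The last assertion, that the fibers of $U_{\mathcal{F}}$ are connected, is then immediate, since the fibers of $U_{\mathcal{G}}(\lambda)$ are connected for any finite-type affine group scheme by \S\ref{subsec: parabolic subgroups}.

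I do not expect a serious obstacle here: the work is the bookkeeping in (i) and (ii) and the observation that matrix coefficients of the $\eta(V)$ generate $\mathcal{O}(\mathcal{G})$. The point worth stressing --- and the reason this cannot simply be quoted from the connected case --- is that neither the block computation nor the Tannakian reconstruction of $\mathcal{O}(\mathcal{G})$ refers to connectedness of $G$ at any stage; the only place disconnectedness could intervene is in the (already available) fact that $U_{\mathcal{G}}(\lambda)$ has connected fibers for arbitrary $\mathcal{G}$. One minor technical point to keep in view is that the $\mathcal{F}^m\eta(V)$ are subbundles and the $\lambda$-weight decomposition of $\eta(V)$ consists of subbundles, so passing to $A'$-points is harmless. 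A slightly more structural alternative would be to note that $\lambda$ splitting $\mathcal{F}^\bullet$ on $\eta$ induces the $\Ad(\lambda)$-weight grading on $\Lie\mathcal{G}$, giving $\Lie P_{\mathcal{F}}=\mathcal{F}^0(\Lie\mathcal{G})=\Lie P_{\mathcal{G}}(\lambda)$ and $\Lie U_{\mathcal{F}}=\mathcal{F}^1(\Lie\mathcal{G})=\Lie U_{\mathcal{G}}(\lambda)$, then deduce $U_{\mathcal{F}}=U_{\mathcal{G}}(\lambda)$ from smoothness together with connectedness of fibers ($U_{\mathcal{F}}$ being unipotent in characteristic $0$), and finally $P_{\mathcal{F}}=P_{\mathcal{G}}(\lambda)$ from the decompositions $P_{\mathcal{F}}=U_{\mathcal{F}}\rtimes Z_{\mathcal{G}}(\lambda)$ and $P_{\mathcal{G}}(\lambda)=U_{\mathcal{G}}(\lambda)\rtimes Z_{\mathcal{G}}(\lambda)$; I would nonetheless prefer the functor-of-points argument above, as it avoids comparing smooth subgroup schemes over a non-field base.
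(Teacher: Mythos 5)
Your proof is correct and takes essentially the same route as the paper's: both reduce to the pushouts $\GL(\eta(V))$ and characterize $P_{\mathcal{G}}(\lambda)$ (resp.\ $U_{\mathcal{G}}(\lambda)$) by whether $t\mapsto\lambda(t)g\lambda(t)^{-1}$ extends over $\A^1$ (resp.\ extends with value $\id$ at $0$), your block computation being the same content as the paper's use of $\sigma_\ast(\lambda)$ inducing the filtration, and your appeal to generation of $\mathcal{O}(\mathcal{G})$ by matrix coefficients playing the role of the paper's gluing of the unique extensions $\widetilde{\sigma_\ast(\mu_g)}$ into a point of $\underline\Aut^\otimes_{\mathcal{F}}(\eta)(\A^1)$ by functoriality and tensor-compatibility. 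The only divergence is the final claim, where the paper gets connectedness of the fibers of $U_{\mathcal{F}}$ directly from the $\A^1$-path $\widetilde{\mu_g}$ joining $\id$ to $g$, while you quote the general connectedness of the fibers of $U_{\mathcal{G}}(\lambda)$ from \S\ref{subsec: parabolic subgroups}; both are legitimate within the paper's framework.
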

\begin{proof}
We consider the map $\mu:\Gm\times P_{\mathcal{F}}\rightarrow
\underline\Aut^\otimes(\eta)$ defined by
$\mu(t,g):=\lambda(t)g\lambda(t^{-1})$, and for $g\in
P_{\mathcal{F}}(A')$, we let
$\mu_g:(\Gm)_{A'}\rightarrow(\underline\Aut^\otimes(\eta))_{A'}$ be the restriction
$\mu|_{\Gm\times\{g\}}$. 
Let $\sigma:G\rightarrow\GL(V)$ be a representation of $G$.  Then the pushout $\eta(V)$ is a filtered vector bundle, and if $g\in P_{\mathcal{F}}(A')$, the action of $g$ preserves the filtration on $\eta(V)$.  The choice of a splitting in particular specifies an isomorphism $\gr^\bullet(\eta(V))\xrightarrow{\sim}\eta(V)$, and $t\in \Gm(A')$ acts via $t^n$ on $(\eta(V))_n$.

Let $\sigma_\ast(\lambda)$ denote the corresponding cocharacter $\sigma_\ast(\lambda):\Gm\rightarrow\Aut_{\GL(V)}(\eta(V))$.  Since this cocharacter induces the filtration on $\eta(V)$, we see that the morphism 
\[	\sigma_\ast(\mu_g):=\sigma_\ast(\lambda)(t)g\sigma_\ast(\lambda)(t^{-1}):\Gm\rightarrow P_{\Aut_{\GL(V)}(\eta(V))}(\sigma_\ast(\lambda))	\]
extends uniquely to a morphism
\[	\widetilde{\sigma_\ast(\mu_g)}:\A^1\rightarrow P_{\Aut_{\GL(V)}(\eta(V))}(\sigma_\ast(\lambda)).	\]

We claim that the collection
$\{\widetilde{\sigma_\ast(\mu_g)}\}_\sigma$ is functorial in $\sigma$
and tensor-compatible.  Indeed, since the collection
$\{\widetilde{\sigma_\ast(\mu_g)}|_{\Gm}\}_\sigma$ is functorial in
$\sigma$ and tensor-compatible, and the extensions to $\A^1$ are
unique, it follows that $\{\widetilde{\sigma_\ast(\mu_g)}\}_\sigma$ is
functorial in $\sigma$ and tensor-compatible.  Thus, there is a
morphism
$\widetilde{\mu_g}:\A^1\rightarrow\underline\Aut_{\mathcal{F}}^\otimes(\eta)$ whose
restriction to $\Gm$ is $\mu_g$.  It follows that $g\in
P_{\mathcal{G}}(\lambda)(A')$.

Suppose in addition that $g\in U_{\mathcal{F}}(A')$.  Then for every representation $\sigma:G\rightarrow \GL(V)$, $g$ induces the identity map from $\gr^\bullet(\sigma(\mathcal{F}^\bullet))$ to itself.  It follows that $\widetilde{\sigma_\ast(\mu_g)}(0)=\mathbb{1}$ for all $\sigma$, and therefore $\widetilde{\mu_g}(0)=\mathbb{1}$.

On the other hand, if $g\in P_{\mathcal{G}}(\lambda)(A')$, then the morphism $\mu_g:(\Gm)_{A'}\rightarrow\underline{\Aut}^\otimes(\eta)_{A'}$ defined by $t\mapsto \lambda(t)g\lambda(t^{-1})$ extends to a morphism $\widetilde{\mu_g}:(\A^1)_{A'}\rightarrow\underline{\Aut}^\otimes(\eta)_{\A'}$.  It therefore induces a family of morphisms
\[	\sigma_\ast(\widetilde{\mu_g}):(\A^1)_{A'}\rightarrow\GL(V)_{A'}	\]
and so $\sigma_{\ast}(g)\in P_{\Aut_{\GL(V)}(\eta(V))}(\sigma_\ast(\lambda))$.  But then $\sigma_\ast(g)$ preserves the filtration on $\eta(V)$ induced by $\sigma_\ast(\lambda)$; since this holds for all $V\in \Rep_k(G)$, $g\in P_{\mathcal{F}}(A')$.  A similar argument shows that if $g\in U_{\mathcal{G}}(\lambda)(A')$, then $g\in U_{\mathcal{F}}(A')$.

Finally, since $\widetilde\mu_g:\A^1\rightarrow \underline\Aut^\otimes(\eta)$ is a morphism from a connected scheme such that $\widetilde\mu_g(0)=\mathbb{1}$ and $\widetilde\mu_g(1)=g$, we see that $g$ is in the connected component of the identity for all $g\in U_\mathcal{F}(A')$.
\end{proof}

%
\begin{lemma}
Let $\mathcal{F}^\bullet$ be a locally splittable exact $\otimes$-filtration on $\eta$.  Then the geometric fibers of $P_{\mathcal{F}}$ are parabolic subgroups of $G_{\overline k}$.
\end{lemma}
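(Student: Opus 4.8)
The plan is to reduce to the splittable case, which is precisely Proposition~\ref{prop: parabolic-splittable-filtration}, by passing to a geometric point of $\Spec A$. First I would check that the formation of $P_{\mathcal F}$ commutes with base change along a $k$-algebra map $A\to A'$: since each $\mathcal F^n\eta(V)$ is a subbundle of $\eta(V)$, its pullback $\mathcal F^n\eta(V)\otimes_A A'$ is a subbundle of $\eta(V)\otimes_A A'$, and these assemble into an exact $\otimes$-filtration of the base-changed fiber functor $\eta\otimes_A A'$. Comparing the functor-of-points descriptions of $P_{\mathcal F}$ (using that for any $A'$-algebra $A''$ one has $\underline\Aut^\otimes(\eta)(A'')=\underline\Aut^\otimes(\eta\otimes_A A')(A'')$, both being tensor-automorphisms of the common base change to $A''$, and that the corresponding pieces of the two filtrations over $A''$ coincide) then identifies $P_{\mathcal F}\times_A A'$ with the analogous group scheme for the base-changed filtration. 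Consequently it suffices to prove the lemma after base change to the residue field $\kappa(x)$ of a geometric point $x$; so I may assume $A=\kappa(x)$ is an algebraically closed field.

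Over such an $A$, I would invoke Theorem~\ref{g-x-homs} to write $\eta$ as the fiber functor attached to a $G$-torsor $X$ over $A$. Since $A$ is algebraically closed and $G$ is smooth, $X$ is trivial, so $\mathcal G:=\underline\Aut^\otimes(\eta)=\Aut_G(X)$ is isomorphic to $G_A$. Because $\Spec A$ is a point and $\operatorname{char}k=0$, Deligne's splitting theorem (recalled above) shows that the exact $\otimes$-filtration $\mathcal F^\bullet$ is splittable, so I may choose a splitting, i.e.\ a cocharacter $\lambda:\Gm\to\mathcal G$. Proposition~\ref{prop: parabolic-splittable-filtration} then gives $P_{\mathcal F}=P_{\mathcal G}(\lambda)$, which under the identification $\mathcal G\cong G_A$ becomes $P_{G_A}(\lambda)$ for a cocharacter of $G_A$.

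It then remains to recall that for a (possibly disconnected) reductive group $G$ over an algebraically closed field, $P_G(\lambda)$ is a parabolic subgroup. Here I would note that $\lambda(\Gm)\subseteq G^\circ$, and that for $g$ in a connected component $C$ of $G$ the orbit map $t\mapsto\lambda(t)g\lambda(t)^{-1}$ has image in $C$, so any limit as $t\to 0$ again lies in $C$; hence $P_G(\lambda)^\circ=P_{G^\circ}(\lambda)$, which is parabolic in $G^\circ$ by \textsection\ref{subsec: parabolic subgroups}. Finiteness of $G/G^\circ$ then exhibits $G/P_G(\lambda)$ as a finite union of $G^\circ$-orbits, each of the form $G^\circ/P_{G^\circ}(\mu)$ for a conjugate $\mu$ of $\lambda$ and hence proper, so $G/P_G(\lambda)$ is proper. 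The only points requiring care are the base-change compatibility of $P_{\mathcal F}$ in the first paragraph and this last piece of disconnectedness bookkeeping; everything else is a direct appeal to Proposition~\ref{prop: parabolic-splittable-filtration}, Tannakian duality, and Deligne's theorem, so I do not anticipate a serious obstacle.
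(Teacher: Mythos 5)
Your proof is correct and follows essentially the same route as the paper: reduce to a split filtration over an algebraically closed field, apply Proposition~\ref{prop: parabolic-splittable-filtration} to identify $P_{\mathcal{F}}$ with $P_{\mathcal{G}}(\lambda)$, and then deduce parabolicity of $P_G(\lambda)$ in the possibly disconnected group from properness of $G/P_G(\lambda)$ using that $G^\circ\subset G$ has finite index. The only cosmetic differences are the order of the reductions (the paper splits locally first and then uses base-change compatibility of $P_{\mathcal{G}}(\lambda)$, while you base-change $P_{\mathcal{F}}$ directly and then split over the field) and the bookkeeping in the final step ($G^\circ$-orbits versus the intermediate quotient $G/P_{G^\circ}(\lambda)$), neither of which changes the substance.
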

\begin{proof}
We may work locally on $\Spec A$ and assume that we have a cocharacter $\lambda:\Gm\rightarrow\mathcal{G}_A$ splitting the exact $\otimes$-filtration.  Then $P_{\mathcal{F}}\cong P_{\mathcal{G}}(\lambda)$.  Since the formation of $P_{\mathcal{G}}(\lambda)$ commutes with base
change on $A$, we may assume that $A=k=\overline k$ and $\mathcal{G}=G=G_{\overline k}$.  Then $P_{G^\circ}(\lambda)\subset G^\circ$ is a parabolic subgroup, so $G^\circ/P_{G^\circ}(\lambda)$ is proper.  There is a sequence of maps 
\[	G^\circ/P_{G^\circ}(\lambda)\rightarrow G/P_{G^\circ}(\lambda)\twoheadrightarrow G/P_G(\lambda)	\]
Since $G^\circ\subset G$ has finite index, the properness of $G^\circ/P_{G^\circ}(\lambda)$ implies the properness of $G/P_{G^\circ}(\lambda)$.  This implies that $G/P_G(\lambda)$ is proper, so $P_G(\lambda)\subset G$ is a parabolic subgroup.
\end{proof}We will also need the following result:
\begin{theorem}[{\cite[IX.3.6]{SGA3.II}}]\label{thm:lift-cocharacters}
Let $S$ be an affine scheme, $S_0$ a subscheme defined by
a nilpotent ideal $J$, $H$ a group of multiplicative type over $S$, $G$ a smooth group scheme
over $S$, $\mu_0 : H\times_S S_0\rightarrow G\times_S S_0$ a
homomorphism of $S_0$-groups. 

Then there exists a homomorphism $\mu:H\rightarrow G$ of $S$-groups
which lift $\mu_0$, and any two such lifts are conjugate by an
element of $G(S)$ which reduces to the identity modulo~ $J$.
\end{theorem}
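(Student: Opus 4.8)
The plan is to deduce this from the deformation theory of homomorphisms out of a linearly reductive group scheme, reducing by d\'evissage to a square-zero thickening. First I would reduce to the case $J^{2}=0$: since $J$ is nilpotent, the closed immersion $S_0\hookrightarrow S$ factors as a finite tower of thickenings $\Spec(\OL(S)/J^{k})\hookrightarrow\Spec(\OL(S)/J^{k+1})$, each defined by the ideal $J^{k}/J^{k+1}$, which is square-zero because $k\ge 1$. A lift $\mu$ is then built one stage at a time. For the conjugacy statement, given two lifts $\mu,\mu'$ of $\mu_0$, applying the square-zero case repeatedly produces elements $g_k$ reducing to the identity modulo $J^{k}$ and conjugating the two lifts modulo progressively deeper ideals; lifting each $g_k$ to $G(S)$ and composing yields $g\in G(S)$ with $g\equiv 1\bmod J$ and $\mu'=g\mu g^{-1}$. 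So from now on assume $J^{2}=0$.

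Next I would set up the obstruction calculus. Write $A=\OL(S)$, $A_0=A/J$, $\fg=\Lie(G_{S_0})$, and let $H$ act on the $A_0$-module $\fg\otimes_{A_0}J$ via $\Ad\circ\mu_0$. Because $G$ is smooth over $S$ and $J^{2}=0$, for every $A$-algebra $A'$ the kernel of $G(S')\to G(S'_0)$ is canonically $\fg\otimes_{A_0}(J\otimes_A A')$; hence the infinitesimal automorphisms of a lift, the lifts modulo such automorphisms, and the obstruction to lifting a homomorphism are governed by $H^{0}$, $H^{1}$ and $H^{2}$ respectively of the Hochschild cohomology of $H$ with coefficients in $\fg\otimes_{A_0}J$ (these being $\Ext$-groups in the category of $H$-modules, not merely cochain complexes on $S$-points). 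Concretely: $H$ is affine and flat over $S$, so by formal smoothness of $G/S$ there is a morphism of $S$-schemes $\mu\colon H\to G$ lifting $\mu_0$, which after translation by $\mu(e)^{-1}$ we may take to carry the unit to the unit; then $(h_1,h_2)\mapsto \mu(h_1)\mu(h_2)\mu(h_1h_2)^{-1}$ is a Hochschild $2$-cocycle valued in $\fg\otimes_{A_0}J$, and modifying $\mu$ by a $1$-cochain trivializing it makes $\mu$ a homomorphism.

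The decisive input is the vanishing $H^{i}(H,M)=0$ for all $i\ge 1$ and all $H$-modules $M$, which holds because groups of multiplicative type are linearly reductive; I would verify this fppf-locally on $S$, over which $H$ is diagonalizable, and there every comodule decomposes into its $X^{*}(H)$-isotypic components, so the comodule category is semisimple and the higher cohomology vanishes. Granting this, the class of the $2$-cocycle above lies in $H^{2}=0$, so a homomorphism lift $\mu$ exists; and if $\mu,\mu'$ are two homomorphism lifts, then $h\mapsto \mu'(h)\mu(h)^{-1}$ is a Hochschild $1$-cocycle valued in $\fg\otimes_{A_0}J$, which by $H^{1}=0$ is a coboundary, which is exactly to say that $\mu'(h)=g\mu(h)g^{-1}$ for some $g\in 1+\fg\otimes_{A_0}J=\ker(G(S)\to G(S_0))$; this $g$ reduces to the identity modulo $J$, which completes the square-zero case and hence the theorem.

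The main obstacle is the cohomological input of the third paragraph: the linear reductivity of groups of multiplicative type over an arbitrary base (in particular in positive or mixed characteristic) and the consequent vanishing of Hochschild cohomology, together with the care needed in the second paragraph to interpret the obstruction and the torsor of lifts as genuine algebraic group cohomology rather than cohomology of the bare functor of $S$-points. The d\'evissage to the square-zero case and the identification of the controlling cohomology groups are formal by comparison.
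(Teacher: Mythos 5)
The paper does not prove this statement at all---it is quoted directly from SGA3, Exp.\ IX, 3.6---and your argument is essentially the proof given there: d\'evissage to a square-zero thickening, the Hochschild-cohomology obstruction calculus for lifting homomorphisms from an affine flat group scheme into a smooth one (with coefficients in $\Lie(G_{S_0})\otimes J$ via $\Ad\circ\mu_0$), and the vanishing of $H^{i}(H,-)$ for $i\ge 1$ for groups of multiplicative type, verified fppf-locally where $H$ becomes diagonalizable; this is correct, including the identification of $1$-coboundaries with conjugation by elements of $\ker(G(S)\to G(S_0))$. The only phrase to adjust is ``the comodule category is semisimple,'' which is false over a general base ring; what the $X^{*}(H)$-weight decomposition actually provides is that the isotypic decomposition is functorial and exact (in particular the invariants functor is exact), and that is exactly what makes the positive-degree Hochschild cohomology vanish.
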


\begin{cor}\label{cor:forgetting filtrations is smooth}
Let $A$ be an artin local $k$-algebra with maximal ideal
$\mathfrak{m}_A$, and let $I\subset A$ be an ideal such that
$I\mathfrak{m}_A=(0)$.  Then if $D_A$ is a $G$-torsor over $A$ such
that the reduction $D_{A/I}:=D_A\otimes_AA/I$ is equipped with an
exact $\otimes$-filtration $\mathcal{F}_{A/I}^\bullet$, then the set of
lifts of $\mathcal{F}_{A/I}^\bullet$ to an exact $\otimes$-filtration
on $D_A$ is non-empty, and is a torsor under $I\otimes_{A/\mathfrak{m}_A}(\ad
D_{A/\mathfrak{m}_A}/\mathcal{F}_{A/\mathfrak{m}_A}^0(\ad
D_{A/\mathfrak{m}_A}))$.
\end{cor}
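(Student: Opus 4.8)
The plan is to reformulate the statement in terms of cocharacters of the automorphism group scheme $\mathcal{G}:=\Aut_G(D_A)=\underline{\Aut}^\otimes(\eta_A)$ (a smooth affine $A$-group scheme, a form of $G_A$, with $\Lie\mathcal{G}=\ad D_A$), so that Theorem~\ref{thm:lift-cocharacters} on lifting cocharacters applies directly. First note that if $I=A$ the statement is vacuous, so we may assume $I\subseteq\mathfrak{m}_A$; then $I^2\subseteq I\mathfrak{m}_A=(0)$, so $I$ is nilpotent and is a vector space over the field $\kappa:=A/\mathfrak{m}_A$. Since $\Spec(A/I)$ is local, Deligne's splittability theorem (recalled above) shows that $\mathcal{F}^\bullet_{A/I}$ is splittable; fix a splitting, i.e.\ a cocharacter $\bar\lambda\colon\Gm\to\mathcal{G}_{A/I}$.

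Now apply Theorem~\ref{thm:lift-cocharacters} with $S=\Spec A$, $S_0=\Spec(A/I)$, the nilpotent ideal $J=I$, $H=\Gm$, and the smooth group $\mathcal{G}$: this yields a cocharacter $\lambda\colon\Gm\to\mathcal{G}$ lifting $\bar\lambda$, and any two such lifts are conjugate by an element of $N:=\ker(\mathcal{G}(A)\to\mathcal{G}(A/I))$. The cocharacter $\lambda$ determines a $\otimes$-grading of $\eta_A$, hence an exact $\otimes$-filtration $\mathcal{F}^\bullet_A$ on $D_A$; since $\lambda$ reduces to $\bar\lambda$, this $\mathcal{F}^\bullet_A$ reduces mod $I$ to $\mathcal{F}^\bullet_{A/I}$, giving non-emptiness. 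Moreover, because $\mathcal{G}$ is smooth and $I^2=I\mathfrak{m}_A=(0)$, the group $N$ is abelian and canonically $N\cong I\otimes_\kappa\ad D_{A/\mathfrak{m}_A}$, and it acts on the set $\mathcal{L}$ of exact $\otimes$-filtrations on $D_A$ reducing to $\mathcal{F}^\bullet_{A/I}$ by $g\cdot\mathcal{F}'^\bullet:=g(\mathcal{F}'^\bullet)$; this preserves exactness and the $\otimes$-structure, and the reduction mod $I$ since $g\equiv 1$.

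It remains to show the $N$-action makes $\mathcal{L}$ a torsor under $V:=I\otimes_\kappa\bigl(\ad D_{A/\mathfrak{m}_A}/\mathcal{F}^0_{A/\mathfrak{m}_A}(\ad D_{A/\mathfrak{m}_A})\bigr)$. For transitivity, take $\mathcal{F}'^\bullet_A\in\mathcal{L}$ and choose a splitting cocharacter $\mu$ of $\mathcal{F}'^\bullet_A$ (splittable, again since $\Spec A$ is local); then $\bar\mu$ and $\bar\lambda$ both split $\mathcal{F}^\bullet_{A/I}$, so $\bar\mu=\bar u^{-1}\bar\lambda\bar u$ for some $\bar u\in U_{\mathcal{F}_{A/I}}(A/I)$. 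Lifting $\bar u$ along the surjection $U_{\mathcal{F}'_A}(A)\twoheadrightarrow U_{\mathcal{F}_{A/I}}(A/I)$ (valid by smoothness of $U_{\mathcal{F}'_A}$ and nilpotence of $I$) and conjugating $\mu$ by the lift, we may assume $\bar\mu=\bar\lambda$ without changing $\mathcal{F}'^\bullet_A$ (conjugating a splitting cocharacter of $\mathcal{F}'^\bullet_A$ by an element of $U_{\mathcal{F}'_A}$ gives another splitting of the same filtration). Then $\mu$ and $\lambda$ are two lifts of $\bar\lambda$, so $\mu=g\lambda g^{-1}$ with $g\in N$, hence $\mathcal{F}'^\bullet_A=g(\mathcal{F}^\bullet_A)$. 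For the stabiliser: $g\in N$ fixes $\mathcal{F}^\bullet_A$ precisely when $g\in P_{\mathcal{F}_A}(A)$, by definition of $P_{\mathcal{F}_A}$ as the functor of automorphisms preserving the filtration; since $P_{\mathcal{F}_A}$ is smooth with $\Lie P_{\mathcal{F}_A}=\mathcal{F}^0_A(\ad D_A)$, the intersection $N\cap P_{\mathcal{F}_A}(A)$ is identified with $I\otimes_\kappa\mathcal{F}^0_{A/\mathfrak{m}_A}(\ad D_{A/\mathfrak{m}_A})$. As $N$ is abelian this stabiliser is independent of the chosen lift, so $\mathcal{L}$ is a torsor under $N/\bigl(N\cap P_{\mathcal{F}_A}(A)\bigr)$, which is $V$ by exactness of $I\otimes_\kappa(-)$ (the module $I$ is $\kappa$-free).

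The main obstacle is the transitivity argument: a given lift $\mathcal{F}'^\bullet_A$ comes a priori with a splitting whose reduction need not be the fixed splitting $\bar\lambda$ of $\mathcal{F}^\bullet_{A/I}$, and only after exploiting the $U_{\mathcal{F}}$-conjugacy of splittings (established above) together with a smoothness-based lift of the conjugator can one arrange two splittings so that Theorem~\ref{thm:lift-cocharacters} applies a second time. The remaining points — non-emptiness, the identification of $N$ and of the stabiliser, and the torsor conclusion — are formal once the dictionary between filtrations and splitting cocharacters is in place.
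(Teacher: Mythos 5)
Your proof is correct and follows essentially the same route as the paper's: split $\mathcal{F}^\bullet_{A/I}$ by a cocharacter (Deligne splittability over a local base), lift that cocharacter via Theorem~\ref{thm:lift-cocharacters} to get non-emptiness, and then identify the set of lifted filtrations as a torsor under $I\otimes_{A/\mathfrak{m}_A}\ad D_{A/\mathfrak{m}_A}$ modulo its intersection $I\otimes_{A/\mathfrak{m}_A}\mathcal{F}^0_{A/\mathfrak{m}_A}(\ad D_{A/\mathfrak{m}_A})$ with $P_{\mathcal{F}}$. Your additional care at the transitivity step --- conjugating a splitting of a second lift by a smoothly lifted element of $U_{\mathcal{F}}$ so that its reduction agrees with the fixed $\bar\lambda$ before invoking the uniqueness clause of Theorem~\ref{thm:lift-cocharacters} --- addresses a point the paper's proof treats implicitly, and is a sound refinement rather than a divergence.
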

\begin{proof}
Suppose that $D_{A/I}$ is a $G$-torsor over $\Spec A/I$, equipped with an exact $\otimes$-filtration $\mathcal{F}^\bullet_{A/I}$.  Since $A/I$ is local, $\mathcal{F}^\bullet_{A/I}$ is split, so it is induced by a cocharacter $\lambda_{A/I}:\Gm\rightarrow\Aut_G(D_{A/I})$.  By Theorem~\ref{thm:lift-cocharacters}, $\lambda_{A/I}$ lifts to a cocharacter $\lambda_A:\Gm\rightarrow \Aut_G(D_A)$.  Then $\lambda_A$ induces an exact $\otimes$-filtration $\mathcal{F}^\bullet_A$ on $D_A$ which lifts that on $D_{A/I}$.

Suppose there are two exact $\otimes$-filtrations, $\mathcal{F}^\bullet_A$ and ${\mathcal{F}'_A}^\bullet$ on $D_A$ lifting $\mathcal{F}^\bullet_{A/I}$, induced by cocharacters $\lambda_A$ and $\lambda_A'$, respectively, which lift $\lambda_{A/I}$.  Then $\lambda_A$ and $\lambda_A'$ are conjugate by an element of $\Aut_G(D_A)$ which is the identity modulo $I$.  In other words, there is some $j\in \ad D_{A/\mathfrak{m}_A}\otimes_{A/\mathfrak{m}_A}I$ such that $\lambda_A'=(1+j)\lambda_A(1-j)$.  This implies that $\mathcal{F}^\bullet_A$ and ${\mathcal{F}'_A}^\bullet$ are conjugate.

On the other hand, conjugation by $1+j$ preserves $\mathcal{F}^\bullet_A$ if
and only if $1+j\in P_{\mathcal{F}_A}(\Aut_G(D_A))$.  This holds if
and only if $j\in
\mathcal{F}_{A/\mathfrak{m}_A}^0\Lie\Aut_G(D_{A/\mathfrak{m}_A})\otimes_{A/\mathfrak{m}_A}I=\mathcal{F}_{A/\mathfrak{m}_A}^0\ad
D_{A/\mathfrak{m}_A}\otimes_{A/\mathfrak{m}_A}I$.
\end{proof}

\subsection{$p$-adic Hodge theory}\label{subsec: p-adic Hodge
  theory}
Our goal
is to study deformations of potentially semi-stable Galois
representations.  That is, we wish to consider deformations of
representations $\rho:\Gal_K\rightarrow G(E)$ such that
$\rho|_{\Gal_L}$ is semi-stable.  Such representations can be
described by linear algebra.  
Briefly, for every representation
$\sigma:G\rightarrow\GL_d$, $\sigma\circ\rho$ is a potentially
semi-stable representation, and $D_{\st}^L(\sigma\circ\rho)$ is a
weakly admissible filtered $(\varphi,N,\Gal_{L/K})$-module.  The
formation of $D_{\st}^L(\sigma\circ\rho)$ is exact and
tensor-compatible in $\sigma$, and if $\mathbf{1}$ denotes the trivial
representation of $G$, then $D_{\st}^L(\mathbf{1}\circ\rho)$ is the
trivial filtered $(\varphi,N,\Gal_{L/K})$-module with coefficients in $E$.

Therefore, as in~\cite[\textsection A.2.8-9]{2014arXiv1403.1411B}, $\sigma\mapsto D_{\st}^L(\sigma\circ\rho)$ is a fiber
functor $\eta:\Rep_E(G)\rightarrow \Proj_{E\otimes_{\Qp}L_0}$, and we obtain from~$\rho$ a
$G$-torsor $D=D_{\st}^L(\rho)$ 
over $E\otimes L_0$ equipped with 
\begin{itemize}
\item	an isomorphism $\Phi:\varphi^\ast D\isoto D$,
\item	a nilpotent element $N\in\Lie\Aut_GD$,
\item	for each $g\in\Gal_{L/K}$, an isomorphism $\tau(g):g^\ast D\isoto D$,
\item	a $\Gal_{L/K}$-stable exact $\otimes$-filtration on $D_L$, or equivalently (by Galois descent), an exact $\otimes$-filtration on the $\Res_{E\otimes K/E}G$-torsor $D_L^{\Gal_{L/K}}$ over $K$.
\end{itemize}
These satisfy the requisite compatibilities such that forgetting the filtration on~$D_{\st}^L(\rho)$ gives
us an object of~$G-\mathrm{Mod}_{L/K,\varphi,N}$.

\begin{defn}
The category of $G$-valued filtered $(\varphi,N,\Gal_{L/K})$-modules, which we denote $G-\Mod_{L/K,\varphi,N,\Fil}$, is the category cofibered in groupoids over $E$-Alg whose fiber over an $E$-algebra $A$ consists of a $\Res_{E\otimes L_0/E}G$-torsor $D$ over $A$, equipped with:
\begin{itemize}
\item	an isomorphism $\Phi:\varphi^\ast D\isoto D$,
\item	a nilpotent element $N\in\Lie\Aut_GD$,
\item	for each $g\in\Gal_{L/K}$, an isomorphism $\tau(g):g^\ast D\isoto D$,
\item	a $\Gal_{L/K}$-stable exact $\otimes$-filtration on $D_L$, or equivalently, an exact $\otimes$-filtration on the $\Res_{E\otimes K/E}G$-torsor $D_L^{\Gal_{L/K}}$ over $A$.
\end{itemize}
The $\Res_{E\otimes L_0/E}G$-torsor $D$, together with $\Phi$, $N$, and $\{\tau(g)\}_{g\in\Gal_{L/K}}$, is required to be an object of $G-\Mod_{L/K,\varphi,N}$.
\end{defn}

\begin{defn}\label{defn: p adic Hodge type}
Suppose that $\rho:\Gal_K\rightarrow G(E)$ is a potentially semi-stable Galois representation which becomes semi-stable when restricted to $\Gal_L$.  The \emph{$p$-adic Hodge type} $\mathbf{v}$ of $\rho$ is the $(\Res_{E\otimes K/E}G)^\circ(\overline E)$-conjugacy class of cocharacters $\lambda:\Gm\rightarrow (\Res_{E\otimes K/E}G)_{\overline E}$ which split the $\otimes$-filtration on $D_{\st}^L(\rho)_L^{\Gal_{L/K}}$.  We let $P_{\mathbf{v}}$ denote the $(\Res_{E\otimes K/E}G)^\circ(\overline E)$-conjugacy class of $P_{\Res_{E\otimes K/E}G}(\lambda)$ for $\lambda\in\mathbf{v}$.
\end{defn}

While we do not need it, for completeness we record the following
definition and result, which control the deformation theory of
filtered $(\varphi,N,\Gal_{L/K})$-modules. Given an object $D_A\in G-\Mod_{L/K,\varphi,N,\Fil}$, we consider the diagram
\[
\xymatrix{ 
(\ad D_A)^{\Gal_{L/K}} \ar[r]\ar[d] &  (\ad D_A)^{\Gal_{L/K}}\oplus(\ad D_A)^{\Gal_{L/K}}\ar[r] & (\ad D_A)^{\Gal_{L/K}} \\
(\ad D_{A,L}/\!\Fil^0\!\ad D_{A,L})^{\Gal_{L/K}} & &
}
\]
where the top line is the total complex of
\[\xymatrix{
(\ad D_A)^{\Gal_{L/K}}\ar[r]^{1-\underline\Ad(\Phi)}\ar[d]^{\ad_N} & (\ad D_A)^{\Gal_{L/K}}\ar[d]^{\ad_N}	\\
(\ad D_A)^{\Gal_{L/K}}\ar[r]^{p\underline\Ad(\Phi)-1} & (\ad D_A)^{\Gal_{L/K}}
}\]
and the vertical map is the natural quotient map.  We let $C_{\Fil}^\bullet$ denote its total complex.  Then $C_{\Fil}^\bullet$ controls the deformation theory of $D_A$:
\begin{prop}\label{prop: tangent-obstruction for filtered
    objects}
Let $A$ be an artin local $E$ algebra with maximal ideal $\mathfrak{m}_A$ and let $I\subset A$ be an ideal such that $I\mathfrak{m}_A=(0)$.  Let $D_{A/I}$ be an object of $G-\Mod_{L/K,\varphi,N,\Fil}(A/I)$ and set $D_{A/\mathfrak{m}_A}:=D_{A/I}\otimes_{A/I}A/\mathfrak{m}_A$.
\begin{enumerate}
\item	If $H_{\Fil}^2(D_{A/I})=0$, then there exists an object $D_A\in G-\Mod_{L/K,\varphi,N,\Fil}(A)$ lifting $D_{A/I}$.
\item	The set of isomorphism classes of lifts of $D_{A/I}$ to $D_A\in G-\Mod_{L/K,\varphi,N,\Fil}(A)$ is either empty or a torsor under $H_{\Fil}^1(D_{A/\mathfrak{m}_A})\otimes_{A/\mathfrak{m}_A}I$.
\end{enumerate}
\end{prop}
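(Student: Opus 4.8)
The idea is to deduce the statement from two things already established in the excerpt: the tangent--obstruction theory for the underlying $(\varphi,N,\Gal_{L/K})$-module, and the (unobstructed) lifting theory for an exact $\otimes$-filtration on a fixed torsor. Under the equivalence $G-\Mod_{L/K,\varphi,N}\simeq G-\WD_E(L/K)$ of Lemma~\ref{lem: WD stack agrees with phi N stack}, the top line of the diagram defining $C_{\Fil}^\bullet$ becomes the complex $C^\bullet(D_A)$ of Proposition~\ref{prop: deformations of WD repns controlled by complex}; writing $H^\bullet(\ad D_A)$ for its cohomology, that proposition gives: if $H^2(\ad D_{A/\mathfrak m_A})=0$ then an object of $G-\Mod_{L/K,\varphi,N}(A/I)$ lifts to $A$, and the lifts of a fixed one form an empty set or an $I\otimes_{A/\mathfrak m_A}H^1(\ad D_{A/\mathfrak m_A})$-torsor, with infinitesimal automorphisms parametrised by $I\otimes_{A/\mathfrak m_A}H^0(\ad D_{A/\mathfrak m_A})$ (Lemma~\ref{lem: Weil representations lift}). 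Secondly, Corollary~\ref{cor:forgetting filtrations is smooth}, applied to the $\Res_{E\otimes K/E}G$-torsor $D_{A,L}^{\Gal_{L/K}}$ (whose $\ad$ is $(\ad D_{A,L})^{\Gal_{L/K}}$ by Galois descent), says a $\Gal_{L/K}$-stable exact $\otimes$-filtration on a reduction always lifts, the lifts forming a torsor under $I\otimes_{A/\mathfrak m_A}M$, where $M:=(\ad D_{A/\mathfrak m_A,L}/\Fil^0\ad D_{A/\mathfrak m_A,L})^{\Gal_{L/K}}$.

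\textbf{The cone.} The first thing I would record is that $C_{\Fil}^\bullet$ is precisely the mapping cone gluing these two problems: there is a short exact sequence of complexes
\[
0\longrightarrow M[-1]\longrightarrow C_{\Fil}^\bullet\longrightarrow C^\bullet(\ad D_A)\longrightarrow 0,
\]
with $M[-1]$ denoting $M$ placed in degree $1$ (it is a subcomplex, since that copy of $M$ maps to $0$ in degree $2$, and the quotient map just forgets it). The associated long exact sequence collapses, because $M[-1]$ is concentrated in degree $1$, to an isomorphism $H_{\Fil}^2(D_A)\cong H^2(\ad D_A)$ together with an exact sequence
\[
0\to H_{\Fil}^0(D_A)\to H^0(\ad D_A)\xrightarrow{\delta}M\to H_{\Fil}^1(D_A)\to H^1(\ad D_A)\to 0,
\]
in which $\delta$ is induced by the quotient $(\ad D_A)^{\Gal_{L/K}}\to M$; concretely $\delta$ records how an infinitesimal automorphism of the $(\varphi,N,\Gal_{L/K})$-module translates the filtration.

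\textbf{Carrying out the two parts.} For (1): from $H_{\Fil}^2(D_{A/I})=0$ and the isomorphism above (run over $A/I$) I get $H^2(\ad D_{A/I})=0$; since this $H^2$ is a cokernel in top degree, and taking $\Gal_{L/K}$-invariants and cokernels commute with $-\otimes_{A/I}A/\mathfrak m_A$ in characteristic $0$, also $H^2(\ad D_{A/\mathfrak m_A})=0$. Then the underlying $(\varphi,N,\Gal_{L/K})$-module lifts to some $D'_A$ by Proposition~\ref{prop: deformations of WD repns controlled by complex}, and Corollary~\ref{cor:forgetting filtrations is smooth} lifts the filtration onto $D'_A$, producing the desired object of $G-\Mod_{L/K,\varphi,N,\Fil}(A)$. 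For (2): assuming a lift exists, I would study the forgetful map $\pi\colon S\to S'$ from isomorphism classes of lifts of $D_{A/I}$ to isomorphism classes of lifts of its underlying $(\varphi,N,\Gal_{L/K})$-module. It is surjective by Corollary~\ref{cor:forgetting filtrations is smooth}; over a fixed $D'_A\in S'$ the fibre is the $(I\otimes M)$-torsor of filtration-lifts modulo the group $I\otimes H^0(\ad D_{A/\mathfrak m_A})$ of automorphisms of $D'_A$ reducing to the identity (which act through their image $\im\delta$ in $M$), hence a torsor under $I\otimes(M/\im\delta)=I\otimes\ker(H_{\Fil}^1\to H^1(\ad D))$. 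As $S'$ is a torsor under $I\otimes H^1(\ad D_{A/\mathfrak m_A})$, combining along $0\to\ker(H_{\Fil}^1\to H^1(\ad D))\to H_{\Fil}^1\to H^1(\ad D)\to 0$ exhibits $S$ as a torsor under $I\otimes H_{\Fil}^1(D_{A/\mathfrak m_A})$.

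\textbf{Main obstacle.} The step needing genuine care is the identification of $C_{\Fil}^\bullet$ with the cone: checking that the signs of the anticommutative square are compatible with the vertical quotient map so that $M[-1]$ really is a subcomplex and the displayed sequence of complexes is exact; and, in (2), the precise bookkeeping of which automorphisms of the bare $(\varphi,N,\Gal_{L/K})$-module act nontrivially on the set of lifted filtrations, together with the compatibility of the two torsor structures along $\pi$. If one prefers to avoid the cone formalism, the alternative is to repeat the proof of Proposition~\ref{prop: deformations of WD repns controlled by complex} essentially verbatim: lift the torsor, then the Weil representation via Lemma~\ref{lem: Weil representations lift}, then $N$, then the filtration via Corollary~\ref{cor:forgetting filtrations is smooth}, observing that the filtration contributes nothing to the obstruction class $N-p\underline\Ad(\Phi)(N)$.
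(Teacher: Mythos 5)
Your proposal is correct and takes essentially the same approach as the paper: the paper's proof is a one-line combination of the tangent--obstruction theory for the underlying unfiltered $(\varphi,N,\Gal_{L/K})$-modules (cited there from Bellovin's earlier paper, and available here through Proposition~\ref{prop: deformations of WD repns controlled by complex} together with the equivalence of Lemma~\ref{lem: WD stack agrees with phi N stack}) with Corollary~\ref{cor:forgetting filtrations is smooth}, exactly the two inputs you use. Your mapping-cone long exact sequence and the torsor bookkeeping in part (2) simply make explicit the d\'evissage that the paper leaves implicit.
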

\begin{proof}
This follows by combining~\cite[Proposition 3.2]{2014arXiv1403.1411B} and Corollary~\ref{cor:forgetting filtrations is smooth}.
\end{proof}





\section{Local deformation rings}\label{sec: local deformation
  rings}As in Section~\ref{subsec: deformation
  rings}, we let~$K/\Q_p$ be a finite extension for some
prime~$p$, possibly equal to~$l$, and let $\rhobar:\Gal_K\to G(\F)$ be a
continuous representation.  We have a universal framed deformation $\cO$-algebra
$R_{\rhobar}^\square$, and if we fix a  a homomorphism ~$\psi:\Gamma\to\Gab(\cO)$
such that $\ab\circ\rhobar=\psibar$, we also have the
quotient~$R_{\rhobar}^{\square,\psi}$ 
corresponding to framed deformations~$\rho$
with~$\ab\circ\rho=\psi$. When we define quotients
of~$R_{\rhobar}^\square$, there are corresponding quotients
of~$R_{\rhobar}^{\square,\psi}$, which we will not explicitly define,
but will denote by a superscript~$\psi$.   
An inertial type is by definition a $G^\circ(\overline{E})$-conjugacy
class of representations $\tau:I_K\to G(\overline{E})$ with open
kernel which admit extensions to~$\Gal_K$; any such~$\tau$ is defined
over some finite extension of~$E$. We choose a finite Galois extension
$L/K$ for which~$\tau|_{I_L}$ is trivial.  If $E'/E$ is a finite
extension, and $\rho:\Gal_K\to G(E')$ is a representation, which we
assume to be potentially semi-stable if
$l=p$, 
then we say that
$\rho$ has type~$\tau$ if the restriction to~$I_K$ (forgetting~$N$) of
the corresponding Weil--Deligne representation $\WD(\rho)$ is
equivalent to~$\tau$. 

\subsection{The case $l\neq p$} Suppose firstly that $l\ne p$. The proof of~\cite[Prop.\ 3.0.12]{MR3152673} shows that for
each~$\tau$ we may define a $\Zl$-flat
quotient~$R_{\rhobar}^{\square,\tau}$ of $R_{\rhobar}^\square$ whose characteristic $0$ points correspond to
representations of type~$\tau$.
The usual construction of the Weil--Deligne
representation associated to a Galois representation makes sense over
$R_{\rhobar}^{\square}[1/l]$, so 
we have a natural morphism 
\[ \Spec R_{\rhobar}^{\square,\tau}[1/l]\to G-\WD_E(L/K). \]




\subsection{The case $l=p$}\label{subsec: l equals p}
Now suppose that $l=p$.  If we fix a $p$-adic Hodge type~$\mathbf{v}$  in the sense of Definition~\ref{defn: p adic Hodge
  type} (that is, a  $(\Res_{E\otimes K/E}G)^\circ(\overline
E)$-conjugacy class of cocharacters $\lambda:\Gm\rightarrow
(\Res_{E\otimes K/E}G)_{\overline E}$), and an inertial type~$\tau$,
then by~\cite[Prop.\ 3.0.12]{MR3152673} there is a
unique $\Zl$-flat quotient $R_{\rhobar}^{\square,\tau,\mathbf{v}}$
of~$R_{\rhobar}^{\square}$ with the property that if $B$ is a finite
local $E$-algebra, then a morphism
$R_{\rhobar}^{\square}\to B$ factors through
$R_{\rhobar}^{\square,\tau,\mathbf{v}}$ if and only if the
corresponding representation $\rho:\Gal_K\to G(B)$ is potentially semi-stable
with Hodge type~$\mathbf{v}$ and inertial type $\tau$. 
For each finite-dimensional representation $V$ of $G$, we may compose with
the representation $\Gal_K\rightarrow
G(R_{\rhobar}^{\square,\tau,\mathbf{v}}[1/p])$ to obtain a representation
$\Gal_K\rightarrow\GL(V)(R_{\rhobar}^{\square,\tau,\mathbf{v}}[1/p])$.
Then exactly as in~\cite[Thm.\ 2.5.5]{MR2373358} we obtain a
corresponding ($\GL(V)$-valued) filtered $(\varphi,N,\Gal_{L/K})$-module over
$R_{\rhobar}^{\square,\tau,\mathbf{v}}[1/p]$ (note that we have been working
with covariant functors in this paper, while Kisin uses contravariant
functors, it is necessary to dualize the
construction in~\cite[\textsection 2.4]{MR2373358}). 
As these
filtered $(\varphi,N,\Gal_{L/K})$-modules are exact and
tensor-compatible, we obtain a $G$-valued filtered
$(\varphi,N,\Gal_{L/K})$-module over
$R_{\rhobar}^{\square,\tau,\mathbf{v}}[1/p]$.  By Lemma~\ref{lem: WD stack agrees with phi N stack}, we again have a natural
morphism 
\[ \Spec R_{\rhobar}^{\square,\tau,\mathbf{v}}[1/l]\to G-\WD_E(L/K). \]

\subsection{Denseness of very smooth points}

We continue to fix an inertial type $\tau$ and (if $p=l$) a $p$-adic Hodge type $\mathbf{v}$.  For convenience, if $l\ne p$ then for the rest of this section
we write $R_{\rhobar}^{\square,\tau,\mathbf{v}}$ for
$R_{\rhobar}^{\square,\tau}$; this notational convention allows us to
treat the cases $l\ne p$ and $l=p$ simultaneously. We study the
generic fibre $R_{\rhobar}^{\square,\tau,\mathbf{v}}[1/l]$ via the
morphism \numequation\label{eqn:morphism from deformation ring to WD
  stack} \Spec R_{\rhobar}^{\square,\tau,\mathbf{v}}[1/l]\to
  G-\WD_E(L/K). \end{equation}

In a standard abuse of terminology, we say that a closed point  $x\in\Spec
R_{\rhobar}^{\square,\tau,\mathbf{v}}[1/l]$ is \emph{smooth}  if the
(completed) local ring at~$x$ is regular. We will see in the proof of
Theorem~\ref{thm: dense set of very smooth points} that these are the
points whose images in~$G-\WD_E(L/K)$ are smooth points, which perhaps
justifies this terminology. Similarly, we say that~$x$ is \emph{very
  smooth} if for any finite extension~$K'/K$, the image of~$x$ in
(with obvious notation) $\Spec R_{\rhobar|_{G_{K'}}}^{\square,\tau|_{I_{K'}},\mathbf{v}_{K'}}[1/l]$ is smooth.

As in~\cite[Proposition 2.3.5]{MR2600871}, if $x\in\Spec
R_{\rhobar}^{\square,\tau,\mathbf{v}}[1/l]$ is a closed point
corresponding to a representation $\rho_x$, then the completed local
ring $A_x$ at $x$ pro-represents framed deformations of $\rho_x$ which
are potentially semi-stable of $p$-adic Hodge type $\mathbf{v}$ (if
$l=p$), and have inertial type $\tau$.

\begin{prop}
  \label{prop: morphism to WD is formally smooth and flat}
  \begin{enumerate}
  \item If~$x$ is a closed point of the Jacobson scheme
    $\Spec R_{\rhobar}^{\square,\tau,\mathbf{v}}[1/l]$, then the
    completion at~$x$ of the 
    morphism~\emph{(\ref{eqn:morphism from deformation ring to WD stack})} is
formally smooth.
  \item The morphism~\emph{(\ref{eqn:morphism from deformation ring to WD stack})} is flat.
  \end{enumerate}
\end{prop}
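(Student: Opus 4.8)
The plan is to reduce part~(1) to a statement about the natural map of formal deformation functors and then read off formal smoothness from the tangent--obstruction theories already set up in this section, treating the cases $l=p$ and $l\ne p$ separately; part~(2) will then be a formal consequence, using that $\Spec R_{\rhobar}^{\square,\tau,\mathbf{v}}[1/l]$ is Jacobson. First I would unwind the definitions: the completion at a closed point~$x$ of the morphism~(\ref{eqn:morphism from deformation ring to WD stack}) is the natural transformation from the functor pro-represented by~$A_x$ (framed deformations of~$\rho_x$ that are potentially semi-stable of Hodge type~$\mathbf{v}$ if $l=p$, and of inertial type~$\tau$, as recalled just above) to the deformation functor of the associated object~$D_x$ of $G-\WD_E(L/K)$.

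Next I would treat the case $l=p$. Fontaine's functor $D_{\st}^L$, together with the theorem of Colmez--Fontaine (weak admissibility is equivalent to being of Galois origin) and Lemma~\ref{lem: WD stack agrees with phi N stack}, identifies the source functor with the functor of deformations of the filtered $(\varphi,N,\Gal_{L/K})$-module $D_{\st}^L(\rho_x)$ whose exact $\otimes$-filtration has $p$-adic Hodge type~$\mathbf{v}$, and under this identification the morphism to the deformation functor of~$D_x$ is simply ``forget the filtration''. Two points make this work on Artin local $E$-algebras: the weakly admissible locus is open and the conjugacy class of a splitting cocharacter is Zariski-locally constant, so deformations of the weakly admissible object $D_{\st}^L(\rho_x)$ of type~$\mathbf{v}$ automatically remain weakly admissible of type~$\mathbf{v}$; and by Propositions~\ref{prop: tangent-obstruction for filtered objects} and~\ref{prop: deformations of WD repns controlled by complex} the two functors are controlled by $C^\bullet_{\Fil}(D_x)$ and $C^\bullet(D_x)$, so the evident surjection $C^\bullet_{\Fil}(D_x)\onto C^\bullet(D_x)$, whose kernel is the term $(\ad D_{x,L}/\Fil^0\ad D_{x,L})^{\Gal_{L/K}}$ placed in degree~$1$, induces a surjection on $H^1$ and an isomorphism on $H^2$; hence the map of functors is formally smooth. (Alternatively one argues directly that the relevant lifting problem is unobstructed, by extending the filtration via Corollary~\ref{cor:forgetting filtrations is smooth}.) In the case $l\ne p$, by Grothendieck's monodromy theorem and the equivalence of Tannakian categories given by the functor $\widehat{\underline{WD}}_{pst}$ of \cite[\S2.3.7]{fonl} (Section~\ref{subsec: l not p Galois to WD}), a deformation of~$\rho_x$ that becomes semi-stable over~$L$ is \emph{the same} as a deformation of~$D_x$ in $G-\WD_E(L/K)$ with Frobenius eigenvalues $l$-adic units; on Artin local $E$-algebras both the eigenvalue condition and the inertial type are automatic (the latter because representations of the finite group $I_{L/K}$ are rigid in characteristic~$0$), so the map of deformation functors is an equivalence, in particular formally smooth. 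This proves~(1).

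For~(2), since $Y_{L/K,\varphi,\mathcal{N}}\to G-\WD_E(L/K)$ is a smooth surjection, flatness of~(\ref{eqn:morphism from deformation ring to WD stack}) may be checked after base change to $Y_{L/K,\varphi,\mathcal{N}}$; the base-changed source is a $G$-torsor over $\Spec R_{\rhobar}^{\square,\tau,\mathbf{v}}[1/l]$ and hence again a Jacobson scheme, and part~(1), chased through the atlas, shows that the completed local homomorphism is formally smooth, hence flat, at every closed point. Flatness at closed points descends to $\mathcal{O}_{X,x}$ along the faithfully flat map $\mathcal{O}_{X,x}\to\widehat{\mathcal{O}}_{X,x}$, and flatness at an arbitrary point then follows since in a Jacobson scheme every point is a generization of a closed point and flatness of $\mathcal{O}_{X,x}$ over $\mathcal{O}_{Y,f(x)}$ is inherited from flatness over the local ring at a closed specialization via the localization $\mathcal{O}_{Y,f(x')}\to\mathcal{O}_{Y,f(x)}$.

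I expect the main obstacle to be the bookkeeping in part~(1): precisely identifying the completed morphism with the forgetful map of deformation functors (including the framing data, and, when $l=p$, the fact that $D_{\st}^L(\rho_x)$ may live on a nontrivial torsor), and verifying that weak admissibility (resp. the Frobenius-eigenvalue condition) together with the prescribed Hodge and inertial types are automatically preserved under infinitesimal deformation. Once these identifications are in place, the comparison of tangent--obstruction complexes and the deduction of flatness are routine.
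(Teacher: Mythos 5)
Your proposal is correct and follows essentially the same route as the paper, which for part~(1) simply cites the proofs of Kisin's Lemma~3.2.1 and Proposition~3.3.1 as carrying over verbatim --- i.e.\ exactly your identification of the completed deformation functor with deformations of the filtered $(\varphi,N,\Gal_{L/K})$-module (weak admissibility and the type being automatic over Artinian thickenings) together with the unobstructedness of lifting the filtration and the formal smoothness of framed over unframed deformations --- and for part~(2) deduces flatness from ``formally smooth implies flat'' on completed local rings, just as you do. The only slip is cosmetic: in the $l\ne p$ case the completed source pro-represents \emph{framed} deformations, so the completed morphism to the deformation groupoid of $D_x$ is formally smooth rather than an equivalence, a framing point you already acknowledge in your closing remark.
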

\begin{proof}The formal smoothness follows from the proofs
  of~\cite[Lemma 3.2.1, Proposition 3.3.1]{MR2373358}, which carries over verbatim
  to our setting (since the morphism of groupoids from framed deformations to unframed deformations is formally
  smooth). 
  Part~(2) then follows from the fact that formally
  smooth morphisms between locally noetherian schemes are flat, which
  in turn follows from ~\cite[\S0 Thm.\
  19.7.1]{MR0173675}. 
\end{proof}

\begin{thm}
  \label{thm: dense set of very smooth points}Assume that
  $R_{\rhobar}^{\square,\tau,\mathbf{v}}\ne 0$. There is a dense open
  subscheme $U\subset\Spec R_{\rhobar}^{\square,\tau,\mathbf{v}}[1/l]$
  which is regular, and there is a Zariski dense subset of $\Spec R_{\rhobar}^{\square,\tau,\mathbf{v}}[1/l]$
  consisting of very smooth points.  Furthermore, $\Spec
  R_{\rhobar}^{\square,\tau,\mathbf{v}}[1/l]$ is equidimensional of
  dimension $\dim G+\delta_{l=p}\dim\Res_{E\otimes
    K/E}G/P_{\mathbf{v}}$, locally a complete intersection, and reduced. 

Similarly,
  $\Spec R_{\rhobar}^{\square,\tau,\mathbf{v},\psi}[1/l]$ contains a regular dense open
  subscheme and a Zariski dense subset of very smooth points, and is  equidimensional of
  dimension~$\dim \Gder + \delta_{l=p}\dim(\Res_{E\otimes K/E}G)/P_{\mathbf{v}}$.
\end{thm}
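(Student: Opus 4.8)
\emph{The plan} is to deduce every assertion from the morphism~(\ref{eqn:morphism from deformation ring to WD stack}), $\pi\colon X:=\Spec R_{\rhobar}^{\square,\tau,\mathbf{v}}[1/l]\to G-\WD_E(L/K)$, which by Proposition~\ref{prop: morphism to WD is formally smooth and flat} is flat and formally smooth upon completion, and from the structure of the target: by Corollary~\ref{cor: WD stack is generically smooth zero dimensional controlled by H2} the smooth atlas $Y_{L/K,\varphi,\mathcal N}$ is equidimensional of dimension $\dim G$, a local complete intersection, reduced, and regular on a dense open subscheme. Since $X$ carries a \emph{trivialised} framed Weil--Deligne representation, the pullback along~$\pi$ of the $G$-torsor $Y_{L/K,\varphi,\mathcal N}\to G-\WD_E(L/K)$ is trivial, so $X\times_{G-\WD_E(L/K)}Y_{L/K,\varphi,\mathcal N}\cong X\times G$, and the second projection $q\colon X\times G\to Y_{L/K,\varphi,\mathcal N}$ is flat and (on completed local rings) formally smooth with \emph{smooth} fibres of dimension $d:=\dim G+\delta_{l=p}\dim(\Res_{E\otimes K/E}G)/P_{\mathbf{v}}$; here $\dim G$ accounts for the framing and, when $l=p$, the remaining $\dim(\Res_{E\otimes K/E}G)/P_{\mathbf{v}}=\dim(\ad D/\Fil^0\ad D)$ accounts for the Hodge filtration of type~$\mathbf{v}$, via Corollary~\ref{cor:forgetting filtrations is smooth} together with the equivalences of Sections~\ref{subsec: l not p Galois to WD} and~\ref{subsec: phi N modules versus WD representations} (recovering $\rho_x$ from its associated weakly admissible filtered $(\varphi,N,\Gal_{L/K})$-module).

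Granting this, fix a closed point $x\in X$ and a point $z$ of $X\times G$ above it, with $y:=q(z)\in Y_{L/K,\varphi,\mathcal N}$. Then $\widehat{\mathcal{O}}_{X\times G,z}$ is a power series ring in $\dim G$ variables over $\widehat{\mathcal{O}}_{X,x}$, and is simultaneously formally smooth of relative dimension $d$ over $\widehat{\mathcal{O}}_{Y_{L/K,\varphi,\mathcal N},y}$; comparing the two descriptions, $\widehat{\mathcal{O}}_{X,x}$ is regular (resp.\ a complete intersection) if and only if $\widehat{\mathcal{O}}_{Y_{L/K,\varphi,\mathcal N},y}$ is, and $\dim\widehat{\mathcal{O}}_{X,x}=\dim\widehat{\mathcal{O}}_{Y_{L/K,\varphi,\mathcal N},y}+\delta_{l=p}\dim(\Res_{E\otimes K/E}G)/P_{\mathbf{v}}$. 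As $Y_{L/K,\varphi,\mathcal N}$ is equidimensional of dimension $\dim G$ and a local complete intersection, it follows that $X$ is a local complete intersection, equidimensional of dimension~$d$. Since $q$ and the projection $X\times G\to X$ are flat they carry generic points to generic points, so the image in~$X$ of the preimage under $q$ of the regular locus of $Y_{L/K,\varphi,\mathcal N}$ is a dense open $U\subseteq X$, and $U$ is regular by the equivalence just noted. In particular $X$ is generically reduced, hence---being Cohen--Macaulay by~\cite[Theorem 21.3]{matsumura}, as a local complete intersection---reduced everywhere by~\cite[Theorem 17.3]{matsumura}.

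For the very smooth points, I would first check that a closed point $x\in X$ is very smooth if and only if its image $q(z)$ in $Y_{L/K,\varphi,\mathcal N}$ is a very smooth point in the sense preceding Lemma~\ref{lem: existence of uniform extension}: for every finite extension $K'/K$ the morphism~(\ref{eqn:morphism from deformation ring to WD stack}) is compatible with restriction of deformations on the source and with the restriction map $Y_{L/K,\varphi,\mathcal N}\to Y_{L'/K',\varphi,\mathcal N}$ on the target, and the maps $X\to\Spec R_{\rhobar|_{\Gal_{K'}}}^{\square,\tau|_{I_{K'}},\mathbf{v}_{K'}}[1/l]$ and $Y_{L/K,\varphi,\mathcal N}\to Y_{L'/K',\varphi,\mathcal N}$ are (after completion) formally smooth, so regularity at the image of $x$ on the left is equivalent to regularity at the image of $q(z)$ on the right, exactly as above. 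Now enlarge $E$ (which merely base-changes $R_{\rhobar}^{\square,\tau,\mathbf{v}}$); by Lemma~\ref{lem: existence of uniform extension} and Theorem~\ref{thm: very smooth points exist for Y} there is a single finite extension $K'/K$ and a dense open $U'\subseteq Y_{L/K,\varphi,\mathcal N}$, the locus where $H^2_{L'/K'}$ vanishes, every closed point of which is very smooth. Then the preimage of $U'$ under $q$ is dense open in $X\times G$, its image $V\subseteq X$ is dense open (again by flatness), and by the preceding remark every closed point of $V$ is a very smooth point of $X$; since $X$ is a Jacobson scheme these are Zariski dense.

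Finally, the fixed-determinant statements follow by running the same argument with $G-\WD_E(L/K)$ replaced by its fixed-determinant variant---whose tangent--obstruction theory is governed by the total complex built from $\Lie\Gder$ in place of $\Lie G$, so that its atlas is equidimensional of dimension $\dim\Gder$, a local complete intersection, reduced, generically smooth, with Zariski-dense very smooth points: the construction of smooth points in Theorem~\ref{thm: very smooth points exist for Y} applies verbatim, since a nilpotent $N\in\Lie G$ lies in $\Lie\Gder$, the homomorphism $\SL_2\to G$ it produces factors through~$\Gder$, and the finite-order correcting elements may be taken in $\Gder$---or, equivalently, by slicing $X$ along the formally smooth flat morphism to the (smooth, $\dim\Gab$-dimensional) universal deformation ring of $\psibar=\ab\circ\rhobar$ for $\Gab$, whose fibre over~$\psi$ is $\Spec R_{\rhobar}^{\square,\tau,\mathbf{v},\psi}[1/l]$; the dimension then drops from $d$ to $d-\dim\Gab=\dim\Gder+\delta_{l=p}\dim(\Res_{E\otimes K/E}G)/P_{\mathbf{v}}$. \emph{The main obstacle} is the bookkeeping of the first paragraph: producing $X\times G$ with its two projections and verifying that $q$ is flat with smooth fibres of dimension~$d$ uniformly in the cases $l\ne p$ and $l=p$ (in the latter the fibre also involves the weak-admissibility open condition and Corollary~\ref{cor:forgetting filtrations is smooth}); once the resulting comparison of completed local rings is in hand, regularity, dimension, the complete intersection property, reducedness and the density of very smooth points are all formal consequences of the corresponding facts about $Y_{L/K,\varphi,\mathcal N}$.
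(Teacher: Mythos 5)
Your overall strategy is the paper's: everything is funnelled through the morphism~(\ref{eqn:morphism from deformation ring to WD stack}), using Proposition~\ref{prop: morphism to WD is formally smooth and flat} together with Corollary~\ref{cor: WD stack is generically smooth zero dimensional controlled by H2} for the regular dense open, the lci and reducedness statements via the fibre product $\Spec R_{\rhobar}^{\square,\tau,\mathbf{v}}[1/l]\times_{G-\WD_E(L/K)}Y_{L/K,\varphi,\mathcal N}\cong X\times G$, and Corollary~\ref{cor:forgetting filtrations is smooth} plus the equivalence of Lemma~\ref{lem: WD stack agrees with phi N stack} for the Hodge-filtration contribution to the dimension when $l=p$. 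The only reorganisation is that you transfer the dimension count through the fibre dimension of $q\colon X\times G\to Y_{L/K,\varphi,\mathcal N}$ at arbitrary closed points, whereas the paper computes tangent spaces only at points of the regular locus (where tangent dimension equals local dimension); your version is fine in principle but the "smooth fibres of dimension $d$" claim, which you yourself flag as the main obstacle, is exactly the deformation-theoretic bookkeeping the paper carries out (framing/trivialisation contributing $\dim G$, filtrations of type $\mathbf{v}$ contributing $\dim(\ad D_x/\mathcal{F}^0\ad D_x)^{\Gal_{L/K}}=\dim\Res_{E\otimes K/E}G/P_{\mathbf{v}}$), so you have not gained anything by deferring it. For the fixed-determinant statement, your first alternative matches the paper (which leaves the details to the reader); the "slicing over the deformation space of $\psibar$" alternative is not justified as stated (formal smoothness and the dimension of that base are asserted, not proved), so you should not lean on it.

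There is one step that fails as written: in the very-smooth-points argument you take the single extension $K'/K$ produced by Lemma~\ref{lem: existence of uniform extension} and claim that \emph{every closed point} of the locus $U'$ where $H^2_{L'/K'}$ vanishes is very smooth. The lemma only gives this for points valued in the fixed coefficient field $E'$: its proof bounds the possible roots of unity among generalised eigenvalues of $p^{f_K}\Ad(\Phi)^\ast$ by the degree of their minimal polynomials over $E'$, and a closed point with larger residue field can admit roots of unity of larger order, so the chosen $K'$ need not detect its failure to be very smooth. The repair is the one used in Corollary~\ref{cor: WD stack is generically very smooth zero dimensional controlled by H2} (and implicitly in the paper's proof of Theorem~\ref{thm: dense set of very smooth points}): fix $E'$ large enough that the $E'$-points are Zariski dense, and take the intersection of $U'$ (equivalently, of its preimage in $\Spec R_{\rhobar}^{\square,\tau,\mathbf{v}}[1/l]$, using your formal-smoothness comparison of local rings over $K'$, which is correct) with the $E'$-valued points; this still yields a Zariski dense set of very smooth points, which is all the theorem asserts.
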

\begin{remark}
In contrast to previous work (in particular the
papers~\cite{MR2373358}, \cite{MR2785764} and \cite{2014arXiv1403.1411B}), we only claim that $U$ is regular, not formally smooth over $\Q_p$.  We are grateful to Jeremy Booher and Stefan Patrikis~\cite{2017arXiv170807434} for drawing our attention to this.
\end{remark}
\begin{proof}
  Since the formation of scheme-theoretic images is compatible with
  flat base change, the existence of a dense open subscheme~$U$ consisting of smooth points
  follows from Corollary~\ref{cor: WD stack is generically smooth zero dimensional
  controlled by H2} and Proposition~\ref{prop: morphism to WD is formally smooth and flat}.  The existence of a Zariski dense subset of very smooth points follows from Corollary~\ref{cor: WD stack is generically very smooth zero dimensional
  controlled by H2}. 
We claim that if $x\in \Spec
R_{\rhobar}^{\square,\tau,\mathbf{v}}[1/l]$ is a closed point in $U$,
then the completion $A_x$
of~$R_{\rhobar}^{\square,\tau,\mathbf{v}}[1/l]$ at~$x$ is a formally
smooth $\Qp$-algebra, and is in particular regular.  Indeed, if $\mathfrak{m}_x$ is the maximal
ideal of $A_x$, then $\Spec A_x/\mathfrak{m}_x^n\subset U$ for all
$n\geq 1$ (since $U$ is open).  Let $B$ be a local $\Q_p$-algebra with
maximal ideal $\mathfrak{m}_B$ and let $I\subset B$ be an ideal such
that $I\mathfrak{m}_B=(0)$.  If there is a local homomorphism
$A_x\rightarrow B/I$, let $D_{B/I}$ be the induced object of
$G-\WD_E(L/K)(B/I)$.  Then $H^2(\ad D_{B/I})=0$, since the homomorphism
$A_x\rightarrow B/I$ factors through $A/\mathfrak{m}_x^n$ for some
$n$.  It follows that $D_{B/I}$ lifts to $D_B\in G-\WD_E(L/K)(B)$.
Since $\Spf A_x\rightarrow G-\WD_E(L/K)$ is formally smooth, $D_B$ is
induced from a map $A_x\rightarrow B$ lifting $A\rightarrow
B/I$. Since~$R_{\rhobar}^{\square,\tau,\mathbf{v}}[1/l]$ is
Noetherian, it follows that the localisation of
~$R_{\rhobar}^{\square,\tau,\mathbf{v}}[1/l]$ at~$x$ is
regular~\cite[\href{http://stacks.math.columbia.edu/tag/07NY}{Tag
  07NY}]{stacks-project}, so~$U$ is regular by~\cite[\href{http://stacks.math.columbia.edu/tag/02IT}{Tag 02IT}]{stacks-project}
, as claimed.

Thus, to compute the dimension of
$\Spec R_{\rhobar}^{\square,\tau,\mathbf{v}}[1/l]$, it is 
enough to compute the dimension of the tangent spaces at closed points
in~$U$. Let~$x$ be such a closed point, let $E'$ be its residue field, and write $A_x$ for the
completion of~$R_{\rhobar}^{\square,\tau,\mathbf{v}}[1/l]$
at~$x$. Since the morphism $\Spf A_x\to G-\WD_E(L/K)$ is formally
smooth by
Proposition~\ref{prop: morphism to WD is formally smooth and flat}, it
is  versal at~$x$. More precisely, in the case that 
$l\ne p$ we see (by the equivalence between Galois representations and
Weil--Deligne representations recalled in Section~\ref{subsec: l not p Galois to
  WD}) that  the induced map $\Spf A_x\to G-\WD_E(L/K)^{\wedge}_x$
(with the right hand side denoting the completion of the target
at~$x$) is a $\widehat{G}$-torsor, 
where $\widehat{G}$ is the completion of~$G_{E}$ along the closed
subgroup given by the centraliser of the representation corresponding to~$x$, in the sense that there is an evident isomorphism \[\Spf
  A_x\times\widehat{G}\isoto\Spf
  A_x\times_{G-\WD_E(L/K)^{\wedge}_x}\Spf A_x. \]In particular, we have
$\dim A_x\times_{G-\WD_E(L/K)_x^\wedge} A_x=\dim A_x+\dim\widehat{G}$, and the
claim about the dimension then follows from~\cite[Lem.\
2.40]{EGcomponents} and Corollary~\ref{cor: WD stack is generically very smooth zero dimensional
  controlled by H2}.

If $l=p$, let $D_x:=D_{\st}^L(\rho_x)$; it is equipped with a filtration $\mathcal{F}_x^\bullet$.  We consider the set $(\Spf A_x)(E'[\varepsilon])$.  Forgetting the framing on liftings is a formally smooth morphism of groupoids and makes the tangent space at $x$ into a $\Lie G$-torsor over the groupoid of unframed deformations.  But since $E'[\varepsilon]$ is an artin local $E$-algebra, by~\cite[Proposition 2.4]{2014arXiv1403.1411B} the category of (unframed) potentially semi-stable representations of $\Gal_K$ over $E'[\varepsilon]$ deforming $\rho_x$ is equivalent to the subcategory of $G-\Mod_{L/K,\varphi,N,\Fil}(E'[\varepsilon])$ deforming $D_{\st}^L(\rho_x)$.  

There is a natural morphism of groupoids
\[
\xymatrix{
G-\Mod_{L/K,\varphi,N,\Fil}\ar[r] & G-\Mod_{L/K,\varphi,N}	}	\]
and therefore a commutative diagram
\[	\xymatrix{
G-\Mod_{L/K,\varphi,N,\Fil}(E'[\varepsilon])\ar[r]\ar[d] & G-\Mod_{L/K,\varphi,N}(E'[\varepsilon])\ar[d]	\\
G-\Mod_{L/K,\varphi,N,\Fil}(E')\ar[r] & G-\Mod_{L/K,\varphi,N}(E')
}	\]
By Corollary~\ref{cor:forgetting filtrations is smooth}, the fibers of \[
\xymatrix{
G-\Mod_{L/K,\varphi,N,\Fil}(E'[\varepsilon])\ar[r] & G-\Mod_{L/K,\varphi,N}(E'[\varepsilon])	}	\]
over the filtered $G$-torsor $D_x$ are torsors under $\left(\ad D_x/\mathcal{F}^0(\ad D_x)\right)^{\Gal_{L/K}}$.  Since $G-\Mod_{L/K,\varphi,N}\cong G-\WD_{E}(L/K)$ is equidimensional of dimension $0$ and $x\in\Spec R_{\rhobar}^{\square,\tau,\mathbf{v}}[1/l]$ is a smooth point, we conclude that
\begin{align*}
\dim A_x&=\dim \Lie G+\dim \left(\ad D_x/\mathcal{F}^0(\ad D_x)\right)^{\Gal_{L/K}} 	\\
&= \dim G+ \dim\Res_{E\otimes K/E}G/P_{\mathbf{v}}
\end{align*}
as desired.

To prove that $R_{\rhobar}^{\square,\tau,\mathbf{v}}[1/l]$ is reduced and locally a complete intersection, we consider the fiber product $\Spec R_{\rhobar}^{\square,\tau,\mathbf{v}}[1/l]\times_{G-\WD_E(L/K)}Y_{L/K,\varphi,\mathcal N}$.  This is a $G$-torsor, hence smooth, over $\Spec R_{\rhobar}^{\square,\tau,\mathbf{v}}[1/l]$, so it suffices to prove that this fiber product is reduced and locally a complete intersection.  But by Proposition~\ref{prop: morphism to WD is formally smooth and flat}, the natural morphism $\Spec R_{\rhobar}^{\square,\tau,\mathbf{v}}[1/l]\times_{G-\WD_E(L/K)}Y_{L/K,\varphi,\mathcal N}\rightarrow Y_{L/K,\varphi,\mathcal N}$ is formally smooth, so completed local rings at points of $\Spec R_{\rhobar}^{\square,\tau,\mathbf{v}}[1/l]\times_{G-\WD_E(L/K)}Y_{L/K,\varphi,\mathcal N}$ are power series rings over completed local rings of $Y_{L/K,\varphi,\mathcal N}$.  Since the latter are reduced and complete intersection (by Corollary~\ref{cor: WD stack is generically very smooth zero dimensional controlled by H2}), the same holds for the former.

The corresponding statements for~
$R_{\rhobar}^{\square,\tau,\mathbf{v},\psi}$ can be proved in the same
way; we leave the details to the reader.
\end{proof}

The following is a generalisation of~\cite[Thm.\
D]{allen2014deformations} (which treats the case that $l=p$ and
$G=\GL_n$). We let~$x$ be a closed point
of~$R_{\rhobar}^{\square,\tau,\mathbf{v}}[1/l]$ with residue
field~$E_x$ (a finite extension of~$E$), and write~$\rho_x:\Gal_K\to
G(E_x)$ for the corresponding representation.
\begin{cor}
  \label{cor: smooth points given by WD rep condition}
The point~$x$ is a formally smooth point
of~$R_{\rhobar}^{\square,\tau,\mathbf{v}}[1/l]$ if and only if $H^0((\ad\WD(\rho_x))^*(1))=0$.
\end{cor}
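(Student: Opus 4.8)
The plan is to combine the main structural results established earlier in the section with Tate duality. By Theorem~\ref{thm: dense set of very smooth points}, the locus of formally smooth (equivalently, regular) points of $\Spec R_{\rhobar}^{\square,\tau,\mathbf{v}}[1/l]$ is exactly the preimage under the morphism~(\ref{eqn:morphism from deformation ring to WD stack}) of the smooth locus of the stack $G-\WD_E(L/K)$: indeed, the argument in that proof shows $x$ is regular if and only if $H^2(\ad D_x)=0$, where $D_x$ is the image of $x$ in $G-\WD_E(L/K)$, which for $l=p$ is $D_{\st}^L(\rho_x)$ and for $l\ne p$ is the Weil--Deligne representation $\WD(\rho_x)$ directly. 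So the first step is simply to record that $x$ is formally smooth $\iff$ $H^2(\ad\WD(\rho_x))=0$; this is essentially a citation of (the proof of) Theorem~\ref{thm: dense set of very smooth points} together with the identification of the image point.

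The second step is to translate the vanishing of $H^2$ into the vanishing of $H^0$ of the twisted dual, which is exactly the content of Corollary~\ref{cor: WD stack is generically smooth zero dimensional controlled by H0} (itself a consequence of Proposition~\ref{prop: Tate local duality}, the perfect pairing $H^0((\ad D)^\ast(1))\times H^2(\ad D)\to E(1)$). Here there is a minor compatibility point to address: in the $l=p$ case, the complex $C^\bullet(\ad D_x)$ attached to the $(\varphi,N,\Gal_{L/K})$-module $D_{\st}^L(\rho_x)$ must be identified with the complex attached to the associated Weil--Deligne representation $\WD(\rho_x)$ under the equivalence of Lemma~\ref{lem: WD stack agrees with phi N stack}. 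Since that equivalence of categories sends $(D,\Phi,N,\tau)$ to $(D_\sigma, r, N_\sigma)$ compatibly with the adjoint action, it carries the double complex of Section~2.2 to the one for $\WD(\rho_x)$, so $H^2(\ad D_x)\cong H^2(\ad\WD(\rho_x))$ and similarly for $H^0((\ad\cdot)^\ast(1))$; for $l\ne p$ this is immediate since the image point already \emph{is} $\WD(\rho_x)$ (Section~\ref{subsec: l not p Galois to WD}).

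Putting these together gives the equivalence: $x$ formally smooth $\iff H^2(\ad\WD(\rho_x))=0 \iff H^0((\ad\WD(\rho_x))^\ast(1))=0$, which is the claim. I do not expect any serious obstacle; the only thing requiring a little care is making the identification of tangent--obstruction complexes under the Weil--Deligne/$(\varphi,N)$-module dictionary explicit enough that the $l=p$ case reduces cleanly to the statement about $G-\WD_E(L/K)$, and checking that "formally smooth point" in the statement matches "regular point of the localisation" as used in Theorem~\ref{thm: dense set of very smooth points} (these coincide because the generic fibre is a localisation of a complete local $\cO$-algebra, so regular local rings there are formally smooth over $E$, hence over $\Qp$).
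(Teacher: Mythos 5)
Your overall route (pass to $G-\WD_E(L/K)$, characterise its smooth points by $H^2(\ad D)$, then dualise via Proposition~\ref{prop: Tate local duality} / Corollary~\ref{cor: WD stack is generically smooth zero dimensional controlled by H0}) is the paper's route, and your remarks about the $l=p$ dictionary (Lemma~\ref{lem: WD stack agrees with phi N stack}) and about ``regular = formally smooth'' in characteristic $0$ are fine. The problem is your first step. You assert that the proof of Theorem~\ref{thm: dense set of very smooth points} shows ``$x$ is regular \emph{if and only if} $H^2(\ad\WD(\rho_x))=0$.'' It does not: that proof only establishes the ``if'' direction (points of $\Spec R_{\rhobar}^{\square,\tau,\mathbf{v}}[1/l]$ lying over the locus where $H^2$ vanishes are formally smooth, whence the dense regular open $U$), and the theorem itself only claims the \emph{existence} of a regular dense open, not that the regular locus equals the preimage of the smooth locus of the stack. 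The ``only if'' direction --- that regularity of the completed local ring $A_x$ forces $H^2(\ad\WD(\rho_x))=0$, equivalently forces the image point of $G-\WD_E(L/K)$ to be smooth --- is precisely the nontrivial half of the corollary you are asked to prove, so citing the theorem here is essentially circular, and your proposal supplies no argument for it.

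This direction does need an argument, and the paper gives one: by Proposition~\ref{prop: morphism to WD is formally smooth and flat} the completed morphism $\Spf A_x\to G-\WD_E(L/K)^{\wedge}_x$ is formally smooth, hence surjective on $B$-points for artinian local $\Qp$-algebras $B$; given a lifting problem for $G-\WD_E(L/K)^{\wedge}_x$ one lifts the $B/I$-point to $A_x$ (surjectivity), solves it there (formal smoothness of $A_x$), and pushes back down, so formal smoothness of $A_x$ implies formal smoothness of $G-\WD_E(L/K)^{\wedge}_x$, and then Corollary~\ref{cor: WD stack is generically smooth zero dimensional controlled by H2} (or~\ref{cor: WD stack is generically smooth zero dimensional controlled by H0}) applies. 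Alternatively you could use the flatness statement of Proposition~\ref{prop: morphism to WD is formally smooth and flat}: after replacing the stack by $Y_{L/K,\varphi,\mathcal N}$ via the $G$-torsor $\Spec R_{\rhobar}^{\square,\tau,\mathbf{v}}[1/l]\times_{G-\WD_E(L/K)}Y_{L/K,\varphi,\mathcal N}$, regularity descends along a flat local homomorphism, so regularity at $x$ forces regularity of $Y_{L/K,\varphi,\mathcal N}$ at the image point, hence $H^2=0$ there. Either way, some such argument must be added; as written, the implication ``$x$ formally smooth $\Rightarrow H^0((\ad\WD(\rho_x))^*(1))=0$'' is unproven.
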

\begin{proof}
Corollary~\ref{cor: WD stack is generically smooth zero dimensional controlled by H0} implies that the formally smooth points of $G-\WD_E(L/K)$ are precisely those points $x$ for which $H^0((\ad D_x)^*(1))$.  Thus, we need to show that $x\in\Spec R_{\rhobar}^{\square,\tau,\mathbf{v}}[1/l]$ is formally smooth if and only if its image in $G-\WD_E(L/K)$ is formally smooth.

We have a morphism $\Spec R_{\rhobar}^{\square,\tau,\mathbf{v}}[1/l]_x^\wedge\rightarrow G-\WD_E(L/K)_x^\wedge$, which is formally smooth by Proposition~\ref{prop: morphism to WD is formally smooth and flat}.  But this implies that for any $\Qp$-finite artin local ring $B$, the map $\Spec R_{\rhobar}^{\square,\tau,\mathbf{v}}[1/l]_x^\wedge(B)\rightarrow G-\WD_E(L/K)_x^\wedge(B)$ is surjective.  Hence, $\Spec R_{\rhobar}^{\square,\tau,\mathbf{v}}[1/l]_x^\wedge$ is formally smooth if and only if $G-\WD_E(L/K)_x^\wedge$ is formally smooth.
\end{proof}

\begin{rem}
  \label{rem: smooth equals generic}If~$G$ is the $L$-group of a
  quasisplit reductive group over~$K$, then it seems plausible that
  the condition of Corollary~\ref{cor: smooth points given by WD rep
    condition} could be equivalent to the condition that the
  (conjectural) $L$-packet of representations associated to the
  Frobenius semisimplification of
  $\WD(\rho_x)$ contains a generic element. In the case that
  $G=\GL_n$ (where the $L$-packets are singletons) and~$\WD(\rho_x)$
  is Frobenius semisimple, this is proved
  in~\cite[\S1]{allen2014deformations}, and in the general case it is
  closely related to~\cite[Conj.\ 2.6]{MR1186476} (which relates
  genericity to poles at $s=1$ of the adjoint $L$-function).
\end{rem}
\begin{rem}
  In the case that $l\ne p$, the equivalence between Galois
  representations and Weil--Deligne representations means that we can
  rewrite the condition in Corollary~\ref{cor: smooth points given by
    WD rep condition} as $H^0(\Gal_K,\ad\rho_x^*(1))=0$.
\end{rem}



We can also consider the quotient
$R_{\rhobar}^{\square,\tau,\mathbf{v},N=0}$, corresponding to the
union of the irreducible components
of~$R_{\rhobar}^{\square,\tau,\mathbf{v}}[1/l]$ for which the
monodromy operator~$N$ vanishes identically (if $l=p$, this is the
locus of potentially crystalline representations, and if $l\ne p$, it
is the locus of potentially unramified representations).
\begin{thm}\label{thm: dimensions etc l equals p N equals 0}
  Fix an inertial type~$\tau$, and if~$l=p$ then fix a $p$-adic Hodge
  type~$\mathbf{v}$. Assume that
  $R_{\rhobar}^{\square,\tau,\mathbf{v},N=0}\ne 0$. Then $R_{\rhobar}^{\square,\tau,\mathbf{v},N=0}[1/l]$
  is regular, and is equidimensional of
  dimension~$\dim_E G+\delta_{l=p}\dim_E(\Res_{E\otimes
    K/E}G)/P_{\mathbf{v}}$. Similarly
  $R_{\rhobar}^{\square,\tau,\mathbf{v},N=0,\psi}[1/l]$ is regular and equidimensional of
  dimension~$\dim_E \Gder + \delta_{l=p}\dim_E(\Res_{E\otimes K/E}G)/P_{\mathbf{v}}$.
\end{thm}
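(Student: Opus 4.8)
The strategy is to repeat the proof of Theorem~\ref{thm: dense set of very smooth points}, but with the ``$N=0$'' substack of $G-\WD_E(L/K)$ in place of $G-\WD_E(L/K)$ itself, the point being that this substack is smooth over~$E$, not merely generically smooth. The assertion about dimensions needs no new argument: by definition $R_{\rhobar}^{\square,\tau,\mathbf{v},N=0}[1/l]$ and $R_{\rhobar}^{\square,\tau,\mathbf{v},N=0,\psi}[1/l]$ are unions of irreducible components of $R_{\rhobar}^{\square,\tau,\mathbf{v}}[1/l]$ and $R_{\rhobar}^{\square,\tau,\mathbf{v},\psi}[1/l]$ respectively, which are reduced and equidimensional of the stated dimensions by Theorem~\ref{thm: dense set of very smooth points}; hence so is any union of their components. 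Thus the only content is regularity.

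Let $Y_{L/K,\varphi,\mathcal N}^{N=0}\subset Y_{L/K,\varphi,\mathcal N}$ be the closed subscheme defined by $N=0$, and let $G-\WD_E(L/K)^{N=0}:=[Y_{L/K,\varphi,\mathcal N}^{N=0}/G]$ be the corresponding closed substack of $G-\WD_E(L/K)$; by the argument of Lemma~\ref{lem: stack quotient for G modules}, an object of $G-\WD_E(L/K)^{N=0}$ over an $E$-algebra $A$ is simply a $G$-torsor $D$ equipped with a representation $r\colon W_K\to\Aut_G(D)$ with $r|_{I_L}$ trivial (the compatibility between $N$ and $r$ being vacuous once $N=0$). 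I would first check that $Y_{L/K,\varphi,\mathcal N}^{N=0}$ is smooth over~$E$: for any square-zero extension $A\to A/I$ of local artinian $E$-algebras, the trivial torsor lifts, and by Lemma~\ref{lem: Weil representations lift} any $\overline r\colon W_K\to\Aut_G(D_{A/I})$ trivial on $I_L$ lifts to such an $r$ over $A$ (the relevant obstruction lying in $H^2(W_K/I_L,\,\cdot\,)\cong H^2(\widehat{\Z},\,\cdot\,)=0$), so that taking $\Phi=r(g_0)$, $\tau=r|_{I_{L/K}}$ produces the desired lift. (In fact $Y_{L/K,\varphi,\mathcal N}^{N=0}$ is a union of irreducible components of $Y_{L/K,\varphi,\mathcal N}$, by Corollary~\ref{cor: WD stack is generically smooth zero dimensional controlled by H2}.)

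Next I would form $Z:=\Spec R_{\rhobar}^{\square,\tau,\mathbf{v}}[1/l]\times_{G-\WD_E(L/K)}G-\WD_E(L/K)^{N=0}$, a closed subscheme of $\Spec R_{\rhobar}^{\square,\tau,\mathbf{v}}[1/l]$. By Proposition~\ref{prop: morphism to WD is formally smooth and flat} the morphism~\eqref{eqn:morphism from deformation ring to WD stack} is formally smooth, hence so is its base change $Z\to G-\WD_E(L/K)^{N=0}$; base changing once more to the smooth atlas $Y_{L/K,\varphi,\mathcal N}^{N=0}$ and descending regularity along the resulting $G$-torsor over $Z$ shows that $Z$ is regular, in particular reduced and normal. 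Now every irreducible component $C$ of $R_{\rhobar}^{\square,\tau,\mathbf{v}}[1/l]$ on which $N$ vanishes identically maps into $G-\WD_E(L/K)^{N=0}$ and, being reduced, is a closed subscheme of $Z$; since $C$ is maximal among irreducible closed subsets of $R_{\rhobar}^{\square,\tau,\mathbf{v}}[1/l]\supseteq Z$, it is an irreducible component, and hence (as $Z$ is normal) a connected component, of $Z$. Therefore $R_{\rhobar}^{\square,\tau,\mathbf{v},N=0}[1/l]$ is an open and closed subscheme of the regular scheme $Z$, so is regular. The fixed-determinant statement follows in the same way, working with the fixed-determinant analogues of the moduli stacks above.

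The part I expect to need the most care is this last identification. By definition $R_{\rhobar}^{\square,\tau,\mathbf{v},N=0}[1/l]$ is a union of components of $R_{\rhobar}^{\square,\tau,\mathbf{v}}[1/l]$, and one must see that it is genuinely a union of connected components of $Z$; the subtlety is that a component of $R_{\rhobar}^{\square,\tau,\mathbf{v}}[1/l]$ on which $N\not\equiv 0$ can still meet the locus where $N$ vanishes, so $Z$ is in general strictly larger (as a set) than $R_{\rhobar}^{\square,\tau,\mathbf{v},N=0}[1/l]$, and it is the normality of $Z$ that forces the ``$N\equiv 0$'' components to split off. (In the case $l=p$ with $\mathbf{v}$ potentially crystalline, where $G-\WD_E(L/K)^{N=0}$ is already the whole relevant moduli stack, this is essentially the main theorem of~\cite{MR3152673}.)
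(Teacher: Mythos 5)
Your argument is correct, but it takes a genuinely different route from the one in the paper. The paper's proof simply reruns the argument of Theorem~\ref{thm: dense set of very smooth points} for the $N=0$ deformation problem itself, replacing the three-term complex $C^\bullet(D)$ by the two-term complex $(\ad D_A)^{I_{L/K}}\xrightarrow{1-\Ad(\Phi)}(\ad D_A)^{I_{L/K}}$ in degrees $0$ and $1$: since this complex has no $H^2$, every point of the moduli of Weil representations is unobstructed, so the (potentially crystalline, resp.\ potentially unramified) deformation ring is regular everywhere and the dimension drops out of the same Euler-characteristic and filtration count (following \cite[Thm.\ 3.3.8]{MR2373358}); implicitly this uses the moduli-theoretic characterisation of $R_{\rhobar}^{\square,\tau,\mathbf{v},N=0}$ rather than its definition as a union of components. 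You instead never reinterpret $R_{\rhobar}^{\square,\tau,\mathbf{v},N=0}$ deformation-theoretically: you pull back the closed substack $\{N=0\}$ along the morphism \eqref{eqn:morphism from deformation ring to WD stack}, use Lemma~\ref{lem: Weil representations lift} to see that its atlas $Y^{N=0}_{L/K,\varphi,\mathcal N}$ is smooth, use Proposition~\ref{prop: morphism to WD is formally smooth and flat} (base-changed, exactly as in the lci/reducedness step of Theorem~\ref{thm: dense set of very smooth points}) to conclude that the preimage $Z$ is regular, and then use normality of $Z$ together with maximality of the $N\equiv 0$ components to show they split off as connected components of $Z$. The two proofs share the same essential input (unobstructedness of $N=0$ Weil--Deligne deformations plus formal smoothness of the map to the stack), but your final splitting-off argument is an extra geometric step not present in the paper, and it is a genuine gain in precision: it addresses head-on the fact that $Z$ is in general strictly larger than $\Spec R_{\rhobar}^{\square,\tau,\mathbf{v},N=0}[1/l]$, a point the paper's terse proof handles only implicitly through the identification of the component-wise definition with the potentially crystalline/unramified deformation ring. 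Conversely, the paper's route gives the dimension directly from the tangent computation, whereas you read it off from Theorem~\ref{thm: dense set of very smooth points}, which is legitimate given the definition by components; for the fixed-determinant statement you, like the paper, need the $\psi$-analogues of the inputs, which are likewise left to the reader. Two cosmetic points: the formal smoothness in Proposition~\ref{prop: morphism to WD is formally smooth and flat} is stated for completions at closed points, so your base-change step should be phrased at that level (as the paper does in the reducedness argument), and your parenthetical that $Y^{N=0}_{L/K,\varphi,\mathcal N}$ is a union of components of $Y_{L/K,\varphi,\mathcal N}$ needs its equidimensionality (from the tangent-space count), not just Corollary~\ref{cor: WD stack is generically smooth zero dimensional controlled by H2}; neither affects the argument.
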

\begin{proof}This can be proved in exactly the same way as
  Theorem~\ref{thm: dense set of very smooth points}, replacing the use of
  the three term complex $\cC^\bullet(D)$ considered in Section~\ref{prop:
    deformations of WD repns controlled by complex} with the two term complex
  \[\xymatrix{ (\ad D_A)^{I_{L/K}}\ar[r]^{1-\Ad(\Phi)} &(\ad
      D_A)^{I_{L/K}}}\]
concentrated in degrees $0$ and $1$; see~\cite[Thm.\ 3.3.8]{MR2373358} for more details in the
  case that~$l=p$ and~$G=\GL_n$.
\end{proof}

\subsection{Components of deformation rings}\label{subsec: components
  of deformation rings}We now prove 
the following
reassuring lemma, which shows that the components of universal
deformation rings are invariant under $G(\cO)$-conjugacy. 
It  is a
generalization of \cite[Lem.\ 1.2.2]{BLGGT}, which treats the case $G=\GL_n$; the proof there is by an
explicit homotopy, while we use the theory of reductive group schemes over $\cO$ to construct less explicit homotopies.

  \begin{lem}
    \label{lem: conjugation preserves components}
  Let $h\in G(\cO')$ be an element which reduces to the identity modulo the maximal ideal, where $\cO'$ is the ring of integers in a finite extension of $E$.  Then conjugation by $h$ induces a map $\Spec (R_{\rhobar}^{\square,\tau,\mathbf{v}}\otimes_{\cO}\cO')[1/l]\rightarrow\Spec (R_{\rhobar}^{\square,\tau,\mathbf{v}}\otimes_{\cO}\cO')[1/l]$, and it fixes each irreducible component.
  \end{lem}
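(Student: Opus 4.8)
The plan is to realise conjugation by $h$ as one endpoint of a ``universal homotopy'' parametrised by the completion of $G$ along its identity section, and then run a topological argument with irreducible components. Throughout set $d:=\dim G$, write $R':=R_{\rhobar}^{\square,\tau,\mathbf{v}}\otimes_{\cO}\cO'$ and $Z:=\Spec R'[1/l]$, and recall that $R'$ is Noetherian, reduced and $\cO$-flat. The easy part first: conjugation by $h$ sends a framed deformation $\rho$ of $\rhobar$ over an $\cO'$-algebra $A$ to $h\rho h^{-1}$, which is again a framed deformation of $\rhobar$ because $h\equiv 1$ modulo the maximal ideal, and whose associated Galois (hence Weil--Deligne) representation is isomorphic to that of $\rho$; so it has the same inertial type and, if $l=p$, the same $p$-adic Hodge type, and we obtain an automorphism $\iota_h$ of $Z$ with inverse $\iota_{h^{-1}}$. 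It remains to show $\iota_h$ fixes each irreducible component.

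For the key construction: since $G$ is smooth over $\cO$, the completion of $G$ along the identity section is $\Spf\cO[[x_1,\dots,x_d]]$, carrying a tautological point $g^{\mathrm{univ}}\in G(\cO[[x_1,\dots,x_d]])$ that reduces to $1$ and whose $G$-coordinates are $x_1,\dots,x_d$. Conjugating the universal framed deformation of $\rhobar$ by $g^{\mathrm{univ}}$ gives a framed deformation of $\rhobar$ over $R'[[x_1,\dots,x_d]]$, and hence a continuous $\cO'$-algebra homomorphism $R_{\rhobar}^{\square}\otimes_\cO\cO'\to R'[[x_1,\dots,x_d]]$. I would then check that, after inverting $l$, this descends to a morphism $\mu\colon\Spec R'[[x_1,\dots,x_d]][1/l]\to Z$: using the construction of $R_{\rhobar}^{\square,\tau,\mathbf{v}}$ in \cite[Prop.\ 3.0.12]{MR3152673} together with the reducedness of $R'[[x_1,\dots,x_d]][1/l]$, it suffices to verify this on a Zariski-dense set of points, and $\Spec R'[[x_1,\dots,x_d]][1/l]$ is Jacobson with closed points of the form ``$(\rho_0,e)$'', where $\rho_0$ is a representation of type $(\tau,\mathbf{v})$ over a finite extension $E_0$ of $E$ and $e$ lies in the maximal ideal of $\cO_{E_0}$; the corresponding point of $\Spec R_{\rhobar}^{\square}[1/l]$ is the representation obtained from $\rho_0$ by conjugating by $g^{\mathrm{univ}}|_{x=e}$, an element reducing to $1$, so it is again of type $(\tau,\mathbf{v})$. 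By construction $g^{\mathrm{univ}}|_{x=0}=1$, and the coordinates of $h$ lie in $\mathfrak{m}_{\cO'}$, hence are topologically nilpotent in $R'$; so substituting $x=0$ and $x=h$ defines two sections $\sigma_0,\sigma_h\colon Z\to\Spec R'[[x_1,\dots,x_d]][1/l]$ of the structure morphism $\pi$, satisfying $\mu\circ\sigma_0=\mathrm{id}_Z$ and $\mu\circ\sigma_h=\iota_h$.

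Finally, fix an irreducible component $C=V(\mathfrak p)$ of $Z$, with $\mathfrak p$ corresponding to a minimal prime of $R'$. Since $R'$ is Noetherian, $(R'/\mathfrak p)[[x_1,\dots,x_d]][1/l]$ is a domain, so $\widetilde C:=\pi^{-1}(C)$ is irreducible and closed. Being sections of $\pi$, both $\sigma_0$ and $\sigma_h$ carry $C$ into $\widetilde C$; hence $\mu(\widetilde C)$ is irreducible and its closure contains $\mu(\sigma_0(C))=C$, and since an irreducible closed subset of $Z$ containing the component $C$ must equal $C$, we get $\mu(\widetilde C)\subseteq C$. Therefore $\iota_h(C)=\mu(\sigma_h(C))\subseteq\mu(\widetilde C)\subseteq C$, and as $\iota_h$ is an automorphism, $\iota_h(C)=C$.

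The main obstacle is the middle step: constructing $\mu$, and in particular verifying that conjugation by the tautological element does not leave the $(\tau,\mathbf{v})$-locus after inverting $l$ — this is where one needs the characterisation of $R_{\rhobar}^{\square,\tau,\mathbf{v}}$, the Jacobson property of $R'[[x_1,\dots,x_d]][1/l]$, and the identification of its closed points. Everything afterwards — the two specialisations and the argument with $\widetilde C$ — is formal.
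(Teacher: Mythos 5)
Your proof is correct, but it takes a genuinely different route from the paper's. The paper argues pointwise: after enlarging coefficients to $\cO''$ so that $G$ splits, it places $h$ in a Borel $B=U_{G^\circ}(\lambda)\rtimes Z_{G^\circ}(\lambda)$, writes $h=h_zh_u$, joins $h_z$ to the identity through the split torus $Z_{G^\circ}(\lambda)$ and $h_u$ to the identity via the limit path $t\mapsto\lambda(t)h_u\lambda(t)^{-1}$, converts these one-parameter conjugation families into maps $\Spf(\cO''[[T]])^{\mathrm{rig}}\to\Spf(R_{\rhobar}^{\square,\tau,\mathbf{v}}\otimes_\cO\cO'')^{\mathrm{rig}}$, and then needs an auxiliary comparison (Lemma~\ref{lemma: analytify irred cpts}, via normalisation and results of Conrad and de Jong) to transfer ``same irreducible component'' from the rigid generic fibre back to $\Spec(R_{\rhobar}^{\square,\tau,\mathbf{v}}\otimes_\cO\cO'')[1/l]$, plus a finite surjective morphism to descend to $\cO'$. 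You instead conjugate the universal framed deformation by the tautological point of the completion of $G$ along its identity section, verify once that the whole $d$-parameter family stays on the $(\tau,\mathbf{v})$-locus, and finish with a purely scheme-theoretic argument using that $(R'/\mathfrak{p})[[x_1,\dots,x_d]]$ is a domain; this avoids rigid geometry, the semisimple/unipotent case division, and any extension of scalars. The price is the factorisation step, and your justification of it is sound: the closed points of $\Spec R'[[x_1,\dots,x_d]][1/l]$ do correspond to continuous maps into rings of integers of finite extensions of $E$, the specialised conjugators reduce to the identity and hence lie in $G^\circ$, so they preserve the inertial and Hodge types, and reducedness of $R'[[x_1,\dots,x_d]][1/l]$ follows from the reducedness statement in Theorem~\ref{thm: dense set of very smooth points}, which precedes this lemma, so there is no circularity. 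It is worth noting that the paper's one-parameter families require exactly the same kind of factorisation claim (``this induces a homomorphism $R_{\rhobar}^{\square,\tau,\mathbf{v}}\otimes_{\cO}\cO''\rightarrow\cO''[[T]]$''), stated there without proof, so your explicit treatment of that point is if anything a gain; what the paper's route buys is the independently useful rigid-analytic component comparison and very concrete homotopies.
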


Before we prove it, we record a preliminary lemma on irreducible components of the generic fiber of $R_{\rhobar}^{\square,\tau,\mathbf{v}}$:
\begin{lemma}\label{lemma: analytify irred cpts}
Let $A:=\cO[\![X_1,\ldots,X_n]\!]/I$ be the quotient of a power series ring.  If $x,x'\in (\Spf A)^{\mathrm{rig}}$ lie on the same irreducible component, then they lie on the same irreducible component of $\Spec A[1/l]$.
\end{lemma}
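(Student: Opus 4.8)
The plan is to read off both sides from the finite set of minimal primes of $A$. First I would recall the standard dictionary for rigid generic fibres of complete local Noetherian $\cO$-algebras (Berthelot's construction, as analysed by de Jong and Conrad): because $A$ is a quotient of $\cO[\![X_1,\dots,X_n]\!]$, the ring $A[1/l]$ is Jacobson, and the set of classical points of $X := (\Spf A)^{\mathrm{rig}}$ is naturally identified with the set of closed points of $\Spec A[1/l]$, i.e.\ with $\Max A[1/l]$, functorially in $A$. In particular, for any ideal $\mathfrak q \subseteq A$ the closed immersion $\Spf (A/\mathfrak q)\hookrightarrow \Spf A$ induces a closed immersion of rigid spaces $(\Spf (A/\mathfrak q))^{\mathrm{rig}}\hookrightarrow X$ which on classical points is the inclusion $\Max\bigl((A/\mathfrak q)[1/l]\bigr)\hookrightarrow \Max A[1/l]$.

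Next I would set up the correspondence with minimal primes. Let $\mathfrak p_1,\dots,\mathfrak p_r$ be the minimal primes of $A$ that do not contain $l$; these are exactly the primes $\mathfrak p$ of $A$ for which $\mathfrak p[1/l]$ is a minimal prime of $A[1/l]$, so the irreducible components of $\Spec A[1/l]$ are precisely the closed subschemes $Z_i := \Spec\bigl((A/\mathfrak p_i)[1/l]\bigr) = V(\mathfrak p_i[1/l])$. Put $X_i := (\Spf (A/\mathfrak p_i))^{\mathrm{rig}}$, a Zariski-closed analytic subset of $X$ whose classical points are exactly the closed points of $Z_i$. I would then observe that $X = \bigcup_{i=1}^r X_i$: every maximal ideal of $A[1/l]$ contains a minimal prime of $A[1/l]$, necessarily one of the $\mathfrak p_i[1/l]$, so every classical point of $X$ lies on some $X_i$ (this is all we need, though one can upgrade it to an equality of reduced rigid spaces).

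To finish I would invoke Conrad's theory of irreducible components of rigid spaces: an irreducible component of $X$ is a Zariski-closed irreducible analytic subset, and an irreducible rigid space that is a finite union of Zariski-closed subsets equals one of them. So if $x$ and $x'$ lie on a common irreducible component $W$ of $X$, then from $W = \bigcup_{i=1}^r (W\cap X_i)$ and the irreducibility of $W$ we get $W = W\cap X_{i_0}\subseteq X_{i_0}$ for some $i_0$; hence $x$ and $x'$ are classical points of $X_{i_0}$, i.e.\ closed points of the irreducible component $Z_{i_0}$ of $\Spec A[1/l]$, which is exactly the assertion.

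The only genuine content, and the step I would state with care and references, is the rigid-geometric input: the identification of classical points of $(\Spf A)^{\mathrm{rig}}$ with $\Max A[1/l]$ together with its compatibility with closed immersions, and the elementary properties of irreducible components of rigid spaces; everything else is routine commutative algebra about minimal primes and localisation. As an alternative to the finite-union argument one could instead show directly that each $X_i$ is irreducible — replace $A/\mathfrak p_i$ by its normalisation, which is a complete local normal domain finite over $A/\mathfrak p_i$ since the latter is excellent, use de Jong's comparison $\pi_0\bigl((\Spf B)^{\mathrm{rig}}\bigr)=\pi_0(\Spec B[1/l])$ to see its rigid generic fibre is connected, hence (being normal) irreducible, and note $X_i$ is its surjective image — so that the $X_i$ are literally the irreducible components of $X$; I would mention this as a remark but use the shorter argument above.
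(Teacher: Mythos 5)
Your argument is correct, but it follows a genuinely different route from the paper's. The paper works ``upstairs'': it passes to the normalization $\widetilde A$ of $A$, invokes Conrad's comparison theorem to identify $(\Spf \widetilde A)^{\mathrm{rig}}\to(\Spf A)^{\mathrm{rig}}$ with the normalization of the rigid generic fibre (so that $x,x'$ lying on one irreducible component means their lifts $\widetilde x,\widetilde x'$ lie on one connected component upstairs), and then transfers ``same connected component'' from $(\Spf\widetilde A)^{\mathrm{rig}}$ to $\Spec\widetilde A[1/l]$ via de Jong's matching of classical points with closed points together with an idempotent argument, before descending to an irreducible component of $\Spec A[1/l]$. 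You instead work ``downstairs'': decompose $\Spec A[1/l]$ by the minimal primes of $A$ not containing $l$, analytify each component to get a finite cover of $(\Spf A)^{\mathrm{rig}}$ by analytic sets whose classical points are exactly the closed points of the corresponding algebraic components, and then quote the basic fact from Conrad's theory that an irreducible component contained in a (locally) finite union of analytic sets lies in one of them. Both proofs rest on the same foundational input (de Jong's identification of classical points with $\Max A[1/l]$, functorially in $A$); yours trades the normalization-plus-idempotents step for Conrad's ``finite analytic covers versus irreducible components'' lemma, which makes the argument shorter and makes the correspondence between rigid and algebraic components more transparent, while the paper's version uses only connectedness statements and the normalization comparison. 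The one step you should pin down with a precise citation is exactly that covering lemma (equivalently, that an irreducible rigid space is not a finite union of proper analytic subsets) and the compatibility of Berthelot's functor with closed immersions; your closing remark about normalizing $A/\mathfrak p_i$ is essentially the paper's proof, so it is a fine alternative but not needed for your main line.
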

\begin{proof}
If $x=x'$ as points of $(\Spf A)^{\mathrm{rig}}$, then by~\cite[Lemma 7.1.9]{dejong-formalrigid}, $x=x'$ as points of $\Spec A[1/l]$.  Thus, we may assume that $x\neq x'$.  Let $A\rightarrow \widetilde A$ denote the normalization of $A$.  Then by~\cite[Theorem 2.1.3]{conrad-rigid-irred-cpts}, $(\Spf \widetilde A)^{\mathrm{rig}}\rightarrow (\Spf A)^{\mathrm{rig}}$ is a normalization of the rigid space $(\Spf A)^{\mathrm{rig}}$, and $x, x'$ lift to points $\widetilde x, \widetilde x'\in (\Spf \widetilde A)^{\mathrm{rig}}$ on the same connected component.  By~\cite[Lemma 7.1.9]{dejong-formalrigid}, $\widetilde x$ and $\widetilde x'$ correspond to distinct closed points of $\Spec \widetilde{A}[1/l]$.

If $\widetilde x$ and $\widetilde x'$ lie on distinct connected components of $\Spec \widetilde{A}[1/l]$, there are idempotents $e_x, e_{x'}\in \widetilde{A}[1/l]$ such that $e_x$ is $1$ at $\widetilde x$ and $0$ at $\widetilde x'$ and $e_{x'}$ is $1$ at $\widetilde x'$ and $0$ at $\widetilde x$.  Again by~\cite[Lemma 7.1.9]{dejong-formalrigid}, the natural map $(\Spf \widetilde A)^{\mathrm{rig}}\rightarrow \Spec \widetilde A[1/l]$ induces isomorphisms on residue fields of closed points.  It follows that the pullbacks of $e_x$ and $e_{x'}$ to $(\Spf \widetilde A)^{\mathrm{rig}}$ are again idempotents (in the global sections of the structure sheaf of $(\Spf \widetilde A)^{\mathrm{rig}}$) such that $e_x$ is $1$ at $\widetilde x$ and $0$ at $\widetilde x'$ and $e_{x'}$ is $1$ at $\widetilde x'$ and $0$ at $\widetilde x$.  But this would contradict the fact that $\widetilde x$ and $\widetilde x'$ lie on the same connected component of $(\Spf \widetilde A)^{\mathrm{rig}}$, so they must actually lie on the same connected component of $\Spec \widetilde{A}[1/l]$.  This in turn implies that they lie on the same irreducible component of $\Spec A[1/l]$.
\end{proof}

\begin{proof}[Proof of Lemma~\ref{lem: conjugation preserves components}]
Let $R_{\rhobar}^{\square,\tau,\mathbf{v}}\otimes_{\mathcal{O}}\mathcal{O}''\rightarrow \cO''$ be a
homomorphism corresponding to a lift $\rho:\Gal_K\rightarrow
G(\cO'')$, where $\cO''$ is the ring of integers in a finite extension
of~$E$ and contains $\cO'$. 
We continue to
write $h$ for the image of~$h$ in $G(\cO'')$.  There is a finite surjective morphism $\Spec(R_{\rhobar}^{\square,\tau,\mathbf{v}}\otimes_{\mathcal{O}}\mathcal{O}'')[1/l]\rightarrow\Spec(R_{\rhobar}^{\square,\tau,\mathbf{v}}\otimes_{\cO}\cO')[1/l]$, so to show that conjugation by $h$ preserves irreducible components of $\Spec(R_{\rhobar}^{\square,\tau,\mathbf{v}}\otimes_{\cO}\cO')[1/l]$, it suffices to show that conjugation by $h$ preserves irreducible components of $\Spec(R_{\rhobar}^{\square,\tau,\mathbf{v}}\otimes_{\mathcal{O}}\mathcal{O}'')[1/l]$.  Moreover, by Lemma~\ref{lemma: analytify irred cpts}, it suffices to work with the rigid analytic generic fiber $\Spf(R_{\rhobar}^{\square,\tau,\mathbf{v}}\otimes_{\mathcal{O}}\mathcal{O}'')^{\mathrm{rig}}$ of $R_{\rhobar}^{\square,\tau,\mathbf{v}}\otimes_{\mathcal{O}}\mathcal{O}''$.

After possibly extending $\cO''$, we may assume that $G$ splits over $\cO''$.  Since $h$ is residually the identity element of $G$, it is a point of $G^\circ$.  After possibly further increasing $\cO''$, there is some Borel subgroup $B_{\cO''[1/l]}\subset G_{\cO''[1/l]}^\circ$ containing the image of $h$; it extends to a Borel subgroup $B\subset G_{\cO''}^\circ$ which contains $h$.  Since $\cO''$ is local, by~\cite[Proposition 5.2.3]{conrad-luminy} there is a cocharacter $\lambda:(\Gm)_{\cO''}\rightarrow G_{\cO''}^\circ$ such that $B=P_{G^\circ}(\lambda)=U_{G^\circ}(\lambda)\rtimes Z_{G^\circ}(\lambda)$.  Write $h_z$ for the projection of $h$ to $Z_{G^\circ}(\lambda)$ and $h_u$ for the projection to $U_{G^\circ}(\lambda)$.  Since this decomposition is unique, both $h_z$ and $h_u$ reduce to the identity modulo $\varpi$ (where $\varpi$ is a uniformizer of $\cO''$).  

Since $Z_{G^\circ}(\lambda)$ is a split torus, 
there is a map
$z_t:(\Gm)_{\cO''}\rightarrow G_{\cO''}^\circ$ which specializes to both
$h_z$ and the identity.  After analytifying this map, $h_z$ and the
identity lie in the same residue disk.  Choosing coordinates on this
residue disk, and rescaling them if necessary,
we obtain a Galois representation $\widetilde\rho:\Gal_K\rightarrow G(\cO''[\![T]\!])$ by considering the conjugation map $z_t\rho z_t^{-1}:\Gal_K\rightarrow G(\cO''[T])$.  This induces a homomorphism $R_{\rhobar}^{\square,\tau,\mathbf{v}}\otimes_{\cO}\cO''\rightarrow \cO''[\![T]\!]$, which in turn induces a morphism of rigid spaces $\Spf(\cO''[\![T]\!])^{\mathrm{rig}}\rightarrow \Spf (R_{\rhobar}^{\square,\tau,\mathbf{v}}\otimes_{\cO}\cO'')^{\mathrm{rig}}$.  Since the source is irreducible and its image contains points corresponding to both $\rho$ and $h_z\rho h_z^{-1}$, they lie on the same irreducible component of $\Spf (R_{\rhobar}^{\square,\tau,\mathbf{v}}\otimes_{\cO}\cO'')^{\mathrm{rig}}$.

Thus, we may assume that $h\in U_{G^\circ}(\lambda)$.  By definition, if $A$
is an $\cO'$-algebra, $U_{G^\circ}(\lambda)(A)=\{g\in G^\circ(A) |
\lim_{t\rightarrow 0}\lambda(t)g\lambda(t)^{-1}=1\}$, so conjugating
$h$ by $\lambda$ induces a map $u_t:\A_{\cO''}^1\rightarrow G_{\cO''}$
with $u_1=h$ and $u_0=1$.  We therefore obtain a Galois representation
$\widetilde\rho':\Gal_K\rightarrow G(\cO''\langle T\rangle)$ by
$l$-adically completing the map $u_t\rho u_t^{-1}:\Gal_K\rightarrow
G(\cO''[T])$.  Since $u_t$ is the identity modulo $\varpi$,
$\widetilde\rho'$ in fact lands in $G(\cO''\langle \varpi T\rangle)$, and
therefore in $G(\cO''[\![\varpi T]\!])$.  This induces a map
$R_{\rhobar}^{\square,\tau,\mathbf{v}}\otimes_{\cO}\cO''\rightarrow \cO''[\![\varpi T]\!]$,
and therefore a morphism of rigid spaces
$\Spf(\cO''[\![\varpi T]\!])^{\mathrm{rig}}\rightarrow\Spf(R_{\rhobar}^{\square,\tau,\mathbf{v}}\otimes_{\cO}\cO'')^{\mathrm{rig}}$.
Since the source is irreducible and its image contains points
corresponding to both $\rho$ and $h_u\rho h_u^{-1}$, they lie on the
same irreducible component of
$\Spf(R_{\rhobar}^{\square,\tau,\mathbf{v}}\otimes_{\cO}\cO'')^{\mathrm{rig}}$, as required.
\end{proof}


\subsection{Tensor products of components, and base change}\label{subsec: tensor
  products of components}
By a ``component for~$\rhobar$'' we mean a choice of~$\tau$
and~$\mathbf{v}$ (in the case~$l=p$) such
that~$R_{\rhobar}^{\square,\tau,\mathbf{v}}[1/l]\ne 0$, and a choice of an
irreducible component
of~$\Spec R_{\rhobar}^{\square,\tau,\mathbf{v}}[1/l]$. 

Let $\rbar:\Gal_K\to\GL_n(\F),\sbar:\Gal_K\to\GL_m(\F)$ be representations, let $C$ be a
component for $\rbar$ and let $D$ be a component for~$\sbar$. Let
$K'/K$ be a finite extension. The following lemma will be useful in section~\ref{sec: unitary groups}.
\begin{lem}
  \label{lem: operations on components make sense}There is a unique
  component $C\otimes D$ for $\rbar\otimes\sbar$ with the property
  that, if $r:\Gal_K\to\GL_n(\Qlbar)$ and $s:\Gal_K\to\GL_m(\Qlbar)$
  correspond to closed points of $C$ and $D$ respectively, then
  $r\otimes s$ corresponds to a closed point of
  $C\otimes D$. Similarly, there is a unique  component
  $C|_{K'}$ for $\rbar|_{\Gal_{K'}}$ such that for all~$r$, $r|_{\Gal_{K'}}$ corresponds
  to a closed point of $C|_{K'}$.
\end{lem}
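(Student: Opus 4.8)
The plan is to deduce both assertions from the functorial smoothness results of Section~\ref{subsec: very smooth points} (Theorems~\ref{thm: very smooth points exist for Y} and~\ref{thm: smoothness for functorial maps}) together with the density of very smooth points (Theorem~\ref{thm: dense set of very smooth points}), using repeatedly that a regular point of a Noetherian scheme lies on a unique irreducible component. Throughout I would work after enlarging $E$ so that $C$ and $D$ are geometrically irreducible. For the tensor product, write $\tau_C,\mathbf{v}_C$ (resp.\ $\tau_D,\mathbf{v}_D$) for the inertial and Hodge type data defining $C$ (resp.\ $D$), choose a finite Galois $L/K$ trivialising both inertial types, and set $G:=\GL_n\times\GL_m$, $\rhobar_0:=(\rbar,\sbar)\colon\Gal_K\to G(\F)$. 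Since a framed deformation of $\rhobar_0$ is the same as a pair of framed deformations of $\rbar$ and $\sbar$, we have $R^{\square,\tau_C\times\tau_D,\mathbf{v}_C\times\mathbf{v}_D}_{\rhobar_0}\cong R^{\square,\tau_C,\mathbf{v}_C}_{\rbar}\hotimes_\cO R^{\square,\tau_D,\mathbf{v}_D}_{\sbar}$; passing to rigid generic fibres via Lemma~\ref{lemma: analytify irred cpts} (and using that a product of geometrically irreducible rigid spaces is irreducible), $C$ and $D$ determine an irreducible component $C\boxtimes D$ of $Z:=\Spec R^{\square,\tau_C\times\tau_D,\mathbf{v}_C\times\mathbf{v}_D}_{\rhobar_0}[1/l]$ with $(C\boxtimes D)(\Qlbar)=C(\Qlbar)\times D(\Qlbar)$. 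The tensor product is a morphism of reductive groups $f\colon G\to G':=\GL_{nm}$; pushing framed deformations forward along $f$ gives a morphism $t\colon Z\to Z':=\Spec R^{\square,\tau,\mathbf{v}}_{\rbar\otimes\sbar}[1/l]$, where $\tau=\tau_C\otimes\tau_D$ and $\mathbf{v}=f_*(\mathbf{v}_C\times\mathbf{v}_D)$ are precisely the inertial and Hodge types of $r\otimes s$ for $r\in C$, $s\in D$ (so $Z'\neq 0$), for $r\in C(\Qlbar)$, $s\in D(\Qlbar)$ the point $(r,s)$ of $C\boxtimes D$ maps under $t$ to $r\otimes s$, and $t$ is compatible with the morphisms to the Weil--Deligne stacks $G-\WD_E(L/K)\to G'-\WD_E(L/K)$ attached to $f$.

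By Theorem~\ref{thm: smoothness for functorial maps} applied to $f$ there is a dense open $U\subset Y_{L/K,\varphi,\mathcal N}$ (for the group $G$) with $H^2_{G'}|_U=0$; pulling back along the flat morphism $Z\to G-\WD_E(L/K)$ of Proposition~\ref{prop: morphism to WD is formally smooth and flat} gives a dense open $U_Z\subset Z$, which is dense in every irreducible component of $Z$ since $Z$ is equidimensional. For a closed point $w\in U_Z$, the $G'$-valued Weil--Deligne representation obtained by pushing forward the one attached to $w$ has vanishing $H^2$, so (by the compatibility above) the image of $t(w)$ in $G'-\WD_E(L/K)$ has vanishing $H^2$; arguing as in the proof of Theorem~\ref{thm: dense set of very smooth points} (versality of $\Spf A_{t(w)}\to G'-\WD_E(L/K)$ together with Corollary~\ref{cor: WD stack is generically smooth zero dimensional controlled by H2}) shows that $t(w)$ is a regular point of $Z'$. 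Choose such a $w$ in $U_Z\cap(C\boxtimes D)$ and let $C\otimes D$ be the unique irreducible component of $Z'$ through $t(w)$. Since $C\boxtimes D$ is irreducible, $\overline{t(C\boxtimes D)}$ is irreducible and contains the regular point $t(w)$, hence is contained in $C\otimes D$; evaluating on $\Qlbar$-points gives $r\otimes s\in C\otimes D$ for all $r\in C$, $s\in D$. Conversely, any component of $Z'$ with this property contains $t(w)$ (take $w=(r,s)$), hence equals $C\otimes D$; this proves existence and uniqueness.

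For base change, let $K'/K$ be finite and $L':=LK'$. Restriction of framed deformations gives a morphism $t'\colon\Spec R^{\square,\tau_C,\mathbf{v}_C}_{\rbar}[1/l]\to Z'':=\Spec R^{\square,\tau_C|_{I_{K'}},\mathbf{v}_{C,K'}}_{\rbar|_{\Gal_{K'}}}[1/l]$, where $\tau_C|_{I_{K'}}$ and $\mathbf{v}_{C,K'}$ are the inertial and Hodge types of $r|_{\Gal_{K'}}$ for $r\in C$ (so $Z''\neq0$), and $t'$ sends the $\Qlbar$-point $r$ to $r|_{\Gal_{K'}}$. By Theorem~\ref{thm: dense set of very smooth points} the very smooth closed points of $C$ are Zariski dense in $C$; if $w$ is such a point, corresponding to $r_w$, then by the very definition of ``very smooth'' the point $r_w|_{\Gal_{K'}}$ is regular in $Z''$, hence lies on a unique irreducible component $C|_{K'}$. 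Exactly as above, $\overline{t'(C)}$ is irreducible and contains this regular point, so $\overline{t'(C)}\subseteq C|_{K'}$, which gives $r|_{\Gal_{K'}}\in C|_{K'}$ for all $r\in C$; uniqueness follows since any component with this property contains $r_w|_{\Gal_{K'}}$.

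The main obstacle is the technical step identifying the vanishing of $H^2$ of the pushed-forward Weil--Deligne complex with regularity of the image point on the target deformation ring: this requires making precise the compatibility of $t$ (resp.\ $t'$) with the morphisms to the Weil--Deligne stacks, and reusing the versality argument from the proof of Theorem~\ref{thm: dense set of very smooth points} in this relative setting. The reduction to geometrically irreducible $C$ and $D$ (so that $C\boxtimes D$ is literally $C\times_E D$) is routine but deserves a little care in comparing $R^{\square}_{\rbar}\hotimes_\cO R^{\square}_{\sbar}$ with the product of the two generic fibres, which is why one passes to rigid generic fibres and invokes Lemma~\ref{lemma: analytify irred cpts}.
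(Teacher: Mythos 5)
Your argument is essentially the paper's own proof with the details spelled out: for the tensor product you apply Theorem~\ref{thm: smoothness for functorial maps} to $\GL_n\times\GL_m\to\GL_{nm}$ together with the flatness/versality argument from the proof of Theorem~\ref{thm: dense set of very smooth points} to produce points of the product deformation space whose image is a regular point of $\Spec R^{\square,\tau,\mathbf{v}}_{\rbar\otimes\sbar}[1/l]$, and for base change you use the density of very smooth points, in both cases concluding from the fact that a regular point lies on a unique irreducible component, exactly as the paper intends. The one caveat is your opening reduction ``enlarge $E$ so that $C$ and $D$ are geometrically irreducible'': it is not needed for the base-change half, and for the tensor half it is not quite the routine step you suggest, since if $C$ or $D$ splits over the larger field the statement for the original $E$-components does not follow from Galois equivariance alone (the diagonal Galois action on pairs of geometric components need not be transitive); to be fair, the paper's own terse proof elides the same irreducibility issue for the locus of pairs in $\Spec\bigl(R^{\square,\tau_C,\mathbf{v}_C}_{\rbar}\hotimes_{\cO}R^{\square,\tau_D,\mathbf{v}_D}_{\sbar}\bigr)[1/l]$, and in practice one takes $E$ large enough that the chosen components are geometrically irreducible.
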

\begin{proof}
  If a point of $\Spec R_{\rbar}^{\square,\tau,\mathbf{v}}[1/l]$ or a point of $\Spec R_{\rbar\otimes \sbar}^{\square,\tau,\mathbf{v}}[1/l]$ is smooth, then it lies on a unique irreducible component.  Then the first part  follows as in the proof of Theorem~\ref{thm: dense
    set of very smooth points}, replacing the appeal to Corollary~\ref{cor: WD stack is generically very smooth zero dimensional
  controlled by H2} with one to Theorem~\ref{thm:
    smoothness for functorial maps}, applied to the tensor product map
  \[\GL_n\times\GL_m\to\GL_{nm}.\]  The second part follows from
  Theorem~\ref{thm: dense set of very smooth points} (more precisely,
  from the existence of very smooth points on each irreducible component).
\end{proof}
In the setting of the previous lemma, we will sometimes say that the component
$C\otimes D$ is the tensor product of the components $C$ and $D$, and
that $C|_{K'}$ is the base change to~$K'$ of the component~$C$.
\section{Global deformation rings}\label{sec: global deformation
  rings}
\subsection{A result of Balaji}\label{subsec: Balaji}In this section
we recall one of the main results of~\cite{MR3152673}, which we will then
combine with the results of section~\ref{sec: local deformation
  rings} to prove Proposition~\ref{prop: global deformation ring with types is positive
    dimensional}, which gives a lower bound for the dimension
of certain global deformation rings. In~\cite[\S 4.2]{MR3152673} the
group~$G$ is assumed to be connected, but this is unnecessary. Indeed, the
assumption is only made in order to use the results of~\cite[\S
5]{MR1643682}, where it is also assumed that~$G$ is connected;
however, this assumption is never used in any of the arguments
of~\cite[\S 5]{MR1643682}, which apply unchanged to
general~$G$. Accordingly, we will freely use the results of~\cite[\S
4.2]{MR3152673} without assuming that~$G$ is connected. We assume in
this section that~$E$  is taken large enough that~$G_E$ is quasisplit.

Let~$F$ be a number field, and let~$S$ be a finite set of places of~$F$
containing all of the places dividing~$l\infty$. We work in the fixed determinant setting, and accordingly we
fix homomorphisms $\rhobar:\Gal_{F,S}\to G(\F)$
and~$\psi:\Gal_{F,S}\to\Gab(\cO)$ such that $\ab\circ\rhobar=\psibar$.


Write
$R^{\square,\psi}_{F,S}\in\CNL_\cO$ for the universal
fixed determinant framed deformation $\cO$-algebra
of~$\rhobar$. 
Let~$\Sigma\subset S$ be a subset containing all of the places lying
over~$l$. For each~$v\in\Sigma$, we let~$R_v^{\square,\psi}$ denote
the 
universal fixed determinant framed deformation $\cO$-algebra
of~$\rhobar|_{\Gal_{F_v}}$, and we set~$R_\Sigma^{\square,\psi}:=\wotimes_{v\in\Sigma,\cO}
R_v^{\square,\psi}$.

The following result is a special case of~\cite[Prop.\ 4.2.5]{MR3152673}.

\begin{prop}
  \label{prop: presentation of global over local Balaji}Suppose that
  $H^0(\Gal_{F,S},(\fg_\F^0)^*(1))=0$, and 
  let \[s:=(|\Sigma|-1)\dim_{\F}\fg_\F^0
    +\sum_{v|\infty,v\notin\Sigma}\dim_\F H^0(\Gal_{F_v},\fg_\F^0). \] Then for some~$r\ge 0$ there is a
  presentation \[R_{F,S}^{\square,\psi}\isoto R_{\Sigma}^{\square,\psi}[\![x_1,\dots,x_r]\!]/(f_1,\dots,f_{r+s}).\]
\end{prop}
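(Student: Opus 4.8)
The plan is to deduce this from \cite[Prop.\ 4.2.5]{MR3152673}, of which it is a special case. The only point needing comment is the one already raised at the start of this subsection: \cite[Prop.\ 4.2.5]{MR3152673} is stated for connected~$G$, but that hypothesis enters only through the general deformation theory of \cite[\S 5]{MR1643682}, where connectedness is likewise assumed but never actually used; so \cite[Prop.\ 4.2.5]{MR3152673} applies to our possibly disconnected~$G$. Granting this, I would apply it with our $G$, $\rhobar$, $\psi$, $S$ and $\Sigma$ --- noting that the hypothesis $H^0(\Gal_{F,S},(\fg_\F^0)^*(1))=0$ is precisely Balaji's --- to obtain a presentation $R_{F,S}^{\square,\psi}\isoto R_\Sigma^{\square,\psi}[\![x_1,\dots,x_r]\!]/(f_1,\dots,f_{r+s'})$, and then identify the excess invariant $s'$ appearing in \emph{loc.\ cit.} with our~$s$; this last step is a matter of unwinding notation.

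For orientation I would also recall the mechanism behind such presentations, going back to \cite{MR2459302}. For any morphism $A\to B$ in $\CNL_\cO$ there is a presentation $B\cong A[\![x_1,\dots,x_r]\!]/(f_1,\dots,f_t)$ in which $r$ is the $\F$-dimension of the relative reduced tangent space and $t$ is at most the $\F$-dimension of the relative obstruction space; since one may pad with trivial relations, it suffices to bound the obstruction space by $r+s$. Taking $A=R_\Sigma^{\square,\psi}$ and $B=R_{F,S}^{\square,\psi}$, the (disconnected-group) deformation theory of \cite[\S 5]{MR1643682} expresses these two spaces as the $H^1$ and a quotient of the $H^2$ of an appropriate mapping fibre built from $C^\bullet(\Gal_{F,S},\fg_\F^0)$ and the local complexes $C^\bullet(\Gal_{F_v},\fg_\F^0)$ for $v\in\Sigma$, the $|\Sigma|$ local framings and the single global framing accounting for the degree-zero local terms. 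The quantity $r-t$ is then read off from the Euler characteristic of this complex via Tate's local and global Euler characteristic formulae: the archimedean places of~$F$ outside~$\Sigma$ are exactly those whose $H^0$-terms are not cancelled by a corresponding term coming from a place of~$\Sigma$, producing the summand $\sum_{v\mid\infty,\,v\notin\Sigma}\dim_\F H^0(\Gal_{F_v},\fg_\F^0)$ of~$s$, while $(|\Sigma|-1)\dim_\F\fg_\F^0$ is the framing contribution. Finally, the hypothesis $H^0(\Gal_{F,S},(\fg_\F^0)^*(1))=0$ is used, via the Poitou--Tate exact sequence and local duality, to see that the top cohomology of the fibre complex --- the cokernel of $H^2(\Gal_{F,S},\fg_\F^0)\to\bigoplus_{v\in\Sigma}H^2(\Gal_{F_v},\fg_\F^0)$ --- vanishes, which converts the Euler characteristic identity into the inequality $t\le r+s$ that we want.

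The only genuine obstacle is the one already flagged: checking that the deformation-theoretic input of \cite[\S 5]{MR1643682} (and hence \cite[Prop.\ 4.2.5]{MR3152673}) is valid for disconnected~$G$. This, however, is immediate on inspecting \cite[\S 5]{MR1643682}, where connectedness is never invoked. If instead one wanted a self-contained proof, the remaining work is the Euler-characteristic bookkeeping sketched above --- tracking the framings and the archimedean Euler factors carefully enough to land on exactly this value of~$s$ --- which runs exactly as in \cite{MR2459302,MR3152673} and introduces no new ideas.
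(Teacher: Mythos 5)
Your proposal matches the paper exactly: the paper gives no independent argument, simply observing (as you do) that Balaji's connectedness hypothesis enters only through \cite[\S 5]{MR1643682}, where it is never used, and then quoting \cite[Prop.\ 4.2.5]{MR3152673} verbatim, of which the statement is a special case. Your additional sketch of the Kisin-style Euler-characteristic bookkeeping is a correct account of what lies inside Balaji's proof, but it is supplementary rather than a different route.
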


\subsection{Global deformation rings of fixed type}\label{subsec:
  Balaji with types}In this section we combine our local results with Proposition~\ref{prop:
  presentation of global over local Balaji} to prove a lower bound for
the Krull dimension of a global deformation ring, following Balaji. This lower bound
will only be non-trivial in the following setting. 

\begin{defn}
  \label{defn: oddness}If $l>2$ then we say that~$\rhobar$ is \emph{discrete series
    and odd} if~$F$ is totally real, and if for all places~$v|\infty$
  of~$F$ we have $\dim_\F H^0(\Gal_{F_v},\fg_\F^0)=\dim_E G-\dim_EB$,
  where~$B$ is a Borel subgroup of~$G$.
\end{defn}
\begin{rem}
  \label{rem: relationship to other definitions of oddness}Recall that
  we chose~$E$ to be large enough that~$G_E$ is quasisplit, so this
  definition makes sense. The
  condition that~$\rhobar$ is discrete series and odd is  needed to make
  the usual Taylor--Wiles method work; see the introduction
  to~\cite{CHT}. If~$G$ is the $L$-group of a simply connected
  group 
  then one can check that this condition is equivalent to~$F$ being
  totally real and~$\rhobar$ being odd in the sense of~\cite{grossodd}
  (cf.\ \cite[Lem.\ 4.3.1]{MR3152673}). We use the term ``discrete
  series'' because the (conjectural) Galois representations associated
  to tempered automorphic representations which are discrete series at
  infinite places are expected to satisfy this property; see
  section~\ref{sec: unitary groups} for an example of this,
  and~\cite{grossodd} for a more general discussion.
\end{rem}

\begin{defn}\label{defn: regular Hodge type}
  We say that a $p$-adic Hodge type $\mathbf{v}$ is \emph{regular}
  if the conjugacy class~$P_\mathbf{v}$ consists of parabolic subgroups of $\Res_{E\otimes K/E}G$ whose connected components are Borel subgroups of $(\Res_{E\otimes K/E}G)^\circ$.
  \end{defn}
  \begin{rem}
    \label{rem: regular Hodge type GLn}If~$G=\GL_n$ then
    Definition~\ref{defn: regular Hodge type} is equivalent to the
    usual definition, that for each embedding $K\into E$ the
    Hodge--Tate weights are pairwise distinct.
  \end{rem}
\begin{rem}
If $E'/E$ is a field extension, then $(\Res_{E\otimes K/E}G)_{E'}\cong \Res_{E'\otimes K/E'}G$.  Furthermore, the formation of $P_{\Res_{E\otimes K/E}G}(\lambda)$ is compatible with extension of scalars from $E$ to $E'$.  Thus, if $\mathbf{v}$ is regular after extending scalars, it was regular over $E$ (and $\Res_{E\otimes K/E}G$ is automatically quasisplit).
\end{rem}

Write~$S^\infty$ for the set of finite places in~$S$.
For each  place~$v\in S^\infty$, we fix an inertial type~$\tau_v$, and
if~$v|l$ then we fix a Hodge type~$\bv_v$. If $v\nmid l$ (resp.\ if
$v|l$), we let~$\Rbar_v$ be a quotient
of the corresponding fixed determinant framed deformation
ring~$R_{\rhobar|_{\Gal_{F_v}}}^{\square,\tau_v,\psi}$ (resp.\
$R_{\rhobar|_{\Gal_{F_v}}}^{\square,\tau_v,\bv_v,\psi}$) corresponding
to a non-empty
union of irreducible components of the generic fiber. Set
$R^{\square,\univ}:=R_{F,S}^{\square,\psi}\otimes_{R_{\Sigma}^{\square,\psi},\cO}\wotimes_{v\in
S^\infty}\Rbar_v$; this is nonzero, because we are assuming that
each~$\Rbar_v$ is nonzero.

Assume that~$H^0(\Gal_{F,S},\fg_\F)=\fz_\F$, so
that 
~$\rhobar$ admits a universal fixed determinant deformation
$\cO$-algebra $R^\psi_{F,S}\in\CNL_\cO$, and write~$R^\univ$ for the
quotient of~$R_{F,S}$ corresponding to~$R^{\square,\univ}$ (as in the discussion preceding~\cite[Lemma 1.3.3]{BLGGT}, this quotient
exists by Lemma~\ref{lem: conjugation preserves
  components}). In the case that we fix potentially crystalline types
at the places $v|l$, and do not fix types at places away from~$l$, the
following result is~\cite[Thm.\ 4.3.2]{MR3152673}; the general case
follows from the same arguments as those of Balaji, given the input of
our local results.

\begin{prop}
  \label{prop: global deformation ring with types is positive
    dimensional}
  Assume that $l>2$, that~$\rhobar$ is discrete series and odd
  \emph{(}so that in particular~$F$ is totally real\emph{)}, and that
  $H^0(\Gal_{F,S},(\fg_\F^0)^*(1))=0$. Maintain our assumption that
  the local deformation rings~$\Rbar_v$ are nonzero.

  Suppose that for each place~$v|l$ the Hodge type~$\bv_v$ is regular.
  Then~$R^\univ$ has Krull dimension at least one.
\end{prop}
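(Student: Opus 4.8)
The plan is to run Balaji's dimension count (Proposition~\ref{prop: presentation of global over local Balaji}) with the local dimensions supplied by Theorem~\ref{thm: dense set of very smooth points}; the two arithmetic hypotheses --- that $\rhobar$ is discrete series and odd, and that the Hodge types at the places above $l$ are regular --- are exactly what make the global archimedean correction cancel the local $l$-adic Hodge-theoretic contribution, leaving $\dim R^\univ \geq 1$.

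First I would apply Proposition~\ref{prop: presentation of global over local Balaji} with $\Sigma = S^\infty$ (legitimate, since $S^\infty\subset S$ contains all places above $l$), and base-change the presentation $R_{F,S}^{\square,\psi}\isoto R_\Sigma^{\square,\psi}[\![x_1,\dots,x_r]\!]/(f_1,\dots,f_{r+s})$ along the natural surjection $R_\Sigma^{\square,\psi}=\wotimes_{v\in S^\infty}R_v^{\square,\psi}\twoheadrightarrow\wotimes_{v\in S^\infty}\Rbar_v$, which yields $R^{\square,\univ}\cong\bigl(\wotimes_{v\in S^\infty}\Rbar_v\bigr)[\![x_1,\dots,x_r]\!]/(\bar f_1,\dots,\bar f_{r+s})$. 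By Krull's height theorem, $\dim R^{\square,\univ}\geq\dim\bigl(\wotimes_{v\in S^\infty}\Rbar_v\bigr)-s$.

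Next I would evaluate the right-hand side. Each $\Rbar_v$ is an $\cO$-flat quotient of the local fixed-determinant framed deformation ring cut out by a non-empty union of irreducible components of its generic fibre, so $\Rbar_v[1/l]$ is equidimensional of dimension $\dim_E\Gder+\delta_{v\mid l}\dim_E(\Res_{E\otimes F_v/E}G)/P_{\mathbf{v}_v}$ by Theorem~\ref{thm: dense set of very smooth points}, whence $\dim\Rbar_v=1+\dim_E\Gder+\delta_{v\mid l}\dim_E(\Res_{E\otimes F_v/E}G)/P_{\mathbf{v}_v}$; since $\dim(A\wotimes_\cO B)=\dim A+\dim B-1$ for $\cO$-flat objects of $\CNL_\cO$, summing over the $|S^\infty|$ places gives
\[ \dim\Bigl(\wotimes_{v\in S^\infty}\Rbar_v\Bigr)=1+|S^\infty|\dim_E\Gder+\sum_{v\mid l}\dim_E(\Res_{E\otimes F_v/E}G)/P_{\mathbf{v}_v}. \]
Regularity of $\mathbf{v}_v$ means $P_{\mathbf{v}_v}$ has connected components Borel subgroups of $(\Res_{E\otimes F_v/E}G)^\circ$, so, using $(\Res_{E\otimes F_v/E}G)^\circ_{\overline{E}}\cong(G^\circ_{\overline{E}})^{[F_v:\Q_l]}$, the quotient $\dim_E(\Res_{E\otimes F_v/E}G)/P_{\mathbf{v}_v}$ equals $[F_v:\Q_l](\dim_E G-\dim_E B)$, which sums over $v\mid l$ to $[F:\Q](\dim_E G-\dim_E B)$. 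On the other hand $F$ is totally real, so it has $[F:\Q]$ archimedean places, each real and each contributing $\dim_\F H^0(\Gal_{F_v},\fg_\F^0)=\dim_E G-\dim_E B$ to $s$; since $\Sigma=S^\infty$ has no archimedean place, $s=(|S^\infty|-1)\dim_E\Gder+[F:\Q](\dim_E G-\dim_E B)$. Substituting, the flag-variety contributions cancel and the copies of $\dim_E\Gder$ collapse to one, so $\dim R^{\square,\univ}\geq 1+\dim_E\Gder$. Finally, since $H^0(\Gal_{F,S},\fg_\F)=\fz_\F$, forgetting the framing exhibits $R^{\square,\univ}$ as a power series ring over $R^\univ$ of relative dimension $\dim_\F\fg_\F-\dim_\F\fz_\F=\dim_E G-\dim_E Z_G=\dim_E\Gder$, so $\dim R^\univ=\dim R^{\square,\univ}-\dim_E\Gder\geq 1$.

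All the substance is in the two inputs just cited; the only place care is needed is this numerical bookkeeping, where one must line up, on the nose, the $\cO$-flatness ``$+1$'' in each local ring, the $\dim\cO=1$ corrections from the completed tensor products, Balaji's relation count $s$, and the framing dimension, so that the archimedean $H^0$-terms, the $P_{\mathbf{v}_v}$-terms and the copies of $\dim_E\Gder$ all cancel to leave exactly $1$. This is the same count as in the potentially crystalline case of~\cite[\S4.3]{MR3152673} (following~\cite{MR2459302,BLGGT}); the difference is merely that our more general local input --- arbitrary inertial types, possibly disconnected $G$, fixed determinant --- is now available from Theorem~\ref{thm: dense set of very smooth points}, so no further local analysis is required. (Without the discrete-series-and-odd hypothesis or the regularity of the Hodge types one would simply get a weaker, possibly vacuous, bound, which is why these hypotheses are imposed.)
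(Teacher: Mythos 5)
Your proposal is correct and follows essentially the same route as the paper: apply Proposition~\ref{prop: presentation of global over local Balaji} with $\Sigma=S^\infty$, bound $\dim R^{\square,\univ}$ below by $\dim\bigl(\wotimes_{v\in S^\infty}\Rbar_v\bigr)-s$, compute the local dimensions via Theorem~\ref{thm: dense set of very smooth points}, and let regularity of the Hodge types and the discrete-series-and-odd condition cancel the flag-variety and archimedean terms, finally dividing out the framing dimension $\dim_\F\fg^0_\F$. The extra details you supply (the $\cO$-flatness ``$+1$'' in each completed tensor factor, the identification $\dim_E(\Res_{E\otimes F_v/E}G)/P_{\mathbf{v}_v}=[F_v:\Q_l](\dim_E G-\dim_E B)$, and $\dim_\F\fg_\F-\dim_\F\fz_\F=\dim_E\Gder$) are exactly the bookkeeping the paper leaves implicit.
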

\begin{proof} By Proposition~\ref{prop: presentation of global over
    local Balaji} (taking~$\Sigma=S^\infty$) we see that for some
  $r\ge \dim_\F\fg_{\F}^0$ we have a presentation

\[R^{\square,\univ}\isoto \left(\wotimes_{v\in S^\infty}\Rbar_v\right)[\![x_1,\dots,x_{r}]\!]/(f_1,\dots,f_{r+s})\]where
 \[s=(|S^\infty|-1)\dim_{\F}\fg_\F^0
    +\sum_{v|\infty}\dim_\F H^0(\Gal_{F_v},\fg_\F^0). \] Since  $R^{\square,\univ}$ is formally smooth over~$R^{\univ}$
  of relative dimension~$\dim_\F\fg_{\F}^0$, it follows that
  the Krull dimension of~$R^\univ$ is at least \[\dim \wotimes_{v\in S^\infty,\cO}\Rbar_v-|S^\infty|\dim_{\F}\fg_\F^0
    -\sum_{v|\infty}\dim_\F H^0(\Gal_{F_v},\fg_\F^0), \] which by
  Theorem~\ref{thm: dense set of very smooth points}, and our assumption that each Hodge type~$\bv_v$
  is regular, is equal to \[1+ \sum_{v|p}[F_v:\Qp]\dim_E G/B -\sum_{v|\infty}\dim_\F
    H^0(\Gal_{F_v},\fg_\F^0), \]which in turn (by the assumption
  that~$\rhobar$ is discrete series and odd) equals~$1$, as required.
\end{proof}
\section{Unitary groups}\label{sec: unitary groups}
\subsection{The group~$\cG_n$}Let~$F$ be a CM field with maximal totally real
subfield~$F^+$. In this section we generalise some results of
~\cite{BLGGT} on the deformation theory of Galois representations
associated to polarised representations
of~$\Gal_F$, by allowing ramification at primes of~$F^+$ which are
inert or ramified in~$F$. This allows us to make cleaner statements,
and is also useful in applications; for example, in Theorem~\ref{thm:
  weight part of Serre for U2} we remove a ``split ramification''
condition in 
the proof of the weight part of Serre's conjecture for rank two unitary groups. Our
results are also needed in~\cite{CEGglobalrealisable}, where they are
used to construct lifts with specified ramification at certain places
of~$F^+$ which are inert in~$F$.


Recall from~\cite{CHT} the reductive group $\cG_n$
over~$\Z$ given by the semi-direct product of $\cG_n^0=\GL_n \times \GL_1$ by the group $\{1 , \jmath\}$
where 
\[ \jmath (g,a) \jmath^{-1}=(a(g^t)^{-1},a). \]
We let
$\nu:\cG_n \to \GL_1$ be the character which sends $(g,a)$ to $a$ and
sends $\jmath$ to $-1$. Our results in this section are for the most
part a straightforward application of the results of the earlier
sections to the particular case~$G=\cG_n$, but we need to begin by
comparing our definitions to those of~\cite{CHT}; we will follow the
notation of~\cite{CHT} where possible.

Fix a place~$v|\infty$. By~\cite[Lem.\ 2.1.1]{CHT}, for any ring~$R$
there is a natural bijection between the set of homomorphisms
$\rho:\Gal_{F^+}\to\cG_n(R)$ inducing an isomorphism
$\Gal_{F^+}/\Gal_F\isoto\cG_n/\cG_n^0$, and the set of triples
$(r,\mu,\langle,\rangle)$ where $r:\Gal_F\to\GL_n(R)$,
$\mu:\Gal_{F^+}\to R^\times$, and $\langle,\rangle:R^n\times R^n\to R$
is a perfect $R$-linear pairing such that
$\langle x,y\rangle=-\mu(c_v)\langle y,x\rangle$, and
$\langle r(\delta)x,r^{c_v}(\delta)y\rangle=\mu(\delta)\langle
x,y\rangle$ for all~$\delta\in\Gal_F$. We refer to such a triple as a
$\mu$-polarised representation of~$\Gal_F$, and we will sometimes
denote it as a pair~$(r,\mu)$, the pairing being implicit.

This bijection is given by setting~$r:=\rho|_{\Gal_F}$ (more
precisely, the projection of~$\rho|_{\Gal_F}$ to~$\GL_n(R)$),
$\mu:=\nu\circ\rho$, and $\langle x,y\rangle=x^t A^{-1}y$,
where $\rho(c_v)=(A,-\mu(c_v))\jmath$. If~$v$ is a finite place
of~$F^+$ which is inert or ramified in~$F$, then we have an induced
bijection between representations $\Gal_{F^+_v}\to\cG_n(R)$ and
$\mu$-polarised representations $\Gal_{F_v}\to\GL_n(R)$.

There is an isomorphism $\GL_1\to Z_{\cG_n}$ given by $g\mapsto
(g,g^2)\in \GL_1\to\GL_1\subset \GL_n\times\GL_1$, and we
have 
$\cG_n^\der=\GL_n\times 1$, and
$\cG_n^\ab=\GL_1\times \{1 , \jmath\}$. (It is easy to check by direct
calculation that $\cG_n^\der\subset \cG_n^\circ$, and
indeed $\cG_n^\der\subset\GL_n\times 1$. Since $\GL_n^{\der}=\SL_n$,
we have $\SL_n\times 1\subset\cG_n^\der$, and since
$\jmath(1,a)\jmath^{-1}(1,a^{-1})=(a,1)$, we also have $\GL_1\times
1\subset\cG_n^\der$, whence $\GL_n\times
1\subset\cG_n^\der$. Similarly, one checks easily that
$Z_{\cG_n}\subset \cG_n^\circ$, so that $Z_{\cG_n}\subset
\GL_1\times\GL_1$. If $(g,a)\in\GL_1\times\GL_1$ then
$\jmath(g,a)\jmath^{-1}=(ag^{-1},a)$, so we see that $(g,a)\in
Z_{\cG_n}$ if and only if $a=g^2$, as required.)

 We fix a prime $l>2$ and a representation
$\rhobar:\Gal_{F^+}\to\cG_n(\F)$ with
$\rhobar^{-1}(\cG_n^0(\F))=\Gal_F$. We fix a character $\mu:\Gal_{F^+}\to\cO^\times$ with
$\nu\circ\rhobar=\mubar$. Write $\psi:\Gal_{F^+}\to\cG_n^\ab(\cO)$ for
the character taking~$g\in \Gal_F$ to $(\mu(g),1)$
and~$g\in\Gal_{F^+}\setminus\Gal_F$ to
$(-\mu(g),\jmath)$. 

Note that if~$R\in\CNL_\cO$ then a
deformation~$\rho:\Gal_{F^+}\to\cG_n(R)$ of~$\rhobar$ has
$\ab\circ\rho=\psi$ if and only if $\nu\circ\rho=\mu$, in which case
we say that it is $\mu$-polarised. By~\cite[Prop.\
2.2.3]{2016arXiv160103752A}, restriction to~$\Gal_F$ gives an
equivalence between the $\mu$-polarised (framed) deformations of~$\rhobar$ and
the~$\mu$-polarised (framed) deformations~$r$
of~$\rbar:=\rhobar|_{\Gal_F}:\Gal_F\to\GL_n(\F)$, the latter by definition
being those~$r$ which satisfy~$r^{c}\cong r^\vee\mu$ (where we are
writing~$c$ for~$c_v$, as~$r^c$ is independent of the choice
of~$v|\infty$). 

The same equivalence pertains to deformations
of~$\rhobar|_{\Gal_{F^+_v}}$, where~$v$ is inert or ramified
in~$F$. On the other hand, if~$v$ splits as~$\tv\tv^c$ in~$F$, then
restriction to~$\Gal_{F_{\tv}}$ gives an equivalence
between~$\mu$-polarised (framed) deformations
of~$\rhobar|_{\Gal_{F^+_v}}$ and (framed) deformations
of~$\rbar|_{\Gal_{F_{\tv}}}$; thus at such places the deformation
theory of representations valued in~$\cG_n$ is reduced to the case
of~$\GL_n$. It is for this reason that~\cite{CHT} and its sequels only
permit ramification at places which split in~$F$.


By~\cite[Lem.\ 2.1.3]{CHT}, $\rhobar$ is discrete series and odd in
the sense of Definition~\ref{defn: oddness} if and only if for each
place $v|\infty$ of~$F^+$ with corresponding complex conjugation~$c_v\in\Gal_{F^+}$
we have $\mubar(c_v)=-1$. This is by definition equivalent to the
corresponding polarised representation~$(\rhobar|_{\Gal_F},\mubar)$
being totally odd in the sense of~\cite[\S 2.1]{BLGGT}.
Let~$S$ be a finite set of places of~$F^+$, including all the places
where~$\rbar$ or~$\mu$ are ramified, all the infinite places, and all
the places dividing~$l$. The following is a generalisation
of~\cite[Prop.\ 1.5.1]{BLGGT} (which is the case that every finite
place in~$S$ splits in~$F$, and is actually proved in~\cite{CHT});
note that the assumption that~$\rhobar|_{\Gal_{F(\zeta_l}}$ is
absolutely irreducible is missing from the statement of~\cite[Prop.\
1.5.1]{BLGGT}, but should have been included there. 
Note also that this assumption implies
that~$\rhobar$ admits a universal deformation ring; indeed, we
have~$H^0(\Gal_{F^+},\fg_{\F})=H^0(\Gal_{F^+},\fgl_{n,\F}\times\fgl_{1,\F})=\fgl_{1,\F}$
by Schur's lemma (note that~$\Gal(F/F^+)$ acts by~$-1$ on the scalar
matrices in~$\fgl_{n,\F}$).

\begin{cor}
  \label{cor: global deformation ring is positive dimensional, unitary
  case}
  Let~$l>2$ be prime, and let~$\rhobar:\Gal_{F^+}\to\cG_n(\F) $ be such
  that~$\rhobar|_{\Gal_{F(\zeta_l)}}$ is absolutely irreducible. Assume that~$\rhobar$ is
  discrete series and odd.

Let~$\mu$ be a de Rham
  lift of~$\mubar$, and let ~$S$ be a finite set of 
  places of~$F^+$ including all the places at  that which either
  ~$\rbar$ or~$\mu$ is ramified, and all the places dividing~$l\infty$.
 For each finite place $v\in S$, fix an inertial type~$\tau_v$, and
 if~$v|l$, fix a regular Hodge type~$\bv_v$. Fix quotients of the
 corresponding local~$\mu$-polarised framed deformation rings which
 correspond to a \emph{(}non-empty\emph{)} union of irreducible components of the generic fiber.

  Let~$R^\univ$ be the universal deformation ring for $\mu$-polarised
  deformations of~$\rhobar$ which are unramified outside~$S$, and lie on
  the given union of irreducible components for each finite place ~$v\in S$. Then~$R^\univ$ has Krull
  dimension at least one.
\end{cor}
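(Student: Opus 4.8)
The plan is to deduce the Corollary directly from Proposition~\ref{prop: global deformation ring with types is positive dimensional}, applied with $G=\cG_n$ (base-changed from~$\Z$ to~$\cO$), with the totally real field of that proposition taken to be~$F^+$, the set of places being~$S$, the residual representation being~$\rhobar\colon\Gal_{F^+}\to\cG_n(\F)$, and the fixed abelianised determinant being the character~$\psi\colon\Gal_{F^+}\to\cG_n^\ab(\cO)$ attached to~$\mu$ as above. For each finite place~$v\in S$ we take~$\Rbar_v$ to be the chosen non-empty union of irreducible components of the generic fibre of the local $\mu$-polarised framed deformation ring; since a deformation~$\rho$ is $\mu$-polarised exactly when $\nu\circ\rho=\mu$, i.e.\ when $\ab\circ\rho=\psi$ (by~\cite[Lem.\ 2.1.1]{CHT} and the discussion of~$\cG_n$ above), these are precisely the quotients of the fixed-determinant local framed deformation rings $R_{\rhobar|_{\Gal_{F^+_v}}}^{\square,\tau_v,\psi}$ (resp.\ $R_{\rhobar|_{\Gal_{F^+_v}}}^{\square,\tau_v,\bv_v,\psi}$ for~$v\mid l$) that appear in Proposition~\ref{prop: global deformation ring with types is positive dimensional}. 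With these identifications, the ring~$R^\univ$ of the Corollary is literally the ring~$R^\univ$ of that proposition (its existence as a quotient of the universal fixed-determinant deformation ring following from Lemma~\ref{lem: conjugation preserves components}, once the latter exists), so it suffices to verify the hypotheses of the proposition.

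Most of these are immediate. The centre $Z_{\cG_n}\cong\Gm$ is smooth over~$\cO$. The equality $H^0(\Gal_{F^+},\fg_\F)=\fz_\F$ was checked in the discussion preceding the Corollary: $\fg_\F=\fgl_{n,\F}\times\fgl_{1,\F}$, and the absolute irreducibility of~$\rhobar|_{\Gal_{F(\zeta_l)}}$ forces $H^0(\Gal_{F^+},\fgl_{n,\F})=0$ by Schur's lemma (complex conjugation acting by~$-1$ on the scalar line), so the universal fixed-determinant deformation ring $R^\psi_{F^+,S}$ exists. We have~$l>2$ by hypothesis, and~$\rhobar$ is discrete series and odd by hypothesis; recall that by~\cite[Lem.\ 2.1.3]{CHT} the latter is equivalent to~$\mubar(c_v)=-1$ for all~$v\mid\infty$. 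The local deformation rings~$\Rbar_v$ are non-empty by hypothesis, and the de Rham hypothesis on~$\mu$ guarantees that the fixed-determinant local theory with the prescribed types is the relevant one. Finally, for~$v\mid l$ the chosen Hodge type~$\bv_v$ is regular in the sense of Definition~\ref{defn: regular Hodge type}: since $(\Res_{E\otimes F^+_v/E}\cG_n)^\circ=\Res_{E\otimes F^+_v/E}(\GL_n\times\GL_1)$, whose Borel subgroups are products of Borel subgroups of~$\GL_n$ with~$\GL_1$, this agrees with the usual $\GL_n$-notion recalled in Remark~\ref{rem: regular Hodge type GLn}.

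The one hypothesis requiring genuine work is the vanishing $H^0(\Gal_{F^+,S},(\fg_\F^0)^*(1))=0$. Here $\fg_\F^0=\fgl_{n,\F}$, the $\F$-points of the Lie algebra of $\cG_n^\der=\GL_n\times 1$. The trace form on~$\fgl_{n,\F}$ is invariant under the adjoint action of~$\cG_n$ (both conjugation and the involution $X\mapsto -X^t$ coming from~$\jmath$ preserve it), so it gives a $\Gal_{F^+}$-equivariant isomorphism $(\fg_\F^0)^*\cong\fgl_{n,\F}$, hence $(\fg_\F^0)^*(1)\cong\fgl_{n,\F}(1)$. Restricting to~$\Gal_{F(\zeta_l),S}$, on which the Tate twist is trivial and on which~$\rhobar$ lands in~$\cG_n^\circ$, the absolute irreducibility of $\rbar|_{\Gal_{F(\zeta_l)}}$ together with Schur's lemma forces $(\fgl_{n,\F})^{\Gal_{F(\zeta_l)}}$ to be the line of scalar matrices; one then checks that $\Gal(F(\zeta_l)/F^+)$ acts on the corresponding line of~$\fgl_{n,\F}(1)$ through the character $\epsilon_{F/F^+}\cdot\overline{\chi}_{\mathrm{cyc}}$, where~$\epsilon_{F/F^+}$ is the quadratic character cutting out~$F/F^+$, and that this character is nontrivial on~$\Gal_{F^+}$. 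Thus $H^0(\Gal_{F^+,S},(\fg_\F^0)^*(1))=0$, and Proposition~\ref{prop: global deformation ring with types is positive dimensional} applies and gives the conclusion. I expect this last cohomological bookkeeping — identifying $\fg_\F^0$ for~$\cG_n$ with~$\fgl_{n,\F}$ under the adjoint action, relating it to the polarised $\GL_n$-picture, and extracting the vanishing from the absolute irreducibility hypothesis — to be the only real obstacle; everything else is a translation between the~$\cG_n$-conventions of this paper and those of~\cite{CHT,BLGGT}.
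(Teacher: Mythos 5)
Your argument is correct and takes essentially the same route as the paper: apply Proposition~\ref{prop: global deformation ring with types is positive dimensional} with $G=\cG_n$ (the other hypotheses being routine translations between the polarised and fixed-determinant pictures), and verify the one substantive hypothesis $H^0(\Gal_{F^+,S},(\fgl_{n,\F})^*(1))=0$ by restricting to $\Gal_{F(\zeta_l)}$, using Schur's lemma to reduce to the scalar line, and using that $\Gal(F/F^+)$ acts there by $-1$. Your identification of the residual character on that line as $\epsilon_{F/F^+}\cdot\overline{\chi}_{\mathrm{cyc}}$ is in fact slightly more explicit than the paper's wording; note only that its nontriviality (equivalently $F\neq F^+(\zeta_l)$, i.e.\ $\zeta_l\notin F$) is asserted without comment in both your write-up and the paper's own proof.
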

\begin{proof}By Proposition~\ref{prop: global deformation ring with
    types is positive dimensional}, we need only check that
  $H^0(\Gal_{F^+,S},(\fgl_{n,\F})^*(1))$ vanishes,
  where~$\fgl_{n,\F}$ is the Lie algebra of~$\cG_n^{\der}$. By
  inflation-restriction this group injects into
  $H^0(\Gal_{F(\zeta_l)},(\fgl_{n,\F})^*(1))^{\Gal(F(\zeta_l)/F^+)}=H^0(\Gal_{F(\zeta_l)},(\fgl_{n,\F}))^{\Gal(F(\zeta_l)/F^+)}$.
  Since~$\rhobar|_{\Gal_{F(\zeta_l)}}$ is absolutely irreducible by
  assumption, this group vanishes by Schur's lemma (noting again
  that~$\Gal(F/F^+)$ acts by~$-1$ on the scalar matrices
  in~$\fgl_{n,\F}$).
\end{proof}

\subsection{Existence of lifts and the weight part of Serre's conjecture}\label{subsec: existence of
  lifts}
We now prove a strengthening
of~\cite[Thm.\ A.4.1]{MR3072811}, removing the condition that the
places at which our Galois representations are ramified are split
in~$F$. We refer the reader to~\cite{BLGGT} for any unfamiliar
terminology; in particular, potential diagonalizability is defined
in~\cite[\S 1.4]{BLGGT}, while adequacy and the notion of a polarised
Galois representation being potentially diagonalizably automorphic are
defined in~\cite[\S 2.1]{BLGGT}. 
\begin{thm}\label{diaglift} Let $l$ be an
  odd prime not dividing~$n$, and suppose that~$\zeta_l\notin F$. Let~$\rhobar:\Gal_{F^+}\to\cG_n(\F) $ be such
  that~$\rhobar|_{\Gal_{F(\zeta_l)}}$ is absolutely irreducible. Assume that~$\rhobar$ is
  discrete series and odd. 
  Let $S$ be a finite set of places of $F^+$,
  including all places dividing~$l\infty$. 

Let~$\mu$ be a de Rham
  lift of~$\mubar$, and let ~$S$ be a finite set of 
  places of~$F^+$ including all the places at  that which either
  ~$\rbar$ or~$\mu$ is ramified, and all the places dividing~$l\infty$.
 For each finite place $v\in S$, fix an inertial type~$\tau_v$, and
 if~$v|l$, fix a regular Hodge type~$\bv_v$. Fix quotients of the
 corresponding local~$\mu$-polarised framed deformation rings which
 correspond to an irreducible component of the generic fiber;
 if~$v|l$, assume also that this component is potentially
 diagonalizable.




Assume further that there is a finite extension of CM fields $F'/F$
such that~ $F'$ does not contain $\zeta_l$, all finite places
of~$(F')^+$ above~$S$ split in~$F$, and $\rhobar(\Gal_{F'(\zeta_l)})$ is
adequate; and that there exists a
lift~$\rho':\Gal_{F^+,S}\to\cG_n(\cO)$ of~$\rhobar|_{\Gal_{(F')^+,S}}$
with~$\nu\circ \rho'=\mu|_{\Gal_{F^+,S}}$, with the further property
that~$\rho'$ is potentially diagonalizably automorphic.

Then there
is a lift
\[ \rho:\Gal_{F^+,S} \longrightarrow \cG_n(\cO) \]
of $\rhobar$ such that
\begin{enumerate}
\item $\nu \circ \rho = \mu$;
\item if $v \in S$ is a finite place, then ${\rho}|_{G_{F^+_v}}$
  corresponds to a point on our chosen component of the local
  deformation ring.
\item $\rho|_{\Gal_{(F')^+,S}}$ is potentially diagonalizably automorphic.
\end{enumerate} \end{thm}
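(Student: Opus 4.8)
The plan is to argue as in \cite[proof of Thm.\ A.4.1]{MR3072811}: realise the sought lift as a characteristic-zero point of a global deformation ring over $F^+$ whose existence is guaranteed by the dimension bound of Corollary \ref{cor: global deformation ring is positive dimensional, unitary case}, and then deduce automorphy of its restriction to $(F')^+$ — where all of the relevant places split in $F$, so that the classical machinery of \cite{CHT,BLGGT} is available — by an automorphy lifting theorem.

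First I would form the universal $\mu$-polarised deformation ring $R^\univ$ for deformations of $\rhobar$ that are unramified outside $S$ and, for each finite $v\in S$, lie on the prescribed union of components of the appropriate local framed deformation ring; this exists as a quotient of $R^\psi_{F^+,S}$ (which itself exists since $\rhobar|_{\Gal_{F(\zeta_l)}}$ is absolutely irreducible, via Schur's lemma, exactly as in the discussion preceding Corollary \ref{cor: global deformation ring is positive dimensional, unitary case}) by Lemma \ref{lem: conjugation preserves components}. Corollary \ref{cor: global deformation ring is positive dimensional, unitary case} — which rests on Balaji's presentation Proposition \ref{prop: presentation of global over local Balaji} together with the local dimension computation of Theorem \ref{thm: dense set of very smooth points} and the discrete-series-and-odd hypothesis, exactly as in Proposition \ref{prop: global deformation ring with types is positive dimensional} — gives $\dim R^\univ\geq 1$. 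Using in addition that the local deformation rings are $\cO$-flat complete intersections (Theorem \ref{thm: dense set of very smooth points}), so that Balaji's presentation exhibits $R^{\square,\univ}$ as a quotient of an $\cO$-flat complete intersection, the argument of \cite[proof of Prop.\ 1.5.1]{BLGGT} produces a point of $R^\univ$ valued in the ring of integers $\cO'$ of some finite extension of $E$. This is a lift $\rho\colon\Gal_{F^+,S}\to\cG_n(\cO')$ of $\rhobar$ with $\nu\circ\rho=\mu$ which, for each finite $v\in S$, lies on the prescribed component — i.e.\ properties (1) and (2).

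For property (3) I would restrict the whole picture to $(F')^+$. Since every finite place of $(F')^+$ above $S$ splits in $F$, at those places $\mu$-polarised deformation theory reduces to the $\GL_n$-theory over the split completions (as recalled at the start of Section \ref{sec: unitary groups}), so we are in the situation where the automorphy lifting theorems of \cite{BLGGT} apply: $\zeta_l\notin F'$ and $\rhobar(\Gal_{F'(\zeta_l)})$ is adequate. Both $\rho|_{\Gal_{(F')^+}}$ and $\rho'$ are $\mu$-polarised lifts of $\rhobar|_{\Gal_{(F')^+}}$; $\rho'$ is potentially diagonalizably automorphic by hypothesis, and $\rho|_{\Gal_{(F')^+_w}}$ is potentially diagonalizable for each $w\mid l$ because the chosen component at the place of $F^+$ below $w$ is potentially diagonalizable and, by Lemma \ref{lem: operations on components make sense}, its base change to $(F')^+_w$ is again a component lying in the potentially diagonalizable locus (potential diagonalizability being preserved under restriction to finite extensions). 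As $\rho|_{\Gal_{(F')^+}}$ and $\rho'$ therefore have the same reduction with adequate image and are connected at every place above $l$ (both potentially diagonalizable of the prescribed Hodge--Tate type), the automorphy lifting theorem propagates potential diagonalizable automorphy from $\rho'$ to $\rho|_{\Gal_{(F')^+}}$, establishing (3).

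All of the genuinely new content sits in Corollary \ref{cor: global deformation ring is positive dimensional, unitary case}, which via Theorem \ref{thm: dense set of very smooth points} and the results on moduli of Weil--Deligne representations in Section \ref{sec: moduli of WD representations} is precisely what permits ramification at non-split places of $F^+$; so I expect the only real work to lie in confirming that these non-split local computations feed correctly into Balaji's presentation, the remaining steps being by now standard — extracting the $\cO'$-point, which needs the bookkeeping with $\cO$-flatness of the completed tensor product $\wotimes_{v}\Rbar_v$ in order to ensure $R^\univ[1/l]\neq 0$, and quoting the automorphy lifting theorem over $(F')^+$.
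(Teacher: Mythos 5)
There is a genuine gap at the very point where the lift $\rho$ is produced. You claim that the dimension bound of Corollary~\ref{cor: global deformation ring is positive dimensional, unitary case}, together with the $\cO$-flatness and complete-intersection property of the local rings from Theorem~\ref{thm: dense set of very smooth points}, lets ``the argument of \cite[proof of Prop.\ 1.5.1]{BLGGT}'' produce an $\cO'$-valued point of $R^\univ$. But \cite[Prop.\ 1.5.1]{BLGGT} is nothing more than the statement that the global deformation ring has Krull dimension at least one (it is exactly what Corollary~\ref{cor: global deformation ring is positive dimensional, unitary case} generalises), and Krull dimension $\geq 1$ does not by itself yield characteristic-zero points: a priori $R^\univ$ could be an $\F$-algebra such as $\F[\![T]\!]$, of dimension one but with empty generic fibre. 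Nor does $\cO$-flatness of $\widehat{\bigotimes}_v \Rbar_v$ propagate to $R^{\square,\univ}$: Balaji's presentation divides by $r+s$ relations, $s$ more than the number of added variables, and the resulting quotient can perfectly well be supported entirely in characteristic $l$ while still meeting the dimension bound. The missing ingredient is the finiteness of $R^\univ$ as an $\cO$-algebra; only the combination ``finite over $\cO$ and of dimension $\geq 1$'' forces $R^\univ[1/l]\neq 0$ and hence a homomorphism $R^\univ\to E$. This is where the potential-automorphy input must enter \emph{before} the point exists, not only afterwards.

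The paper's proof supplies exactly this: one restricts to $(F')^+$, notes by \cite[Lem.\ 1.2.3(1)]{BLGGT} that $R^\univ$ is finite over the analogous ring $R^\univ_{F'}$ (for the base-changed components), produces a potentially diagonalizably automorphic $\cO$-point $\rho''$ of $R^\univ_{F'}$ by \cite[Thm.\ A.4.1]{MR3072811} (whose hypotheses are available over $F'$ precisely because all places of $(F')^+$ above $S$ split and because $\rho'$ exists), and concludes from \cite[Thm.\ 2.3.2]{BLGGT} that $R^\univ_{F'}$, hence $R^\univ$, is a finite $\cO$-algebra; only then does the dimension bound give the point $\rho$, and a second application of \cite[Thm.\ 2.3.2]{BLGGT} (to $\rho''$ and $\rho|_{\Gal_{(F')^+,S}}$) gives property (3). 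Your treatment of (3) is essentially this last step and is fine in outline, including the use of Lemma~\ref{lem: operations on components make sense} for base change of components; but as written your argument never establishes the existence of $\rho$, which is the crux of the theorem.
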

\begin{proof}  Let~$R^\univ$ be the universal deformation ring for $\mu$-polarised
  deformations of~$\rhobar$ which are unramified outside~$S$, and lie on
  the given  irreducible component for each finite place~$v\in S$. Then~$R^\univ$ has Krull
  dimension at least one by Corollary~\ref{cor: global deformation ring is positive dimensional, unitary
  case}. We claim that~$R^\univ$ is a finite~$\cO$-algebra. Admitting
this claim, we can choose a homomorphism~$R^\univ\to E$, and let~$\rho$
be the corresponding representation. This satisfies properties~(1)
and~(2) by construction.

Let~$R_{F'}^\univ$ be the universal
deformation ring for  $\mu|_{G_{(F')^+,S}}$-polarised
  deformations of~$\rbar|_{G_{F',S}}$ which lie on the base changes of
  our chosen components. 
  By~\cite[Lem.\ 1.2.3 (1)]{BLGGT}, $R^\univ$ is a
  finite $R_{F'}^\univ$-algebra, so in order to prove the claim it is enough to show that
  $R_{F'}^\univ$ is a finite $\cO$-algebra.

By~\cite[Thm.\ A.4.1]{MR3072811} (with~$F$ there taken to equal~$F'$),
there is a representation~$\rho'':G_{(F')^+,S}\to\cG_n(\cO)$
corresponding to an $\cO$-point of $R_{F'}^\univ$, which is
furthermore potentially diagonalizably automorphic. Then
$R_{F'}^\univ$ is a finite $\cO$-algebra by~\cite[Thm.\ 2.3.2]{BLGGT}.
as required. Finally, property~(3) holds by~\cite[Thm.\ 2.3.2]{BLGGT}
(applied to~$\rho''$ and~$\rho|_{G_{(F')^+,S}}$). 
\end{proof}
We now apply this result to the weight part of Serre's conjecture for
unitary groups. We
restrict ourselves to the case~$n=2$, where the existing results in
the literature are strongest; our results should also allow the
removal of the hypothesis of ``split ramification'' from results in
the literature for higher rank unitary groups, such as the results
of~\cite{blggUn}. We recall that if~$K/\Ql$ is a finite extension,
there is associated to any representation $\rhobar:\Gal_K\to\GL_2(\F)$
a set~$W(\rhobar)$ of Serre weights. A definition of~$W(\rhobar)$ was
first given in~\cite{bdj} in the case that $K/\Ql$ is unramified, and
various generalisations and alternative definitions have subsequently
been proposed. As a result of the main theorems
of~\cite{MR3324938,2016arXiv160806059C}, all of these definitions are
equivalent; we refer the reader to the introductions to those papers
for a discussion of the various definitions.

Suppose that~$F$ is an imaginary CM field with maximal totally real
subfield~$F^+$, such that~$F/F^+$ is unramified at all finite places,
that each place of~$F^+$ above~$l$ splits in~$F$, and that~$[F^+:\Q]$
is even. Then as in~\cite{MR3072811} we have a unitary group~$G/F^+$
which is quasisplit at all finite places and compact at all infinite
places. If $\rbar:\Gal_{F^+}\to\cG_2(\Flbar)$ is irreducible, the
notion of~$\rbar$ being modular of a Serre weight is defined
in~\cite[Defn.\ 2.1.9]{MR3072811}. This definition (implicitly)
insists that~$\rbar$ is only ramified at places which split in~$F$,
and we relax it as follows: we change the definition of a good compact
open subgroup $U\subset G(\A_{F^+}^\infty)$ in~\cite[Defn.\
2.1.5]{MR3072811} to require only that at all places~$v|l$ we
have~$U_v=G(\cO_{F^+_v})$, and at all places~$v\nmid l$ we have
$U_v\subset G(\cO_{F^+_v})$. (Consequently, we are now considering
automorphic forms of arbitrary level away from~$l$, whereas
in~\cite{MR3072811} the level is hyperspecial at all places which do
not split in~$F$.)

Having made this change, everything in~\cite[\S 2]{MR3072811} goes
through unchanged, except that all mentions of ``split ramification''
can be deleted. The following theorem strengthens~\cite[Thm.\ A]{MR3164985}, removing
a hypothesis on the ramification away from~$l$ (and also a hypothesis
on the ramification at~$l$, although that could already have been
removed thanks to the results of~\cite{MR3324938}).
 \begin{thm}
  \label{thm: weight part of Serre for U2}
  Let~$F$ be an imaginary CM field with maximal totally real
  subfield~$F^+$, and suppose that~$F/F^+$ is unramified at all finite
  places, that each place of~$F^+$ above~$l$ splits in~$F$, and
  that~$[F^+:\Q]$ is even. Suppose that~$l$ is odd, that
  $\rbar:G_{F^+}\to\cG_2(\Flbar)$ is irreducible and modular, and
  that~$\rbar(G_{F(\zeta_l)})$ is adequate.

Then the set of Serre weights for which~$\rbar$ is modular is exactly
the set of weights given by the sets~$W(\rbar|_{G_{F_v}})$, $v|l$.
\end{thm}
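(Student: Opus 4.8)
The plan is to prove the two inclusions separately, following the argument of \cite[Thm.\ A]{MR3164985} but feeding in Theorem~\ref{diaglift} in place of the lifting theorem used there---whose hypotheses forced all ramification away from~$l$ to split in~$F$---together with the enlargement of the notion of good compact open subgroup described above (so that ramification away from~$l$ is allowed); with these substitutions, everything in \cite[\S2]{MR3072811} goes through with ``split ramification'' deleted.

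For the weight elimination direction, I would argue that if $\rbar$ is modular of a Serre weight $a=(a_v)_{v\mid l}$, then the witnessing automorphic form on $G/F^+$ has level prime to~$l$ at the places above~$l$, so by local--global compatibility at~$l$ each $\rbar|_{G_{F_v}}$ admits a crystalline lift of the Hodge type determined by~$a_v$; by the equivalence of the various definitions of $W(\rbar|_{G_{F_v}})$ established in \cite{MR3324938,2016arXiv160806059C} (and \cite{bdj} in the unramified case), this is precisely the condition $a_v\in W(\rbar|_{G_{F_v}})$. This direction uses none of the new results of the paper.

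For the weight existence direction I would fix a weight $a=(a_v)_{v\mid l}$ with $a_v\in W(\rbar|_{G_{F_v}})$ for all $v\mid l$, and for each $v\mid l$ use the crystalline-lift characterization of $W(\rbar|_{G_{F_v}})$ to pick a potentially diagonalizable crystalline lift of $\rbar|_{G_{F_v}}$ of the Hodge type corresponding to~$a_v$; this selects a regular Hodge type $\bv_v$ and a potentially diagonalizable component of the local deformation ring, as Theorem~\ref{diaglift} requires. At the ramified places $v\nmid l$ I would choose any inertial type~$\tau_v$ and a component of the local deformation ring with nonempty generic fibre. Next, using that $\rbar$ is modular and $\rbar(G_{F(\zeta_l)})$ adequate, a standard base-change and Moret-Bailly argument (cf.\ \cite[\S2]{BLGGT}, \cite{MR3072811}) produces a finite CM extension $F'/F$ with $\zeta_l\notin F'$, all finite places of $(F')^+$ over~$S$ split in~$F$, $\rbar(\Gal_{F'(\zeta_l)})$ still adequate, and a potentially diagonalizably automorphic lift $\rho':\Gal_{F^+,S}\to\cG_2(\cO)$ of $\rbar|_{\Gal_{(F')^+,S}}$ with $\nu\circ\rho'=\mu|_{\Gal_{F^+,S}}$ (solvable base change of modularity, enlarging $F'$ if necessary to force potential diagonalizability at~$l$ while preserving adequacy). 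Theorem~\ref{diaglift} then delivers a lift $\rho:\Gal_{F^+,S}\to\cG_2(\cO)$ of $\rbar$ with $\nu\circ\rho=\mu$, lying on the chosen local components---so crystalline of Hodge type $\bv_v$ at each $v\mid l$---and with $\rho|_{\Gal_{(F')^+,S}}$ potentially diagonalizably automorphic. By solvable descent (Arthur--Clozel) $\rho$ is automorphic, corresponding to an automorphic form on $G/F^+$ of level prime to~$l$ above~$l$; local--global compatibility at~$l$ shows this form has weight~$a$, and its reduction mod~$l$ exhibits $\rbar$ as modular of weight~$a$.

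The genuinely new ingredients are Theorem~\ref{diaglift} and the component formalism of Section~\ref{subsec: tensor products of components}---in particular the well-definedness of base change of components (Lemma~\ref{lem: operations on components make sense})---and the step I expect to be the main obstacle is the bookkeeping at the ramified places away from~$l$: one must choose those local components so that their generic fibres are nonempty, so that their base change to $(F')^+$ lands in the split-ramification situation in which \cite[Thm.\ A.4.1]{MR3072811} produces the automorphic input, and so that they are compatible with the level of the resulting form. Once these inputs are granted, together with the weight-existence input from the mod-$l$ local theory of \cite{MR3324938,2016arXiv160806059C}, the remainder is a routine adaptation of \cite[Thm.\ A]{MR3164985}.
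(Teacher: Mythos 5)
Your overall strategy matches the paper's: delete ``split ramification'' from \cite[\S 2]{MR3072811} via the enlarged notion of good compact open subgroup, obtain weight elimination from local--global compatibility at~$l$ together with the equivalence of the definitions of $W(\rbar|_{G_{F_v}})$ (this is precisely the paper's appeal to \cite[Cor.\ 4.1.8]{MR3072811} and \cite[Thm.\ 6.1.8]{MR3324938}), and obtain weight existence by feeding Theorem~\ref{diaglift} into the argument of \cite[Thm.\ 5.1.3]{MR3072811} in place of \cite[Thm.\ A.4.1]{MR3072811}.

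The genuine gap is in your weight-existence endgame. Theorem~\ref{diaglift} only yields automorphy of $\rho|_{\Gal_{(F')^+,S}}$, and you propose to descend this to~$F^+$ by Arthur--Clozel. But the extension $F'/F$ in your construction is produced by a Moret--Bailly/potential-automorphy type argument and is therefore not solvable in general, so solvable descent is simply not available; and your parenthetical ``solvable base change of modularity, enlarging $F'$ if necessary to force potential diagonalizability at~$l$'' does not repair this, because base change cannot make the local representation at~$l$ of the witnessing form potentially diagonalizable (this is genuinely unknown in general, e.g.\ when $l$ ramifies in~$F^+$). The correct source of the potentially diagonalizably automorphic lift $\rho'$ over $(F')^+$ is \cite[Thm.\ A.4.1]{MR3072811} applied over~$F'$, which is legitimate exactly because $F'$ is further enlarged so that every place of $(F')^+$ above~$S$ splits in~$F'$ --- this is the parenthetical step in the paper's proof.

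The paper then performs no descent of automorphy at all: it substitutes Theorem~\ref{diaglift} for the citation of \cite[Thm.\ A.4.1]{MR3072811} inside the proof of \cite[Thm.\ 3.1.3]{MR3072811}, and the remaining, unchanged arguments of \cite[\S\S 2--5]{MR3072811} (valid in the enlarged-level setting) convert the resulting lift into modularity of the given weight over~$F^+$. If you want to run your more explicit version, you would have to arrange $F'/F$ solvable while still verifying all the hypotheses of Theorem~\ref{diaglift} (in particular producing $\rho'$ via \cite[Thm.\ A.4.1]{MR3072811} over~$F'$, not by base change of the witnessing form), and then still carry out the \cite[\S 2]{MR3072811}-style translation from an automorphic lift of the right type and level to modularity of the Serre weight; as written, the descent step would fail.
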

\begin{proof}We begin by observing that the proof of~\cite[Thm.\
  5.1.3]{MR3072811} goes through in our more general context (that is,
  without assuming ``split ramification''). Indeed,  we have
  already observed that the results of~\cite[\S 2]{MR3072811} are valid in
  our context, and chasing back
  through the references, we see that the only change that needs to be
  made is to relax the hypotheses in~\cite[Thm.\
  3.1.3]{MR3072811} by no longer requiring that the places~$v\in S$,
  $v\nmid l$ split in~$F$. This follows by replacing the citation of~\cite[Thm.\
  A.4.1]{MR3072811} in the proof of ~\cite[Thm.\
  3.1.3]{MR3072811} with a reference to Theorem~\ref{diaglift}
  above (after making a further extension of~$F'$ to arrange
  that all of the places of~$(F')^+$ lying over~$S$ split in~$F'$). 

This shows that~$\rbar$ is modular of every weight given by
the~$W(\rbar|_{G_{F_v}})$, $v|l$. For the converse, observe
that~\cite[Cor.\ 4.1.8]{MR3072811} also holds in our context (again,
since the results of~\cite[\S 2]{MR3072811} go through); the result
then follows immediately from~\cite[Thm.\ 6.1.8]{MR3324938}.
\end{proof}

\begin{rem}
  \label{rem: non quasi split groups}It is presumably possible to
  prove in the same way a further strengthening of Theorem~\ref{thm: weight part of
    Serre for U2} where we allow our unitary group to be ramified at
  some finite places (and thus allow~$[F^+:\Q]$ to be odd, and~$F/F^+$
  to be ramified at some finite places), but to do so would involve a
  lengthier discussion of automorphic representations on unitary
  groups, which would take us too far afield. 
\end{rem}

\begin{rem}
  \label{rem: allowing places above p to ramify}We have assumed that
  the places of~$F^+$ above~$l$ split in~$F$, because the weight part
  of Serre's conjecture has not been considered in the literature for
  unitary groups which do not split above~$l$ (although if $l$ is
  unramified in~$F$, and we are in the generic semisimple case, such a
  conjecture is a special case of the conjectures
  of~\cite{2015arXiv150902527G}). However, it seems likely that it is
  possible to formulate and prove a generalisation of
  Theorem~\ref{thm: weight part of Serre for U2} which removes this
  assumption, following the ideas of~\cite{MR3292675}
  and~\cite{MR3449190} (that is, using the Breuil--M\'ezard conjecture
  for potentially Barsotti--Tate representations). Again, this would
  take us too far afield from the main concerns of this paper, so we
  do not pursue this; and in any case we understand that this will be
  carried out in forthcoming work of~Koziol and~ Morra.
\end{rem}


\bibliographystyle{amsalpha}
\bibliography{deformation}

 \end{document}